\newtheorem{prop}{Proposition}[section]
\newtheorem{rules}[prop]{Rules List}
\newtheorem{lemm}[prop]{Lemma}
\newtheorem{coro}[prop]{Corollary}
\newtheorem{thm}[prop]{Theorem}
\newtheorem*{ques}{Question}
\theoremstyle{definition}
\newtheorem{nota}[prop]{Notation}
\newtheorem{defn}[prop]{Definition}
\newtheorem{rema}[prop]{Remark}
\newtheorem{exam}[prop]{Example}
\newcommand{\AAA}{\mathcal{A}}
\newcommand{\BBB}{\mathcal{B}}
\newcommand{\CCC}{\mathcal{C}}
\newcommand{\DDD}{\mathcal{D}}
\newcommand{\EEE}{\mathcal{E}}
\newcommand{\III}{\mathcal{I}}
\newcommand{\KKK}{\mathcal{K}}
\newcommand{\LLL}{\mathcal{L}}
\newcommand{\MMM}{\mathcal{M}}
\newcommand{\PPP}{\mathcal{P}}
\newcommand{\SSS}{\mathcal{S}}
\newcommand{\TTT}{\mathcal{T}}
\newcommand{\VVV}{\mathcal{V}}
\newcommand{\WWW}{\mathcal{W}}
\newcommand{\XXX}{\mathcal{X}}
\newcommand{\NN}{\mathbb{N}}
\newcommand{\ZZ}{\mathbb{Z}}
\newcommand{\GAP}{\vspace{.05in}}
\newcommand{\eps}{\varepsilon}
\definecolor{jcolor}{rgb}{0.5,0,0.6}
\definecolor{jcolor2}{rgb}{1.0,0,0.6}
\definecolor{jcolor3}{rgb}{0.5,0.2,1.0}
\title[Ergodic measures for subshifts]{The number of ergodic measures for transitive subshifts under the regular bispecial condition}
\date{\today}
\author[M.\ Damron]{Michael Damron}
\email{mdamron6@math.gatech.edu}
\author[J.\ Fickenscher]{Jon Fickenscher}
\email{jonfick@princeton.edu}
\begin{document}

\begin{abstract}
If $\mathcal{A}$ is a finite set (alphabet), the shift dynamical system consists of the space $\mathcal{A}^{\mathbb{N}}$ of sequences with entries in $\mathcal{A}$, along with the left shift operator $S$. Closed $S$-invariant subsets are called subshifts and arise naturally as encodings of other systems. In this paper, we study the number of ergodic measures for transitive subshifts under a condition (``regular bispecial condition'') on the possible extensions of words in the associated language. Our main result shows that under this condition, the subshift can support at most $\frac{K+1}{2}$ ergodic measures, where $K$ is the limiting value of $p(n+1)-p(n)$, and $p$ is the complexity function of the language. As a consequence, we answer a question of Boshernitzan from `84, providing a combinatorial proof for the bound on the number of ergodic measures for interval exchange transformations.
\end{abstract}

\maketitle

\tableofcontents

\section{Introduction}

If $\NN = \{1,2,3,\dots\}$ is the set of positive integers and
	$\AAA$ is a finite alphabet of symbols, we let
	$$
		\AAA^\NN = \{x = x_1x_2x_3\cdots:~x_i\in \AAA~\forall i\in \NN\}
	$$
denote the set of all one-sided infinite sequences with letters in $\AAA$.
When endowed with the natural product topology, this
	set becomes a compact metric space with its Borel $\sigma$-algebra.
The left shift map $S : \AAA^\NN \to \AAA^\NN$ given by
	$$
		(Sx)_i = x_{i+1} \mbox{ for all }i\in \NN
	$$
is a continuous function and creates the natural dynamical system, the \emph{shift} $(\AAA^\NN,S)$.
We will consider \emph{subshifts} $X\subseteq \AAA^\NN$, meaning
	closed $S$-invariant subsets of $\AAA^\NN$,
and assume that the restricted system $(X,S)$ is \emph{transitive},
	meaning that for each pair of non-empty open $U,V\subseteq X$
	there is an iterate $n\in \NN$ such that $S^n(U) \cap V$
	is non-empty.
	
Let $\AAA^* = \bigcup_{n\in \NN} \AAA^n$ denote the
	set of all finite \emph{words} with letters in $\AAA$.
We may naturally associate to a subshift $X$ its \emph{language}
	$$
		\LLL(X) = \{w\in \AAA^*:~w=x_ix_{i+1} \dots x_{i+|w|-1}\mbox{ for some } i\in \NN~\&~x\in X\},
	$$
	meaning the collection of all finite words
	that occur as a subword within a sequence $x\in X$.
Here $|w|$ is the length of $w$, meaning $|w| =n$ if $w = w_1w_2\dots w_n$.

There exists a broad collection of literature that relates combinatorial properties of
	$\LLL(X)$ with dynamical properties of $(X,S)$.
For example, under the stronger assumption that $(X,S)$ is minimal\footnote{For every $x\in X$, its orbit $\{x,Sx,S^2x,\dots\}$
	is dense, or equivalently any closed $S$-invariant subset of $X$ is either empty or $X$.}
	Boshernitzan \cite{cBosh} bounded the size of $\EEE(X,S)$, the set of $S$-ergodic probability measures
		on $X$,
	by considering the growth rate of the complexity function
		$$
			p_X(n) = \big|\{w\in \LLL:~|w| = n\}\big|.
		$$
In particular, he showed that if $\liminf_{n\to\infty} \frac{p_X(n)}{n} = \alpha \geq 1$
	then $|\EEE(X,S)| \leq \alpha$.
Furthermore, if $\limsup_{n\to\infty} \frac{p_X(n)}{n} = \alpha \geq 2$ then $|\EEE(X,S)| \leq \alpha - 1$.
We note that $|\EEE(X,S)| \geq 1$ due to the Krylov-Bogolyubov Theorem \cite{cKrylov} and if $\alpha < 1$ in either limit then 
	$(X,S)$ is periodic as a consequence of the Morse-Hedlund Theorem \cite{cMorseHedlund}.
More recently in \cite{cCyrKra}, Cyr and Kra extended these results to a broader class of dynamical systems
		and proved the same bound for the larger class of \emph{generic measures}.
Furthermore, they showed that the original bounds by Boshernitzan are sharp.

A class of systems discussed as motivation in \cite{cBosh} were subshifts that
	arise naturally from \emph{interval exchange transformations}.
These are piece-wise isometries from $I = [0,1)$ to itself defined
	by dividing the interval into $d$ subintervals of prescribed lengths
		$\lambda_1,\dots,\lambda_d$
	and then rearranging them by translation according to a permutation
		$\pi$ over $\{1,\dots,d\}$.
If we label the initial subintervals as $I_1,\dots, I_d$,
	the subshift associated to such an interval exchange $f$ is created by
	the closure of the image of the map
	\begin{equation}\label{eq: encoding}
		[0,1) \ni z \mapsto x\in \{1,\dots,d\}^\NN,
		\mbox{ where }
		 x_i = j \iff f^{i-1} z\in I_j.
	\end{equation}
It was shown independently by Katok \cite{cKatokMeasures}
	and Veech \cite{cVeechMeas} that for a minimal interval exchange $f$
	over $d$ subintervals we have
	$$
		|\EEE(I,f)| \leq \frac{d}{2}.
	$$
We note here that the bound is actually more precise and based on
	data\footnote{The \emph{genus} of a suspension of $f$. Such a surface may be realized by the
		zippered rectangle construction defined by Veech \cite{cVeechZip}.} obtained from $\pi$ that defines $f$,
but in many cases this bound agrees with $\lfloor d/2\rfloor$.

Under the natural encoding in \eqref{eq: encoding}, a minimal intevral exchange becomes a minimal subshift $(X,S)$
	and the complexity function $p_X(n)$ satisfies
	$$
		p_X(n+1) - p_X(n) \leq d-1 \mbox{ for all } n\in \NN,
	$$
and in fact under a general assumption\footnote{The infinite distinct orbit condition, also called the Keane condition. See \cite{cKeaneIDOC}.},
	equality holds above for all $n$.
This observation led to the natural question:

\begin{ques}[Boshernitzan \cite{cBosh}]
Can the bound $d/2$ for $|\EEE(X,S)|$ be shown combinatorially 
 for $(X,S)$ obtained from a minimal interval exchange transformation?
\end{ques}

In part, this question was presented as motivation for the work
	in \cite{cBosh} as the main results prove that for a general interval exchange
	the bound $|\EEE(X,S)| \leq d-2$ holds because $\lim_{n\to \infty} \frac{p_X(n)}{n} = d-1$.
So for $d = 2,3,4$ this bound agrees with the previously established bound.
This problem has been considered and some work has been done to
	extend Boshernistzan's result.
Before Cyr and Kra's work \cite{cCyrKra},
	Ferenczi and Monteil \cite[Theorem 7.3.7]{cFerenMont2010}
	expanded the bounds to ``$K$-deconnectable''
		systems.
		
In \cite{cDamFick2016}, the authors showed that $|\EEE(X,S)| \leq K - 2$
	for minimal
	$(X,S)$ satisfying
	\begin{equation}\label{eq: ECG_intro}
	p_X(n+1) - p_X(n ) = K \mbox{ for all large }n\in \NN
	\end{equation}
	where $K \geq 4$.
Because $K = d-1$ for interval exchanges, this answered Boshernitzan's
	question for $d = 5,6$.
	
To provide a proof for all $d$, we will in this paper make a further assumption on
	the language $\LLL$ if $(X,S)$.
A word $w\in \LLL$ is \emph{left special} if
	there are distinct letters $a,a' \in \AAA$
	so that $aw$ and $a'w$ exist in the language.
Likewise, $w$ is \emph{right special} if $wb$ and $wb'$ exist in the language
	for distinct letters $b,b'$.
A word is \emph{bispecial} if it is both left and right special.
A bispecial word is \emph{regular bispecial} if only one left extension of $w$ is
	right special and only one right extension of $w$ is left special.
	$\LLL$ (or equivalently $(X,S)$)
	satisfies the \emph{regular bispecial condition (RBC)}
	if all large enough bispecial words are regular.
We note here that RBC implies the constant growth condition
	from \eqref{eq: ECG_intro} above for some $K$.
Furthermore, all subshifts that arise from interval exchanges
	satisfy this property by work of Ferenczi and Zamboni \cite{cRBCforIET}.
Finally, this property is stable in the following sense:
	using results from \cite{cDolcePerrin3} we have that
	RBC is closed under topological conjugacy,
	meaning if $(X,S)$ is a subshift satisfying RBC and $(Y,S)$ is a subshift
		with $\varphi:X \to Y$ is a homemorphism satisfying
		$\varphi \circ S = S \circ \varphi$ then $(Y,S)$ also satisfies RBC.

Our main result is the following:

\begin{thm}[Main Theorem]\label{THM_MAIN}
	Let $(X,S)$ be a transitive subshift satisfying RBC
		with growth constant $K$.
	Then $|\EEE(X,S)| \leq \frac{K+1}{2}$.
\end{thm}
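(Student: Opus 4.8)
The plan is to recast the problem in terms of non-negative circulations on the Rauzy graphs of $\LLL=\LLL(X)$, to use RBC to control how these graphs — and the induced linear maps on circulation spaces — evolve as the word length grows, and finally to exploit an antisymmetric pairing preserved by that evolution to halve the available dimension.

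\emph{Setup.} For large $n$ let $\Gamma_n$ be the Rauzy graph whose vertices are the length-$n$ words of $\LLL$ and whose edges are the length-$(n+1)$ words, an edge running from its length-$n$ prefix to its length-$n$ suffix; transitivity makes $\Gamma_n$ strongly connected. Contracting every vertex of in-degree and out-degree both $1$ produces the reduced Rauzy graph $G_n$, whose vertices are the special words of length $n$ and whose edges are the maximal non-branching paths. Writing $r(w)$ for the number of right extensions of $w$, one has $\sum_{|w|=n}(r(w)-1)=p_X(n+1)-p_X(n)=K$, so $|E(G_n)|-|V(G_n)|=K$ and the cycle space $Z_n$ of $G_n$ has dimension $K+1$. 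An $S$-invariant probability measure $\mu$ assigns to each edge of $G_n$ the mass of the cylinder over its label; shift-invariance makes this a non-negative conservative flow, i.e. an element of a cone $C_n\subseteq Z_n$ lying on the affine hyperplane $\{\text{total mass}=1\}$, and the coherent sequence $(c^\mu_n)_n$ recovers $\mu$. Thus it suffices to bound by $\tfrac{K+1}{2}$ the dimension of the linear span $W\subseteq Z_{n_0}$ of the transported cones: since the invariant measures form a Choquet simplex which, via $\mu\mapsto c^\mu_{n_0}$, embeds affinely into $W\cap\{\text{mass}=1\}$, and since the mass functional is positive on the nonzero part of $C_{n_0}$ hence nonzero on $W$, the ergodic measures are affinely independent points of a space of dimension $\le\tfrac{K-1}{2}$, giving $|\EEE(X,S)|\le\tfrac{K+1}{2}$.

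\emph{Evolution across bispecial words.} List the bispecial words by increasing length $n_1<n_2<\cdots$; for $n$ strictly between consecutive bispecial lengths $G_n$ changes only by a relabeling, and by RBC every $n_k$ beyond some point is regular. At a regular bispecial $w$ of length $n_k$ the bi-extension graph $E(w)$ is a tree with exactly two vertices of degree $\ge 2$, namely the unique right-special left extension $a_\ast w$ and the unique left-special right extension $wb_\ast$, so $E(w)$ is a union of two stars. The passage $G_{n_k}\rightsquigarrow G_{n_k+1}$ then replaces the vertex $w$ by this explicit gadget, reroutes the incident edges, and reabsorbs the degree-$1$ legs into longer non-branching paths; the resulting linear map $A_k\colon Z_{n_k+1}\to Z_{n_k}$ carries $C_{n_k+1}$ into $C_{n_k}$, is an isomorphism for $n$ large, and in a suitable basis of cycles is elementary (unipotent with a single off-diagonal entry) — the combinatorial analogue of a Rauzy–Veech matrix.

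\emph{The crux.} It remains to produce an antisymmetric form $\omega_n$ on $Z_n$, of rank $K+1$ up to a bounded radical and preserved up to sign by the $A_k$, for which every limiting circulation — hence every invariant measure — is isotropic; then $W$ is isotropic and $\dim W\le\tfrac{K+1}{2}$. The natural candidate is the algebraic intersection form attached to a ribbon structure on $G_n$, i.e. a cyclic ordering of the edges at each special vertex coming from the order in which left and right extensions branch, RBC being exactly what makes such a structure propagate coherently through the surgeries. Establishing (a) that the ribbon structure is compatible with the regular-bispecial surgeries and (b) that two distinct ergodic measures have vanishing $\omega$-pairing — the statement that non-negative flows for the same branching pattern cannot ``cross'' — is where essentially all the work lies, and is the step I expect to be the main obstacle. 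A more hands-on alternative would be a direct induction over the $n_k$ showing that each bifurcation of $C_n$ into an additional extreme ray consumes two units of $K$; but this seems to require exactly the bookkeeping that the intersection form packages cleanly.
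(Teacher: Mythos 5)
Your proposal is a program rather than a proof: the two claims you flag at the end as ``where essentially all the work lies'' --- (a) that a ribbon structure on the reduced Rauzy graphs exists and is carried coherently through the regular-bispecial surgeries, and (b) that distinct ergodic circulations are mutually $\omega$-isotropic --- are precisely the content of the theorem, and neither is established. Claim (b) is the combinatorial avatar of the Katok--Veech statement that invariant transverse measures span an isotropic subspace of $H_1$ of a suspension surface; Boshernitzan's question was exactly to find an argument that does not route through that geometric fact, and your sketch does not supply one. There is also a quantitative gap you acknowledge only in passing: if $K+1$ is odd the pairing on the $(K+1)$-dimensional cycle space must have a nontrivial radical, and an isotropic subspace of a form with radical of dimension $d$ can have dimension up to $d+\frac{K+1-d}{2}>\frac{K+1}{2}$; so even granting (a) and (b) you would still need to show the span of the transported cones meets the radical trivially (in the IET case this is where the singularity data and the genus, rather than $\lfloor d/2\rfloor$, enter). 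Finally, note that RBC systems are strictly more general than IET codings (the paper cites linear involutions from half-translation surfaces), so the existence of a globally coherent cyclic edge-ordering compatible with all large regular bispecial surgeries is not something you can import from a suspension; it would have to be built from the combinatorics, and it is not clear it exists.

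For comparison, the paper's proof is entirely different in mechanism, though it exploits the same number $K+1$ (the cycle rank of the special Rauzy graph). Each ergodic measure is assigned, via a density function, a distinctly ``colored'' closed circuit ($N$-loop) in a limiting special Rauzy graph $\Lambda$; the evolution of $\Lambda$ under regular bispecial moves is tracked until each loop either shrinks or ``spreads'' its color along outside edges; a separation argument shows the chosen loops cannot disconnect an auxiliary graph $\Xi$ built by contracting them; and the count $\#_{\mathrm{edge}}(\Xi)-\#_{\mathrm{vert}}(\Xi)=K-2E\geq -1$ for a connected graph yields $E\leq\frac{K+1}{2}$. One can view that connectivity argument as a combinatorial substitute for your isotropy claim --- ``nonnegative flows with disjoint supports cannot cross'' becomes ``distinctly colored loops cannot form a disconnecting set'' --- but the paper actually proves its version, via the exit-word and density estimates of Sections 3 and 5, whereas your intersection-form version remains the open step.
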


Because the proof will be combinatorial in nature, this provides
	a complete answer to Boshernitzan's question for all $d$. The result is also strictly more general than one that would apply only to interval exchanges, since interval exchanges form a proper subset of those systems with RBC.
Indeed, linear involutions as discussed in \cite{cBoissyLanneau}
		arise as first returns of flows on half-translation surfaces
	and a generic involution satisfies RBC.
	However they are not equivalent to
		interval exchanges\footnote{There is a natural way to define an IET that is a two-fold covering of a given linear involution (see \cite{cAvilaResende} for example).
		However, the set of invariant measures for the IET and the linear involution do not necessarily agree.}	
	
In terms of Rauzy graphs, which are directed graphs derived from the language $\LLL$
	of a subshift,
	the works in \cite{cBosh} and \cite{cFerenMont2010} bounded
		$|\EEE(X,S)|$
	by identifying ``important''\footnote{In \cite{cBosh} these were the vertices related to right special words
			in the language while in \cite{cFerenMont2010} these vertices formed a ``$K$-deconnectable'' set.} vertices in these
	graphs to the measures in $\EEE(X,S)$.
In \cite{cDamFick2016}, the improvement essentially developed from adding a notion of dynamics
	to these graphs and tracing these identifications
	over time via a ``coloring function.''
By incorporating the RBC condition with the dynamics from \cite{cDamFick2016}, we will construct an auxiliary graph and relate the number of ergodic measures of $(X,S)$ to the connectedness of this graph.


\subsection{Outline of paper}

Section \ref{SEC_DEF} will provide definitions, including
	more conventional objects as well as
	some of the machinery from \cite{cDamFick2016}.
We will first introduce languages $\LLL$ over a finite alphabet
	as well as the subshift dynamical systems $(X,S)$ and
	recall standard results concerning their relationships.
We also relate RBC to a property defined by Dolce and Perrin in \cite{cDolcePerrin3}
	to show that transitive subshifts with RBC must in fact be minimal (Corollary \ref{COR_REC_IFF_UREC})
	and that RBC is closed under topological conjugation (Corollary \ref{COR_RBC_CONJ}).
Then, following the work from \cite{cDamFick2016},
	the subshift/language will be used to define a graph $\Lambda$
	with coloring rule $\CCC$.
The coloring rule is based on a type of density function
	that will play a key role in translating
	between combinatorics in $\LLL$ and
	ergodic measures for $(X,S)$.
	
In Section \ref{SEC_EXIT_WORDS},
	a notion of an exit word of a given word $w$ will be developed
	along with relevant combinatorial results.
The main problem will be described using
	graphs $\Lambda$ and coloring rules in $\CCC$
	in Section \ref{SEC_GRAPHS}.
These graphs
	and coloring rules will be given restrictions
	via Rules Lists.
To aid in understanding, these restrictions will
	be stated as given and the proofs will be given in Section \ref{SEC_ERG_TO_GRAPH}.	
As a main result, Proposition \ref{PROP_CONNECT} will be proven in the appendix and
	via Corollary \ref{cor: main_corollary}
	will relate the bound on $|\EEE(X,S)|$
	to the connected-ness of an auxiliary graph $\Xi$.

The discussion concerning density from Section~\ref{SEC_DEF} and exit words
	from Section \ref{SEC_EXIT_WORDS}
	will continue in Section \ref{SEC_ERG}.
Having already considered exit words from the word combinatorial perspective,
	we will in this section derive ergodic theoretic properties.
Lemma \ref{LemLoopWords} will help in bounding densities of
	of recurring words, while
Lemma \ref{LemExitDens} will establish mutual bounds between the density of
	$w$ and its exit words.
	
The Rules Lists from Section \ref{SEC_GRAPHS} will be justified in
	Section \ref{SEC_ERG_TO_GRAPH}.
Here will will use the results from previous sections to
	explicitly construct our graphs as
	well as prove the aforementioned graph and coloring rule
	restrictions.
	
The key result, Proposition \ref{PROP_CONNECT},
	concerning the auxiliary graph will be proven in the appendix.
The arguments used rely only on definitions and Rules Lists from Section \ref{SEC_GRAPHS}
	and so this appendix may be read independently from Sections \ref{SEC_ERG}
	and \ref{SEC_ERG_TO_GRAPH}.

	\subsection{Acknowledgments}
	The research of M. D. is supported by an NSF CAREER grant. 
	J.\ F.\ thanks M.\ Boshernitzan for posing his question and F.\ Dolce for his helpful conversations that lead to, among many other things,
		Corollaries \ref{COR_REC_IFF_UREC} and \ref{COR_RBC_CONJ}.
	J.\ F.\ is also thankful every day for Laine, Charlie and Amy.

\section{Definitions}\label{SEC_DEF}

\subsection{Languages on a Finite Alphabet}

Let $\AAA$ be a finite alphabet of symbols, let
	$$\AAA^n = \{w = w_1w_2\dots w_n: w_i\in \AAA \mbox{ for } 1\leq i \leq n\}$$
	be the set of words of length $n$ with letters in $\AAA$
	and let $\AAA^* = \bigcup_{n\in \NN} \AAA^n$ be the collection of all finite words with letters in $\AAA$.
	Note that in this paper the empty word is not considered part of $\AAA^*$ or
		any language (as will be defined).
	We denote by $|w|$ the length of a word $w\in \AAA^*$.
If $u,v\in \AAA^*$ then $uv$ represents the concatenation of $u$ followed by $v$ and for $k\in \NN$
	the word
	$$u^k = \underbrace{u \cdots u}_k$$
represents $k$ copies of $u$ concatenated together.
For $w\in \AAA^n$ and $1\leq i \leq j \leq n$ we call $w_{[i,j]} = w_i w_{i+1} \cdots w_{j}$
	the \emph{subword} of $w$ starting at position $i$ of length $j - i +1$.
It follows then that for $w\in \AAA^n$, $1\leq i \leq j \leq n$ and $1\leq k \leq m \leq j-i+1$
	\begin{equation}
		(w_{[i,j]})_{[k,m]} = w_{[i',j']} \mbox{ where }
			\left\{\begin{array}{l} i' = i + k - 1, \\ j' = i + m -1.\end{array}\right.
	\end{equation}
For any two words $w,u\in \AAA^*$ we let
	$$
		|w|_u = |\{1\leq j \leq |w| - |u| + 1:~ w_{[j,j+|u| - 1]} = u\}|
	$$
denote the \emph{number of occurrences} of $u$ as a subword in $w$.
	
\begin{defn}
	A \emph{language} $\LLL \subseteq \AAA^*$ is a collection of finite words so that
			\begin{enumerate}
				\item $\AAA\subseteq \LLL$.
				\item For all $w\in \LLL$ and $1\leq i \leq j \leq |w|$, $w_{[i,j]} \in \LLL$.
				\item For each $w\in \LLL$ there exist $a,b\in \AAA$ so that $awb\in \LLL$.
			\end{enumerate}
	For each $n\in \NN$, $\LLL_n = \AAA^n \cap \LLL$ is the collection of all words in $\LLL$ of length $n$.
\end{defn}

A language $\LLL\subseteq \AAA^*$ is \emph{recurrent} if for any $u,v\in \LLL$ there exists $w\in \LLL$
	so that $uwv\in \LLL$
	and $\LLL$ is \emph{uniformly recurrent} if for each $u$ there exists $N\in \NN$ so that $u$
	is a subword of each $w\in \LLL_N$.
A language $\LLL$ is \emph{periodic} if there exists $p\in \NN$ so that for each $w\in \LLL$
	we have $w_i = w_{i + p}$ for all $1\leq i \leq |w|-p$ and $\LLL$ is \emph{aperiodic} otherwise.

By definition, each $w\in \LLL$ may be extended to the left (resp. right) by at least one symbol in $\AAA$.
We say that $w\in \LLL$ is \emph{left special} if the set
	$$Ex^\ell(w;\LLL) := \{a\in \AAA: aw\in \LLL\},$$
	of left extensions contains at least two elements.
Likewise, $w$ is \emph{right special} if $|Ex^r(w;\LLL)|\geq 2$,
	where
	$$
		Ex^r(w;\LLL) := \{b\in \AAA:~wb\in \LLL\},
	$$
	denotes the set of right extensions.
	We will typically exclude $\LLL$ in the notation of left (resp. right)
		extensions when the language is understood.
Let $\LLL^{\ell}$ (resp. $\LLL^r$) denote the left (resp. right) special words in the language
	and let $\LLL^{\mathfrak{s}}_n = \LLL^{\mathfrak{s}} \cap \LLL_n$ for $(n,\mathfrak{s}) \in \NN \times\{\ell,r\}$.
As a convention, we will let $\ell$-special (resp. $r$-special) denote left special (resp. right special).

A word $w\in \LLL$ is \emph{bispecial} if it is both left and right special.
We call $w$ \emph{regular bispecial} if there are unique $\hat{a}\in Ex^\ell(w)$ and $\hat{b}\in Ex^r(w)$ such that
	$w\hat{b}$ is left special and $\hat{a}w$ is right special.
In other words, for each $a\in Ex^{\ell}(w)\setminus\{\hat{a}\}$ the word $aw$ is not right special and likewise for
	each $b\in Ex^r(w)\setminus\{\hat{b}\}$ and word $wb$.
	\begin{rema}
		This definition of regular bispecial is a generalization of \emph{ordinary bispecial} words
			defined in \cite{cCass} but for alphabets of any size.
	\end{rema}
	
\begin{defn}\label{def: RBC}
	A language $\LLL \subseteq \AAA^*$ satisfies the \emph{regular bispecial condition (RBC)}
		if for some $n_0 \in \NN$, all bispecial $w\in \LLL$ of length at least $n_0$
		are regular bispecial.
\end{defn}

The next result, Lemma \ref{LEM_RBC_UNIQUENESS}, provides three statements essential to the proofs that follow.
In the case of left-special words: the first part states that a prefix of a left-special word is left-special,
	the second part states that any large enough left-special word may be uniquely extended to the right
	within the language to form another left-special word
	and the third part states that the left extensions of large left-special words having a large enough common prefix
		must agree.
The parts also analogously apply to right-special words and suffixes.
The first part is well-known and applies to any language $\LLL$ while the last two parts
	require the RBC condition.
\begin{lemm}\label{LEM_RBC_UNIQUENESS}
	Let $\LLL\subseteq \AAA^*$ be a language.
	\begin{enumerate}
		\item For each $n_2 \geq n_1$, $\mathfrak{s}\in \{\ell,r\}$ and $w'\in \LLL_{n_2}^\mathfrak{s}$
			the word
				$$w = \begin{cases}
						w'_{[1,n_1]}, & \text{if }\mathfrak{s} = \ell,\\
						w'_{[n_2 - n_1 + 1,n_2]}, & \text{if }\mathfrak{s} = r,
					\end{cases}$$
				is $\mathfrak{s}$-special.
	\end{enumerate}
	Now assume also that $\LLL$ satisfies RBC and $n_0$ is from Definition \ref{def: RBC}.
	\begin{enumerate}
		\setcounter{enumi}{1}
		\item For each $n_2\geq n_1 \geq n_0$, $\mathfrak{s}\in \{\ell,r\}$ and $w\in \LLL_{n_1}^\mathfrak{s}$ there exists
			a unique $w'\in \LLL_{n_2}^\mathfrak{s}$ such that 
						$$w = \begin{cases}
						w'_{[1,n_1]}, & \text{if }\mathfrak{s} = \ell,\\
						w'_{[n_2 - n_1 + 1,n_2]}, & \text{if }\mathfrak{s} = r.
					\end{cases}$$

		\item If $n_2 \geq n_1$ are large enough and $\mathfrak{s}\in\{\ell,r\}$ then any words $w'\in \LLL_{n_1}^\mathfrak{s}$
			and $w''\in \LLL_{n_2}^\mathfrak{s}$ satisfying
				$$
					\begin{cases}
						w'_{[1,n_0]} = w''_{[1,n_0]}, & \text{if }\mathfrak{s} = \ell,\\
						w'_{[n_1 - n_0 + 1, n_1]} = w''_{[n_2-n_0+1,n_2]}, & \text{if }\mathfrak{s} = r,
					\end{cases}
				$$
			have the same $\mathfrak{s}$-extensions.
	\end{enumerate}
\end{lemm}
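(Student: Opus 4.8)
The plan is to treat the three parts in order, and in each part to handle only the case $\mathfrak{s} = \ell$ (left-special words and prefixes); the right-special case follows by the obvious left–right symmetry, which amounts to reversing all words and swapping $Ex^\ell$ with $Ex^r$. For part (1), if $w' \in \LLL_{n_2}^\ell$ then there are distinct $a, a' \in \AAA$ with $aw', a'w' \in \LLL$. Since $\LLL$ is closed under taking subwords, $a\,w'_{[1,n_1]}$ and $a'\,w'_{[1,n_1]}$ are both in $\LLL$, so $w = w'_{[1,n_1]}$ has at least two left extensions and is left-special. This is the routine well-known part and uses only the language axioms.

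For part (2), fix $n_0$ from Definition \ref{def: RBC} and let $w \in \LLL_{n_1}^\ell$ with $n_1 \ge n_0$. Existence: extend $w$ one letter to the right at a time, always choosing an extension that keeps the word left-special; I would argue this is always possible using part (1) in reverse together with the RBC hypothesis. Concretely, a left-special word $v$ of length $\ge n_0$ is regular bispecial if it happens to also be right-special, and regularity tells us exactly one of its right extensions $vb$ is again left-special; if $v$ is not right-special it has a unique right extension, which by part (1)'s contrapositive reasoning is forced to be left-special (since $v$ itself is, and every occurrence of $v$ is followed by the same letter). Either way there is at least one, hence a well-defined, left-special one-letter right extension, and iterating from length $n_1$ up to $n_2$ produces the desired $w'$. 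Uniqueness: if $w', w'' \in \LLL_{n_2}^\ell$ both have $w$ as their length-$n_1$ prefix, compare them letter by letter past position $n_1$; the first place they differ, say position $m+1$ with $n_1 \le m < n_2$, forces the common length-$m$ prefix $v := w'_{[1,m]} = w''_{[1,m]}$ to have two distinct right extensions $v b'$ and $v b''$ both lying in $\LLL$ — so $v$ is right-special — and by part (1) $v$ is also left-special, hence bispecial of length $\ge n_0$, hence regular bispecial; but then regularity says at most one right extension of $v$ is left-special, contradicting that both $w'_{[1,m+1]}$ and $w''_{[1,m+1]}$ are prefixes of the left-special words $w', w''$ and therefore left-special by part (1). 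This contradiction gives $w' = w''$.

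For part (3), I want to choose the threshold for "large enough" and then show that if $w' \in \LLL_{n_1}^\ell$ and $w'' \in \LLL_{n_2}^\ell$ agree on their first $n_0$ letters then $Ex^\ell(w') = Ex^\ell(w'')$. The idea is to use part (2): let $u := w'_{[1,n_0]} = w''_{[1,n_0]}$, which is left-special by part (1) and has length $n_0 \ge n_0$, so by part (2) it has a unique left-special right extension of each length. Both $w'$ and $w''$ are left-special and extend $u$, so if $n_1, n_2$ are at least $n_0$ then the uniqueness in part (2), applied with $u$ in the role of the short word, shows that $w'$ and $w''$ are each the unique left-special extension of $u$ of their respective lengths; in particular $w''_{[1,n_1]} = w'$ (taking $n_1 \le n_2$ without loss of generality). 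So now $w'$ is a prefix of $w''$, and I must compare $Ex^\ell(w')$ with $Ex^\ell(w'')$. For any $a \in \AAA$: if $aw'' \in \LLL$ then $aw' \in \LLL$ by the subword axiom, so $Ex^\ell(w'') \subseteq Ex^\ell(w')$ always. For the reverse inclusion — the substantive direction — I would argue that if $a \in Ex^\ell(w')$ then the word $aw'$ can be extended to the right within $\LLL$, and by the uniqueness from part (2) (now applied to the left-special words $aw'$, whose length $|w'|+1$ exceeds $n_0$, noting $aw'$ is left-special precisely when... ) — here is where the hypothesis must be used carefully — one shows that the continuation is forced to agree with $w''$, so $aw'' \in \LLL$. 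The cleanest route is probably: once $n_1 > n_0$, the left extensions of a left-special word $v$ of length $\ge$ some explicit bound are determined by $v_{[1,n_0]}$ alone, by an argument dual to part (2) applied to the reversed language. The \textbf{main obstacle} I anticipate is exactly pinning down what "large enough" must mean in part (3) and making the dependence on $n_0$ precise: the natural statement is that there is an $n_0' \ge n_0$ (perhaps $n_0' = 2n_0$, or depending on the maximal length of non-regular bispecial words) such that for $n_1, n_2 \ge n_0'$ the conclusion holds, and verifying that the regularity of all bispecial words of length $\ge n_0$ genuinely propagates the agreement of left extensions from the length-$n_0$ prefix all the way out is the crux — everything else is bookkeeping with the subword axiom and the two uniqueness facts from part (2).
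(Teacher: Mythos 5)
Parts (1) and (2) of your proposal are correct. Part (1) is the paper's argument. For part (2) you establish existence by a deterministic one-letter-at-a-time extension (using regularity when the current word is bispecial, and the observation that a left-special non-right-special word has its unique right extension again left-special otherwise), and uniqueness by a ``first point of disagreement'' argument; the paper instead runs an induction on $n_2$, but the content is equivalent and your version is sound.

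Part (3) has a genuine gap, and it sits exactly where you flag ``the crux.'' You correctly reduce to showing $Ex^\ell(w')\subseteq Ex^\ell(w'')$ when $w'$ is a prefix of $w''$, but your proposed route --- that the left extensions of a left-special word of length $\ge n_0$ are determined by its length-$n_0$ prefix --- is false as stated: under RBC the extension sets can still strictly shrink as the word grows (this happens at bispecial words $v$ where $Ex^\ell(v\hat b)\subsetneq Ex^\ell(v)$, which regularity does not forbid), and this is precisely why the lemma says ``large enough'' rather than ``$\ge n_0$.'' The missing idea is a stabilization argument: for each of the finitely many $w\in\LLL_{n_0}^\ell$, the sets $Ex^\ell(u^{(n)})$, where $u^{(n)}$ is the unique left-special extension of $w$ of length $n$ from part (2), form a nested decreasing sequence of finite nonempty sets in $n$, hence are eventually constant, say for $n\ge N_w$; taking $N=\max_w N_w$ over $w\in\LLL_{n_0}^\ell$ gives the threshold. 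Your attempted direct argument (extending $aw'$ to the right and forcing the continuation to agree with $w''$) cannot be repaired without this, since $aw'$ need not be left-special and its right continuations need not pass through $w''$.
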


\begin{proof}
	We will prove the lemma for the case $\mathfrak{s} = \ell$, as the other case is handled in a nearly identical way.
	For part 1, if $aw' \in \LLL$ then $aw = (aw')_{[1,n_1+1]} \in \LLL$ as well.
	In particular, $Ex^{\ell}(w') \subseteq Ex^{\ell}(w)$, or in other words the set of left extensions of $w'$ is a subset of the left extensions for $w$.
	Therefore $w'\in \LLL_{n_2}^\ell$ implies that $w\in \LLL_{n_1}^\ell$.
	
	For part 2, note that by part 1 we may assume $n_1 = n_0$.
	Indeed, if $w$ is in, for example, $\mathcal{L}_{n_1}^\ell$,
		then $w_{[1,n_0]}$ is also left-special,
		and if it has an unique right extension to a left-special word in $\mathcal{L}_{n_2}$, then so does $w$.
	We then proceed by induction on $n_2$.
	The result holds for $n_2= n_0$ so assume the claim is true for values at most $n_2$ and 
		consider the value $n_2+1$.
	Let $\tilde{w}\in \LLL_{n_2}^\ell$ be the unique left special word of length $n_2$
		with prefix $w$.
	If $\tilde{w}$ is not bispecial there is a unique $b\in \AAA$ so that $w' = \tilde{w} b\in \LLL_{n_2+1}$.
	It follows that $w'_{[1,n_0]} = \tilde{w}_{[1,n_0]} = w$ and each left extension of $\tilde{w}$ must also
		be a left extension of $w'$.
	Therefore $w'$ is left special and is unique as $\tilde{w}$ is unique.	
	If instead $\tilde{w}$ is bispecial,
		we use RBC to let $\hat{b} \in Ex^{r}(w)$ be the unique element such that
		$w' = \tilde{w}\hat{b}$ be left special.
	Again $w=w'_{[1,n_0]}$ and must be unique by the uniqueness of $\tilde{w}$.
	
	We will now show part 3.
	By part 2, for any $w\in \LLL_{n_0}^\ell$ and $n\geq n_0$ there is a unique
		$u^{(n)} \in \LLL_n^\ell$ so that $(u^{(n)})_{[1,n_0]} = w$.
	As in the proof of part 1
		$$
			Ex^\ell(u^{(n)}) \supseteq Ex^\ell(u^{(n+1)}) \text{ for all } n \geq n_0.
		$$
	Therefore, there exists $N_w$ so that equality holds above for all $n \geq N_w$.
	The claim then holds for all $n$ at least $N = \max_{w\in \LLL_{n_0}^\ell}\{N_w\}$,
		as by parts 1 and 2 for any $\tilde{n} \geq N$ and $\tilde{w} \in \LLL_{\tilde{n}}^\ell$
		there is a unique $w\in \LLL_{n_0}^\ell$ so that $\tilde{w} = u^{(\tilde{n})}$
		as defined above for $w$.
\end{proof}

For a language we define the \emph{complexity function} $p(n) = p_\LLL(n) : \NN \to \NN$ by
	\begin{equation}
		p(n) = |\LLL_n|.
	\end{equation}
In other words $p(n)$ is the number of distinct words in $\LLL$ of length $n$.
If $\LLL$ is recurrent, the Morse-Hedlund Theorem \cite{cMorseHedlund} states that $\LLL$ is periodic if and only if
	$p(n_0) \leq n_0$ for some $n_0\in \NN$.
Equivalently, there exists $n_0\in \NN$ so that $p(n_0+ 1) - p(n_0) = 0$
	and in fact $p(n+1) - p(n) = 0$ for all $n\geq n_0$.
It follows that the aperiodic recurrent languages $\LLL$ must have complexity functions
	that satisfy $p(n) \geq n+1$.
The recurrent languages such that $p(n) = n +1$ for all $n\in\NN$ are the well studied
	Sturmian languages \cite{cSturmian}.
Such languages have constant (complexity) growth $1 = p(n+1) - p(n)$ for all $n\in\NN$.
The following definition from \cite{cDamFick2016} generalizes these observations.

\begin{defn}
	A language $\LLL\subseteq \AAA^*$ has \emph{eventually constant growth (ECG)}
		if there are constants $N_0\in \NN$ and $K,C\in \NN\cup\{0\}$ so that
		\begin{equation}
			p(n) = Kn + C \text{ for all } n \geq N_0,
		\end{equation}
	or equivalently for some constants $K,N_0\in \NN$,
		\begin{equation}
			p(n+1) - p(n) = K \text{ for all } n \geq N_0.
		\end{equation}
\end{defn}

For any $n\in \NN$, each $w'\in \LLL_{n+1}$ is uniquely defined by
	its length $n$ suffix $w = w'_{[2,n+1]}$ and its $\ell$-extension $a = w'_1$,
	and the case for $r$-extensions is analogous.
Therefore
	$$
		p(n + 1) = \sum_{w\in \LLL_n} |Ex^\mathfrak{s}(w)|
	$$
and so
	\begin{equation}\label{EQ_GROWTH_AND_EXTENSIONS}
		p(n+1) - p(n) = \sum_{w\in \LLL_n}( |Ex^\mathfrak{s}(w)| -1 ) = \sum_{w\in \LLL_n^\mathfrak{s}} (|Ex^\mathfrak{s}(w)| - 1).
	\end{equation}
	
\begin{lemm}\label{LEM_RBC_then_ECG}
	If language $\LLL\subseteq \AAA^*$ satisfies RBC then it has ECG.
\end{lemm}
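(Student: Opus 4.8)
The plan is to show that the RBC condition forces the quantity $p(n+1) - p(n)$ to stabilize, using Lemma \ref{LEM_RBC_UNIQUENESS} together with the counting identity \eqref{EQ_GROWTH_AND_EXTENSIONS}. The starting point is the identity
\begin{equation*}
	p(n+1) - p(n) = \sum_{w\in \LLL_n^\mathfrak{s}} (|Ex^\mathfrak{s}(w)| - 1),
\end{equation*}
so it suffices to prove that the right-hand side is eventually independent of $n$ (for, say, $\mathfrak{s} = \ell$). The RBC condition, combined with parts 2 and 3 of Lemma \ref{LEM_RBC_UNIQUENESS}, should let us organize the left-special words of each large length $n$ into finitely many ``towers'': by part 2, each $w\in \LLL_{n_0}^\ell$ has, for every $n \geq n_0$, a unique left-special extension $u^{(n)}(w)\in \LLL_n^\ell$ with prefix $w$, and by part 1 every left-special word of length $n \geq n_0$ arises this way from a unique $w\in \LLL_{n_0}^\ell$. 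Hence for $n \geq n_0$ there is a bijection between $\LLL_n^\ell$ and $\LLL_{n_0}^\ell$, so the sum has a fixed number $|\LLL_{n_0}^\ell|$ of terms once $n \geq n_0$.

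The remaining point is that each individual term $|Ex^\ell(u^{(n)}(w))| - 1$ eventually stops changing in $n$. Part 3 of Lemma \ref{LEM_RBC_UNIQUENESS} (applied with $w' = w'' $ having a common length-$n_0$ prefix, which here is literally $w$ itself for all large $n$) gives exactly this: there is an $N_w$ such that $Ex^\ell(u^{(n)}(w))$ is constant for $n \geq N_w$. Taking $N_0 = \max_{w\in \LLL_{n_0}^\ell} N_w$ (a finite maximum over a finite set), we conclude that for $n \geq N_0$,
\begin{equation*}
	p(n+1) - p(n) = \sum_{w\in \LLL_{n_0}^\ell} \big(|Ex^\ell(u^{(N_0)}(w))| - 1\big) =: K,
\end{equation*}
a constant, which is the definition of ECG. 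One should note that $K \geq 1$: RBC implies the language is aperiodic (a periodic language has no arbitrarily long special words at all, or rather its special structure eventually trivializes), so by Morse--Hedlund $p$ is strictly increasing, forcing $K\ge 1$; alternatively one simply allows $K \in \NN$ as the definition of ECG demands and checks the degenerate periodic case separately, where $K$ may be taken to be any positive integer vacuously or the statement is interpreted as applying to the aperiodic regime.

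**The main obstacle** is essentially bookkeeping rather than a genuine difficulty: one must be careful that the ``large enough'' thresholds in parts 2 and 3 of Lemma \ref{LEM_RBC_UNIQUENESS} are compatible and that the bijection $\LLL_n^\ell \leftrightarrow \LLL_{n_0}^\ell$ is set up in the correct direction (extension is unique by part 2, restriction lands in $\LLL_{n_0}^\ell$ by part 1, and the two are mutually inverse). A secondary subtlety is making sure the argument is symmetric in $\ell$ and $r$ so that no inconsistency arises — but since \eqref{EQ_GROWTH_AND_EXTENSIONS} holds for either choice of $\mathfrak{s}$ and yields the same left-hand side, this is automatic. In short, the proof is a direct assembly of Lemma \ref{LEM_RBC_UNIQUENESS} and the extension-counting identity, with the only care needed being the uniform choice of the stabilization length $N_0$.
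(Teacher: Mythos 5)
Your proof is correct and follows essentially the same route as the paper: the identity \eqref{EQ_GROWTH_AND_EXTENSIONS} relating $p(n+1)-p(n)$ to extension counts of special words, the bijection $\LLL_{n_0}^{\mathfrak{s}} \to \LLL_n^{\mathfrak{s}}$ coming from parts 1 and 2 of Lemma \ref{LEM_RBC_UNIQUENESS}, and the stabilization of each extension set from part 3 with $N_0$ taken as a finite maximum. The only blemish is the closing aside: RBC does \emph{not} imply aperiodicity (a periodic language satisfies RBC vacuously, having no large bispecial words), but this is harmless since the paper's definition of ECG permits $K=0$.
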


\begin{proof}
	Let $n_0$ be from the definition of RBC and
		$\mathfrak{s}\in \{\ell,r\}$.
	Letting $\phi_n$ be the bijection from $\LLL_{n_0}^\mathfrak{s}$ to $\LLL_{n}^\mathfrak{s}$
		for any $n\geq n_0$ implicitly defined by parts 1 and 2 of Lemma \ref{LEM_RBC_UNIQUENESS},
		then
		by part 3 for some $N_0$ and any $n_2 \geq  n_1 \geq N_0$ we have that
			$$
				|Ex^\mathfrak{s}(\phi_{n_2}(w))|
					= 
				|Ex^\mathfrak{s}(\phi_{n_1}(w))|
				\text{ for all }w\in \LLL_{n_0}^\mathfrak{s}.
			$$
	For any such $n_1$ and $n_2$, it must be that
			$$
				\sum_{w'\in \LLL_{n_1}^\mathfrak{s}} (|Ex^\mathfrak{s}(w')| - 1)
					=
				\sum_{w''\in \LLL_{n_2}^\mathfrak{s}} (|Ex^\mathfrak{s}(w'')| - 1)
			$$
	and by \eqref{EQ_GROWTH_AND_EXTENSIONS}, $p(n+1) - p(n)$ is a constant $K$
		for all $n \geq N_0$.
	Therefore $\LLL$ has ECG as desired.
\end{proof}

Aside from the sets of (one-sided) extensions $Ex^\mathfrak{s}(w)$ for $w\in \LLL$ and $\mathfrak{s}\in \{\ell,r\}$,
	let
	$$
		Ex^{\ell r}(w;\LLL) = \{(a,b)\in \AAA^2:~awb\in \LLL\}
	$$
	be the set of two-sided extensions.
Adapting Dolce and Perrin's notation from \cite{cDolcePerrin2}, let
	$$
		m_\LLL(w) = |Ex^{\ell r}(w;\LLL)| - |Ex^\ell(w;\LLL)| - |Ex^r(w;\LLL)| + 1.
	$$
In that paper, the authors prove the next result.
\begin{lemm}[Theorem \cite{cDolcePerrin2}]
	If a language $\LLL$ is recurrent and for some $n_0$ we have $m_\LLL(w) = 0$ for all $w\in \LLL$ such that $|w| \geq n_0$, then 
		$\LLL$ is uniformly recurrent.
\end{lemm}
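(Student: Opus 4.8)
The plan is to extract two structural consequences of the neutrality hypothesis valid for \emph{every} recurrent language, and then to assume uniform recurrence fails and reach a contradiction; notably, only part (1) of Lemma~\ref{LEM_RBC_UNIQUENESS} (which holds for an arbitrary language) is needed — neutrality alone already forces the relevant rigidity. First I record the effect on complexity. As in the derivation of \eqref{EQ_GROWTH_AND_EXTENSIONS} one has $\sum_{w\in\LLL_n}|Ex^\ell(w)| = \sum_{w\in\LLL_n}|Ex^r(w)| = p(n+1)$, and since $w\mapsto(w_1,w_{[2,n+1]},w_{n+2})$ is a bijection from $\LLL_{n+2}$ onto $\{(a,x,b):x\in\LLL_n,\ axb\in\LLL\}$, also $\sum_{w\in\LLL_n}|Ex^{\ell r}(w)| = p(n+2)$. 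With $|\LLL_n|=p(n)$ this yields $\sum_{w\in\LLL_n}m_\LLL(w)=p(n+2)-2p(n+1)+p(n)$, so the hypothesis forces $p(n+1)-p(n)$ to equal a constant $K$ for all $n\ge n_0$. If $K=0$ then $\LLL$ is periodic by the Morse--Hedlund Theorem \cite{cMorseHedlund}, and a recurrent periodic language is uniformly recurrent; so from now on assume $K\ge 1$ (hence $\LLL$ is aperiodic). Then by \eqref{EQ_GROWTH_AND_EXTENSIONS}, $\sum_{w\in\LLL_n^\mathfrak{s}}(|Ex^\mathfrak{s}(w)|-1)=K$ for $n\ge n_0$ and $\mathfrak{s}\in\{\ell,r\}$, so there are at most $K$ left-special and at most $K$ right-special words of each length $\ge n_0$.

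Next I would establish the rigidity of the special factors. The claim is: if $w\in\LLL$ with $|w|\ge n_0$ is left-special, then $wb$ is left-special for at least one $b\in Ex^r(w)$. Consider the bipartite extension graph $E(w)$ with parts $Ex^\ell(w), Ex^r(w)$ and edge set $Ex^{\ell r}(w)$; it has no isolated vertex, and since $m_\LLL(w)=0$, if $E(w)$ is connected it is a tree. If $E(w)$ is connected it is then a tree with $\ge 2$ left vertices, hence has a right vertex of degree $\ge 2$; if $E(w)$ is disconnected, its components cannot all be stars centered at a single left vertex (that would force $m_\LLL(w)=1-\#\{\text{components}\}<0$), so some component has $\ge 2$ left vertices and thus (being a tree, or containing a cycle) a right vertex $b$ of degree $\ge 2$; in either case $|Ex^\ell(wb)|=\deg_{E(w)}(b)\ge 2$. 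The mirror statement holds for right-special words and left extensions. Since the length-$n$ prefix of a left-special word is left-special (Lemma~\ref{LEM_RBC_UNIQUENESS}(1)), the prefix map $\LLL^\ell_{n+1}\to\LLL^\ell_n$ is onto for $n\ge n_0$, so $|\LLL^\ell_n|$ is nondecreasing and bounded by $K$, hence equal to a constant $\kappa_\ell$ for all $n\ge N_1$ (some $N_1\ge n_0$); the prefix map is then a bijection for $n\ge N_1$, so every left-special word of length $\ge N_1$ has a \emph{unique} left-special right-extension, and the left-special words of length $\ge N_1$ split into $\kappa_\ell$ pairwise disjoint chains — $\kappa_\ell$ ``left-special rays,'' each encoding a right-infinite sequence all of whose prefixes are left-special. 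Symmetrically there are $\kappa_r$ right-special rays.

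Finally, suppose $\LLL$ is not uniformly recurrent. Then some $u\in\LLL$ is absent from arbitrarily long words of $\LLL$, so by compactness the subshift $X_u$ of points avoiding $u$ is nonempty; it contains a minimal subshift $Y$, and $\LLL_Y:=\LLL(Y)\subsetneq\LLL$ is uniformly recurrent (as $u\notin\LLL_Y$). Let $v$ be a shortest word of $\LLL\setminus\LLL_Y$, with $m=|v|$, $v^-=v_{[1,m-1]}$, $v^+=v_{[2,m]}$. Minimality of $m$ forces $v^-,v^+\in\LLL_Y$; since $v^-$ lies in the language $\LLL_Y$ it has a right extension inside $\LLL_Y$, necessarily distinct from $v_m$ (as $v^-v_m=v\notin\LLL_Y$), so $|Ex^r(v^-)|\ge 2$, i.e.\ $v^-$ is right-special in $\LLL$; symmetrically $v^+$ is left-special in $\LLL$. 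The remaining task is to turn this into a contradiction, and the intended route is to play the rigidity of the second step — only $\le K$ left-special and $\le K$ right-special rays of $\LLL$, along which all long special words are forced — against the uniform recurrence of $\LLL_Y$, showing that every sufficiently long special word of $\LLL$, and hence (reading off branchings in the Rauzy graph) every sufficiently long word of $\LLL$, already lies in $\LLL_Y$, contradicting $\LLL_Y\subsetneq\LLL$. Equivalently, since for a recurrent language uniform recurrence is equivalent to finiteness of the return-word set of some, hence every, word, one may instead aim to bound the return words of a long bispecial word of $\LLL$.

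The main obstacle is precisely this last step. Steps one and two are complexity bookkeeping together with elementary extension-graph combinatorics, but the conclusion requires controlling how the occurrences of the branching words of $X$ sit relative to the subsystem $Y$ — that is, precluding $Y$ from systematically ``missing'' the special structure of $X$. This is where neutrality does its real work: through the bounds $|\LLL^\mathfrak{s}_n|\le K$ and the ray rigidity, the special factors of $\LLL$ form a very thin, essentially one-parameter family, and the crux is to trap that family inside $\LLL_Y$. I expect this localization argument, not the combinatorial preliminaries, to be the difficult part.
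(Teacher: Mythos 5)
Your proposal is not a complete proof: it assembles correct preliminaries and then stops exactly at the step that carries the entire content of the lemma, as you yourself acknowledge. What you do establish is sound. The identity $\sum_{w\in\LLL_n}m_\LLL(w)=p(n+2)-2p(n+1)+p(n)$ and the resulting eventual constancy of $p(n+1)-p(n)=K$ are correct; the extension-graph argument (connected $\Rightarrow$ tree with a right vertex of degree $\ge 2$; disconnected with $m_\LLL(w)=0$ $\Rightarrow$ a cycle, hence again such a vertex) correctly shows every long left-special word has a left-special right-extension, and the surjectivity-plus-boundedness argument correctly yields the finitely many special rays; and the reduction via a minimal subshift $Y$ avoiding some $u$, with the shortest $v\in\LLL\setminus\LLL_Y$ producing the special words $v^-,v^+$, is standard and fine. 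But nothing in this list is yet contradictory: the existence of at most $K$ special rays, of a proper uniformly recurrent sublanguage $\LLL_Y$, and of special words of $\LLL$ sitting on the boundary of $\LLL_Y$ are mutually consistent facts in the abstract. The contradiction requires showing that the branching structure of $\LLL$ is forced back inside $\LLL_Y$, and that is precisely where the neutrality hypothesis must be used a second time, beyond the complexity and ray bookkeeping you extracted from it. Even pushing your setup further (pigeonholing the boundary special words onto a single ray with a fixed offending extension letter, and stabilizing the extension sets along that ray) one still obtains only a consistent picture, not a contradiction; the missing localization is a genuine argument, not a routine completion.

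For context, the paper itself does not prove this lemma; it is imported verbatim from Dolce and Perrin \cite{cDolcePerrin2}, so there is no internal proof to compare your route against. The argument there is organized differently from your minimal-subsystem reduction: one shows that neutrality forces the set of complete return words to any fixed $w$ to be finite (with cardinality controlled by $K$), via a counting argument on first-return paths in the Rauzy graph that exploits the tree-like structure of the extension graphs; finiteness of complete return words together with recurrence then immediately gives that every sufficiently long word of $\LLL$ contains $w$, i.e.\ uniform recurrence. If you want to salvage your approach, the return-word bound is the ingredient you would need to supply in place of the unfinished ``trap the special rays inside $\LLL_Y$'' step.
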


Dolce and Perrin called a language $\LLL$ \emph{eventually neutral} if $m_\LLL(w) =0$ for all large enough $w\in \LLL$.
Furthermore, in \cite{cDolcePerrin3} they define a stronger property.
To define this property, we first define the graph $\mathscr{E}(w)$ for $w\in \LLL$ as follows:
$\mathscr{E}(w)$ is a bipartite graph with vertices $Ex^\ell(w) \cup Ex^r(w)$
	and an edge $(a,b)$ if and only if $(a,b)\in Ex^{\ell r}(w)$.
They say that $\LLL$ is \emph{eventually dendric} if there exists $n_0$ such that
	for all $w\in \LLL$ satisfying $|w|\geq n_0$ the graph $\mathscr{E}(w)$ is a tree.

We note that a connected graph with $m$ vertices is a tree if and only if it has $m-1$ edges.
It then follows that if $\mathscr(E)(w)$ is a tree then
	$m_\LLL(w) = 0$ and so
	if $\LLL$ is eventually dendric then $\LLL$ is also eventually neutral.
	
In fact, we have the following relationship between this property and RBC.

\begin{lemm}
	Let $\LLL$ be a recurrent language.
	Then $\LLL$ is eventually dendric if and only if $\LLL$ satisfies RBC.
\end{lemm}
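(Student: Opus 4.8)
The plan is to establish both implications by analyzing the structure of the bipartite extension graph $\mathscr{E}(w)$ for large bispecial words $w$, using the fact that a connected graph on $m$ vertices is a tree if and only if it has $m-1$ edges, together with parts 2 and 3 of Lemma \ref{LEM_RBC_UNIQUENESS}. Throughout, since $\LLL$ is recurrent, only bispecial words are of interest: if $w$ is not left special (resp. not right special) then $\mathscr{E}(w)$ is a star centered at the unique left (resp. right) extension, hence automatically a tree, and $w$ is vacuously regular bispecial. So we may restrict attention to bispecial $w$ with $|w|$ large.

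First I would prove that RBC implies eventually dendric. Fix $n_0$ from Definition \ref{def: RBC} and also take $|w|$ large enough that part 3 of Lemma \ref{LEM_RBC_UNIQUENESS} applies. Let $w$ be bispecial with $|Ex^\ell(w)| = r$ and $|Ex^r(w)| = s$; then $\mathscr{E}(w)$ has $r+s$ vertices and $|Ex^{\ell r}(w)|$ edges. The key observation is that for each $a \in Ex^\ell(w)$, the word $aw$ has $Ex^r(aw) \subseteq Ex^r(w)$, and $aw$ is right special for exactly one value $\hat a$ of $a$ by RBC; for the other $r-1$ values, $aw$ has a single right extension. Summing, $|Ex^{\ell r}(w)| = \sum_{a\in Ex^\ell(w)} |Ex^r(aw)| = |Ex^r(\hat a w)| + (r-1)$. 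Symmetrically, reading by right extensions, $|Ex^{\ell r}(w)| = |Ex^\ell(w\hat b)| + (s-1)$, where $w\hat b$ is the unique left-special right extension. Now I claim $Ex^r(\hat a w) = Ex^r(w)$: indeed $Ex^r(\hat a w) \subseteq Ex^r(w)$ always, and for any $b \in Ex^r(w)$, since $w$ is left special and $n_0$-regular, the word $wb$ has left extensions equal to those of $w$ when $b = \hat b$; more carefully, one uses part 3 of Lemma \ref{LEM_RBC_UNIQUENESS} applied to the length-$n_0$ suffix to conclude that every right extension $b$ of $w$ extends $\hat a w$. Granting this, $|Ex^{\ell r}(w)| = s + (r-1) = r + s - 1$, so $\mathscr{E}(w)$ has exactly $(r+s)-1$ edges. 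It remains to check $\mathscr{E}(w)$ is connected: each non-$\hat a$ left vertex $a$ has degree $1$, joined to some right vertex, and each non-$\hat b$ right vertex $b$ has degree $1$, joined to some left vertex; the vertices $\hat a$ and $\hat b$ are adjacent (since $\hat a w \hat b \in \LLL$ as $\hat b \in Ex^r(\hat a w) = Ex^r(w)$), and one shows every degree-$1$ vertex is joined to either $\hat a$ or $\hat b$ — a pendant left vertex $a \neq \hat a$ is joined to its unique right extension $b$, and $b$ cannot be a pendant right vertex unless... here one argues that a pendant right vertex $b \neq \hat b$ has its unique left neighbor forced to be $\hat a$ by a counting/uniqueness argument, giving connectivity. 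Hence $\mathscr{E}(w)$ is a tree.

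Next I would prove the converse: eventually dendric implies RBC. Suppose $\mathscr{E}(w)$ is a tree for all $|w| \geq n_0$, and let $w$ be bispecial with $|w|$ large. With notation as above, a tree on $r+s$ vertices has $r+s-1$ edges, so $|Ex^{\ell r}(w)| = r+s-1$. Writing $|Ex^{\ell r}(w)| = \sum_{a\in Ex^\ell(w)}|Ex^r(aw)|$ and noting each summand is $\geq 1$ with at least one summand $\geq 2$ (else $w$ would not be right special, as $Ex^r(w) = \bigcup_a Ex^r(aw)$), the constraint $\sum_a |Ex^r(aw)| = r + s - 1 = r + (s-1)$ forces exactly one $a$ — call it $\hat a$ — to have $|Ex^r(\hat a w)| = s$ and all others to have $|Ex^r(aw)| = 1$; in particular only $\hat a w$ is right special among left extensions of $w$. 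The symmetric argument gives a unique $\hat b$ with $w\hat b$ left special. Therefore $w$ is regular bispecial, and RBC holds.

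The main obstacle will be the connectivity argument in the first implication and the corresponding "only one summand exceeds $1$" bookkeeping — in particular, verifying carefully that $Ex^r(\hat a w) = Ex^r(w)$ (and its mirror), which is exactly where part 3 of Lemma \ref{LEM_RBC_UNIQUENESS} and the hypothesis that $|w|$ is large must be invoked, rather than just the local RBC condition at a single word. One must be careful that "regular bispecial" as defined only constrains which extensions are special, not the full edge set of $\mathscr{E}(w)$, so the tree property genuinely requires the extra edge-count identity derived from the two ways of summing $|Ex^{\ell r}(w)|$; the recurrence of $\LLL$ is used implicitly to guarantee enough long special words exist so that these uniqueness lemmas have content.
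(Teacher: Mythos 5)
Your forward direction (RBC implies eventually dendric) is essentially sound, and it takes a slightly different route from the paper: you compute $|Ex^{\ell r}(w)| = r+s-1$ by summing over left extensions, whereas the paper establishes connectivity by contradiction. In fact your own observation that $Ex^r(\hat a w)=Ex^r(w)$ (which does follow from part 3 of Lemma \ref{LEM_RBC_UNIQUENESS}) already closes the connectivity step you left dangling: it says $\hat a$ is adjacent in $\mathscr{E}(w)$ to \emph{every} right vertex, and every left vertex has degree at least one, so every vertex is within distance two of $\hat a$; combined with the edge count this makes $\mathscr{E}(w)$ a tree. No further ``counting/uniqueness argument'' for pendant vertices is needed.

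The converse direction, however, has a genuine gap. You claim that the identity $\sum_{a\in Ex^\ell(w)}|Ex^r(aw)| = r+(s-1)$, with each summand at least $1$, forces exactly one summand to equal $s$ and the rest to equal $1$. This is false as a counting statement: the excess $s-1$ can be spread over several summands. Concretely, take $r=2$, $s=3$ and the bipartite graph with edge set $\{(a_1,b_1),(a_1,b_2),(a_2,b_1),(a_2,b_3)\}$; this is a tree on $5$ vertices with $4$ edges in which both $a_1w$ and $a_2w$ are right special, so $w$ would fail to be regular bispecial even though $\mathscr{E}(w)$ is a tree. What rules such configurations out is not the tree property of a single extension graph but the stronger fact, cited by the paper as Lemma 1 of \cite{cDolcePerrin3}, that in a recurrent eventually dendric language the extension graphs of all sufficiently long words are \emph{simple} trees, i.e., trees of diameter at most $3$ (the tree in the example above has diameter $4$). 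With diameter at most $3$, any two left vertices are joined by a path of length $2$ through a common right vertex $b_0$, and the tree property forces $b_0$ to be the same for all pairs; this is exactly the paper's argument, and it is the step your counting cannot replace. To repair the proof you would need to either invoke that result or prove directly that non-simple trees cannot persist for all long words, which is a nontrivial argument about how $\mathscr{E}(w)$ evolves as $w$ is extended.
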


\begin{proof}
	Let us first assume $\LLL$ is eventually dendric.
	By \cite[Lemma 1]{cDolcePerrin3} there exists $n_0$ such that
		$\mathscr{E}(w)$ for $w\in \LLL$ satisfying $|w| \geq n_0$ is a \emph{simple tree}, meaning
		that the diameter (maximum path distance between any two vertices) is at most $3$.
		
	Fix $w\in \LLL$ satisfying $|w|\geq n_0$.
	If $w$ is bispecial, then we will show that there exists a unique $a_0\in Ex^{\ell}(w)$
		and a unique $b_0\in Ex^r(w)$ such that $Ex^{\ell}(w) = Ex^{\ell}(wb_0)$ and $Ex^{r}(w) = Ex^{r}(a_0 w)$.
	Because $\mathscr{E}(w)$ is a tree,
	Therefore $w$ is regular bispecial and therefore $\LLL$ satisfies RBC.

	By the bispeciality of $w$, $Ex^{\ell}(w)$ and $Ex^{r}(w)$ each contain at least two elements.
	Let $a,a'\in Ex^{\ell}(w)$ be distinct elements.
	Because $\mathscr{E}(w)$ is connected with diameter at most $3$, there must exist a path of length $2$ connecting $a$ and $a'$
		(all paths between elements of $Ex^{\ell}(w)$ must be of even length).
	Let $b_0\in Ex^{r}(w)$ be such that $(a,b_0)$ and $(a',b_0)$ are in $\mathscr{E}(w)$.
	Because $\mathscr{E}(w)$ is a tree, if there exists $b\in Ex^r(w)$ such that $(a,b)$ and $(a',b)$ are in $\mathscr{E}(w)$,
		then $b = b_0$.
	Furthermore, $b_0$ is the unique vertex that is visited by any path of length two connecting distinct elements of $Ex^{\ell}(w)$.
	It follows that $Ex^{\ell}(w b_0) = Ex^{\ell}(w)$ and for any $b\neq b_0$ in $Ex^{r}(w)$ we have
		$|Ex^{\ell}(w b)| = 1$.
	We may show by a similar argument that there exists $a_0\in Ex^{\ell}(w)$ so that
		$Ex^{r}(a_0 w) = Ex^r(w)$ and for $a\neq a_0$ in $Ex^{\ell}(w)$ we have $|Ex^{r}(a w)| =1$.
		
	Now we assume that $\LLL$ satisfies RBC and let $n_3$ be such that all $n_2 \geq n_1 \geq n_3$
		satisfy all parts of Lemma \ref{LEM_RBC_UNIQUENESS}.
	We first observe that for any $w\in \LLL$ the graph $\mathscr{E}(w)$
		must be a (simple) tree if $w$ is not bispecial,
		so we now assume that $|w|\geq n_3$ and $w$ is (regular) bispecial.
	Let $a_0$ be the unique left extension such that $a_0 w$ is right special and
		$b_0$ be the unique right extension such that $w b_0$ is left special.
	The connected component of $\mathscr{E}(w)$ containing $a_0$ and $b_0$
		is a (simple) tree,
		and suppose by contradiction that $\mathscr{E}(w)$ is not connected.
	Then there must exist an edge $(a_1,b_1)$ in $\mathscr{E}(w)$
		where $a_1 \neq a_0$ is in $Ex^{\ell}(w)$ and $b_1\neq b_0$ is in $Ex^{r}(w)$.
	Note that $(a_1,b_0)$ and $(a_0,b_1)$ cannot be edges in $\mathscr{E}(w)$.
	However, $w b_0$ is left special, has $w$ as its prefix but $a_1 \not\in Ex^{\ell}(wb_0)$.
	This is a contradiction of the third part of Lemma \ref{LEM_RBC_UNIQUENESS}
		as $Ex^{\ell}(w b_0) \neq Ex^{\ell}(w)$.
		
	Therefore, $\mathscr{E}(w)$ is a tree for all large enough $n$ and this concludes the proof.
\end{proof}

\begin{coro}\label{COR_REC_IFF_UREC}
	If $\LLL$ satisfies RBC, then $\LLL$ is recurrent if and only if it is uniformly recurrent.
\end{coro}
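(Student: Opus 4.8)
The plan is to obtain the corollary as a formal consequence of the results already assembled in this section: the equivalence between ``eventually dendric'' and RBC for recurrent languages, together with the criterion of Dolce and Perrin that a recurrent, eventually neutral language is uniformly recurrent.

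First I would dispose of the easy direction: uniform recurrence implies recurrence for an \emph{arbitrary} language, with no appeal to RBC. Given $u,v\in\LLL$, I would use the language axioms (iterating the two-sided extension axiom and passing to subwords) to produce a word $u'\in\LLL$ having $u$ as a prefix and with $|u'|\geq |u|+N+1$, where $N$ is a constant witnessing uniform recurrence for $v$. The suffix of $u'$ of length $N$ then lies in $\LLL$, hence contains an occurrence of $v$, and that occurrence necessarily begins at a position strictly greater than $|u|+1$. Reading off the prefix of $u'$ up to the end of this occurrence exhibits a word of the form $uwv\in\LLL$ with $w\in\LLL$ nonempty, so $\LLL$ is recurrent.

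For the converse, suppose $\LLL$ is recurrent and satisfies RBC. By the preceding lemma, $\LLL$ is eventually dendric, so there is $n_0$ with $\mathscr{E}(w)$ a tree for every $w\in\LLL$ satisfying $|w|\geq n_0$. As noted above, a connected tree on $m$ vertices has exactly $m-1$ edges, so $\mathscr{E}(w)$ being a tree forces $m_\LLL(w)=0$; thus $\LLL$ is eventually neutral. Applying the theorem of Dolce and Perrin quoted above then shows $\LLL$ is uniformly recurrent, which completes the proof.

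I do not expect a genuine obstacle here, since the substance is all contained in the earlier lemmas. The only place where any argument is needed is the elementary reverse implication, and there the one nontrivial point is choosing $u'$ long enough that the guaranteed occurrence of $v$ lands strictly after the prefix $u$; this is exactly what forces the connecting word $w$ to be nonempty, and hence to lie in $\LLL$, the empty word being excluded from the language by convention.
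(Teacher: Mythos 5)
Your proposal is correct and follows exactly the route the paper intends: the corollary is stated without proof precisely because it is the immediate combination of the preceding lemma (recurrent plus RBC is equivalent to eventually dendric), the observation that eventually dendric implies eventually neutral, and the quoted Dolce--Perrin theorem, while the converse direction is the standard elementary fact. Your extra care in making the connecting word $w$ nonempty is appropriate given that this paper excludes the empty word from $\AAA^*$.
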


In \cite[Theorem 2]{cDolcePerrin3}, it was shown that the set of subshifts $X$ with eventually
	dendric $\LLL$ is closed under topological conjugation.
We therefore also arrive at the same conclusion.

\begin{coro}\label{COR_RBC_CONJ}
	The subset $X$ of subshifts whose languages satisfy RBC is closed under conjugation.
\end{coro}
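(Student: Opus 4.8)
The plan is to derive this immediately from the lemma just proved, which identifies RBC with eventual dendricity among recurrent languages, together with \cite[Theorem 2]{cDolcePerrin3}, which asserts that the class of subshifts with eventually dendric language is closed under topological conjugacy. So let $(X,S)$ be a subshift with $\LLL(X)$ satisfying RBC, and let $\varphi\colon X\to Y$ be a homeomorphism onto a subshift $(Y,S)$ with $\varphi\circ S=S\circ\varphi$; the goal is to show that $\LLL(Y)$ satisfies RBC.

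First I would dispose of the recurrence hypothesis. By the standard correspondence between a subshift and its language, topological transitivity of a subshift is equivalent to recurrence of its language, and transitivity is manifestly a conjugacy invariant; hence $\LLL(X)$ is recurrent if and only if $\LLL(Y)$ is. The lemma above and the notion of eventual dendricity are both framed for recurrent languages, and for non-transitive subshifts the statement carries no content beyond the (trivial) componentwise one, so from here I assume $\LLL(X)$ and $\LLL(Y)$ are recurrent. Then the argument is a short chain: by the lemma, $\LLL(X)$ being recurrent and satisfying RBC is eventually dendric, so $(X,S)$ is an eventually dendric subshift; by \cite[Theorem 2]{cDolcePerrin3} its conjugate $(Y,S)$ is eventually dendric as well, so $\LLL(Y)$ is eventually dendric; and by the lemma once more, $\LLL(Y)$ being recurrent and eventually dendric must satisfy RBC. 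This is the desired conclusion.

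The one point that needs care — and the only real obstacle — is the transfer of recurrence along $\varphi$, since the equivalence of RBC with eventual dendricity is available only for recurrent languages: one must confirm both that transitivity is preserved under conjugacy and that it corresponds to recurrence of the language before invoking the lemma on the $Y$-side. Everything else is a direct appeal to results already in hand, so no genuinely new work is required.
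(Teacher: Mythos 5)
Your proposal is correct and is essentially the argument the paper intends: the preceding lemma identifies RBC with eventual dendricity for recurrent languages, and \cite[Theorem 2]{cDolcePerrin3} gives conjugacy-invariance of the eventually dendric class, so RBC is conjugacy-invariant. Your extra care in noting that recurrence of the language (equivalently, transitivity of the subshift) transfers along the conjugacy is a reasonable precaution that the paper leaves implicit, but it does not change the route.
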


\subsection{Subshifts of a Finite Alphabet}\label{SSEC_INTRO_SUBSHIFTS}
Let
	$$
		\AAA^\NN = \{x = x_1x_2\dots: x_i \in \AAA \text{ for } i\in \NN\}
	$$
be the set of \emph{sequences} on alphabet $\AAA$ and for each $x\in \AAA^\NN$ and $1\leq i \leq j$ let
	$$
		x_{[i,j]} = x_i x_{i+1} \dots x_{j}
	$$
be the subword of $x$ of length $j-i +1$ starting at position $i$.
On the set $\AAA^\NN$ of infinite sequences on finite alphabet $\AAA$
	the \emph{cylinder sets}
		$$
			[w] = \{x\in \AAA^\NN: x_{[1,|w|]} = w\},\mbox{ for }w\in \AAA^*
		$$
form a basis for the natural topology $\TTT$.
Moreover, $(\AAA^\NN,\TTT)$ is compact metric space \cite{cFogg} given by metric
		$$
			d(x,y) = 2^{-\min\{i\in \NN: x_i \neq y_i\}}
		$$
for $x,y\in \AAA^\NN$, $x\neq y$.
The \emph{(left) shift} $S : \AAA^\NN \to \AAA^\NN$ is the continuous map defined by
	$S(x_1x_2x_3 \dots) = x_2 x_3 \dots$, or 
	$$
		(Sx)_i = x_{i+1} \mbox{ for all }i\in \NN, 
	$$
and the topological dynamical system $(\AAA^\NN,\TTT,S)$ is the \emph{shift} on $\AAA$.

A \emph{subshift} $X \subseteq \AAA^\ZZ$ is any non-empty closed set $X$ such that $SX = X$, considered as a subsystem
	$(X,S)$, where we omit mention of the induced topology of $\TTT$ on $X$.
$(X,S)$ is \emph{transitive} if for each non-empty open $U,V\subseteq X$ there
	exists $n\in \NN$ so that $S^n( U) \cap V \neq \emptyset$.
$(X,S)$ is \emph{minimal} if no proper closed subset $A\subseteq X$ is $S$-invariant.
	
The \emph{language} $\LLL(X)$ associated to $X$,
	$$
		\LLL(X) = \{w\in \AAA^*: x_{[1,|w|]} = w \mbox{ for some }x\in X\}
	$$
is the collection of words that occur as subwords of some sequence in $X$.
Likewise, given a language $\LLL$, we may define the associated subshift as
	$$
		X(\LLL) = \{x\in \AAA^\NN: x_{[i,j]} \in \LLL \mbox{ for all } i,j\in \NN, i\leq j\}.
	$$
	
	The following statements are well-known, and, given the definitions
		above, the proof is left as an exercise.
\begin{lemm}\label{LEM_UREC_IFF_MIN}
	Let $X \subseteq \AAA^\NN$ be a subshift and $\LLL \subseteq \AAA^*$ its associated language.
		\begin{enumerate}
			\item $(X,S)$ is transitive if and only if $\LLL$ is recurrent.
			\item $(X,S)$ is minimal if and only if $\LLL$ is uniformly recurrent.
		\end{enumerate}
\end{lemm}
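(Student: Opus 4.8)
The plan is to prove each biconditional via its two implications, in every case just translating between a ``connecting word'' statement about $\LLL$ and an ``orbit visits a cylinder'' statement about $X$. Throughout I would use that the cylinders $[w]\cap X$ with $w\in\LLL$ are clopen and form a basis for $X$, and that $[w]\cap X\neq\emptyset$ precisely when $w\in\LLL$ (equivalently, when $w$ is a prefix of some point of $X$; this uses $S$-invariance of $X$).

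For the two ``easy'' implications: to deduce transitivity from recurrence, given non-empty open $U,V$ I would shrink them to basic cylinders $[u]\cap X\subseteq U$, $[v]\cap X\subseteq V$ (so $u,v\in\LLL$), pick $w\in\LLL$ with $uwv\in\LLL$, realize $uwv$ as a prefix of some $x\in X$, and note $x\in U$ while $S^{|uw|}x\in V$ with $|uw|\geq 1$. Symmetrically, to deduce minimality from uniform recurrence, I would take a non-empty closed $S$-invariant $A\subsetneq X$, find $w\in\LLL$ with $[w]\cap A=\emptyset$ (using that $X\setminus A$ is non-empty open), pick $x\in A$, and apply uniform recurrence to get $N$ with $w$ a subword of $x_{[1,N]}\in\LLL_N$; then $S^kx\in[w]\cap X$ for the relevant $k\geq 0$, while $S^kx\in A$, a contradiction.

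For transitivity $\Rightarrow$ recurrence: if $X$ is finite it is a single periodic orbit, for which recurrence of $\LLL$ is immediate, so I would assume $X$ infinite; then transitivity forbids isolated points and hence every non-empty open subset of $X$ is infinite. A routine Baire-category argument (for a countable basis $\{V_k\}$, each $\bigcup_{n\geq 1}S^{-n}(V_k)$ is open and dense) produces a point $z$ with dense forward orbit. For each $w\in\LLL$ the return set $\{n\geq 0:S^nz\in[w]\cap X\}$ is non-empty, and in fact infinite: otherwise the tail of the orbit of $z$ would remain in the closed set $X\setminus([w]\cap X)$, forcing $[w]\cap X$ to be finite. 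Given $u,v\in\LLL$, I would pick an occurrence of $u$ in $z$ at a position $i$ and an occurrence of $v$ at a position $j\geq i+|u|+1$, and read off $z_{[i,\,j+|v|-1]}=u\,w\,v\in\LLL$ with $|w|=j-i-|u|\geq 1$. For minimality $\Rightarrow$ uniform recurrence I would argue contrapositively: if some $u\in\LLL$ is missing from arbitrarily long words, choose $w_N\in\LLL_N$ with $|w_N|_u=0$, realize each as a prefix of a point of $X$, and pass to a subsequential limit $x$; each finite prefix of $x$ is a prefix of some $w_N$, so $u$ occurs nowhere in $x$, hence in no point of the closed forward-invariant set $\overline{\{S^nx:n\geq 0\}}$, which is then a proper non-empty closed invariant subset, contradicting minimality.

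I expect the only real friction to sit in the transitivity $\Rightarrow$ recurrence direction: one must be certain the occurrences of $u$ and of $v$ used to form the connecting word can be separated enough that $w$ is non-empty, which is exactly what the ``return set is infinite'' observation supplies --- and that in turn rests on the standard fact that an infinite transitive system has no isolated points, which is why the finite (periodic-orbit) case has to be peeled off first. Everything else is bookkeeping with cylinders, continuity of $S$, and compactness of $X$.
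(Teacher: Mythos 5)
The paper does not actually supply a proof of Lemma \ref{LEM_UREC_IFF_MIN}; it is explicitly left as an exercise, so there is nothing to compare against line by line. Your argument is correct and is the standard one: the two ``easy'' implications are handled exactly as one would expect, and in the delicate direction (transitivity $\Rightarrow$ recurrence) you correctly isolate the real issue --- since the empty word is excluded from $\LLL$, the connecting word $w$ in $uwv$ must be non-empty, so the occurrences of $u$ and $v$ must be separated --- and your Baire-category/dense-forward-orbit point together with the infinitude of return times (which rests on the peeled-off finite case and the absence of isolated points) resolves it cleanly. The only cosmetic remark is that your final contradiction in the minimality direction uses $SA\subseteq A$ rather than $SA=A$; this matches the footnoted characterization of minimality in the paper (every forward orbit is dense), so no repair is needed.
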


We may consider $(X,S)$ as a measurable dynamical system by endowing it
	with the Borel $\sigma$-algebra $\Sigma$ generated by $\TTT$.
Let $\MMM(X,S)$ be the space of $S$-invariant probability measures on $X$,
	noting that $\MMM(X,S) \subseteq \MMM(\AAA^\NN,S)$
	by assigning to any measurable $A\subset \AAA^\NN$ the measure $\mu(A\cap X)$.
Because a subshift is a compact metric space, each $\mu\in \MMM(X,S)$ is regular.
In particular, by the Reisz Representation Theorem each $\mu$ is uniquely defined by
	$$
		\mu([w]) = \int_{X} \mathbbm{1}_{[w]} \,d\mu\mbox{ for all } w\in \AAA^*,
	$$
where $\mathbbm{1}_{[w]}$ is the indicator function on $[w]$,
	as $\{\mathbbm{1}_{[w]}: w\in \AAA^*\}$ forms a basis for the space of complex valued continuous functions on $\AAA^\NN$.
	
Let
	$
		\EEE(X,S)
	$
be the set of \emph{ergodic} probability measures of $(X,S)$, meaning
	all measures $\nu\in \MMM(X,S)$ such that for any measurable $A \subset X$,
	$\nu(S^{-1}(A) \Delta A) = 0$ implies that $\nu(A) \in \{0,1\}$.
By the pointwise ergodic theorem, for $\nu\in \EEE(X,S)$ and $\nu$-almost every $x\in X$,
	\begin{equation}
		\lim_{N\to \infty} \frac{1}{N} \sum_{i=0}^{N-1} \mathbbm{1}_{[w]}(S^i(x)) = \nu([w])\mbox{ for each } w\in \AAA^*.
	\end{equation}
Equivalently, for $u,w\in \AAA^*$, let
	$$
		|u|_w = \left|\{1 \leq i \leq |w| - |u|: w = u_{[i,i+|w|-1]} \}\right|.
	$$
Then for ergodic $\nu$ and $\nu$-almost every $x\in X$ we have
	\begin{equation}\label{EQ_ERGODIC_THM}
		\lim_{N\to \infty} \frac{\left|x_{[1,N]}\right|_{w}}{N} = \nu([w])\mbox{ for each } w\in \AAA^*.
	\end{equation}
	
Consider a sequence of words $\{u_n\}_{n\in \NN}$ such that $|u_n| \to \infty$ as $n\to\infty$.
For any $w\in \AAA^*$ we may refine to a subsequence $\{u_n\}_{n\in \WWW_w}$ for an infinite $\WWW_w\subset \NN$
	so that
		\begin{equation}\label{EQ_WORD_TO_FUNCTION}
			\lim_{\WWW_w \ni n \to \infty} \frac{|u_n|_w}{|u_n|} =: \phi(w)
		\end{equation}
	exists.
Because $\AAA^*$ is countable, we may diagonalize to find infinite $\WWW \subseteq \NN$ so that
	\eqref{EQ_WORD_TO_FUNCTION} exists for all $w\in \AAA^*$.
It follows that there is a unique $\mu\in \MMM(\AAA^\NN,S)$ defined by $\mu([w]) = \phi(w)$ for all $w\in \AAA^*$.
Furthermore, if $u_n\in \LLL(X)$ for all $n\in \WWW$ for some subshift $X$, then $\mu\in \MMM(X,S)$.
We denote this relationship by
	\begin{equation}\label{EQ_WORDS_TO_MEASURE}
		\{u_n\}_{n\in \WWW} \to \mu.
	\end{equation}
If we have multiple sequences $\{u^{(k)}_n\}_{n\in \NN}$, for finite list of $k$'s, we may find an infinite
	$\WWW\subset\NN$ so that $\{u_n^{(k)}\}_{n\in \WWW} \to \mu^{(k)}$ for each $k$.
Moreover, we may initially restrict this construction to any infinite $\VVV\subset \NN$ and find an infinite
	$\WWW\subset \VVV$.

\subsection{Natural Coding of a System}

Given a language $\LLL$ (or equivalently a subshift $X$ with language $\LLL = \LLL(X)$) and $n \in \mathbb{N}$,
	the Rauzy graph $\Gamma_n$
	is a directed graph with vertex set equal to $\LLL_n$, the words of length $n$ in $\LLL$,
	and edges defined by $\LLL_{n+1}$ as follows: there exists a directed edge from $u\in \LLL_n$ to $v\in \LLL_n$
	if and only if there exists $w\in \LLL_{n+1}$ such that $w_{[1,n]} = u$ and $w_{[2,n+1]} = v$.
	
A directed (multi)graph is \emph{strongly connected} if for each pair of vertices $u,v$ there exists
	a directed path from $u$ to $v$.
A directed (multi)graph is \emph{weakly connected} if its associated undirected graph is connected,
	or equivalently for each pair of distinct vertices $u,v$ there exist vertices
	$w_1,w_2,\dots,w_k$ in the graph so that $w_1 = u$, $w_k = v$ and
	for each $1\leq i < k$ either an edge from $w_i$ to $w_{i+1}$
		or an edge from $w_{i+1}$ to $w_i$ 
		in the graph.

The proof of the following is a direct consequence of the definition above.
\begin{lemm}
	If $(X,S)$ is a transitive subshift then each Rauzy graph $\Gamma_n$, $n\in \NN$, is strongly connected.
\end{lemm}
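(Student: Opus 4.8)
The plan is to reduce strong connectedness of $\Gamma_n$ to recurrence of the language $\LLL = \LLL(X)$, which is available since $(X,S)$ is transitive by Lemma~\ref{LEM_UREC_IFF_MIN}(1). The key observation I would record first is that directed walks in $\Gamma_n$ correspond exactly to sufficiently long words in $\LLL$: if $z \in \LLL$ has length $m \geq n$, then the successive length-$n$ windows $z_{[1,n]}, z_{[2,n+1]}, \dots, z_{[m-n+1,m]}$ are all vertices of $\Gamma_n$ (each is a subword of $z \in \LLL$, hence in $\LLL_n$), and each consecutive pair is joined by an edge, because $z_{[i,i+n]} \in \LLL_{n+1}$ has prefix $z_{[i,i+n-1]}$ and suffix $z_{[i+1,i+n]}$ and so witnesses a directed edge between them. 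Thus this sequence of windows is a directed path in $\Gamma_n$ from $z_{[1,n]}$ to $z_{[m-n+1,m]}$.

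Given this, I would fix $n \in \NN$ and two vertices $u, v \in \LLL_n$. Since $\LLL$ is recurrent, there is $w \in \LLL$ with $uwv \in \LLL$. Setting $z = uwv$, we have $z \in \LLL$, $|z| \geq 2n \geq n$, $z_{[1,n]} = u$ (as $|u| = n$), and $z_{[|z|-n+1,|z|]} = v$ (as $|v| = n$). By the window observation above, $z$ traces a directed path in $\Gamma_n$ from $u$ to $v$. Since $u$ and $v$ were arbitrary, $\Gamma_n$ is strongly connected; the case $u = v$ is handled identically (or trivially by the empty path).

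There is essentially no obstacle here. The only point needing a little care is the bookkeeping in the window observation — that every length-$n$ and length-$(n+1)$ factor of a word in $\LLL$ again lies in $\LLL$ — and this is immediate from the closure of languages under taking subwords in the definition of a language. In particular, no use of the RBC hypothesis, the complexity function, or any measure-theoretic machinery is required for this lemma.
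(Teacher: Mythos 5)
Your proof is correct and is exactly the argument the paper has in mind (the paper simply asserts the lemma is "a direct consequence of the definition" and omits the details): transitivity gives recurrence of $\LLL$ via Lemma~\ref{LEM_UREC_IFF_MIN}, and the word $uwv$ traces a directed path in $\Gamma_n$ from $u$ to $v$ through its length-$n$ windows. No issues.
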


\begin{exam}
		The converse to this lemma is false.
		For example, the language $\LLL$ on $\AAA = \{0,1\}$
			given by $\LLL = \{w \in \AAA^*:~|w|_1 \leq 1\}$
			is not transitive, as $1w1\not\in \LLL$ for any $w\in \LLL$.
		However, each Rauzy graph $\Gamma_n$ is strongly connected.
\end{exam}

A left special word $w\in \LLL^\ell_n$ then corresponds to a vertex in $\Gamma_n$ with in-degree
	equal to $|Ex^\ell(w)| \geq 2$, the number of left extensions of $w$.
Likewise a right special word $w\in \LLL^r_n$ corresponds to a vertex with out-degree $|Ex^r(w)| \geq 2$.
Unless a word is right and/or left special, its vertex belongs to a directed path in $\Gamma_n$ containing no branching points.
In Section \ref{SEC_ERG_TO_GRAPH}, we will ignore such paths and focus on these branching points.
A path from $u\in \LLL_n$ to $v\in \LLL_n$ in $\Gamma_n$ corresponds to a word $w\in \LLL$
	such that $w_{[1,n]} = u$ and $w_{[|w|-n+1,|w|]} = v$. A path $w$ is \emph{branchless} in $\Gamma_n$ if $w_{[i,i+n-1]}$ is neither left nor right special
	for each $2\leq i \leq |w|-n$; in other words, no subword of length $n$
	is special except possibly the first or last.
The following was defined in a slightly different manner in \cite{cDamFick2016}.

\begin{defn}\label{DEF_SPEC_RG}
	For language $\LLL$, the \emph{special Rauzy graph} for $n\in \NN$, $\Gamma_n^{\mathrm{sp}}$, is the directed (multi)graph
		with vertex set $\LLL_n^\ell \cup \LLL_n^r$, treating a bispecial $w\in \LLL_n$ as two vertices, $w^\ell$ and $w^r$.
	There is a directed edge from $u$ to $v$
		for each branchless path in $\Gamma_n$ from $u$ to $v$ and for each pair $u = w^\ell$, $v = w^r$
		for bispecial $w\in \LLL_n$.
\end{defn}

\begin{lemm}\label{LEM_RECUR_IMPLIES_NO_CLOSED_LOOPS}
	Fix $\mathfrak{s}\in \{\ell,r\}$.
	If a language $\LLL\subseteq \AAA^*$ is recurrent and for some $n$ there exists a
		\emph{closed circuit} in the Rauzy graph $\Gamma_n$, meaning edges $e_i$ from $w_i$ to $w_{i+1}$
			for $1\leq i \leq k$ such that $w_1 = w_{k+1}$ and $w_i \neq w_j$ for $1\leq i < j \leq k$,
	such that no vertex $w_i$ is $\mathfrak{s}$-special, then $\LLL$ is periodic.
\end{lemm}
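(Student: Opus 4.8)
The plan is to show that the existence of such a closed circuit in $\Gamma_n$ forces a periodic factor inside $\LLL$, from which periodicity of the whole language follows via recurrence. First I would set up the notation: let $e_1, \dots, e_k$ be the edges of the closed circuit, with $e_i$ going from $w_i$ to $w_{i+1}$, all distinct for $1 \le i \le k$, and $w_1 = w_{k+1}$. Reading the labels of these edges off consecutively produces a word $v \in \LLL$ of length $n + k$ with $v_{[1,n]} = w_1 = v_{[k+1, n+k]}$; in other words, $v$ is ``periodic with period $k$'' as far as it goes. The key combinatorial observation is that because none of the vertices $w_i$ is $\mathfrak{s}$-special, the walk around this circuit is forced: each $w_i$ has a unique $\mathfrak{s}$-extension (when $\mathfrak{s} = r$, a unique out-edge; when $\mathfrak{s} = \ell$, a unique in-edge), so once a length-$n$ window of a sequence in $X$ lands on some $w_i$, it must continue (resp. must have arrived) along the circuit deterministically.

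Next I would exploit recurrence. Since $\LLL$ is recurrent and $w_1 \in \LLL_n$, the word $w_1$ occurs infinitely often (in both directions of extension) inside arbitrarily long words of $\LLL$. Starting from any occurrence of $w_1$, the forced-walk observation shows that the sequence must trace the circuit $e_1, \dots, e_k$ and return to $w_1$, then repeat — so every sufficiently long word of $\LLL$ containing $w_1$ actually contains a long power $u^m$ where $u$ is the length-$k$ ``label word'' of the circuit (more precisely, $v = w_1' u'$ with $|u'| = k$ and the overlap structure making $v$ eventually periodic with period $k$). Combining the two directions, one gets that $\LLL$ contains arbitrarily long words that are periodic with period $k$, hence $p_\LLL(N) \le N + (\text{const})$ fails to grow, or more directly: the complexity function satisfies $p_\LLL(N) \le k \cdot (\text{bounded})$ eventually. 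The cleanest route is to argue that in fact \emph{every} word of $\LLL$ is periodic with period $k$: by recurrence every $w \in \LLL$ appears inside some word containing $w_1$, which is itself forced to be periodic with period $k$ on a long stretch, and then push this to show $p_\LLL(N)$ is bounded, so by the Morse--Hedlund theorem $\LLL$ is periodic.

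The step I expect to be the main obstacle is making the ``forced walk'' argument fully rigorous in the direction where the circuit only guarantees non-$\mathfrak{s}$-speciality, not non-special-ness: a vertex $w_i$ on the circuit could still be special on the \emph{other} side ($\bar{\mathfrak{s}}$-special), so the circuit need not be branchless in $\Gamma_n$, and a sequence passing through $w_i$ is only constrained on one side. The resolution is that for establishing periodicity one only needs the one-sided determinism: say $\mathfrak{s} = r$, so each $w_i$ has out-degree $1$ in $\Gamma_n$; then any right-infinite continuation of any occurrence of $w_1$ is forced to be $(u)^\infty$ where $u$ is the circuit label, so $\LLL$ contains $w_1 u^m$ for all $m$, giving arbitrarily long periodic words; recurrence then propagates periodicity of period $|u| = k$ to all of $\LLL$, and Morse--Hedlund finishes. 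The case $\mathfrak{s} = \ell$ is symmetric, using left-infinite continuations (equivalently, in-degree $1$), so it suffices to write one case. I would also need to check the degenerate possibility that the circuit has length $k$ with the label word $u$ itself not primitive, but this does not affect the conclusion since any period divides a period.
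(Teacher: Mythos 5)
Your proposal is correct and follows essentially the same route as the paper: the one-sided determinism of the circuit (each $w_i$ has a unique $\mathfrak{s}$-extension, so the walk is forced in that direction) combined with recurrence, which places an arbitrary word of $\LLL$ on the determined side of $w_1$ and thereby forces it to sit inside the periodic continuation of the circuit. The paper organizes this by first concluding $\LLL_n = \{w_1,\dots,w_k\}$ and then reading off periodicity with period $k$ from the forced cyclic order, without needing Morse--Hedlund, but that is only a cosmetic difference from your argument.
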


\begin{proof}
	Suppose $\mathfrak{s} = \ell$ as the $\mathfrak{s}=r$ case is analogous.
	Because $w_i$ is not $\ell$-special, there is a unique left extension of $w_i$
		and the prefix of the associated length $n+1$ word is $w_{i-1}$, where the index in understood
		$\mathrm{mod}\ k$ to belong to $\{1,\dots,k\}$.
	Fix any $v\in \LLL_n$; by recurrence, there exists $u\in \LLL$ so that
		$vuw_1 \in \LLL$.
	By examining the left extensions starting at $w_1$ in $vuw_1$, it must be that $v = w_i$
		for some $i$.
	Therefore $\LLL_n = \{w_1,\dots,w_k\}$.
	
	We will finish by showing that $\LLL$ is periodic with period $k$, noting that it suffices to show that
			$$
				V_{j} = V_{j+k} \mbox{ for all }N \geq n, V\in \LLL_N\mbox{ and } 1\leq j \leq N-k.
			$$
	Fix such a word $V$. By our above reasoning each subword of length $n$ must be an element of
			$\{w_1,\dots,w_k\}$ and starting with the suffix of length $n$ and proceeding to the left
			the $w_i$'s must occur in the cyclic order given by our original circuit.
	This provides the desired periodicity.
\end{proof}

\begin{coro}
	If $\LLL\subset \AAA^*$ is recurrent and aperiodic, then for all $n\in \NN$ each edge in $\Gamma_n^{\mathrm{sp}}$
		joins distinct vertices.
\end{coro}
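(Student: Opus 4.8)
The plan is to combine the previous corollary---that every edge in $\Gamma_n^{\mathrm{sp}}$ joins distinct vertices when $\LLL$ is recurrent and aperiodic---with Lemma \ref{LEM_RECUR_IMPLIES_NO_CLOSED_LOOPS}, but first I must reduce to that setting. So the very first step is to dispose of the periodic case: if $\LLL$ is periodic then by Morse--Hedlund $p(n)\le n$ for all $n$, hence $\LLL_n$ is finite of bounded size and contains no special words for $n$ large; for such $n$ the graph $\Gamma_n^{\mathrm{sp}}$ is empty (or a single cycle) and the conclusion holds vacuously or by direct inspection. For small $n$ one checks directly. Thus the interesting content is the recurrent aperiodic case, and by transitivity plus Lemma \ref{LEM_UREC_IFF_MIN} we may assume $\LLL$ is recurrent; aperiodicity follows from ECG (Lemma \ref{LEM_RBC_then_ECG}) together with $K\ge 1$, since RBC forces a bispecial word (hence a right-special one) to exist arbitrarily far out, so $p(n+1)-p(n)=K\ge 1$ and Morse--Hedlund gives aperiodicity.

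Next I would recall precisely what the edges of $\Gamma_n^{\mathrm{sp}}$ are (Definition \ref{DEF_SPEC_RG}): each edge is either a branchless path in $\Gamma_n$ between two special vertices, or a ``bispecial'' edge $w^\ell \to w^r$. The bispecial edges trivially join distinct vertices, since $w^\ell$ and $w^r$ are declared to be two different vertices. So the work is entirely about branchless-path edges. Suppose, for contradiction, that some branchless path $w$ in $\Gamma_n$ runs from a special vertex $u$ back to the same special vertex $u$. Reading off the interior length-$n$ subwords of $w$, none of them is special (that is the branchless condition), and the path closes up at $u$. If I delete the repeated endpoint I obtain a closed circuit $w_1,\dots,w_k$ in $\Gamma_n$ with $w_1=w_{k+1}=u$ and all of $w_1,\dots,w_k$ distinct, in which at least one of the two special directions of $u$ is irrelevant to the circuit: concretely, since $w$ is branchless, the vertices $w_2,\dots,w_k$ are not special at all, and although $w_1=u$ is special, I can choose $\mathfrak{s}\in\{\ell,r\}$ to be a direction in which the circuit does not branch at $u$ (the circuit uses exactly one incoming and one outgoing edge at $u$). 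The subtlety---and the place I expect to spend the most care---is matching this up cleanly with the hypothesis of Lemma \ref{LEM_RECUR_IMPLIES_NO_CLOSED_LOOPS}, which asks that \emph{no} vertex on the circuit be $\mathfrak{s}$-special; here $u$ could be $\mathfrak{s}$-special in the ambient graph even if the circuit itself only uses one of its $\mathfrak{s}$-edges.

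To get around that, I would argue directly in the spirit of the proof of Lemma \ref{LEM_RECUR_IMPLIES_NO_CLOSED_LOOPS} rather than quoting it verbatim: fix $\mathfrak{s}$ so that the circuit enters and leaves $u$ along the unique $\mathfrak{s}$-predecessor/successor relation used by the loop (possible since each length-$n+1$ word determines its length-$n$ prefix and suffix). Following $\mathfrak{s}$-extensions around the loop shows that the bi-infinite sequence obtained by concatenating copies of the loop word lies in $X$, and by recurrence every vertex of $\Gamma_n$ equals some $w_i$, forcing $\LLL_n=\{w_1,\dots,w_k\}$ and then, exactly as in the cited proof, periodicity of $\LLL$. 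This contradicts aperiodicity. Hence no branchless-path edge is a loop, and combined with the distinctness of $w^\ell,w^r$ for bispecial $w$, every edge of $\Gamma_n^{\mathrm{sp}}$ joins distinct vertices. The only genuine obstacle is this bookkeeping about which special direction of the endpoint is ``active'' in the loop; once that is set up correctly the argument is essentially the one already given for Lemma \ref{LEM_RECUR_IMPLIES_NO_CLOSED_LOOPS}.
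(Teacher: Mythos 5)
Your overall strategy --- reduce a self-loop in $\Gamma_n^{\mathrm{sp}}$ to a closed circuit in $\Gamma_n$ and invoke Lemma \ref{LEM_RECUR_IMPLIES_NO_CLOSED_LOOPS} --- is the same as the paper's, but you stop short of the one observation that makes the reduction work, and the substitute you offer does not close the gap. The missing point is that a self-loop forces its endpoint to be special in only \emph{one} direction. By Definition \ref{DEF_SPEC_RG} a bispecial word $w$ is split into two distinct vertices $w^\ell$ and $w^r$ (with branchless paths leaving through the right-special copy and arriving at the left-special copy), so a branchless path from a bispecial $w$ back to itself yields an edge from $w^r$ to $w^\ell$, which is not a self-loop. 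Hence a genuine self-loop can only sit at a vertex $u$ that is $\ell$-special but not $r$-special, or vice versa; one then takes $\mathfrak{s}$ to be the direction in which $u$ is \emph{not} special, every vertex of the circuit (including $u$) fails to be $\mathfrak{s}$-special, and Lemma \ref{LEM_RECUR_IMPLIES_NO_CLOSED_LOOPS} applies verbatim. There is no residual subtlety.

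The workaround you propose for the case you leave open --- choosing $\mathfrak{s}$ according to which edges the circuit \emph{uses} at $u$ and then ``arguing in the spirit of'' the lemma --- fails. The lemma's proof walks backwards from an occurrence of $w_1$ supplied by recurrence and, at each step, uses that the current circuit vertex has a \emph{unique} $\mathfrak{s}$-extension to force the walk to stay on the circuit; that is how one concludes $\LLL_n=\{w_1,\dots,w_k\}$ and then periodicity. If $u=w_1$ is $\mathfrak{s}$-special (in particular if $u$ is bispecial), the walk may leave the circuit at $u$ and the conclusion is simply false: in a Sturmian language a bispecial word of length $n$ sits on closed circuits in $\Gamma_n$ all of whose other vertices are non-special, yet $\LLL$ is aperiodic. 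So the statement your fallback would need to prove is not true, and the bispecial case must be excluded by the vertex-splitting observation above rather than by reworking the lemma. (Separately, your opening paragraph re-derives recurrence and aperiodicity and appeals to RBC/ECG; all of that is either already a hypothesis of the corollary or not a hypothesis at all, and should be deleted.)
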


\begin{proof}
		Assume for a contradiction that an edge begins and ends at the same vertex $w\in \Gamma_n^{\mathrm{sp}}$.
		This vertex $w$ must be right or left special since otherwise the component of
			$w$ in $\Gamma_n^{\mathrm{sp}}$ consists of one vertex and one edge,
			and this would violate recurrence or aperiodicity.
		Because a bispecial word $\LLL_n$ is divided into two distinct vertices in $\Gamma_n^{\mathrm{sp}}$,
			one that is right special and one that is left special,
			it must be that $w$ is either left special but not right special or
			right special but not left special.
		By Lemma \ref{LEM_RECUR_IMPLIES_NO_CLOSED_LOOPS}, $\LLL$ must be periodic, and this is a contradiction.
\end{proof}

\begin{rema}\label{REM_PIGEON-HOLING_GRAPHS}
If $\LLL$ has ECG with constant $K$ and $n$ is large enough, then the number of vertices in $\Gamma_n^{\mathrm{sp}}$ is
	bounded from above by $2K$.
Moreover, if $\LLL$ satisfies RBC then by Lemma \ref{LEM_RBC_UNIQUENESS} for all large $n$,
	each $\Gamma_n^{\mathrm{sp}}$ has the same number of vertices of each type and multiplicity.
For example if for some large $n$ there are four left-special vertices in $\Gamma_n^{\mathrm{sp}}$,
		three with in-degree $2$ and the other with in-degree $4$, then the same is true for $\Gamma_{n'}^{\mathrm{sp}}$ for all $n'\geq n$.
In particular, for all large $n$ there are only a finite number of possible special Rauzy graphs (up to naming of vertices and edges).
\end{rema}

We note that, by the constructions above,
	for any language $\LLL$ and $n\in \NN$, the Rauzy graph $\Gamma_n$ is strongly connected (resp. weakly connected) if
		and only if the 
			special Rauzy graph $\Gamma_n^{\mathrm{sp}}$ is strongly connected (resp. weakly connected).

\subsection{Colorings}\label{SEC_COLOR_DEF}

The coloring assignment defined here was previously discussed in \cite{cDamFick2016}.
However, we introduce the concept for convenience and to modify some of the arguments.
As mentioned in Remark \ref{REM_PIGEON-HOLING_GRAPHS}, if $X$ is transitive and $\LLL$ satisfies RBC with growth rate $K$,
	then there are only finitely many graphs that the special Rauzy graphs $\Gamma_n^{\mathrm{sp}}$ can equal
		(in other words, the set $\{\Gamma_n^{\mathrm{sp}} : n \geq 1\}$ is finite).
In particular, we may choose a directed graph $\Lambda$ such that, up to vertex/edge naming, $\Lambda = \Gamma_n^{\mathrm{sp}}$
	for each $n\in \WWW_0$ for infinite $\WWW_0 \subset \NN$.
For each vertex $v\in \Lambda$, there is a corresponding $v^{(n)}\in \Gamma_n^{\mathrm{sp}}$ for $n\in \WWW_0$
	and likewise for each edge in $\Lambda$.
As discussed in Section \ref{SSEC_INTRO_SUBSHIFTS}, we may find an infinite $\WWW\subset \WWW_0$ so that
	for each $v\in \Lambda$ we have $\{v^{(n)}\}_{n\in \WWW} \to \mu_v \in \MMM(X,S)$
	as introduced in \eqref{EQ_WORDS_TO_MEASURE}.
	
Assuming $\LLL$ has ECG of rate $K$, for any $x\in X$ and $w\in \LLL$ we define the following \emph{upper-density} function
	\begin{equation}
		\DDD(w,x) := \limsup_{N\to \infty} \frac{1}{N} \sum_{j=1}^N r(w,x,j),
	\end{equation}
where, if $n = |w|$,
	\begin{equation}\label{EQ_r_DEFINING_DDD}
		r(w,x,j) = \begin{cases}
					1, & x_{[k,k+n-1]} = w \\ & \mbox{ for some } (j-1)(K+1)n < k \leq j (K+1) n,\\
					0, & \mathrm{otherwise.}
				\end{cases}
	\end{equation}
	
The next result is similar to \cite[Lemma 4.1]{cBosh} except we consider the $\ell$-special case as well and
	assume transitivity rather than minimality.
	
\begin{lemm}\label{LEM_THERE_IS_A_SPECIAL}
	Let $X$ be a transitive subshift whose language $\LLL$ has ECG of rate $K$.
	Let $x \in X$.
	Either $\LLL$ is periodic or for all large $n$ and any $j\in \NN$ the word $x_{[j, j + (K+2)n-2]}$
		contains at least one $w\in\LLL_n^\mathfrak{s}$ (for both $\mathfrak{s} = \ell$ and $r$)
		as a subword.
\end{lemm}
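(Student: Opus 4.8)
The plan is to argue by contradiction: suppose $\LLL$ is aperiodic and yet for some arbitrarily large $n$ there is a window $x_{[j,j+(K+2)n-2]}$ in some $x \in X$ that contains no $\mathfrak{s}$-special word of length $n$ (say for $\mathfrak{s} = r$; the $\ell$ case is symmetric, reading words from right to left). The key observation is that this window has length $(K+2)n - 1$, hence contains exactly $(K+2)n - 1 - (n-1) = (K+1)n + 1$ subwords of length $n$, read off by sliding a window of width $n$. Call these $u_0, u_1, \dots, u_{(K+1)n}$ in order, where consecutive $u_i, u_{i+1}$ overlap in $n-1$ letters and hence are joined by an edge in the Rauzy graph $\Gamma_n$. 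By assumption none of $u_0, \dots, u_{(K+1)n - 1}$ is right special (the last one, $u_{(K+1)n}$, we need not control, but in fact it isn't special either under our hypothesis; it suffices to control all but possibly the last).

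Now I would exploit the following pigeonholing: among the first $(K+1)n + 1$ of these words there are at most... — here is the crux — the number of words of length $n$ that lie on a branchless directed path, i.e. that are not right special, is at most $p(n) - |\LLL_n^r|$, but more usefully I want to bound the length of a branchless directed path in $\Gamma_n$. A directed path in $\Gamma_n$ along which no vertex is right special has a canonical forward continuation (each non-right-special vertex has a unique out-edge), so if such a path ever revisits a vertex it closes into a loop, and then Lemma \ref{LEM_RECUR_IMPLIES_NO_CLOSED_LOOPS} forces $\LLL$ to be periodic, contradicting aperiodicity. Hence a branchless (in the $r$-sense) directed path in $\Gamma_n$ visits distinct vertices and so has at most $p(n)$ vertices, i.e. length at most $p(n) - 1$. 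But we need the sharper bound $p(n) \le (K+1)n$ for large $n$: indeed by Lemma \ref{LEM_RBC_then_ECG}/ECG we have $p(n) = Kn + C$ for $n \ge N_0$, and since $p(n) \ge n+1$ forces $C \ge 1$ only the trivial way, in any case $Kn + C \le (K+1)n$ once $n \ge C$. So for all large $n$, any directed path in $\Gamma_n$ with no right-special interior-or-otherwise vertex has at most $(K+1)n$ vertices, hence the sequence $u_0, u_1, \dots$ of length $n$ subwords cannot avoid a right-special word for $(K+1)n + 1$ consecutive steps. This is the desired contradiction.

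More carefully, the argument I would write is: the words $u_0, \dots, u_{(K+1)n}$ form a directed walk of length $(K+1)n$ in $\Gamma_n$. If none of $u_0, \dots, u_{(K+1)n}$ were right special, then each $u_i$ ($i < (K+1)n$) has a unique out-neighbor, namely $u_{i+1}$; a walk of length $(K+1)n \ge p(n)$ through vertices each having a unique out-edge must revisit some vertex, producing a closed circuit of non-$r$-special vertices, whence $\LLL$ is periodic by Lemma \ref{LEM_RECUR_IMPLIES_NO_CLOSED_LOOPS} — a contradiction. So some $u_i$ with $0 \le i \le (K+1)n$ is right special, and $u_i = x_{[j+i, j+i+n-1]}$ is the required $\mathfrak{s} = r$ special subword of the window. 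The $\mathfrak{s} = \ell$ case follows verbatim after reversing: left-special words of length $n$ in $\LLL$ correspond to right-special words in the reversed language $\widetilde{\LLL}$, which is still recurrent and aperiodic with the same complexity function, and $\widetilde{x}$ restricted to the reversed window has the same length, so the same count applies.

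The step I expect to be the main obstacle — or at least the one requiring care — is pinning down the exact window length and the off-by-one bookkeeping so that the count of length-$n$ subwords is genuinely $\ge p(n) + 1$ (forcing the pigeonhole / loop), together with verifying $p(n) \le (K+1)n$ for all large $n$ from ECG; one must also be slightly careful that Lemma \ref{LEM_RECUR_IMPLIES_NO_CLOSED_LOOPS} is invoked with the circuit consisting of a genuine cycle of \emph{distinct} vertices (extract the cycle from the first repeated vertex in the walk), and that recurrence of $\LLL$ (equivalently transitivity of $X$) is in force so the lemma applies. Everything else is routine.
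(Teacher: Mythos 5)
Your proof is correct and follows essentially the same route as the paper: pigeonhole the $(K+1)n$ length-$n$ subwords of the window against $p(n)=Kn+C$, extract from the first repeated word a circuit of non-$\mathfrak{s}$-special vertices in $\Gamma_n$, and invoke Lemma \ref{LEM_RECUR_IMPLIES_NO_CLOSED_LOOPS} to force periodicity. The only slip is the subword count ($(K+2)n-1-(n-1)=(K+1)n$, not $(K+1)n+1$), which is harmless since $(K+1)n>Kn+C$ once $n>C$; also, the reversal trick for $\mathfrak{s}=\ell$ is unnecessary, as Lemma \ref{LEM_RECUR_IMPLIES_NO_CLOSED_LOOPS} is already stated for both $\mathfrak{s}\in\{\ell,r\}$ and applies directly to the same circuit.
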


\begin{proof}
	Let $n_0$ be given by the EGC growth condition, meaning
		$|\LLL_n| = Kn+C$ for all $n \geq n_0$,
	and fix $n\geq \max\{n_0,C\}$.
	Assuming there are no $\mathfrak{s}$-special subwords in
		$x_{[j,j+(K+2)n-2]}$ for some $j\in \NN$,
		we will show that $\LLL$ is periodic.	
	For each $i$ with $j \leq i \leq j+(K+1)n-1$, let
		$$
			w_i = x_{[i,i+n-1]}
		$$
	be the length $n$ subword of $x_{[j,j+(K+2)n-2]}$ beginning at position $i$ in $x$.
	There are $(K+1)n$ such words and because $|\LLL_n| = Kn+C$ it must be that
		$w_i = w_{i'}$ for some pair $i < i'$.
	Choose distinct $i,i'$ that minimize $i'-i$, noting in particular that $w_k\neq w_{k'}$
		for $i \leq k < k' <i'$.
	On the Rauzy graph $\Gamma_n$ there is a directed circuit 
		corresponding to the word $x_{[i,i'+n-1]}$
		whose vertices are not $\mathfrak{s}$-special.
	By Lemma \ref{LEM_RECUR_IMPLIES_NO_CLOSED_LOOPS}, $\LLL$ is periodic as claimed.
\end{proof}

\begin{coro}\label{COR_THE_SPECIAL}
	Let $X$ be transitive subshift whose aperiodic language $\LLL$
		has ECG of rate $K$.
	Then for any $x\in X$, large enough $n\in \NN$ and $\mathfrak{s}\in \{\ell,r\}$
		$$
			\DDD(w,x) \geq \frac{1}{K}
		$$		
	for some $w\in \LLL_n^\mathfrak{s}$.
\end{coro}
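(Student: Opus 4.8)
The plan is to average the upper-density estimate from Lemma \ref{LEM_THERE_IS_A_SPECIAL} over the (finitely many) $\mathfrak{s}$-special words of length $n$. Fix $\mathfrak{s}\in\{\ell,r\}$, fix $x\in X$, and fix $n$ large enough that Lemma \ref{LEM_THERE_IS_A_SPECIAL} applies and also $|\mathcal{L}_n^\mathfrak{s}|\le K$ (this uses ECG together with Remark \ref{REM_PIGEON-HOLING_GRAPHS}, or more directly \eqref{EQ_GROWTH_AND_EXTENSIONS}, which gives $\sum_{w\in\mathcal{L}_n^\mathfrak{s}}(|Ex^\mathfrak{s}(w)|-1)=K$, hence $|\mathcal{L}_n^\mathfrak{s}|\le K$). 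Since $\mathcal{L}$ is aperiodic and recurrent (being the language of a transitive subshift, by Lemma \ref{LEM_UREC_IFF_MIN}), Lemma \ref{LEM_THERE_IS_A_SPECIAL} tells us every window $x_{[j,j+(K+2)n-2]}$ contains at least one $\mathfrak{s}$-special word of length $n$.

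First I would relate the window length $(K+2)n-1$ from Lemma \ref{LEM_THERE_IS_A_SPECIAL} to the block size $(K+1)n$ used to define $r(w,x,j)$ in \eqref{EQ_r_DEFINING_DDD}. The $j$-th block in the definition of $\mathcal{D}$ is $x_{[(j-1)(K+1)n+1,\,j(K+1)n]}$, which has length $(K+1)n$. I claim that within any $(K+2)$ consecutive such blocks, say blocks $j, j+1, \dots, j+K+1$, some $\mathfrak{s}$-special $w\in\mathcal{L}_n^\mathfrak{s}$ is counted: indeed a stretch of $K+2$ consecutive blocks spans at least $(K+2)n$ consecutive positions of $x$ (in fact far more), so it contains a subword of length $(K+2)n-1$, which by Lemma \ref{LEM_THERE_IS_A_SPECIAL} contains an occurrence of some $w\in\mathcal{L}_n^\mathfrak{s}$; that occurrence starts within one of these $K+2$ blocks (after possibly shrinking to ensure the occurrence is not split — I would instead choose the window to sit strictly inside the union of the $K+2$ blocks, which is legitimate since the union is much longer than $(K+2)n-1$). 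Hence $\sum_{w\in\mathcal{L}_n^\mathfrak{s}} r(w,x,j')\ge 1$ for at least one $j'\in\{j,\dots,j+K+1\}$, so among any $K+2$ consecutive indices $j$, at least one has $\sum_{w} r(w,x,j)\ge 1$.

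Next I would pass to densities. For each fixed large $N$ of the form $N=(K+2)M$, partition $\{1,\dots,N\}$ into $M$ consecutive runs of length $K+2$; by the previous paragraph each run contributes at least $1$ to $\sum_{j=1}^{N}\sum_{w\in\mathcal{L}_n^\mathfrak{s}} r(w,x,j)$, so
\[
\frac{1}{N}\sum_{j=1}^{N}\sum_{w\in\mathcal{L}_n^\mathfrak{s}} r(w,x,j)\ \ge\ \frac{M}{N}\ =\ \frac{1}{K+2},
\]
and by monotonicity this persists (up to a vanishing error) for general $N$. Taking $\limsup_{N\to\infty}$ and using that the finite sum of limsups dominates the limsup of the finite sum,
\[
\sum_{w\in\mathcal{L}_n^\mathfrak{s}}\mathcal{D}(w,x)\ \ge\ \limsup_{N\to\infty}\frac{1}{N}\sum_{j=1}^{N}\sum_{w\in\mathcal{L}_n^\mathfrak{s}} r(w,x,j)\ \ge\ \frac{1}{K+2}.
\]

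Wait — this yields only $\frac{1}{K+2}$ divided among at most $K$ words, i.e. $\mathcal{D}(w,x)\ge\frac{1}{K(K+2)}$, which is weaker than claimed. So the main obstacle is the loss incurred by the crude block-counting, and the real argument must be tighter: one should count occurrences (or block-hits) of $\mathfrak{s}$-special words of length $n$ against the total block count with the correct constant. The fix I expect the authors use is that in each block of length $(K+1)n$ there are $(K+1)n-n+1>Kn$ starting positions for length-$n$ subwords, at most $Kn+C\le (K+\epsilon)n$ of which can avoid all $\mathfrak{s}$-special words without forcing a repeated non-$\mathfrak{s}$-special length-$n$ word (hence periodicity, by Lemma \ref{LEM_RECUR_IMPLIES_NO_CLOSED_LOOPS}); so in fact \emph{every} block has $r(w,x,j)=1$ for some $\mathfrak{s}$-special $w$, giving $\sum_{w\in\mathcal{L}_n^\mathfrak{s}}\mathcal{D}(w,x)\ge 1$, and then averaging over the $\le K$ special words of length $n$ yields some $w\in\mathcal{L}_n^\mathfrak{s}$ with $\mathcal{D}(w,x)\ge\frac1K$. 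I would therefore carry out the proof in this sharper form: (i) show each $(K+1)n$-block contains an $\mathfrak{s}$-special length-$n$ subword (a pigeonhole essentially identical to the proof of Lemma \ref{LEM_THERE_IS_A_SPECIAL}, since $(K+1)n$ length-$n$ windows in a block of length $(K+1)n$ overflow $|\mathcal{L}_n|=Kn+C$ once $n\ge C$), so $\sum_{w}r(w,x,j)\ge1$ for \emph{all} $j$; (ii) conclude $\sum_{w\in\mathcal{L}_n^\mathfrak{s}}\mathcal{D}(w,x)\ge 1$; (iii) invoke $|\mathcal{L}_n^\mathfrak{s}|\le K$ to extract the required $w$. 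The delicate point is step (i) — ensuring an occurrence actually fits inside a single block rather than straddling two — which is handled exactly as in Lemma \ref{LEM_THERE_IS_A_SPECIAL} because the block length $(K+1)n$ already exceeds the $(K+2)n$-vs-window bookkeeping once one notes $(K+1)n$ consecutive length-$n$ windows live inside a block of length $(K+1)n + (n-1) < (K+2)n$; carefully, one takes $n \ge \max\{n_0, C\}$ so the pigeonhole bites within the block itself, giving a non-$\mathfrak{s}$-special circuit and hence periodicity unless an $\mathfrak{s}$-special window is present.
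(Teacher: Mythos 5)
Your final argument—steps (i)–(iii): every single block of $(K+1)n$ starting positions contains the start of an $\mathfrak{s}$-special word because the corresponding $(K+1)n$ length-$n$ windows sit inside a word of length $(K+2)n-1$ (so Lemma \ref{LEM_THERE_IS_A_SPECIAL} applies block by block), hence $\sum_{w\in\LLL_n^{\mathfrak{s}}}r(w,x,j)\geq 1$ for all $j$, and then pigeonhole over the at most $K$ words of $\LLL_n^{\mathfrak{s}}$—is exactly the paper's proof. The first half of your write-up (grouping $K+2$ blocks and landing at $1/(K(K+2))$) is an unnecessary detour that you correctly diagnose and discard; the key point, which you do eventually isolate, is that $r(w,x,j)$ only requires the occurrence to \emph{start} in the $j$-th block, so the relevant window per block already has length $(K+2)n-1$ and no multi-block grouping is needed.
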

	
\begin{proof}
	Assume $n$ is large enough to satisfy Lemma \ref{LEM_THERE_IS_A_SPECIAL}.
	For a fixed $j \in \mathbb{N}$, consider the subword $x_{[(j-1)(K+1)n + 1, j(K+1)n+n-1]}$ of length
		$(K+2)n - 1$ that contains the words that start in positions $k$ in $x$ for
			$(j-1)(K+1)n < k \leq j(K+1)n$. 
	By Lemma \ref{LEM_THERE_IS_A_SPECIAL}, there exists a word $w^{(j)}\in \LLL_n^\mathfrak{s}$
		so that $w^{(j)} = x_{[k,k+n-1]}$  for such a $k$.
	In other words $r(w^{(j)},x,j) = 1$ as in \eqref{EQ_r_DEFINING_DDD}.
	So for each $N$,
		$$
			\sum_{j=1}^N\sum_{w\in \LLL_n^\mathfrak{s}} r(w,x,j) \geq N.
		$$
	Because $K = p(n+1) - p(n)$, we have $|\LLL_n^\mathfrak{s}| \leq K$
			by \eqref{EQ_GROWTH_AND_EXTENSIONS}.
	Therefore, there must exist $w\in \LLL_n^\mathfrak{s}$ so that for infinitely many $N$
		$$
			\frac{1}{N} \sum_{j=1}^N r(w,x,j) \geq \frac{1}{K},
		$$
	which implies that $\DDD(w,x) \geq \frac{1}{K}$ as desired.
\end{proof}

For each ergodic measure $\nu \in \EEE(X,S)$ we fix a generic $x^{(\nu)}\in X$, meaning \eqref{EQ_ERGODIC_THM} holds for each $w\in \AAA^*$,
	noting that $\nu([w]) = 0$ for all $w\notin \LLL$.
Recall our infinite $\WWW_0\subset \NN$ so that $\{v^{(n)}\} \to \mu_v \in \MMM(X,S)$ as defined
	before \eqref{EQ_WORDS_TO_MEASURE}.
We then define the following notation:
	\begin{equation}\label{EQ_D(nu)_DEF}
		\DDD(\mu_v,\nu) := \limsup_{\WWW_0 \ni n \to \infty} \DDD(v^{(n)},x^{(\nu)}).
	\end{equation}
The following result has related counterparts in \cite{cDamFick2016} as well as in \cite{cBosh}.

\begin{lemm}\label{LEM_ONE_COLOR_PER_MEASURE}
	Assume $X$ is a transitive subshift whose aperiodic language $\LLL$ has ECG of rate $K$.
	If for $v\in \Lambda$ and $\nu \in\EEE(X,S)$ we have
		$$
			\DDD(\mu_v, \nu) >0,
		$$
	then $\DDD(\mu_v, \nu') = 0$ for all $\nu'\in \EEE(X,S)$ such that $\nu'\neq \nu$.
\end{lemm}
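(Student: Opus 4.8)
The statement says that if a word-sequence $\{v^{(n)}\}$ (associated to a vertex $v$ of $\Lambda$) has positive upper density $\DDD(\mu_v,\nu)>0$ along the generic point $x^{(\nu)}$ of one ergodic measure $\nu$, then it has zero density along the generic point of every other ergodic $\nu'$. The guiding principle is the classical one (present already in Boshernitzan's argument): a special word $w$ of length $n$ that recurs "often" in $x^{(\nu)}$ must, by the ergodic theorem, have $\nu([w])$ bounded below in terms of $\DDD(w,x^{(\nu)})$; and since the total mass $\sum_{w\in\LLL_n}\nu([w])=1$, a word cannot have large $\nu$-measure for two distinct ergodic measures — but more to the point, we want to convert "positive recurrence density in $x^{(\nu)}$" into a genuine lower bound on $\nu([v^{(n)}])$ that persists in the limit, and then show the \emph{same} word family cannot simultaneously recur in $x^{(\nu')}$.

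\textbf{Step 1: from density to measure.} First I would show that for a fixed word $w=v^{(n)}$, $\DDD(w,x^{(\nu)}) \le (K+1)\, n\, \nu([w])$, or some comparable inequality. The point of the blocking scheme in \eqref{EQ_r_DEFINING_DDD} is that $r(w,x,j)=1$ forces at least one occurrence of $w$ among the $(K+1)n$ positions in block $j$, so the number of occurrences of $w$ in $x_{[1,N(K+1)n]}$ is at least $\sum_{j=1}^N r(w,x,j)$; dividing by $N(K+1)n$ and using \eqref{EQ_ERGODIC_THM} for the generic point $x^{(\nu)}$ gives $\nu([w]) \ge \frac{1}{(K+1)n}\,\DDD(w,x^{(\nu)})$. (One must be slightly careful that $\DDD$ is a $\limsup$ of Cesàro averages while the ergodic theorem gives a genuine limit of occurrence-frequencies; taking a subsequence where the $\limsup$ is achieved handles this.) Hence $\DDD(\mu_v,\nu)>0$ gives, for all large $n\in\WWW_0$, $\nu([v^{(n)}]) \ge c/n$ for some $c>0$ independent of $n$.

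\textbf{Step 2: the same lower bound cannot hold for $\nu'$.} Now suppose for contradiction $\DDD(\mu_v,\nu')>0$ as well, so symmetrically $\nu'([v^{(n)}]) \ge c'/n$ for large $n\in\WWW_0$. This alone is not a contradiction (two ergodic measures can both give a length-$n$ word measure $\sim 1/n$). The real mechanism must instead exploit that $v^{(n)}$ is a \emph{special} word whose recurrence in $x^{(\nu)}$ is "syndetic at scale $(K+1)n$": $r(v^{(n)},x^{(\nu)},j)=1$ for a positive density of blocks $j$. I would use this to produce, via the $\limsup$-achieving subsequence and the word-to-measure construction \eqref{EQ_WORDS_TO_MEASURE}, that $\mu_v$ itself satisfies $\mu_v([v^{(n)}])$ bounded below along $\WWW$ — and then relate $\mu_v$ to $\nu$. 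The cleanest route: show $\DDD(\mu_v,\nu)>0$ implies $\mu_v=\nu$ (or at least $\nu \le C\mu_v$ in the sense of an absolute-continuity/domination bound), since then $\DDD(\mu_v,\nu')>0$ forces $\mu_v=\nu'$ too, contradicting $\nu\neq\nu'$ as these are distinct ergodic (hence mutually singular) measures. To get $\mu_v = \nu$: for every word $u$, estimate $\nu([u])$ from below using the fact that inside a long stretch of $x^{(\nu)}$ a positive fraction of the blocks contain $v^{(n)}$, and $v^{(n)}$-occurrences are (by the word-to-measure convergence $\{v^{(n)}\}_{n\in\WWW}\to\mu_v$) distributed so that the empirical frequency of $u$ inside $v^{(n)}$ tends to $\mu_v([u])$; a sandwich/averaging argument then yields $\nu([u]) \ge \DDD(\mu_v,\nu)\cdot \mu_v([u])$ for all $u$, and since both are probability measures summing to $1$ over $\LLL_{|u|}$, positivity of the constant forces $\nu = \mu_v$.

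\textbf{Main obstacle.} The delicate point is Step 2's sandwich inequality $\nu([u]) \ge \DDD(\mu_v,\nu)\,\mu_v([u])$: one must carefully count occurrences of a fixed short word $u$ inside the long generic orbit of $x^{(\nu)}$ by partitioning the orbit according to occurrences of $v^{(n)}$, control the "overlap" and "boundary" contributions between consecutive $v^{(n)}$-occurrences (which may be spaced up to $(K+1)n$ apart, so the wasted fraction is at most $1-\frac{1}{(K+1)n}\cdot(\text{something})$ — this needs the density to survive as $n\to\infty$ along $\WWW$), and then pass to the double limit ($N\to\infty$ for the ergodic average, then $n\to\infty$ along $\WWW$ for the word-to-measure convergence) in the right order. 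Interchanging these limits, and making sure the error terms vanish uniformly, is where the technical weight lies; everything else is bookkeeping with \eqref{EQ_ERGODIC_THM} and \eqref{EQ_WORD_TO_FUNCTION}.
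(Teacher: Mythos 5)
Your overall architecture matches the paper's: establish a domination $\nu([u]) \ge c\,\mu_v([u])$ for all cylinder words $u$ with some constant $c>0$ determined by $\DDD(\mu_v,\nu)$ (the paper obtains $c=\frac{\beta}{2(K+1)}$ with $\beta=\DDD(\mu_v,\nu)$, after exactly the overlap and block-length bookkeeping you describe in your last paragraph), conclude $\mu_v=\nu$, and then observe that $\mu_v$ cannot equal two distinct ergodic measures. Your Step 1 is a harmless detour the paper does not need.

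The one genuine flaw is your justification of the final deduction. You assert that $\nu([u])\ge c\,\mu_v([u])$ for all $u$, together with the fact that both measures assign total mass $1$ to $\LLL_n$, ``forces $\nu=\mu_v$.'' It does not: for any $c<1$ the inequality $\nu\ge c\,\mu_v$ is perfectly consistent with $\nu\ne\mu_v$; summing over $\LLL_n$ only yields $1\ge c$, which is no information. The correct mechanism, and the one the paper uses, is extremality: from $\nu\ge c\,\mu_v$ one writes $\nu = c\,\mu_v + (1-c)\mu^*$ with $\mu^* = \frac{1}{1-c}\left(\nu - c\,\mu_v\right)\in\MMM(X,S)$, and since the ergodic $\nu$ is an extreme point of $\MMM(X,S)$ the decomposition must be trivial, giving $\mu_v=\nu$. (Equivalently, $\mu_v\le \frac{1}{c}\nu$ makes the invariant measure $\mu_v$ absolutely continuous with respect to the ergodic $\nu$, hence equal to it; note that the domination you display, ``$\nu\le C\mu_v$,'' points the wrong way for this reading.) With that repair, the rest of your outline goes through and coincides with the paper's proof.
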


\begin{proof}
	Let $\beta  = \DDD(\mu_v, \nu)$.
	We claim that  $\nu \geq \frac{\beta}{2(K+1)} \mu_v$, meaning
		$$\nu([u]) \geq \frac{\beta}{2(K+1)}\mu_v([u]) \mbox{ for all }u\in \LLL.$$
	This implies that $\nu = \frac{\beta}{2(K+1)} \mu_v + \left(1- \frac{\beta}{2(K+1)}\right) \mu^*$
		for some $\mu^*\in \MMM(X,S)$.
	By the extremality of ergodic measures, this implies that $\mu_v = \nu$, which completes the proof.
	
	We are left proving the claim.
	Fix $u\in \LLL$ and $\eps>0$.
	Choose a large $m \in \WWW_0$ so that
		$$
			\left|\beta - \DDD(v^{(m)}, x^{(\nu)}) \right| < \eps \mbox{ and } \left|\mu_v([u]) - \frac{|v^{(m)}|_u}{|v^{(m)}|}\right|<\eps,
		$$
	and let $n = |v^{(m)}|$.
	Now choose large $N > \frac{1}{\eps}$ so that
		$$
			\left|\DDD(v^{(m)},x^{(\nu)}) - \frac{1}{N} \sum_{j=1}^N r(v^{(m)},x^{(\nu)},j)\right| < \eps \mbox{ and } \left|\nu([u]) - \frac{|x^{(\nu)}_{[1,N(K+1)n]}|_u}{N(K+1)n}\right| < \eps.
		$$
	We see that
		$$
			|x^{(\nu)}_{[1,N(K+1)n]}|_u \geq \frac{|v^{(m)}|_u}{2} \sum_{j=1}^N r(v^{(m)},x^{(\nu)},j) - 1,
		$$
	as each possible beginning of $v^{(m)}$ contributing to the sum may overlap at most pairwise and
		otherwise contributes all its occurrences of $u$ to the left-hand quantity, except possibly the $v^{(m)}$ beginning in the last $(K+1)N$ block.
	Therefore,
		$$
			\begin{array}{rcl}
				\nu([u]) & > & \frac{|x^{(\nu)}_{1,N(K+1)n]}|_u}{N(K+1)n} - \eps\\
				~\\
						& \geq & \frac{1}{2(K+1)}\frac{|v^{(m)}|_u}{n}\frac{1}{N}\sum_{j=1}^N r(v^{(m)},x^{(\nu)},j) - \frac{1}{N(K+1)n} - \eps\\
				~\\
						& > & \frac{1}{2(K+1)}\left(\mu_v([u]) - \eps\right)(\beta - 2\eps)- 2\eps.
			\end{array}
		$$
	Because this holds for any $\eps>0$, we conclude that $\nu([u]) \geq \frac{\beta}{2(K+1)}\mu_v([u])$, as claimed.
\end{proof}

Given the previous lemma, the following ``coloring'' function is well defined.
In Section \ref{SEC_ERG_TO_GRAPH}, we will expand this definition to include edges of $\Lambda$.
The assumed properties of this coloring function will be given in Section \ref{SEC_GRAPHS}
	and justified in Section~\ref{SEC_ERG_TO_GRAPH} as well.

\begin{defn}\label{DEF_COLORING_1}
	Assume the notation in this section.
	For each vertex $v\in \Lambda$, we define $\CCC(v) = \nu\in \EEE(X,S)$ if and only if $\DDD(\mu_v,\nu) >0$
		and $\CCC(v) = \mathbf{0}$ if and only if $\DDD(\mu_v,\nu) = 0$ for all $\nu\in \EEE(X,S)$,
		where $\mathbf{0}$ is a fixed symbol not in $\EEE(X,S)$.
\end{defn}

\section{Exit Words}\label{SEC_EXIT_WORDS}

	In Section \ref{SEC_GRAPHS}, we will further discuss the coloring rule $\CCC$
		from Definition \ref{DEF_COLORING_1}.
	As will become apparent at that point,
		we will become concerned with the behavior of ``$N$-loops,''
		which are finite closed and simple paths (or circuits) within the
		(special) Rauzy graphs used to construct $\Lambda$.
		
	If we consider a vertex $w\in \LLL_n$ in $\Gamma_n$
		and follow a path that returns to $w$
		that visits any other vertex at most once,
	this path may be represented by a word $P\in \LLL$
	 such that
	 	$$P_{[1,n]} = P_{[|P|-n+1,|P|]} = w,$$
	 and this path in $\Gamma_n$ is of length $|P| - n + 1$.
	In addition to $P$, we will want to discuss
		the path $P'$ that arises by beginning at $w$ and following the path $P$
		$s\in\NN$ consecutive times. Then $P'$ must be of the form
		$$
			P' = \big(P_{[1,|P|-n]}\big)^{s} w = w \big(P_{[n+1,|P|]}\big)^{s}
		$$
	with $|P'| = s(|P|-n) + n$.
	
	There are many paths in $\Gamma_n$ (corresponding to words in $\mathcal{A}^*$) that we may construct
		that begin and end at $w$ (particularly when the path length is $\gg n$).
	We will be concerned with paths that are short relative
		to $n$.
	In such cases, the occurrences of $w$ as $P_{[1,n]}$ and
		$P_{[|P|-n+1,|P|]}$ will overlap
		and so special conditions must hold concerning the word $w$
		to allow such a path to exist.
	In the following definition, we consider $q = |P| - n$ to be the
		length between occurrences of $w$ and we are fixing
		the condition $q \leq n/2$ so that the arguments that follow hold. (This condition corresponds to the path being short relative to the length of the word $w$.)
	For now, we observe that under this restriction, there will certainly
		be overlap between occurrences of $w$.

\begin{defn}
	For $n \geq 2$ and $w\in \AAA^n$, suppose $q \in \mathbb{Z}$ satisfies $1 \leq q  \leq n/2$ and
		\begin{equation}\label{EqValid1}
			w_{[q+1,n]} = w_{[1,n-q]};
		\end{equation}
	that is, the length $(n-q)$ suffix of $w$ is also its prefix of the same length.
	For $r \in \mathbb{N}$, let $w^{q  \ast r}$ be the word of length $n+(r-1)q$ such that
			\[
			(w^{q \ast r})_{[q(i-1)+1, q(i-1) + n]}  = w
			\]
	for all $1\leq i \leq r$.
	Then $q$ is a \emph{valid step for $w$ in language $\LLL$}
		if \eqref{EqValid1} holds and $w^{q \ast 2}\in \LLL$.
\end{defn}

\begin{rema}\label{RemValidDefs}
	The rule \eqref{EqValid1} for $q$ and $w\in \AAA^n$ may be stated in various
		equivalent ways.
	For example, consider the length $q$ prefix $y = w_{[1,q]}$ of $w$ and the infinite
		sequence
			$
				w^{q\ast \infty} := y y y y y y \dots \in \AAA^\NN.
			$
	Then \eqref{EqValid1} holds if and only if 
	\[
	\left(w^{q\ast \infty}\right)_{[1,n]} = w.
	\]
	Alternately, \eqref{EqValid1} holds if and only if $w^{q\ast r}$ is defined for all $r\geq 2$.
\end{rema}

\begin{lemm}\label{lem: new_lem}
	Let $w \in \LLL_n$ and let $q',q$ be valid steps for $w$ in $\LLL$ such that $q' < q$. Then both of the following hold.
	\begin{enumerate}
	\item The infinite sequences $w^{q\ast \infty}$ and $w^{q'\ast \infty}$ are equal.
	\item The word $w^{q'\ast 2}$ is a prefix of the word $w^{q\ast 2}$.
	\end{enumerate}
\end{lemm}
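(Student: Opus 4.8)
The plan is to observe that condition \eqref{EqValid1} for each of the two steps says exactly that the finite word $w$ has $q$ (respectively $q'$) as a word-combinatorial period, i.e.\ $w_i = w_{i+q}$ for $1\le i\le n-q$ and $w_i=w_{i+q'}$ for $1\le i\le n-q'$, and then to run a Fine--Wilf type argument. Since $q'<q\le n/2$ we have $q+q'<2q\le n$, so in particular $q+q'-\gcd(q,q')\le n=|w|$; the Fine--Wilf theorem then gives that $w$ has period $d:=\gcd(q,q')$, that is, $w_i=w_{i+d}$ for $1\le i\le n-d$. A self-contained induction replacing the citation is also possible, and the hypothesis $q\le n/2$ is precisely what lets it start. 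Note that the membership requirements $w^{q\ast 2},w^{q'\ast 2}\in\LLL$ built into the notion of \emph{valid step} play no role here: both parts are purely about the word $w$ and its periods.

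For part (1): by Remark \ref{RemValidDefs}, $w^{q\ast\infty}$ is the purely periodic sequence $(w_{[1,q]})^\infty$ and its length-$n$ prefix is $w$, and the same holds with $q$ replaced by $q'$. Since $d\mid q$ and $w$ has period $d$, the block $w_{[1,q]}$ is the concatenation of $q/d$ copies of $w_{[1,d]}$, so $w^{q\ast\infty}=(w_{[1,d]})^\infty$; identically $w^{q'\ast\infty}=(w_{[1,d]})^\infty$. Hence $w^{q\ast\infty}=w^{q'\ast\infty}$.

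For part (2): I first note that the defining relations of $w^{q\ast 2}$ force it to be the length-$(n+q)$ prefix of $w^{q\ast\infty}$ — the relation for $i=1$ pins down positions $1,\dots,n$, the relation for $i=2$ pins down positions $n+1,\dots,n+q$, and both are consistent with the periodic extension — and likewise $w^{q'\ast 2}$ is the length-$(n+q')$ prefix of $w^{q'\ast\infty}$. Combining this with part (1) and the inequality $n+q'<n+q$, we conclude that $w^{q'\ast 2}$ is the length-$(n+q')$ prefix of $w^{q\ast\infty}$, hence a prefix of $w^{q\ast 2}$.

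The only genuine obstacle is making Fine--Wilf applicable, i.e.\ ensuring $q+q'\le n$, which is exactly why the definition of a valid step imposes $q\le n/2$; everything after that is bookkeeping with the definitions and Remark \ref{RemValidDefs}.
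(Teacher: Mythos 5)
Your proof is correct, but it takes a genuinely different route from the paper's. The paper never mentions periods or Fine--Wilf: it writes $w = w_{[1,q]}w_{[1,n-q]} = w_{[1,q']}w_{[1,n-q']}$, substitutes one decomposition into the other (using $q+q'\le n$) to derive the commutation relation $w_{[1,q]}w_{[1,q']} = w_{[1,q']}w_{[1,q]}$, and from that concludes the two purely periodic sequences $(w_{[1,q]})^\infty$ and $(w_{[1,q']})^\infty$ agree; part (2) is then read off from part (1) exactly as you do. You instead observe that \eqref{EqValid1} says $q$ and $q'$ are word-combinatorial periods of $w$, check the Fine--Wilf hypothesis $q+q'-\gcd(q,q')\le n$ (which holds since $q+q'<2q\le n$), and conclude $w$ has period $d=\gcd(q,q')$, whence both periodic extensions equal $(w_{[1,d]})^\infty$. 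The two arguments are close cousins -- the commutation relation is the engine inside Fine--Wilf -- but yours leans on a cited classical theorem where the paper stays self-contained, and in exchange yours delivers the stronger intermediate fact that $\gcd(q,q')$ is itself a period of $w$, which is essentially the combinatorial half of Lemma~\ref{LemStepDiv} (the remaining half there, that $w^{\gcd(q,q')\ast 2}$ actually lies in $\LLL$, is not given by your argument and is where the language enters). Your side remarks are also accurate: the membership condition $w^{q\ast 2}\in\LLL$ plays no role in this particular lemma, and the identification of $w^{q\ast 2}$ with the length-$(n+q)$ prefix of $w^{q\ast\infty}$ is justified correctly from the defining relations.
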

\begin{proof}
Using condition \eqref{EqValid1} from the definition of valid, we have
\[
w = w_{[1,q]} w_{[1,n-q]} \quad \text{ and } \quad w = w_{[1,q']} w_{[1,n-q']}.
\]
Placing the second equation in the first, we use the fact that $q+q' \leq n$ to get
\[
w = w_{[1,q]} \left( w_{[1,q']}w_{[1,n-q']} \right)_{[1, n-q]} = w_{[1,q]} w_{[1,q']} w_{[1,n-q'-q]}.
\]
Similarly, $w=w_{[1,q']} w_{[1,q]} w_{[1,n-q'-q]}$. From these two, we obtain the commutativity relation
\[
w_{[1,q']}w_{[1,q]} = w_{[1,q]}w_{[1,q']}.
\]
A consequence of this relation is that
\[
w^{q\ast q'}w^{q'\ast q} = \left(w_{[1,q]}\right)^{q'} \left( w_{[1,q']}\right)^{q} = w^{q'\ast q}w^{q\ast q'},
\]
and so $w^{q\ast q'} = w^{q'\ast q}$. From this we conclude that $w^{q\ast \infty} = w^{q'\ast \infty}$; that is, item (1) holds.

To show item (2), note that the prefix of length $n + q'$ of $w^{q'\ast \infty}$ is $w^{q'\ast 2}$, so by item (1), the prefix of length $n+q'$ of $w^{q\ast \infty}$ is also $w^{q'\ast 2}$. But since the prefix of length $n+q$ (which is $> n+q'$) of $w^{q\ast \infty}$ is $w^{q\ast 2}$, we find that $w^{q'\ast 2}$ is a prefix of $w^{q\ast 2}$.
\end{proof}

\begin{lemm}\label{LemStepDiv}
	Let $w\in \LLL_n$. If $q'$ and $q$ are valid steps for $w$ in $\LLL$ with $q' < q$,
			then their greatest common divisor $q''= \mathrm{gcd}(q,q')$ is a valid step for $w$ in $\mathcal{L}$.
\end{lemm}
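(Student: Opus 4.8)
The plan is to verify directly the three requirements in the definition of a valid step for the candidate $q'' = \gcd(q,q')$: that $1 \le q'' \le n/2$, that condition \eqref{EqValid1} holds with $q''$ in place of $q$, and that $w^{q'' \ast 2} \in \LLL$. The bound is immediate, since $1 \le q'' \le q' \le n/2$. For \eqref{EqValid1}, note that this condition for a step $p$ is exactly the statement that $w$ admits $p$ as a period, i.e.\ $w_i = w_{i+p}$ for all $1 \le i \le n-p$. Since $q'$ and $q$ are valid steps, $w$ has both $q'$ and $q$ as periods; as both steps are at most $n/2$ we have $q + q' \le n$, hence $q + q' - \gcd(q,q') \le n$, and the Fine--Wilf theorem gives that $w$ has period $q''$, which is precisely \eqref{EqValid1} for $q''$. (Alternatively, one may invoke the commutativity relation $w_{[1,q']} w_{[1,q]} = w_{[1,q]} w_{[1,q']}$ established in the proof of Lemma \ref{lem: new_lem}, together with the Lyndon--Sch\"utzenberger theorem, to write $w_{[1,q]}$ and $w_{[1,q']}$ as powers of a common word $z$ with $|z|$ dividing $q''$.)

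It then remains to check $w^{q'' \ast 2} \in \LLL$. Since $q''$ divides $q'$ and the length-$q'$ prefix $w_{[1,q']}$ of $w$ inherits the period $q''$, we get $w_{[1,q']} = \big(w_{[1,q'']}\big)^{q'/q''}$, and so the two periodic extensions (in the notation of Remark \ref{RemValidDefs}) coincide: $w^{q'' \ast \infty} = \big(w_{[1,q'']}\big)^\infty = \big(w_{[1,q']}\big)^\infty = w^{q' \ast \infty}$. Consequently $w^{q'' \ast 2}$, which is the length-$(n+q'')$ prefix of $w^{q'' \ast \infty}$, is a prefix of $w^{q' \ast 2}$, the length-$(n+q')$ prefix of the same infinite sequence, because $n + q'' \le n + q'$. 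As $q'$ is a valid step, $w^{q' \ast 2} \in \LLL$, and since a language is closed under passing to subwords (in particular to prefixes), we conclude $w^{q'' \ast 2} \in \LLL$. Hence $q''$ is a valid step for $w$ in $\LLL$.

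I expect the one nontrivial input to be the passage from two periods $q, q'$ of $w$ to the period $q'' = \gcd(q,q')$ --- the classical Fine--Wilf phenomenon. However, the key algebraic fact needed for it, namely that $w_{[1,q]}$ and $w_{[1,q']}$ commute, has already been extracted in the proof of Lemma \ref{lem: new_lem}, so beyond citing Fine--Wilf (or Lyndon--Sch\"utzenberger) the remaining work is only bookkeeping of the prefix relations among the words $w^{q \ast \infty}$, $w^{q' \ast \infty}$, $w^{q'' \ast \infty}$ and their finite truncations.
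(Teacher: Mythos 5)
Your proof is correct, but it takes a genuinely different route from the paper's. The paper never leaves its own framework: it first shows that the difference $q-q'$ of two valid steps is again a valid step (by locating two overlapping occurrences of $w$ inside $w^{q\ast 2}$, using item (2) of Lemma~\ref{lem: new_lem}), and then runs a Euclidean-algorithm induction on repeated subtraction to reach $\gcd(q,q')$. You instead observe that \eqref{EqValid1} for a step $p$ is exactly the statement that $p$ is a period of $w$, note that $|w|=n\geq q+q'\geq q+q'-\gcd(q,q')$, and invoke Fine--Wilf to obtain the period $q''=\gcd(q,q')$ in one stroke; the remaining condition $w^{q''\ast 2}\in\LLL$ then follows because $q''\mid q'$ forces $w^{q''\ast\infty}=w^{q'\ast\infty}$, so $w^{q''\ast 2}$ is a prefix of $w^{q'\ast 2}$ and languages are closed under subwords. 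Both arguments are sound. What the paper's approach buys is self-containment: everything is derived from the commutativity relation established in Lemma~\ref{lem: new_lem}, at the cost of a somewhat fussy strong induction. What yours buys is brevity and transparency, at the cost of importing a classical theorem (though, as you note, the Lyndon--Sch\"utzenberger alternative via the already-established relation $w_{[1,q']}w_{[1,q]}=w_{[1,q]}w_{[1,q']}$ keeps the argument nearly as self-contained as the paper's). One point worth being explicit about, which you handle correctly: you cannot apply Lemma~\ref{lem: new_lem} directly to the pair $(q'',q')$ since $q''$ is not yet known to be valid, so the identity $w^{q''\ast\infty}=w^{q'\ast\infty}$ must be rederived from $w_{[1,q']}=\bigl(w_{[1,q'']}\bigr)^{q'/q''}$, as you do.
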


\begin{proof}
	Take valid steps $q,q'$ for $w$ in $\mathcal{L}$ with $q'<q$. We first note that $q-q'$ is valid for $w$ in $\mathcal{L}$. Indeed, by item (2) of Lemma~\ref{lem: new_lem}, $w^{q'\ast 2}$ is a prefix of $w^{q\ast 2}$, and so $w$ appears in positions $1$ and $1+q-q'$ of the word $\left(w^{q\ast 2}\right)_{[q'+1,n+q]}$. This word, which is in $\mathcal{L}$ because it is a subword of $w^{q\ast 2}$, must therefore be $w^{(q-q')\ast 2}$, and so $q-q'$ is valid.
	
	To prove the lemma, we will use induction, so for $m=1, \ldots, \lfloor n/2\rfloor-1$, let $S(m)$ be the statement: for $q=m$ and any $q'$ with $1 \leq q' < q$, if $q'$ and $q$ are valid steps for $w$ in $\mathcal{L}$, then so is $q'' = \text{gcd}(q,q')$. Note that $S(1)$ is vacuously true. Suppose that for some $m$ satisfying $1 \leq m \leq \lfloor n/2 \rfloor-1$, the statement $S(k)$ holds for all $k=1, \ldots, m-1$. Let $q=m$ and let $q'$ with $1 \leq q'<q$ be such that $q'$ and $q$ are valid steps for $w$ in $\mathcal{L}$. If $q'$ divides $q$, then $\text{gcd}(q,q') = q'$ and so the lemma is trivial, so suppose that $q'$ does not divide $q$, and so in particular $q'>1$. There are two cases.
	
	In the first case, $q-q'<q'$. Then apply the statement $S(q')$. Since $q'<q$ and $1 \leq q-q' < q'$, we find that $\text{gcd}(q',q-q')$ is a valid step. But $\text{gcd}(q',q-q') = \text{gcd}(q',q)$, so $\text{gcd}(q',q)$ is a valid step. In the second case, $q-q' \geq q'$. Since $q-q' \neq q'$ (as $q'$ does not divide $q$), we must have $q'< q-q'$. Then because $1\leq q-q' < q$, we can apply $S(q-q')$ to find that $\text{gcd}(q',q-q')$ is a valid step. But $\text{gcd}(q',q-q') = \text{gcd}(q',q)$, so $\text{gcd}(q',q)$ is a valid step. This completes the induction.
\end{proof}

\begin{defn}
	If for $w\in \AAA^n$ there exists a valid step $q'$ for $w$, we call
		$$
			\min\{q'':~q''\mbox{ is valid for } w\}
		$$
	the \emph{minimal step} for $w$. By Lemma~\ref{LemStepDiv}, the minimal step divides all valid steps.
\end{defn}

\begin{lemm}\label{LEM_NO_REPEAT}
	Consider $w\in \LLL_n$ with valid step $q$.
	If for some $i,j$ with $1 \leq i < j \leq q$, one has
		\begin{equation}\label{EQ_u_i_j}
			(w^{q\ast 2})_{[i, i+ n -1]} = (w^{q\ast 2})_{[j, j+ n -1]},
		\end{equation}
	then $j - i < q$ is a valid step for $w$.
\end{lemm}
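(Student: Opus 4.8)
The plan is to interpret the hypothesis \eqref{EQ_u_i_j} as saying that the word $u := (w^{q\ast 2})_{[i,i+n-1]}$ occurs at two positions of $w^{q\ast 2}$ that differ by $d := j - i$, with $1 \le d < q$, and then to show that this forces $d$ itself to be a valid step for $w$. First I would record that $u$ is a word of length $n$ lying in $\LLL$, being a subword of $w^{q\ast 2}$. The key observation is that $u$ is itself a translate of $w$ up to a periodic adjustment: since $q$ is a valid step, $w^{q\ast\infty}$ is the periodic sequence with period $w_{[1,q]}$ (Remark \ref{RemValidDefs}), and every length-$n$ subword of $w^{q\ast 2}$ starting at a position $\le q$ is in fact obtained from $w$ by a cyclic shift of this period; I would make precise that $u$ equals $w$ shifted, i.e. $u = (w^{q\ast\infty})_{[i,i+n-1]}$, and that $u$ also inherits the period-$q$ prefix/suffix overlap \eqref{EqValid1}.

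Next I would establish the overlap relation \eqref{EqValid1} for the step $d$: from \eqref{EQ_u_i_j} the length-$n$ window of $w^{q\ast 2}$ starting at $i$ coincides with the one starting at $i + d$, which is exactly the statement that $w^{q\ast 2}$ (hence the sequence $w^{q\ast\infty}$) has period $d$ on the relevant range; restricting to the window $w$ (which sits at positions $1,\dots,n$ of $w^{q\ast 2}$, and thus at positions within reach of these two copies of $u$ because $i < j \le q \le n/2$), this yields $w_{[d+1,n]} = w_{[1,n-d]}$, i.e. \eqref{EqValid1} holds for $q = d$. The bound $d < q \le n/2$ guarantees $d \le n/2$, so $d$ is an admissible candidate step.

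It then remains to check the second requirement, namely $w^{d\ast 2} \in \LLL$. Here I would argue that $w^{d\ast 2}$, which has length $n + d$, appears as a subword of $w^{q\ast 2}$: indeed, using the coincidence \eqref{EQ_u_i_j} together with Lemma~\ref{lem: new_lem}(2) (or directly from the periodicity of $w^{q\ast 2}$), the two overlapping copies of $u$ at positions $i$ and $i+d$ combine to exhibit, inside $w^{q\ast 2}$, a block on which $w$ occurs at two positions separated by exactly $d$; that block has length $n+d$ and equals $w^{d\ast 2}$ by the defining property of $w^{q'\ast r}$. Since $w^{q\ast 2} \in \LLL$ and $\LLL$ is closed under subwords, $w^{d\ast 2} \in \LLL$, so $d$ is a valid step for $w$.

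I expect the main obstacle to be the bookkeeping in the second paragraph: one must verify carefully that the window of length $n$ corresponding to $w$ itself (positions $1,\dots,n$ of $w^{q\ast 2}$) lies within the overlap region of the two copies of $u$ so that the period-$d$ relation genuinely transfers to \eqref{EqValid1} for $w$, and this is precisely where the hypotheses $i < j \le q$ and $q \le n/2$ are used (they ensure $j + n - 1 \le q + n - 1 \le n + q \le n + n/2$, keeping everything inside $w^{q\ast 2}$, and that $n - d \ge n/2 > 0$ so the overlap is nonempty). Once the indices are pinned down, the rest is a direct appeal to the word-combinatorial facts already established in Lemmas~\ref{lem: new_lem} and~\ref{LemStepDiv} and the subword-closure axiom for languages.
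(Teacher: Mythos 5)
Your overall plan follows the same route as the paper: first establish the overlap relation \eqref{EqValid1} for the step $d=j-i$, then obtain $w^{d\ast 2}\in\LLL$ because it sits inside $w^{q\ast 2}$ and $\LLL$ is closed under subwords. However, there is a genuine gap at the crux of your second paragraph. The hypothesis \eqref{EQ_u_i_j} gives $(w^{q\ast 2})_k=(w^{q\ast 2})_{k+d}$ only for $k\in[i,\,i+n-1]$, i.e.\ a local period $d$ on the positions covered by the two copies of $u$. The window occupied by $w$ is $[1,n]$, and when $i>1$ the positions $1,\dots,i-1$ lie strictly to the left of that region, so the period-$d$ relation does not simply ``restrict'' to $w$; your stated justification (that everything stays inside $w^{q\ast 2}$ and that the two copies of $u$ overlap) says nothing about these initial positions. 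This is not bookkeeping: it is exactly where the paper does its work. The missing idea is to use the $q$-periodicity of $w^{q\ast 2}$ --- the second occurrence of $w$ at position $q+1$ --- to transport the local period $d$ back to positions $1,\dots,i-1$: for $1\le t\le i-1$ one has $w_t=(w^{q\ast 2})_{q+t}$ with $q+t\in[i,\,i+n-1]$, whence $w_t=(w^{q\ast 2})_{q+t+d}=w_{t+d}$; splicing this with the directly covered range $t\in[i,\,n-d]$ yields \eqref{EqValid1} for $d$. This two-step splice is precisely the content of the paper's two displayed index computations and its final concatenation $w_{[1,n-(j-i)]}=w_{[1,i-1]}w_{[i,n-(j-i)]}$; the paper isolates $i=1$ as the only case where your direct restriction works verbatim.

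A secondary, smaller issue: in your third paragraph the two overlapping copies at distance $d$ are copies of $u$, not of $w$, so by themselves they do not exhibit $w^{d\ast 2}$ inside $w^{q\ast 2}$. The clean way to finish is the one you mention in passing: once \eqref{EqValid1} is known for $d$, the prefix of $w^{q\ast 2}$ of length $n+d$ equals $w^{d\ast 2}$ (the proof of Lemma~\ref{lem: new_lem} uses only \eqref{EqValid1}, not membership of $w^{d\ast 2}$ in $\LLL$, so appealing to it at this point is not circular), and then subword-closure of $\LLL$ gives $w^{d\ast 2}\in\LLL$.
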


\begin{proof}
	Let $u\in \LLL_n$ be the word in positions $i$ and $j$ given in \eqref{EQ_u_i_j}.
	If $i = 1$, then $u = w$ and the claim holds, so we now assume $i > 1$.
	Note that $j-i < q$ is a valid step for $u$, meaning
		\begin{equation}
			u_{[(j-i) + 1, n]} = u_{[1, n - (j-i)]}
		\end{equation}
	We want to show this relationship for $w$.
	
	We have
		\begin{equation}
			\begin{array}{rcl}
			 w_{[i,n - (j - i)]} & =  & (w^{q\ast 2})_{[i,n - (j-i)]}\\  & = & u_{[1, n + 1- j]}\\
			 	& = & (w^{q\ast 2})_{[j,n]}\\
			 	& = & w_{[j,n]},
			 \end{array}
		\end{equation}
	and
		\begin{equation}
			\begin{array}{rcl}
				w_{[1, n + i - q -1]} & = & (w^{q\ast 2})_{[q+1,n+i - 1]}\\
						& = & u_{[q+2-i, n]}\\
						& = & (w^{q\ast 2})_{[q + 1 + (j-i), n + i - 1 + (j-i)]}\\
						& = & w_{[1 + (j-i), (n + i - q - 1) + (j-i) ]}.
			\end{array}
		\end{equation}
	
	Because $n + i - q - 1 \geq i$, it follows that
		\begin{equation}
			w_{[1,n-(j-i)]} = w_{[1,i-1]}w_{[i,n-(j-i)]} = w_{[1+ (j-i), j-1]}w_{[j,n]} = w_{[1+(j-i),n]}
		\end{equation}
	or $j-i$ is a valid step for $w$.
\end{proof}

\begin{coro}\label{COR_NO_REPEAT}
	If $q$ is the minimal step for $w \in \LLL_n$, then
		the words
			\begin{equation}
				(w^{q\ast 2})_{[i,i+n-1]} \mbox{ for } 1\leq i \leq q,
			\end{equation}
	are all distinct.
\end{coro}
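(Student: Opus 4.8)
The plan is to derive this as an immediate consequence of Lemma \ref{LEM_NO_REPEAT} together with the minimality hypothesis on $q$. Suppose for contradiction that two of the listed words coincide, say $(w^{q\ast 2})_{[i,i+n-1]} = (w^{q\ast 2})_{[j,j+n-1]}$ for some $1 \le i < j \le q$. First I would observe that $j - i$ is a genuine positive integer satisfying $1 \le j - i < q$, since $i < j \le q$. Then Lemma \ref{LEM_NO_REPEAT} applies verbatim to this situation and yields that $j - i$ is a valid step for $w$ in $\LLL$.

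This contradicts the definition of $q$ as the minimal step: by definition $q = \min\{q'' : q'' \text{ is valid for } w\}$, so no valid step can be strictly smaller than $q$, yet $j - i < q$ is valid. Hence no two of the words $(w^{q\ast 2})_{[i,i+n-1]}$, $1 \le i \le q$, can be equal, which is the claim.

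The only point requiring a word of care is that Lemma \ref{LEM_NO_REPEAT} is stated for an arbitrary valid step $q$ (not just the minimal one) and for indices $1 \le i < j \le q$, which is exactly the range appearing in the corollary's list, so there is no gap in the index bookkeeping. I do not anticipate any real obstacle here: the entire content has been front-loaded into Lemma \ref{LEM_NO_REPEAT}, and this corollary is purely a repackaging of that lemma against the minimality of $q$.
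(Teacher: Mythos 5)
Your proposal is correct and is essentially identical to the paper's own proof: assume two of the listed words coincide, apply Lemma \ref{LEM_NO_REPEAT} to obtain the valid step $j-i<q$, and contradict the minimality of $q$. Nothing further is needed.
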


\begin{proof}
	Suppose for some $i,j$ with $1\leq i < j \leq q$ we have
		\begin{equation}
			(w^{q\ast 2})_{[i, i+ n -1]} = (w^{q\ast 2})_{[j, j+ n -1]}.
		\end{equation}
	By Lemma \ref{LEM_NO_REPEAT}, $q' = j-i< q$ is a valid step for $w$,
		contradicting the minimality of $q$.
\end{proof}

	The word $w^{q\ast r}$ represents the path obtained by traversing a loop (circuit) corresponding to
		$w^{q\ast 2}$ exactly $r-1$ times.
	If our language $\LLL$ is transitive and aperiodic,
		then for $w\in \LLL$
		we cannot realize all words of the type $w^{q\ast r}$, $r\in \NN$, within the language.
	As we will argue precisely in Lemma \ref{LemWordInExit},
		if a long enough path contains $w$ (and $w$ is not at the beginning of the path)
	then the path must contain $w^{q\ast r}$ for some $r\in\NN$ (the case $r=1$ is possible)
	entering the loop at some point, and then leaving.
	The next definition
		characterizes the paths of minimum length
		that begins and ends outside of the loop $w^{q\ast 2}$
		but also visits $w$.

\begin{defn}
	Let $q$ be a valid step for $w$ in $\LLL$. The word $z$ is called an \emph{exit word} for $w$ with step $q$ if it has a representation $z=pw^{q\ast r}s$ with $|p|,|s|\leq q$ such that all of the following hold: 
		\begin{enumerate}
			\item $pw^{q \ast r}  s\in \LLL$,
			\item $p w^{q \ast r}$ is not a suffix of $w^{q \ast (r+1)}$ but $p_{[2,|p|]}w^{q\ast r}$ is, and
			\item $w^{q \ast r}s$ is not a prefix of $w^{q \ast (r+1)}$ but $w^{q\ast r}s_{[1,|s|-1]}$ is.
		\end{enumerate}				
\end{defn}

\begin{rema}
	If $w^{q\ast r} = w^{q'\ast r'}$ for some $q'<q$ and $r'>r$, it is possible that there are different choices of pairs $(p,s)$ and $(p',s')$
		so that
			$$ z = p w^{q \ast r}s \mbox{ and } z= p' w^{q' \ast r'} s'$$
		form different representations of the same exit word for $w$.
	For example, 
		if we take $w=1^4=1111$ (so $n=4$) then the exit word $z=01^{15}0$
		may be represented by the following choices of parameters:
		$p=0$, $s=110$, $q=3$, $r=4$, and $p'=01$, $s'=0$, $q'=2$, $r'=6$. 
\end{rema}

\begin{lemm}\label{LemMinRep}
	Let $w\in \LLL_n$ and let $q$ be a minimal step for $w$ in $\LLL$. If $z$ is an exit word for $w$ with step $q$, then
		\begin{enumerate}
			\item The representation of $z$ as $p w^{q\ast r}s$ is unique, and
			\item $|z|_w = r$; that is,
				$$
					\{1\leq i \leq |z|-n: z_{[i,i+n-1]} = w\}
				$$
				contains exactly $r$ values.
		\end{enumerate}
\end{lemm}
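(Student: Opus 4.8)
The plan is to establish both parts simultaneously by analyzing how occurrences of $w$ inside $z$ are forced to line up with the periodic structure $w^{q\ast\infty}$. The key combinatorial fact is Corollary \ref{COR_NO_REPEAT}: when $q$ is the minimal step, the $q$ words $(w^{q\ast 2})_{[i,i+n-1]}$ for $1\le i\le q$ are pairwise distinct, and by Lemma \ref{lem: new_lem}(1) these are exactly the length-$n$ windows of $w^{q\ast\infty}$ up to the period $q$. First I would take an exit word $z = p w^{q\ast r}s$ with a fixed valid representation and consider the ``core'' $w^{q\ast r}$ sitting inside $z$ in positions $[|p|+1,|p|+n+(r-1)q]$. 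Because $w^{q\ast r}$ is a subword of $w^{q\ast\infty}$, its internal occurrences of $w$ are precisely at the $r$ positions $|p|+1, |p|+1+q, \dots, |p|+1+(r-1)q$ — giving at least $r$ occurrences of $w$ in $z$; the work is to show there are no others, i.e. none that begin in the prefix region $[1,|p|]$ (other than possibly at position $1$ when $|p|=q$... actually conditions (2),(3) are designed precisely to prevent $p$ and $s$ from themselves contributing occurrences of $w$), and none straddling into $s$.

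For part (2): suppose $z_{[i,i+n-1]} = w$ for some $i \notin \{|p|+1,\dots,|p|+1+(r-1)q\}$. Since $|p|,|s|\le q \le n/2$, such an occurrence of $w$ overlaps the core $w^{q\ast r}$ in a window of length $> n-q \ge n/2$, hence overlaps at least one of the ``anchored'' occurrences of $w$ at positions $|p|+1+kq$. Comparing $w$ at position $i$ with $w$ at the nearest anchored position $|p|+1+kq$ and using \eqref{EqValid1} repeatedly, one gets that the offset is a valid step for $w$ strictly smaller than... no — the offset could be large, so instead I would reduce modulo $q$: writing $i = |p|+1+kq + t$ with $0 < |t| < q$, the relation forces $(w^{q\ast 2})_{[t',t'+n-1]} = w = (w^{q\ast 2})_{[1,n]}$ for $t' = t \bmod q \neq 0$, contradicting Corollary \ref{COR_NO_REPEAT} (equivalently, $t\bmod q$ would be a valid step smaller than $q$ via Lemma \ref{LEM_NO_REPEAT}). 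The boundary cases — an occurrence of $w$ that pokes past $w^{q\ast r}$ into $p$ or $s$ — are exactly ruled out by conditions (2) and (3) in the definition of exit word: condition (2) says $p w^{q\ast r}$ is \emph{not} a suffix of $w^{q\ast(r+1)}$, which means shifting $w$ one step left of position $|p|+1$ (into $p$) breaks the pattern, so no occurrence of $w$ starts in $[1,|p|]$; condition (3) symmetrically handles the $s$ side. This yields $|z|_w = r$.

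For part (1), uniqueness of the representation: if $z = p w^{q\ast r}s = p'w^{q\ast r'}s'$ were two representations with the \emph{same} step $q$, then part (2) already forces $r = r' = |z|_w$, and then $|p w^{q\ast r}| = |p'w^{q\ast r'}|$ together with $|p|,|p'|\le q$ and the location of the first/last occurrence of $w$ (position $|p|+1$ resp. $|p'|+1$) pins down $|p|=|p'|$, hence $p=p'$ and $s=s'$; the hypothesis of minimality of $q$ is what prevents a competing representation with a smaller step $q' < q$ (a smaller step would be incompatible with $q$ being minimal, by Lemma \ref{LemStepDiv}). I expect the main obstacle to be the careful bookkeeping at the two ends — precisely formalizing that conditions (2) and (3) forbid occurrences of $w$ from starting inside $p$ or straddling into $s$ — since this is where the asymmetry between ``is a prefix/suffix of $w^{q\ast(r+1)}$'' and ``is not'' must be translated into a statement about occurrences of the length-$n$ word $w$; everything in the interior is a clean consequence of Corollary \ref{COR_NO_REPEAT}.
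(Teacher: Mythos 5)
Your proposal is correct and follows essentially the same route as the paper: both establish part (2) first by showing that any occurrence of $w$ off the arithmetic progression $|p|+1, |p|+1+q,\dots,|p|+1+(r-1)q$ would produce a valid step strictly smaller than $q$ (your reduction mod $q$ via Lemma \ref{LEM_NO_REPEAT}/Corollary \ref{COR_NO_REPEAT} is exactly the paper's ``$q' = |j+q(i-1)-k| < q$ is a valid step'' contradiction), with the two extreme boundary cases ($k=1$ with $|p|=q$, and the symmetric one at the right end) ruled out directly by conditions (2) and (3) of the exit-word definition, and then both deduce part (1) immediately since the positions of $w$ pin down $p$, $r$, and $s$. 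The only cosmetic difference is that occurrences beginning at positions $2,\dots,|p|$ are excluded by the minimality/small-step argument rather than by condition (2) itself, but your write-up also covers those via the mod-$q$ reduction, so nothing is missing.
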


\begin{proof}
	We begin with part 2. First, let $j = |p|+1$ be the starting position of $w^{q\ast r}$ in $z$.
	Then
	\begin{equation}\label{EqLemMin}
		z_{[j + q(i-1),j + qi -1]} = w
	\end{equation}
	for $1 \leq i \leq r$, so $|z|_w \geq r$. Assume that there is some other position $k$ with $1 \leq k \leq |p|+|s|+(r-1)q+1$ that is not of the form $|p|+1 + mq$ for some $m$ in $0, \ldots, r-1$ such that $z_{[k,k+n-1]} = w$. We will obtain a contradiction.
	
	We may assume that
		there exists $i$ in $1, \ldots, r$ such that $0<|j + q(i-1) - k| =: q' <q$, because otherwise either $k=1$ and $|p|=q$, or $k=|p|+|s|+(r-1)q +1$ and $|s|=q$, and both of these can be seen to contradict the definition of an exit word.
	The assumed inequalities imply that $q'<q$ is a valid step for $w$, and this contradicts the minimality of $q$. 
	
	Now part 1 follows immediately, as the positions of $w$ in $z$ are completely determined by 
part 2 and so is the position of $w^{q\ast r}$.
\end{proof}

Recall that a sequence $x\in \AAA^\NN$ is \emph{eventually periodic}
	if for some $(n',p)\in \NN\times \NN$ we have $x_{j+p} = x_j$ for all $j\geq n'$.
	If a subshift $X$ is minimal and its language $\LLL$ is aperiodic, then no
		$x\in X$ is eventually periodic.

\begin{lemm}\label{LemWordInExit}
	Let $X$ be a subshift.
	If $x\in X$ is not eventually periodic and $q$ is a minimal valid step for $w\in \LLL_n$, then
		for each beginning position $j\in \NN$ of $w$ in $x$ (that is, $x_{[j,j+n-1]} = w$) exactly one of the following must hold:
			\begin{enumerate}
				\item $x_{[1,j+n-1]}$ is a suffix of $w^{q\ast r}$ where $r = \lceil \frac{j-1}{q}\rceil + 1$, or
				\item there exists a unique exit word $z$ of $w$ with step $q$ with an occurrence in $x$ that contains $w$ at position $j$; that is,
						there exists $k<j$ so that $z = x_{[k,k+|z|-1]}$ and $k+|z|-1 > j + n -1$.
			\end{enumerate}
\end{lemm}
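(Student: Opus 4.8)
The plan is to analyze, for a fixed occurrence position $j$ of $w$ in $x$, the maximal block of overlapping translated copies of $w$ that ``chains together'' through position $j$ using only valid steps $\le q$. First I would make precise the notion of a $q$-run containing position $j$: consider the set $R$ of all positions $k$ (with $x_{[k,k+n-1]}=w$) that are connected to $j$ by a chain $k=k_0,k_1,\dots,k_m=j$ of occurrence positions of $w$ with $|k_{i+1}-k_i|\le q$ for each $i$. By Lemma~\ref{LEM_NO_REPEAT} (applied via the local structure: if two occurrences of $w$ sit within distance $<q$ inside a common window of length $n+q$, their separation is a valid step), any two consecutive occurrences in such a chain are actually separated by a multiple of the minimal step $q$ itself — indeed $q$ is minimal, so the only valid step $\le q$ is $q$, forcing all gaps in the chain to be exactly $q$ (or, more carefully, multiples of $q$ that still realize overlap; one checks consecutive chained occurrences differ by exactly $q$). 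Hence $R$ is an arithmetic progression $j_0 < j_0+q < \dots < j_0 + q(r-1)$ with common difference $q$, and $x_{[j_0, j_0+q(r-1)+n-1]} = w^{q\ast r}$. Let $k_- = j_0$ and $k_+ = j_0 + q(r-1)$ be its first and last positions.

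Next I would split on whether the run reaches the left end of $x$. Case (1): if $k_- = 1$, or more precisely if there is no ``room'' to the left to place a preceding partial block — equivalently $k_- \le q$ and the prefix $x_{[1,k_-+n-1]}$ is forced to be a suffix of $w^{q\ast\infty}$ by the overlap relation \eqref{EqValid1} — then $x_{[1,j+n-1]}$ is a suffix of $w^{q\ast r'}$ for the appropriate $r'$. Computing $r'$: the occurrence at position $j$ is the $(\lceil (j-k_-)/q\rceil+1)$-th in the run, and when the run is left-anchored one gets $r = \lceil (j-1)/q\rceil + 1$ as stated, giving alternative~(1). Case (2): otherwise $k_- > q$ (there is a genuine occurrence-free buffer of length $\ge q$ to the left) and similarly, because $x$ is not eventually periodic, the run cannot continue indefinitely to the right, so $k_+ + n - 1 < |x_{[1,?]}|$ has a genuine buffer of length $\ge q$ to the right, i.e.\ $x_{[k_+ + n, k_+ + n + q - 1]}$ contains no occurrence of $w$. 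Here aperiodicity of $x$ is what prevents the degenerate possibility that $x$ is just $w^{q\ast\infty}$ forever. Then set $p = x_{[k_- - q, k_- - 1 + (\text{adjust})]}$ and $s = x_{[k_+ + n, \dots]}$ with $|p|=|s|=q$ chopped to the minimal length making conditions (2) and (3) of the exit word definition hold; concretely, $|p|$ is chosen so that $p_{[2,|p|]}w^{q\ast r}$ is a suffix of $w^{q\ast(r+1)}$ but $pw^{q\ast r}$ is not, which pins $|p|$ uniquely, and symmetrically for $s$. Then $z = p\,w^{q\ast r}\,s$ is an exit word for $w$ with step $q$ occurring in $x$ and containing $w$ at position $j$, giving alternative~(2).

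Finally I would address uniqueness in (2) and exclusivity of the two alternatives. Uniqueness of $z$: any exit word occurring in $x$ and covering position $j$ has its central block $w^{q\ast r}$ equal (as a subword of $x$) to the maximal run $R$ computed above — this is exactly Lemma~\ref{LemMinRep}(2), which says the $w$-occurrences inside an exit word are precisely the $r$ translates of the central block, so the exit word's central block cannot extend past $R$ nor stop short of it; and then $|p|,|s|$ are forced as above, so $z$ is determined. Exclusivity: alternatives (1) and (2) cannot both hold because (1) says the prefix is a suffix of $w^{q\ast r}$ (the run is left-anchored, no buffer), whereas (2) requires a left buffer of length $q$; and at least one holds because the run $R$ either does or does not hit the left boundary. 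The main obstacle I anticipate is the careful bookkeeping around the definitions of $p$ and $s$ — verifying that the overlap condition \eqref{EqValid1} together with minimality of $q$ forces the buffer words to have exactly the length and form demanded by conditions (2) and (3) of the exit word definition, and handling the edge cases where $|p|$ or $|s|$ would want to be $0$ versus the full $q$. The non-eventually-periodic hypothesis enters precisely to rule out the run $R$ being infinite (so that a right buffer exists), which is the only place global, as opposed to local-combinatorial, information is needed.
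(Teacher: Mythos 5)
Your proposal is correct and follows essentially the same route as the paper: identify the maximal periodic extension of $w^{q\ast\infty}$ through the occurrence at position $j$ (the paper does this via $j_1=\min\{k : x_{[k,j+n-1]}$ is a suffix of $w^{q\ast r}\}$ and $j_2=\max\{k : x_{[j,k+n-1]}$ is a prefix of $w^{q\ast\infty}\}$, which is equivalent to your chain-of-occurrences run once minimality of $q$ forces all gaps to equal $q$), split on whether this extension is left-anchored, invoke non-eventual-periodicity to terminate it on the right, read off $p$ and $s$ from the pattern-breaking letters, and get uniqueness from Lemma~\ref{LemMinRep}. The only slip is cosmetic: alternative (2) requires a left buffer of length $\le q$ containing a pattern break, not of length exactly $q$, but your surrounding argument for exclusivity is otherwise the paper's.
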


\begin{proof}
Define
\[
j_1 = \min\{k \in [1,j] : x_{[k,j+n-1]} \text{ is a suffix of } w^{q\ast r}\},
\]
and note that the set in question contains $j$, so it is nonempty. The condition $j_1 = 1$, is equivalent to (1). In this case, if $x_{[j,j+n-1]}$ were part of an exit word $pw^{q\ast r_0}s$ in $x$, then by Lemma~\ref{LemMinRep}, $x_{[j,j+n-1]}$ would have to be one of the $r_0$ many occurrences of $w$ in $pw^{q\ast r_0}s$. In other words, the exit word would have to begin at position $j-mq-|p| \geq 1$ for some $m \geq 0$. By the definition of exit word, $pw^{q\ast r_0}$ is not a suffix of $w^{q\ast (r_0+1)}$ and so $x_{[j-mq-|p|,j+n-1]}$ is also not a suffix of $w^{q\ast (r_0+1)}$, which in turn implies it is not a suffix of $w^{q\ast r}$. This contradicts the definition of $j_1$. In other words, items (1) and (2) cannot hold simultaneously. If (1) fails, then $j_1>1$ and we further define
\[
j_2 = \max\{k \geq j : x_{[j,k+n-1]} \text{ is a prefix of } w^{q \ast \infty}\}.
\]
Note that the set in question contains $j$, so it is nonempty.
Furthermore,
	because $x$ is not eventually periodic,
	$j_2< \infty$.
Then $x_{[j_1-1,j_2+n]}$ is an exit word of $w$ with step $q$, with representation $p'w^{q\ast r'}s'$, where
\[
p' = x_{[j_1-1,j-\lfloor \frac{j-j_1}{q} \rfloor q-1]} \quad \text{and} \quad s'= x_{[j+\lfloor \frac{j-j_1}{q}\rfloor q +n,j_2+n]},
\]
so that $r' = \lfloor \frac{j_2-j}{q} \rfloor + \lfloor \frac{j-j_1}{q} \rfloor +1$.
Indeed, because $w$ only appears in positions $mq+1$ for $m \geq 0$ in the sequence $w^{q\ast \infty}$,
	the word $p_{[2,|p|]}w^{q\ast r'} q_{[1,|q|-1]}=x_{[j_1,j_2+n-1]}$ is a subword of $w^{q\ast \infty}$,
	and therefore $p_{[2,|p|]}w^{q\ast r'}$ is a suffix of $w^{q\ast(r'+1)}$ and $w^{q\ast r'}s_{[1,|s|-2]}$
	is a prefix of $w^{q\ast(r'+1)}$.
Furthermore by definition of $j_1$ and $j_2$,
	$pw^{q\ast r'}$ is not a suffix of $w^{q\ast(r'+1)}$ and $w^{q\ast r'}s$
	is not a prefix of $w^{q\ast(r'+1)}$.

For similar reasons, the exit word $z$ must be unique. If there were two, say $pw^{q\ast r}s$ and $p'w^{q\ast r'}s'$, then $x_{[j,j+n-1]}$ would have to appear in one of the $r$ or $r'$ many positions allotted by Lemma~\ref{LemMinRep}. This, along with the suffix and prefix properties of exit words, forces $r=r'$ and furthermore that the beginning positions of $w^{q\ast r}$ and $w^{q\ast r'}$ containing $x_{[j,j+n-1]}$ are equal. Likewise, by these same properties, the beginning and ending positions of $pw^{q\ast r}s$ and $p'w^{q\ast r'}s'$ must be equal, giving $p=p'$ and $s=s'$.
\end{proof}

\begin{coro}\label{CorExitOverlap}
	Let $x\in \AAA^\NN$ with language $\LLL = \LLL_x$ and
	  $w\in \LLL_n$ with minimal valid $q$ and $z = p w^{q\ast r}s,z' = p' w^{q\ast r'}s'$ be exit words of $w$ with step $q$.
	If $z$ begins at position $i$ of $x$ and $z'$ begins at position $i'>i$ of $x$, then
		$i'\geq i + |z| - n$.
	Moreover, even if these occurrences overlap; that is, $i' < i + |z|$,
		then $x$ still contains at least $r+r'$
		occurrences of $w$ in this common interval.
	In other words,
			$
				\left|x_{[i,i'+|z'|-1]}\right|_w \geq  r+ r'
			$.
\end{coro}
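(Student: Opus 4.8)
The plan is to analyze two overlapping occurrences of exit words by comparing their internal structure against the common ``background'' periodic word $w^{q\ast\infty}$. First I would set up coordinates: suppose $z = p w^{q\ast r} s$ begins at position $i$ and $z' = p' w^{q\ast r'} s'$ begins at position $i' > i$, and assume for contradiction that $i' < i + |z| - n$, so that the occurrence of $z'$ starts before the final length-$n$ window of $z$. The key observation is that within $z$, by Lemma~\ref{LemMinRep}, the word $w$ appears exactly at the positions of the copies of $w$ in $w^{q\ast r}$, i.e.\ at positions $|p| + 1 + mq$ in $z$ for $0 \le m \le r-1$; and by the definition of exit word, $z_{[|p|+1,\,|z|-1]} = p_{[2,|p|]} w^{q\ast r} s_{[1,|s|-1]}$ is a subword of $w^{q\ast\infty}$, while $z$ itself is not (it fails to be a suffix/prefix of $w^{q\ast(r+1)}$ at both ends). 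The first occurrence of $w$ inside $z'$ is at position $|p'|+1 \le q$ within $z'$, hence at position $i' + |p'|$ in $x$, which under our assumption lies strictly inside the region $[i, i+|z|-1]$ where $x$ agrees with $z$; so this occurrence of $w$ in $x$ is one of the $r$ occurrences catalogued inside $z$, meaning $i' + |p'| = i + |p| + mq$ for some $0 \le m \le r-1$.

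Next I would exploit this alignment to show that $z'$, too, would have to sit inside $w^{q\ast\infty}$ in a way contradicting the exit-word defining property (2) for $z$. Since $i' + |p'|$ aligns with a copy of $w$ in $w^{q\ast\infty}$ (via the identification of $z$'s interior with a window of $w^{q\ast\infty}$), and $z'$'s interior $p'_{[2,|p'|]} w^{q\ast r'} s'_{[1,|s'|-1]}$ is also a subword of $w^{q\ast\infty}$ aligned at that same copy of $w$, the two windows of $w^{q\ast\infty}$ must agree on their overlap. The point $i' - 1$ (the start of $z'$) corresponds to the leading letter of $p'$, which by defining property (2) for $z'$ is \emph{not} consistent with $w^{q\ast\infty}$ when prepended; but if $i' - 1 > i$, i.e.\ if the start of $z'$ lies at or after position $i+1$ in $x$ — which holds whenever $i' \ge i+2$ — then $x_{i'-1}$ is determined by $z$ and hence by $w^{q\ast\infty}$, forcing $p' w^{q\ast r'}$ to be a suffix of $w^{q\ast(r'+1)}$, contradicting property (2). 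One must handle the boundary cases $i' = i+1$ and $i' = i$ separately (the latter is excluded since $z \ne z'$ as occurrences, or reduces to the uniqueness in Lemma~\ref{LemWordInExit}), and similarly use property (3) and the right end of $z$ if the conflict is on the suffix side. This establishes $i' \ge i + |z| - n$.

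For the ``moreover'' clause, even when $i' < i + |z|$ (but necessarily $i' \ge i + |z| - n$ by the above), I would simply count: the interval $[i, i' + |z'| - 1]$ contains the entire occurrence of $z$ and the entire occurrence of $z'$, and these two occurrences overlap in a block of length at most $n$ (since $i' + |z'| - 1 \ge i + |z| - 1$ and $i' \ge i + |z| - n$). The $r$ occurrences of $w$ contributed by $z$ all lie within $[i, i+|z|-1]$, and the $r'$ occurrences contributed by $z'$ lie within $[i', i'+|z'|-1]$; I claim no occurrence of $w$ is double-counted. Indeed, an occurrence of $w$ counted in $z$ begins at some position $\le i + |z| - n$, and an occurrence counted in $z'$ begins at some position $\ge i'$, so a shared occurrence would need $i' \le i + |z| - n$; combined with $i' \ge i + |z| - n$ this pins it to equality, but then the copy of $w$ at the very end of $z$ would coincide with the copy of $w$ at the very start of $z'$ (i.e.\ $|p'| = 0$), and one checks via properties (2)–(3) that this still permits the count $r + r'$ by reindexing — or, more cleanly, one argues $i' > i + |z| - n$ strictly (the case $|p'| = 0$ would make the last window of $z$ equal to $w$, forcing $z$ to end with $w^{q\ast(r+1)}$-consistent data, contradicting property (3) for $z$), so the occurrences are genuinely disjoint as multisets and we get $|x_{[i,i'+|z'|-1]}|_w \ge r + r'$.

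\textbf{Main obstacle.} The delicate point is the bookkeeping in the second paragraph: carefully tracking how the ``forbidden'' leading letter of $p'$ (property (2) for $z'$) interacts with the position where $z$ forces agreement with $w^{q\ast\infty}$, and correctly disposing of the handful of boundary alignments ($i'=i$, $i'=i+1$, $|p|=q$, $|p'|=0$, and the mirror-image cases involving $s$, $s'$ and property (3)). Everything reduces to the single principle ``two windows of the purely periodic sequence $w^{q\ast\infty}$ that are aligned on a common copy of $w$ must agree,'' together with the fact that exit words are exactly the minimal violations of that periodicity at their two ends; the challenge is purely in organizing the case analysis so that no alignment is missed.
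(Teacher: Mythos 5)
Your strategy (comparing the two occurrences against the periodic background $w^{q\ast\infty}$ and using that exit words violate that periodicity exactly at their two boundary letters) is a genuine alternative to the paper's argument, which instead realizes $w^{q\ast 2}$ as a vertex self-avoiding circuit $L$ in the Rauzy graph $\Gamma_n^*$ of the full language (via Corollary~\ref{COR_NO_REPEAT}) and observes that the interior edges of an exit word's path lie in $L$ while its first and last edges do not, so the first edge of $z'$ cannot coincide with an interior edge of $z$. The two viewpoints are morally equivalent, but the graph formulation automatically works with length-$(n+1)$ windows rather than full copies of $w$, and that is precisely where your version develops a gap.

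The gap: you assume $i' < i + |z| - n$ and assert that the first occurrence of $w$ in $z'$, beginning at $x$-position $i'+|p'|$, ``is one of the $r$ occurrences catalogued inside $z$'' by Lemma~\ref{LemMinRep}. That lemma only catalogues occurrences of $w$ that are \emph{subwords of} $z$, and the occurrence starting at $i'+|p'|$ need not be one: it ends at $i'+|p'|+n-1 \le i+|z|-n-1+|p'|+n-1 = i+|z|+|p'|-2$, which exceeds $i+|z|-1$ whenever $|p'| > i+|z|-n-i'$ (e.g.\ $i'=i+|z|-n-1$ with $|p'|\ge 2$). So for a range of roughly $q$ positions of $i'$ near the claimed threshold, the copy of $w$ you align on protrudes past the end of $z$, Lemma~\ref{LemMinRep} does not apply, and the identification $i'+|p'| = i+|p|+mq$ --- on which the entire contradiction rests --- is unjustified. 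This is not among the boundary cases you list. The repair is to align instead on the length-$n$ window $u' = z'_{[2,n+1]} = x_{[i'+1,i'+n]}$: it is a window of $w^{q\ast 2}$ at a position uniquely determined by Corollary~\ref{COR_NO_REPEAT}, and for $i' \le i+|z|-n-2$ it is also a window of the periodic interior $z_{[2,|z|-1]}$, which pins down the alignment and makes your letter-conflict argument at position $i'$ go through; the single remaining position $i'=i+|z|-n-1$ then needs separate treatment. Two smaller points: the letter forced into conflict is $x_{i'}$ (the first letter of $z'$), not $x_{i'-1}$; and $|p'|=0$ cannot occur, since condition (2) of the exit-word definition fails for an empty prefix, so that case need not be discussed at all. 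The counting in your final paragraph is fine once the positional separation is established.
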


\begin{proof}
	Consider the Rauzy graph $\Gamma^*_n$ of size $n$ for the full language $\AAA^*$.
	Because $q$ is valid, $w^{q \ast 2}\in \AAA^*$ is defined.
	Moreover, $w^{q\ast 2}$ (and $w^{q\ast r}$ for $r>2$) is realized as a directed circuit $L$ of
		edges in $\Gamma^*_n$.
	Specifically, $L$ is the circuit with vertices (in order) $v_1, \ldots, v_{q+1}$,
		where $v_1$ and $v_{q+1}$ correspond to the word $w$,
		and for $i=2, \ldots, q$, $v_i$ corresponds to the word $\left(w^{q\ast 2}\right)_{[i,i+n-1]}$.
	Because of Corollary~\ref{COR_NO_REPEAT}, this circuit is vertex self-avoiding (except its initial and final points).
	In other words, if $v_i=v_j$, then either $i=j$ or 
		$i,j\in \{1,q+1\}$.
	The edges of $L$ are (in order) $e_1, \ldots, e_q$, where 
		$e_i$ begins at $v_i$ and ends at $v_{i+1}$,
		and $e_i$ corresponds to the word $\left( w^{q\ast 2}\right)_{[i,i+n]}$.
	Then the word $w^{q\ast r}$ corresponds to fully traversing this circuit $r-1$ times,
		starting at $v_1$ and ending back at $v_1 = v_{q+1}$.
	(When $r=1$, we start at $v_1$ and do not cross any edges.) 
		
	Let
		$$
			e_i = z_{[i,i+n]},~ 1 \leq i \leq |z|-n
		$$
	be the consecutive edges of the path represented by $z$ in $\Gamma^*_n$.
	We claim now that $e_1 \notin L$.
	To prove this, note that by definition of an exit word,
		the word $z_{[2,n+1]}$ must correspond to a vertex $u$ in $L$,
		whereas the vertex $u'$ corresponding to $z_{[1,n]}$ cannot equal $u''$,
		the vertex in $L$ that directly precedes $u$.
	If $u' \notin L$, then $e_1 \notin L$.
	Otherwise, if $u' \in L$ but $e_1 \in L$,
		then the 
		edge from $u'$ to $u$ and the edge from
			$u''$ to $u$ are both in $L$.
	They are distinct because $u'\neq u''$,
		and they are nontrivial (not self-loops) because if either of $u'$ or $u''$ were equal to $u$,
		then $L$ would contain only one vertex, and this would contradict $u'\neq u''$.
	However there is only one edge in $L$ which ends at $u$,
		since $L$ is a vertex self-avoiding path. This shows that $e_1 \notin L$.
	
	A argument similar to the above shows that $e_{|z|-n} \notin L$.
	Furthermore, the definition of an exit word mandates that for all $i=2, \ldots, |z|-n-1$, $e_i \in L$.
	Likewise, the edges $e'_i$ for $z'$ (defined analogously) in $\Gamma^*_n$ also satisfy
		$e'_1,e'_{|z'|-n}\not\in L$ while $e'_i\in L$ otherwise.
	Therefore the position $i'$ of $z'$ must be at least at the position of $e_{|z|-n}$ in $z$ at $i$, or
		$$
			i' \geq i + |z| - n.
		$$
	
	If $z$ and $z'$ do not overlap ($i' \geq i+|z|$),
		then the inequality $|x_{[i,i'+|z'|-1]}|_w \geq r+r'$ is immediate.
	If the words overlap, we can conclude the same lower bound using the inequality $i' \geq i+|z|-n$.
	Indeed, there are $r$ occurrences of $w$ in the word $z$ at locations
		$|p|+1, |p|+q+1, \ldots, |p|+(r-1)q+1$ (corresponding to $w^{q\ast r}$),
		and this last occurrence at position $|p|+(r-1)q+1$ satisfies
	\[
	|p|+(r-1)q+1 < |z|-n,
	\]
	This means that there are $r$ occurrences of $w$ at indices  less than $i+|z|-n$ in $x$.
	Similarly, there are $r$ occurrences of $w$ at indices greater than $i'$ in $x$ (corresponding to $w^{q\ast r'}$).
	This gives a total of at least $r+r'$ occurrences of $w$ in the common interval.
\end{proof}

\begin{rema}
	Concerning the last statement of Corollary \ref{CorExitOverlap},
		in the common overlapping interval of $z$ and $z'$ a $w$ may begin in $z$ and end in $z'$ (but not
		occur as subword of either).
	More precisely, $w$ may begin at position $j$ for $i + |z| - n < j < i'$.
	Although not necessary for this work, one can show the following:
		if $q$ is the minimal valid step for $w\in \LLL_n$ and there exists $q' \leq n - q$ so that
			\eqref{EqValid1} holds for $q'$, then $q'$ is a multiple of $q$ and for any $r\geq 1$ we have
			$w^{q'\ast r} = w^{q\ast(kr - k + 1)}$, where $q' = kq$.
	Using this fact, it may be shown that $j$ must belong to an interval of length at most $4q - n - 4$.
	We would then have $|x_{[i,i'+|z'| -1]}|_w \leq r + r' + 2$ as $q \leq n/2$ and
		in fact $|x_{[i,i'+|z'| -1]}|_w = r + r'$ if, for instance, $q \leq n/4$.
\end{rema}

\begin{rema}\label{REM_EXIT_WORDS_SPECIALS}
	Given $w$ in $\LLL$ of length $n$ with minimal step $q$,
		let $z = p w^{q\ast r} s$ be an exit word,
		noting that $|z| = n + q(r-1) + |s| + |p| \geq n + 2$.
	Let $u = z_{[2,n+1]}$ and $v = z_{[|z|-n,|z|-1]}$.
	By definition, $u$ and $v$ are subwords of $w^{q\ast 2}\in \LLL$,
		so let their respective positions in $w^{q\ast 2}$ be
		$i>1$ and $j\leq q$, 
		noting that the word at position $1$ is $w$ which is also in position $q+1$.
	Let $a$ be the letter such that the left extension $au$ satisfies $au = w^{q\ast 2}_{[i-1,i+n-1]}$.
	By definition of an exit word,
		if $a'$ is the letter satisfying $a'u = z_{[1,n+1]}$,
		then $a'\neq a$.
	Likewise if $b$ is the letter satisfying $vb = w^{q\ast 2}_{[j,j+n]}$,
		then $vb' = z_{[|z|-n,|z|]}$ where $b'\neq b$.
	Furthermore,
		the words $a'u$, $vb'$ and the parameter $r$ uniquely define $z$.
	We can see this by using the notation of the previous proof of Corollary~\ref{CorExitOverlap}.
	Write $L$ for the self-avoiding circuit in the Rauzy graph $\Gamma_n^*$ with the property that $w^{q\ast 2}$ (with $q$ minimal)
		corresponds to beginning at $w$,
		traversing this circuit once, and ending back at $w$.
	Then we have seen that an exit word $pw^{q\ast r}s$ corresponds to starting at a vertex,
		taking an edge not in $L$ but whose other endpoint is in $L$, traversing $L$ until we reach $w$,
		following the circuit $L$ $r-1$ times, then moving to another vertex of $L$, and then taking one edge not in $L$.
		(The last portion of the path after the $r-1$ iterations of $L$ follows $L$ only partway and does not touch $w$,
		since Lemma~\ref{LemMinRep} mandates that the traversal only touch $w$ a total of $r$ times.)
	The words $a'u$ and $vb'$ above indicate the entrance and exit edges to and from $L$,
		and the parameter $r$ indicates the number of traversals of $L$.
\end{rema}
	
\begin{lemm}\label{LemExitCount}
	Under RBC with related constant $K$ and large enough word $w$ in the language,
		the set of exit words $\XXX$ of minimal valid step $q$ for $w$ 
		contains at most $2K^2$ elements.
\end{lemm}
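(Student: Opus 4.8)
The plan is to encode each exit word by a bounded amount of combinatorial data and then count. By Remark~\ref{REM_EXIT_WORDS_SPECIALS}, if $q$ is the minimal valid step for $w\in\LLL_n$, an exit word $z=p\,w^{q\ast r}\,s$ for $w$ with step $q$ is recovered uniquely from the triple consisting of its \emph{entrance edge} $a'u$, its \emph{exit edge} $vb'$, and the repetition count $r=|z|_w$; here $u=z_{[2,n+1]}$ and $v=z_{[|z|-n,|z|-1]}$ are length-$n$ subwords of $w^{q\ast 2}$, hence vertices of the vertex-self-avoiding circuit $L$ in the full-alphabet Rauzy graph $\Gamma_n^*$ determined by $w^{q\ast 2}$ (Corollary~\ref{COR_NO_REPEAT}), while $a'u$ and $vb'$ are, respectively, an in-edge of $u$ and an out-edge of $v$ that do not lie on $L$. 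Thus it suffices to bound separately the number of entrance edges that occur, the number of exit edges that occur, and, for a fixed admissible pair of these, the number of admissible values of $r$.

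The entrance edges are bounded by the growth constant. Since $a'$ differs from the letter of $w^{q\ast 2}$ that precedes $u$, the word $u$ has at least two left extensions, so $u\in\LLL_n^\ell$; and because $L$ is vertex-self-avoiding it contains exactly one in-edge of $u$, and this edge is not $a'u$. Hence the entrance edges form a subset of $\{\,au:\ u\in\LLL_n^\ell\text{ a vertex of }L,\ a\in Ex^\ell(u),\ au\text{ not the }L\text{-edge into }u\,\}$, which has cardinality $\sum_u(|Ex^\ell(u)|-1)$, the sum over left-special $u$ of length $n$ lying on $L$, and this is at most $\sum_{u\in\LLL_n^\ell}(|Ex^\ell(u)|-1)=p(n+1)-p(n)$. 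For $|w|=n$ large enough this equals $K$, by Lemma~\ref{LEM_RBC_then_ECG} together with~\eqref{EQ_GROWTH_AND_EXTENSIONS}. The mirror-image argument gives at most $K$ exit edges.

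It remains to bound, for each fixed admissible pair $(a'u,vb')$, the number of values of $r$ yielding an exit word, and \textbf{this is the main obstacle}, being the only place the RBC hypothesis is genuinely used. Once $u$ and $v$ are fixed, the position of $u$ on $L$ relative to $w$ determines $|p|$, and then $a'$ together with the arc of $L$ from $u$ to $w$ determines $p$ (and symmetrically $vb'$ determines $s$), so the only candidates are the words $z_r=p\,w^{q\ast r}\,s$, $r\ge 1$, whose lengths strictly increase with $r$; the issue is for how many $r$ one has $z_r\in\LLL$. The exit conditions force the core $z_{r,[2,|z_r|-1]}$ to be a subword of the purely periodic sequence $w^{q\ast\infty}$, so $z_r$ corresponds to a path in $\Gamma_n$ that enters the circuit $L$ through the off-$L$ edge $a'u$, runs along $L$ (passing $w$ exactly $r$ times), and leaves through the off-$L$ edge $vb'$. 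By part~2 of Lemma~\ref{LEM_RBC_UNIQUENESS} a large left-special word has a unique right-extension to a left-special word of each larger length, and symmetrically on the right; pushing these uniqueness statements along $L$ constrains how the off-periodic prefix $a'u$ and the off-periodic suffix $vb'$ can be simultaneously embedded in a word of $\LLL$ around an intervening periodic block, and the two constraints are compatible for at most two block-lengths, i.e.\ at most two values of $r$. Concretely, I would assume three counts $r_1<r_2<r_3$ all occur, work inside the longest word $z_{r_3}$, and track the occurrences of the left-special word $u$ along $L$ (all preceded by the periodic letter except the single entrance occurrence, preceded by $a'$) together with those of $v$, deriving from Lemma~\ref{LEM_RBC_UNIQUENESS} that the one-sided extension data carried along $L$ cannot accommodate an off-$L$ entrance and an off-$L$ exit at two different scales, a contradiction. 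Multiplying the three bounds gives $|\XXX|\le K\cdot K\cdot 2=2K^{2}$, as claimed.
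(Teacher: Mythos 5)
Your outer counting follows the paper's proof exactly: encode each exit word by the triple $(a'u,\,vb',\,r)$ via Remark~\ref{REM_EXIT_WORDS_SPECIALS}, bound the entrance edges by $K$ because each candidate $u$ is left-special and the off-$L$ in-edges of the $L$-vertices are counted by $\sum_u(|Ex^\ell(u)|-1)\le p(n+1)-p(n)=K$, do the mirror argument for exit edges, and multiply by the number of admissible $r$. That part is correct (and in fact slightly more explicit than the paper about why the circuit-edge into $u$ must be excluded from the count).

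The gap is exactly where you flag "the main obstacle": the claim that for a fixed pair $(p,s)$ at most two values of $r$ give a word of $\LLL$ is asserted, but the argument offered for it is a plan ("I would assume three counts $r_1<r_2<r_3$ all occur, \dots track the occurrences of the left-special word $u$ along $L$ \dots deriving \dots a contradiction"), not a proof. As written, the plan is also aimed at the wrong word: tracking the length-$n$ vertex $u$ and its one-sided extensions along $L$ does not by itself produce any violation of RBC, since $u$ being left-special with several left extensions is perfectly consistent with regularity. The paper closes this step by supposing $z=pw^{q\ast r}s$ and $z'=pw^{q\ast r'}s$ are both in $\LLL$ with $r'\ge r+2$ and examining the single \emph{long} word $y=z_{[2,|z|-1]}$ (the entire interior of the shorter exit word). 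Because $r'-r\ge 2$, the word $y$ occurs inside $z'$ at positions $2,\,2+q,\dots,2+(r'-r)q$, and comparing the extensions of these occurrences shows that $y$ is bispecial and admits \emph{two distinct} right extensions $yb$ and $yb'$ that are each left-special --- contradicting regular bispeciality (equivalently, the uniqueness in part~2 of Lemma~\ref{LEM_RBC_UNIQUENESS} applied to $y$). Note that two counts differing by at least $2$ already suffice for the contradiction; you do not need three. Without identifying this word $y$ and exhibiting its two left-special right extensions, the factor $2$ in $2K^2$ is unsupported, so the proof is incomplete at its one genuinely RBC-dependent step.
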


\begin{proof}
	We begin by proving that if $z = pw^{q\ast r}s \in \XXX$ and $z'= pw^{q \ast r'} s \in \XXX$,
		$r \leq r'$,
		have the same prefix and suffix, but possibly different middle repetition,
		then either $r' = r$ or $r' = r+1$.
	Suppose by contradiction that $r' \geq r +2$ and consider the word
		$y = z_{[2,|z|-1]}$ obtained by removing the first and last letters of $z$.
	Note that
		$$
			y = z'_{[2,1+|y|]} = z'_{[2+q,1 + q + |y|]} = \dots = z'_{[2+(r'-r)q,1 + (r'-r)q + |y|]}
		$$
		
	We will argue that $y$ is bispecial but not regular bispecial, violating RBC.
	First $y$ is left special, as the occurrences of $y$ at positions $2$ and $2+q$ in $z'$ have different left extensions.
	Likewise, $y$ is right special, as the occurrences of $y$ at positions $2 + (r'-r-1)q$ and $2+ (r'-r)q$ have different right extensions.
	Therefore $y$ is bispecial.
	Let $b$ be defined by $yb = z_{[2,|z|]}$, noting that $yb = z'_{[2+(r'-r)q,2 + (r'-r)q + |y|]}$ as well.
	Because $z$ is an exit word, the left extension $z$ of $yb$ must differ from the left extension $z'_{[1+(r'-r)q,2 + (r'-r)q + |y|]}$
		and so $yb$ is left special.
	However, if $b'$ is defined by $yb' = z'_{[2,2+|y|]}$ then $yb' = z'_{[2+q,2 + q + |y|]}$ as well and $b'\neq b$.
	Similarly $z'_{[1,2+|y|]}$ and $z'_{[1+q,2 + q + |y|]}$ are distinct left extensions of $yb'$.
	Because $y$ admits two left special right extensions, it is not regular bispecial as claimed.
	
		Recall from Remark~\ref{REM_EXIT_WORDS_SPECIALS} that an exit word $z$ for $w$ is uniquely determined
			by the words $z_{[1,n+1]} = a'u$, $z_{[|z|-n,|z|]} = vb'$, and the parameter $r$.
		Note that any such $u$ is left-special because, in addition to the left extension $a'u$,
			there is also another left extension $au$ which is a subword of $w^{q\ast 2}$
				(it corresponds to an edge which follows the circuit to which $w^{q\ast 2}$ corresponds).
		So by \eqref{EQ_GROWTH_AND_EXTENSIONS}, there are at most $K$
			choices for $a'u$.
			Likewise there are at most $K$ choices for $vb'$ and therefore at most $K^2$ choices for pairs $(a'u,vb')$.
			Each such pair deremines
			 $p$ and $s$ in any representation $z=pw^{q\ast r}s$ by the
			method described in Remark~\ref{REM_EXIT_WORDS_SPECIALS}.
			(One enters the circuit $L$ using the edge described by $a'u$, moves to $w$,
				makes $r-1$ traversals of $L$,
				and leaves through an edge described by $vb'$.
				Then $p$ is the prefix of any such $z$ up until the first location of $w$.)
		Once $p$ and $s$ are determined, there are at most two choices for $r$, and 
			therefore there are at most $2K^2$ exit words as desired.
\end{proof}

\section{Just graphs and colors}\label{SEC_GRAPHS}

	Instead of remembering where $\Lambda$'s and the corresponding $\CCC$ rules
		come from (the underlying shift and ergodic measures respectively), in this section we will
		define abstract graphs and coloring functions satisfying a set of properties.
		We will then show
		(in  Sections \ref{SEC_NLOOPS_MOVES} -- \ref{SEC_NLOOPS_NEWG} and proven in the appendix) that such graphs cannot have more than
			$\frac{K+1}{2}$ $N$-loops of different colors (see Corollary~\ref{cor: main_corollary}).
	In Section \ref{SEC_2LOOPS}, we will first show this conclusion for $2$-loops.
	The $2$-loop case is presented first (although it falls under the general case)
		because it is easier to prove and conveys
		some of the principal arguments used in the general case.
	 
	 These results will be used in Section \ref{SEC_ERG_TO_GRAPH},
	 	where we will construct special Rauzy graphs and coloring functions
	 		(from subshifts with that obey RBC)
	 		satisfying the properties outlined in this section.
	 	We will then conclude that such graphs cannot support more than $\frac{K+1}{2}$
	 		$N$-loops of different colors, and deduce our main theorem, Theorem~\ref{THM_MAIN}.
			
	\begin{nota}\label{NOTA1}
		With or without decoration (primes, tildes, etc.), $\Lambda$ will be a directed (multi)graph such that, for given integers $K>2$ and $1\leq K_\ell,K_r \leq K$,
				\begin{enumerate}
					\item $\Lambda$ has $K_\ell + K_r$ vertices,
					\item $K_\ell$ of the vertices are \emph{left special},
						meaning they each have at least two incoming edges and exactly one outgoing edge,
					\item $K_r$ of the vertices are \emph{right special}, meaning they each have exactly one incoming edge and at least two outgoing edges,
					\item $\Lambda$ has $K+K_\ell+K_r$ edges.
							It is possible that distinct edges $e$ and $e'$ share the same initial and terminal vertices (because this
								is a multigraph).
					\item $\Lambda$ is \emph{strongly connected}, meaning for any two vertices $u$ and $v$ in $\Lambda$, there exists a directed path in $\Lambda$
							from $u$ to $v$.
				\end{enumerate}
				Items (1)-(5) above imply in particular that $\Lambda$ cannot have any ``self-loops,''
					meaning edges that begin and end at the same vertex.
				Indeed, if $u$ had a self-loop and were, for example, left special,
					then there would not be a path from $u$ to any other distinct vertex $v$ in $\Lambda$.
				
		With each graph, there will be an associated coloring rule $\CCC$ on the vertices and edges of $\Lambda$ that
			takes values in $\NN\cup\{0\}$.
		By this we mean:
				\begin{enumerate}
					\setcounter{enumi}{5}
					\item $\CCC(v)\in \NN\cup\{0\}$ is defined for each vertex $v$ in $\Lambda$,
					\item 
						$\CCC(e) \in \NN\cup\{0\}$ is defined for each edge $e$ in $\Lambda$,
					\item 
						if $\CCC(e) \neq 0$ for an edge $e$ from $u$ to $v$ in $\Lambda$, then $\CCC(u) = \CCC(v) = \CCC(e)$.
				\end{enumerate}
	\end{nota}
	
		\begin{rema}
			In this section, we are treating $\CCC$ as a function whose image is a subset of $\NN\cup\{0\}$.
			However, when we justify these rules and definitions for graphs derived from an appropriate subshift $X$ in Section \ref{SEC_ERG_TO_GRAPH},
				$\CCC$ will have image $\EEE(X)\cup \{\textbf{0}\}$.
			Upon first reading here, it may help to associate the number $0$ to the symbol $\textbf{0}$ (``not colored'')
				and associate any positive integer to a measure in $\EEE(X)$.
		\end{rema}

	We impose the following rules on $\CCC$.
			
	\begin{rules}[Rules concerning one graph]\label{RULE1}
		For $\Lambda$, $\CCC$, and some integer $E>0$, we require that:
			\begin{enumerate}
				\item $\CCC$ only assigns values in $\{0,1,\dots,E\}$.
				\item For each $\nu \in \{1,\dots,E\}$, there exists at least one left special $u$ and at least one right special $v$
						such that $\CCC(u) = \CCC(v) = \nu$.
				\item  If for a vertex $v$ we have $\CCC(v) \neq 0$, 
						then there exist edges $e,e'$ such that $e$ ends at $v$, $e'$ begins at $v$ and $\CCC(e)=\CCC(e') = \CCC(v)$.
						As a consequence, $\CCC(u) = \CCC(w) = \CCC(v)$ as well,
							where $e$ begins at $u$ and $e'$ ends at $w$.
						Note that by our assumptions in Notation \ref{NOTA1}, $v \notin \{u,w\}$, although it is possible that $u=w$.
				\item If $\CCC(e) \neq 0$ for an edge $e$ in $\Lambda$,
					then there exists a closed directed path (that is, a directed circuit) in $\Lambda$ containing the edge $e$
							such that each edge $e'$ in the path satisfies $\CCC(e') = \CCC(e)$.
			\end{enumerate}
	\end{rules}
	
	\subsection{Regular Bispecial Moves}\label{SSEC_RBS_DEF}

	We now consider \emph{regular bispecial (RBS)} moves to transition from one graph $\Lambda$ to a new graph $\Lambda'$.
	These moves give us dynamics on the sets of graphs and coloring functions, and only RBS moves are allowed to transition between graphs.
	(These moves have counterparts on special Rauzy graphs constructed from subshifts, as will be shown in Section \ref{SEC_ERG_TO_GRAPH}).
	An edge $e_0$ from $u$ to $v$ is said to be \emph{bispecial} if $u$ is left special and $v$ is right special.
	Note that if $e_0$ is bispecial, then it is the unique edge from $u$ to $v$.
		Let $e_1,\dots e_L$ be the distinct edges that end at $u$
			and $\tilde{e}_1,\dots,\tilde{e}_R$ be the distinct edges beginning at $v$.
		It may be the case that $e_i = \tilde{e}_j$ for some $i$'s and $j$'s.
		For each $i=1, \dots, L$, we define $y_i$ to be the vertex at the beginning of $e_i$
			and for each $j$ we let $z_j$ be the ending vertex of $\tilde{e}_j$.
			
	We will now define an \emph{RBS move} from $\Lambda$ to a graph $\Lambda'$.
	The rigorous definition will follow this paragraph, but informally we will be
		removing the edge $e_0$ from the graph as well as the vertices $u$ and $v$,
		reattaching $v$ as the ending vertex of $e_{i_0}$ for a single $i_0$ with $1\leq i_0 \leq L$,
		reattaching $u$ as the beginning vertex of $\tilde{e}_{j_0}$ for a single $j_0$ with $1\leq j_0 \leq R$,
		reattaching the $e_i$'s, $i\neq i_0$, to end at $u$, reattaching the $\tilde{e}_j$'s, $j\neq j_0$,
			to begin at $v$, and then replacing $e_0$ with an edge from $v$ to $u$.
	See Figure \ref{FIG_RBS} for an example with $L=R = 3$ and $i_0 = j_0 = 1$.
		
	For the full definition, we first define an auxiliary graph $\tilde{\Lambda}$.
		This graph $\tilde{\Lambda}$ has the vertices from $\Lambda$ except $u$ and $v$,
			and we add new vertices $x_1,\dots, x_L,\tilde{x}_1,\dots,\tilde{x}_R$.
		For each edge $e\not\in\{e_0,e_1,\dots,e_L,\tilde{e}_1,\dots,\tilde{e}_R\}$ from $\Lambda$
			we have an edge in $\tilde{\Lambda}$ with the same beginning and ending vertices.
		For each $e_i$, $1\leq i \leq L$, we include in $\tilde{\Lambda}$ an edge that begins at $y_i$ but ends at $x_i$
			if $e_i \not\in \{\tilde{e}_1,\dots,\tilde{e}_R\}$.
		If $e_i = \tilde{e}_j$ for some $1\leq j \leq R$, then we include in $\tilde{\Lambda}$ an edge
			beginning at $\tilde{x}_j$ and ending at $x_i$.
		For each $\tilde{e}_j$, $1\leq j \leq R$, that is not an element of $\{e_1,\dots,e_L\}$
			we include an edge in $\tilde{\Lambda}$ from $\tilde{x}_j$ to $z_j$.
		Then for the single $1\leq i_0 \leq L$ and $1 \leq j_0 \leq R$, we replace $e_0$ in $\Lambda$
			with an edge from $x_{i_0}$ to $\tilde{x}_{j_0}$ in $\tilde{\Lambda}$.
		Note that this graph is not strongly connected as no edge begins at $x_i \neq x_{i_0}$ and
			no edge ends at $\tilde{x}_j\neq \tilde{x}_{j_0}$.
		
	To create $\Lambda'$, we have the same vertices as $\Lambda$.
	For each edge $e$ in $\tilde{\Lambda}$ let $q$ and $q'$
		be its beginning vertex and ending vertex respectively.
	We then include in $\Lambda'$ an edge from $\Psi(q)$ to $\Psi(q')$
		where
		$\Psi$ is a function from the vertex set of $\tilde{\Lambda}$ to the vertex
		set of $\Lambda'$ given by:
			$$
				\Psi(p) = \begin{cases}
					u,& p = \tilde{x}_{j_0}\mbox{ or } p = x_i \mbox{ for } i\neq i_0,\\
					v,& p = x_{i_0}\mbox{ or } p = \tilde{x}_j \mbox{ for } j\neq j_0,\\
					p, & \mbox{othwerwise.}
				\end{cases}
			$$
	In particular, if an edge $e$ was copied from $\Lambda$ to $\tilde{\Lambda}$ without
		modification then that edge will remain unaltered in $\Lambda'$.
	Moreover, each edge $e$ in $\Lambda$ may be naturally associated to an edge in $\Lambda'$
		and we refer to this edge also as $e$ without ambiguity.
		
	An RBS move is a transformation as described above such that the resulting graph $\Lambda'$
		remains strongly connected.
	For example, if $y_2 = y_3 = z_1$ and $y_1 = z_2 = z_3$ in Figure \ref{FIG_RBS} then
		the choice $i_0 = j_0 = 1$ shown would not define an RBS move.
	However, the choice $i_0 = 2,j_0 = 1$ would define an allowed RBS move.

	\begin{figure}[t]
		{\centering
			\begin{tikzpicture}[vertstyle/.style={draw=black, top color = black!5, bottom color = black!35},
			edgestyle/.style={->,thick,black}, dotedgestyle/.style={thick,dotted,black}]
	\def\asp{0.13}

	\draw [thick,black!20,fill=black!10] (-6*\asp, 9*\asp) rectangle (36*\asp,-9*\asp);
	\draw [thick,black!20,fill=black!10] (44*\asp, 9*\asp) rectangle (86*\asp,-9*\asp);
	
	\draw [dotedgestyle] (0,6*\asp) -- +(-5*\asp, 2*\asp);
	\draw [dotedgestyle] (0,6*\asp) -- +(-5*\asp, -2*\asp);
	\draw [dotedgestyle] (0,-6*\asp) -- +(-5*\asp, 2*\asp);
	\draw [dotedgestyle] (0,-6*\asp) -- +(-5*\asp, -2*\asp);
	\draw [dotedgestyle] (0,0*\asp) -- +(-5*\asp, 2*\asp);
	\draw [dotedgestyle] (0,0*\asp) -- +(-5*\asp, -2*\asp);

	\draw [edgestyle] (1.4*\asp,4.6*\asp) -- (8.6*\asp,1.4*\asp);
	\draw [edgestyle] (2*\asp,0*\asp) -- (8*\asp,0*\asp);
	\draw [edgestyle] (1.4*\asp,-4.6*\asp) -- (8.6*\asp,-1.4*\asp);

	\draw [edgestyle] (12*\asp,0) -- +(6*\asp,0);
	
	\draw [edgestyle] (21.4*\asp,1.4*\asp) -- (28.6*\asp,4.6*\asp);
	\draw [edgestyle] (22*\asp,0) -- (28*\asp,0);
	\draw [edgestyle] (21.4*\asp,-1.4*\asp) -- (28.6*\asp,-4.6*\asp);
	
	\draw [dotedgestyle] (30*\asp,6*\asp) -- +(5*\asp, 2*\asp);
	\draw [dotedgestyle] (30*\asp,6*\asp) -- +(5*\asp, -2*\asp);
	\draw [dotedgestyle] (30*\asp,-6*\asp) -- +(5*\asp, 2*\asp);
	\draw [dotedgestyle] (30*\asp,-6*\asp) -- +(5*\asp, -2*\asp);
	\draw [dotedgestyle] (30*\asp,0*\asp) -- +(5*\asp, 2*\asp);
	\draw [dotedgestyle] (30*\asp,0*\asp) -- +(5*\asp, -2*\asp);

	\shade [vertstyle] (0,6*\asp) circle (2*\asp) node {$y_1$};
	\shade [vertstyle] (0,0) circle (2*\asp) node {$y_2$};
	\shade [vertstyle] (0,-6*\asp) circle (2*\asp) node {$y_3$};
	
	\shade [vertstyle] (10*\asp,0*\asp) circle (2*\asp) node {$u$};
	\shade [vertstyle] (20*\asp,0*\asp) circle (2*\asp) node {$v$};
	
	\shade [vertstyle] (30*\asp,6*\asp) circle (2*\asp) node {$z_3$};
	\shade [vertstyle] (30*\asp,0) circle (2*\asp) node {$z_2$};
	\shade [vertstyle] (30*\asp,-6*\asp) circle (2*\asp) node {$z_1$};
	
	\draw (15*\asp, - 11*\asp) node {\scalebox{1.5}{$\Lambda$}};

	\draw [->,very thick, black!20] (17*\asp, - 11*\asp) -- (63*\asp, - 11*\asp);
	\draw (40*\asp,-13*\asp) node {RBS};

	\draw [dotedgestyle] (50*\asp,6*\asp) -- +(-5*\asp, 2*\asp);
	\draw [dotedgestyle] (50*\asp,6*\asp) -- +(-5*\asp, -2*\asp);
	\draw [dotedgestyle] (50*\asp,-6*\asp) -- +(-5*\asp, 2*\asp);
	\draw [dotedgestyle] (50*\asp,-6*\asp) -- +(-5*\asp, -2*\asp);
	\draw [dotedgestyle] (50*\asp,0*\asp) -- +(-5*\asp, 2*\asp);
	\draw [dotedgestyle] (50*\asp,0*\asp) -- +(-5*\asp, -2*\asp);

	\draw [edgestyle] (52*\asp,6*\asp) -- (58*\asp,6*\asp);
	\draw [edgestyle] (52*\asp,0*\asp) -- (68.3*\asp,-5*\asp);
	\draw [edgestyle] (52*\asp,-6*\asp) -- (68*\asp,-6*\asp);

	\draw [edgestyle] (61*\asp,4.3*\asp) -- (69*\asp,-4.3*\asp);
	
	\draw [edgestyle] (62*\asp,6*\asp) -- (80*\asp,6*\asp);
	\draw [edgestyle] (61.7*\asp,5*\asp) -- (78*\asp,0);
	\draw [edgestyle] (72*\asp,-6*\asp) -- (78*\asp,-6*\asp);
	
	\draw [dotedgestyle] (80*\asp,6*\asp) -- +(5*\asp, 2*\asp);
	\draw [dotedgestyle] (80*\asp,6*\asp) -- +(5*\asp, -2*\asp);
	\draw [dotedgestyle] (80*\asp,-6*\asp) -- +(5*\asp, 2*\asp);
	\draw [dotedgestyle] (80*\asp,-6*\asp) -- +(5*\asp, -2*\asp);
	\draw [dotedgestyle] (80*\asp,0*\asp) -- +(5*\asp, 2*\asp);
	\draw [dotedgestyle] (80*\asp,0*\asp) -- +(5*\asp, -2*\asp);

	\shade [vertstyle] (50*\asp,6*\asp) circle (2*\asp) node {$y_1$};
	\shade [vertstyle] (50*\asp,0) circle (2*\asp) node {$y_2$};
	\shade [vertstyle] (50*\asp,-6*\asp) circle (2*\asp) node {$y_3$};
	
	\shade [vertstyle] (70*\asp,-6*\asp) circle (2*\asp) node {$u$};
	\shade [vertstyle] (60*\asp,6*\asp) circle (2*\asp) node {$v$};
	
	\shade [vertstyle] (80*\asp,6*\asp) circle (2*\asp) node {$z_3$};
	\shade [vertstyle] (80*\asp,0) circle (2*\asp) node {$z_2$};
	\shade [vertstyle] (80*\asp,-6*\asp) circle (2*\asp) node {$z_1$};

	\draw (65*\asp, - 11*\asp) node {\scalebox{1.5}{$\Lambda'$}};

\end{tikzpicture}
			
		}
		\caption{A regular bispecial move.}\label{FIG_RBS}
	\end{figure}
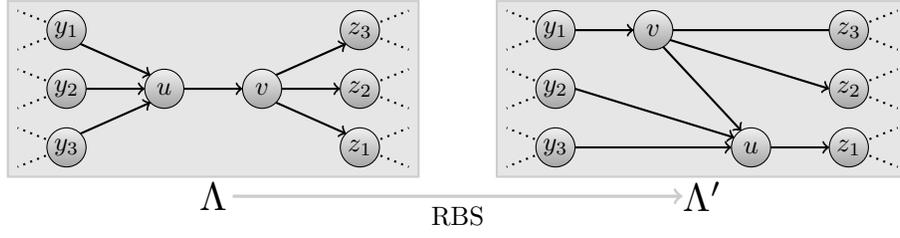
	
	The coloring rule $\CCC'$ for $\Lambda'$ will be related to the rule $\CCC$ for $\Lambda$.
	\begin{rules}[RBS moves and colors]\label{RULE2}
		Let $\Lambda$ have coloring $\CCC$ and let $\Lambda'$ be the result of the RBS move
			with 
			the vertex and edge names defined in this section.
		Then coloring rule $\CCC'$ for $\Lambda'$ must satisfy the following.
		\begin{enumerate}
			\item If $q$ is a vertex in $\Lambda'$ not equal to $u$ or $v$,
				then
					$\CCC'(q) = \CCC(q).$
			\item 
					If $e \neq e_0$ is an edge in $\Lambda$,
						then $\CCC'(e) = \CCC(e)$.
		\end{enumerate}
		Furthermore, both $\CCC$ and $\CCC'$ must satisfy Rules List \ref{RULE1}.
	\end{rules}
	
	\subsection{Just 2-loops}\label{SEC_2LOOPS}
	
	In this section, we give a special case of our main result on abstract graphs:
		under the rules listed in the last section,
		$\Lambda$ can have distinct colors on at most $\frac{K+1}{2}$ many $2$-loops. (See Corollary~\ref{COR2LOOPS}.)
	The argument we employ will be referred to as a {\it separation argument}.
	The basic strategy is to show that if $\mathcal{C}$ assigns distinct colors to a collection of $2$-loops,
		then the graph produced from $\Lambda$ by removing these loops must be connected
		(that is, the loops cannot separate the graph).
	A counting argument then shows that the number of such loops is at most $\frac{K+1}{2}$.
	
	Given Rules List \ref{RULE1}, the minimal preimage of $\nu\in \{1,\dots,E\}$ comes from
		a \emph{2-loop}, meaning
		a pair of vertices $u$ and $v$, with $u$ left special and $v$ right special,
			an edge $e_0$ from $u$ to $v$ and an edge $e_1$ from $v$ to $u$ such that
			$$ \CCC^{-1}(\nu) = \{u ,v, e_0, e_1\}.$$
	We recall the edges $e_1,\dots e_L,\tilde{e}_1,\dots, \tilde{e}_R$
		and vertices $y_1,\dots,y_L$ and $z_1,\dots,z_R$ from Section \ref{SSEC_RBS_DEF}
		and assume in this case that $e_1 = \tilde{e}_1$.
	We also note that $y_1 = v$ and $z_1 = u$. 
	As before, an RBS move is defined by the choices $1\leq i_0\leq L$ and $1\leq j_0 \leq R$.
	
	There are exactly two types of allowed RBS moves on the edge $e_0$
		:
	\begin{enumerate}
		\item 
			If $i_0 = j_0=1$, the RBS move actually preserves the $2$-loop from $\Lambda$ to $\Lambda'$.
				Here the roles of $e_0$ and $e_1$ switch as $e_0$ is from $v$ to $u$ in $\Lambda'$ and
					$e_1$ is from $u$ to $v$.
			
		\item 
			If $i_0 \neq 1$ and $j_0 \neq 1$, the $2$-loop becomes ``undone'' meaning both edges
					$e_0$ and $e_1$ are from $v$ to $u$ in $\Lambda'$.
	\end{enumerate}
	If only one of $i_0$ or $j_0$ is equal to $1$, then one can check that the resulting graph $\Lambda'$
		is not strongly connected.
	
	See Figure \ref{FIG_RBS_2LOOP} for an example of each type of move with $L= R = 3$.

	For graphs constructed from subshifts in Section \ref{SEC_ERG_TO_GRAPH},
		a 2-loop with nonzero color is preserved (and keeps its color) for a number of RBS moves of the first type above until
		the RBS move of the second type occurs.
	In the resulting graph, the color must pass along at least one incoming and at least one outgoing edge 
		into the rest of the graph.
	As Rules List \ref{RULE2} shows, the color must have been passed along those analogous edges in $\Lambda$ as well.
		We will explain the subtleties that complicate this idea in Section \ref{SEC_ERG_TO_GRAPH}.
		However for now we will
		state what we need in terms of colorings and graphs.

	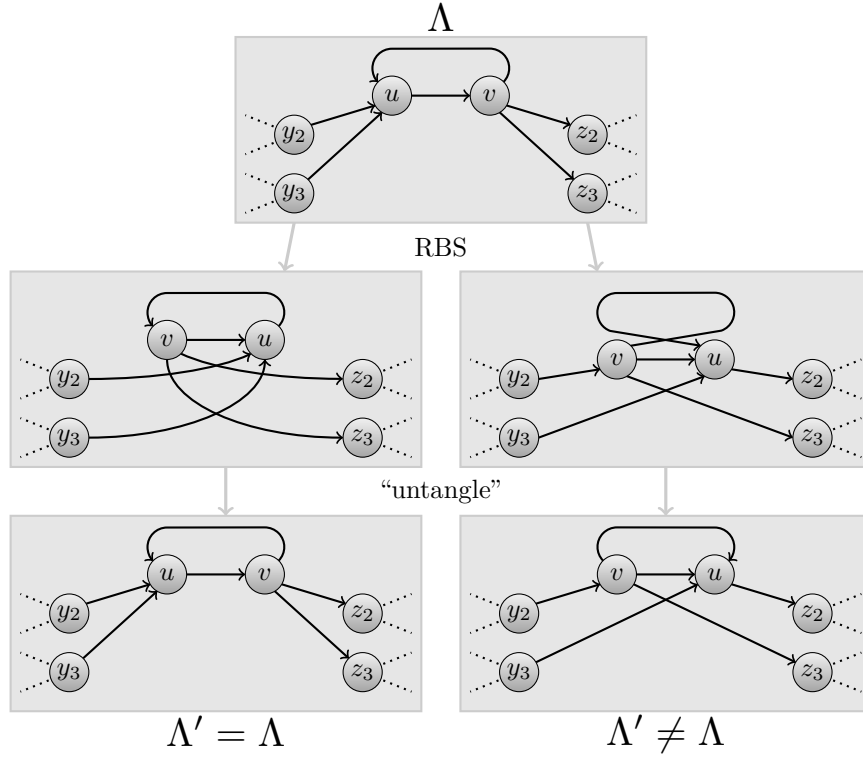
\begin{figure}[t]
		\begin{center}
			\begin{tikzpicture}[vertstyle/.style={draw=black, top color = black!5, bottom color = black!35},
			edgestyle/.style={->,thick,black}, dotedgestyle/.style={thick,dotted,black},
			gbox/.style={thick,black!20,fill=black!10}]
	\def\asp{0.13}

	\draw [gbox] (-6*\asp, 6*\asp) rectangle (36*\asp,-13*\asp);
	\draw [gbox] (17*\asp, -18*\asp) rectangle +(42*\asp,-20*\asp);
	\draw [gbox] (-29*\asp, -18*\asp) rectangle +(42*\asp,-20*\asp);
	\draw [gbox] (17*\asp, -43*\asp) rectangle +(42*\asp,-20*\asp);
	\draw [gbox] (-29*\asp, -43*\asp) rectangle +(42*\asp,-20*\asp);
	
	\draw [->,very thick, black!20] (0*\asp, -13*\asp) -- (-1*\asp, -18*\asp);
	\draw [->,very thick, black!20] (30*\asp, -13*\asp) -- (31*\asp, -18*\asp);
	\draw (15*\asp,-15.5*\asp) node {RBS};

	\draw [->,very thick, black!20] (38*\asp, -38*\asp) -- +(0, -5*\asp);
	\draw [->,very thick, black!20] (-7*\asp,  -38*\asp) -- +(0, -5*\asp);
	\draw (15*\asp,-40.5*\asp) node {``untangle''};
	
	
	\draw [dotedgestyle] (0,-4*\asp) -- +(-5*\asp, 2*\asp);
	\draw [dotedgestyle] (0,-4*\asp) -- +(-5*\asp, -2*\asp);
	\draw [dotedgestyle] (0,-10*\asp) -- +(-5*\asp, 2*\asp);
	\draw [dotedgestyle] (0,-10*\asp) -- +(-5*\asp, -2*\asp);

	\draw [edgestyle] (1.7*\asp,-3*\asp) -- (8.3*\asp,-\asp);
	\draw [edgestyle] (1.4*\asp,-8.6*\asp) -- (9*\asp,-1.7*\asp);

	\draw [edgestyle] (12*\asp,0) -- +(6*\asp,0);
	
	\draw [edgestyle] (21.7*\asp,-\asp) -- (28.3*\asp,-3*\asp);
	\draw [edgestyle] (21*\asp,-1.7*\asp) -- (28.6*\asp,-8.6*\asp);
	
	\draw [dotedgestyle] (30*\asp,-4*\asp) -- +(5*\asp, 2*\asp);
	\draw [dotedgestyle] (30*\asp,-4*\asp) -- +(5*\asp, -2*\asp);
	\draw [dotedgestyle] (30*\asp,-10*\asp) -- +(5*\asp, 2*\asp);
	\draw [dotedgestyle] (30*\asp,-10*\asp) -- +(5*\asp, -2*\asp);
	
	\draw [edgestyle] (21.4*\asp,1.4*\asp) arc (-45:90:2*\asp) -- (10*\asp,4.8*\asp) arc (90:225:2*\asp);
	
	\shade [vertstyle] (0,-4*\asp) circle (2*\asp) node {$y_2$};
	\shade [vertstyle] (0,-10*\asp) circle (2*\asp) node {$y_3$};
	
	\shade [vertstyle] (10*\asp,0*\asp) circle (2*\asp) node {$u$};
	\shade [vertstyle] (20*\asp,0*\asp) circle (2*\asp) node {$v$};
	
	\shade [vertstyle] (30*\asp,-4*\asp) circle (2*\asp) node {$z_2$};
	\shade [vertstyle] (30*\asp,-10*\asp) circle (2*\asp) node {$z_3$};

	
	\draw [dotedgestyle] (-23*\asp,-29*\asp) -- +(-5*\asp, 2*\asp);
	\draw [dotedgestyle] (-23*\asp,-29*\asp) -- +(-5*\asp, -2*\asp);
	\draw [dotedgestyle] (-23*\asp,-35*\asp) -- +(-5*\asp, 2*\asp);
	\draw [dotedgestyle] (-23*\asp,-35*\asp) -- +(-5*\asp, -2*\asp);

	\draw [edgestyle] (-21*\asp,-29*\asp) arc (-90: -30: \asp*19.15 and 5.17*\asp);
	\draw [edgestyle] (-21*\asp,-35*\asp) arc (-90:0: \asp*18 and 8*\asp); 

	\draw [edgestyle] (-11*\asp,-25*\asp) -- +(6*\asp,0);
	
	\draw [edgestyle] (-11.6*\asp, -26.4*\asp) arc (210: 270: \asp*19.15 and 5.17*\asp);
	\draw [edgestyle] (-13*\asp, -27*\asp) arc (180:270: \asp*18 and 8*\asp);
	
	\draw [dotedgestyle] (7*\asp,-29*\asp) -- +(5*\asp, 2*\asp);
	\draw [dotedgestyle] (7*\asp,-29*\asp) -- +(5*\asp, -2*\asp);
	\draw [dotedgestyle] (7*\asp,-35*\asp) -- +(5*\asp, 2*\asp);
	\draw [dotedgestyle] (7*\asp,-35*\asp) -- +(5*\asp, -2*\asp);
	
	\draw [edgestyle] (-1.6*\asp,-23.6*\asp) arc (-45:90:2*\asp) -- (-13*\asp,-20.2*\asp) arc (90:225:2*\asp);
	
	\shade [vertstyle] (-23*\asp,-29*\asp) circle (2*\asp) node {$y_2$};
	\shade [vertstyle] (-23*\asp,-35*\asp) circle (2*\asp) node {$y_3$};
	
	\shade [vertstyle] (-13*\asp,-25*\asp) circle (2*\asp) node {$v$};
	\shade [vertstyle] (-3*\asp,-25*\asp) circle (2*\asp) node {$u$};
	
	\shade [vertstyle] (7*\asp,-29*\asp) circle (2*\asp) node {$z_2$};
	\shade [vertstyle] (7*\asp,-35*\asp) circle (2*\asp) node {$z_3$};

	
	\draw [dotedgestyle] (23*\asp,-29*\asp) -- +(-5*\asp, 2*\asp);
	\draw [dotedgestyle] (23*\asp,-29*\asp) -- +(-5*\asp, -2*\asp);
	\draw [dotedgestyle] (23*\asp,-35*\asp) -- +(-5*\asp, 2*\asp);
	\draw [dotedgestyle] (23*\asp,-35*\asp) -- +(-5*\asp, -2*\asp);

	\draw [edgestyle] (25*\asp,-29*\asp) -- (31.3*\asp,-28*\asp);
	\draw [edgestyle] (25*\asp,-35*\asp) -- (42*\asp,-28.7*\asp);

	\draw [edgestyle] (35*\asp,-27*\asp) -- +(6*\asp,0);
	
	\draw [edgestyle] (44.7*\asp,-28* \asp) -- (51*\asp,-29*\asp);
	\draw [edgestyle] (34*\asp,-28.7*\asp) -- (51*\asp,-35*\asp);
	
	\draw [dotedgestyle] (53*\asp,-29*\asp) -- +(5*\asp, 2*\asp);
	\draw [dotedgestyle] (53*\asp,-29*\asp) -- +(5*\asp, -2*\asp);
	\draw [dotedgestyle] (53*\asp,-35*\asp) -- +(5*\asp, 2*\asp);
	\draw [dotedgestyle] (53*\asp,-35*\asp) -- +(5*\asp, -2*\asp);
	
	\draw [edgestyle] (34.4*\asp, -25.6*\asp) -- (43.68*\asp, -24.05*\asp) 
			arc (-70:90:2*\asp)
		-- (33*\asp,-20.2*\asp) arc (90:250:2*\asp) -- (41.6*\asp,-25.6*\asp);
	
	\shade [vertstyle] (23*\asp,-29*\asp) circle (2*\asp) node {$y_2$};
	\shade [vertstyle] (23*\asp,-35*\asp) circle (2*\asp) node {$y_3$};
	
	\shade [vertstyle] (33*\asp,-27*\asp) circle (2*\asp) node {$v$};
	\shade [vertstyle] (43*\asp,-27*\asp) circle (2*\asp) node {$u$};
	
	\shade [vertstyle] (53*\asp,-29*\asp) circle (2*\asp) node {$z_2$};
	\shade [vertstyle] (53*\asp,-35*\asp) circle (2*\asp) node {$z_3$};


	\draw [dotedgestyle] (-23*\asp,-53*\asp) -- +(-5*\asp, 2*\asp);
	\draw [dotedgestyle] (-23*\asp,-53*\asp) -- +(-5*\asp, -2*\asp);
	\draw [dotedgestyle] (-23*\asp,-59*\asp) -- +(-5*\asp, 2*\asp);
	\draw [dotedgestyle] (-23*\asp,-59*\asp) -- +(-5*\asp, -2*\asp);

	\draw [edgestyle] (-21.3*\asp,-52*\asp) -- (-14.7*\asp,-50*\asp);
	\draw [edgestyle] (-21.6*\asp,-57.6*\asp) -- (-14*\asp,-50.7*\asp);

	\draw [edgestyle] (-11*\asp,-49*\asp) -- +(6*\asp,0);
	
	\draw [edgestyle] (-1.3*\asp,-50*\asp) -- (5.3*\asp,-52*\asp);
	\draw [edgestyle] (-2*\asp,-50.7*\asp) -- (5.6*\asp,-57.6*\asp);
	
	\draw [dotedgestyle] (7*\asp,-53*\asp) -- +(5*\asp, 2*\asp);
	\draw [dotedgestyle] (7*\asp,-53*\asp) -- +(5*\asp, -2*\asp);
	\draw [dotedgestyle] (7*\asp,-59*\asp) -- +(5*\asp, 2*\asp);
	\draw [dotedgestyle] (7*\asp,-59*\asp) -- +(5*\asp, -2*\asp);
	
	\draw [edgestyle] (-1.6*\asp,-47.6*\asp) arc (-45:90:2*\asp) -- (-13*\asp,-44.2*\asp) arc (90:225:2*\asp);
	
	\shade [vertstyle] (-23*\asp,-53*\asp) circle (2*\asp) node {$y_2$};
	\shade [vertstyle] (-23*\asp,-59*\asp) circle (2*\asp) node {$y_3$};

	\shade [vertstyle] (-13*\asp,-49*\asp) circle (2*\asp) node {$u$};
	\shade [vertstyle] (-3*\asp,-49*\asp) circle (2*\asp) node {$v$};
	
	\shade [vertstyle] (7*\asp,-53*\asp) circle (2*\asp) node {$z_2$};
	\shade [vertstyle] (7*\asp,-59*\asp) circle (2*\asp) node {$z_3$};



	\draw [dotedgestyle] (23*\asp,-53*\asp) -- +(-5*\asp, 2*\asp);
	\draw [dotedgestyle] (23*\asp,-53*\asp) -- +(-5*\asp, -2*\asp);
	\draw [dotedgestyle] (23*\asp,-59*\asp) -- +(-5*\asp, 2*\asp);
	\draw [dotedgestyle] (23*\asp,-59*\asp) -- +(-5*\asp, -2*\asp);

	\draw [edgestyle] (24.7*\asp,-52*\asp) -- (31.3*\asp,-50*\asp);
	\draw [edgestyle] (24.7*\asp,-58*\asp) -- (41.3*\asp,-50*\asp);

	\draw [edgestyle] (35*\asp,-49*\asp) -- +(6*\asp,0);
	
	\draw [edgestyle] (44.7*\asp,-50*\asp) -- (51.3*\asp,-52*\asp);
	\draw [edgestyle] (34.7*\asp,-50*\asp) -- (51.3*\asp,-58*\asp);
	
	\draw [dotedgestyle] (53*\asp,-53*\asp) -- +(5*\asp, 2*\asp);
	\draw [dotedgestyle] (53*\asp,-53*\asp) -- +(5*\asp, -2*\asp);
	\draw [dotedgestyle] (53*\asp,-59*\asp) -- +(5*\asp, 2*\asp);
	\draw [dotedgestyle] (53*\asp,-59*\asp) -- +(5*\asp, -2*\asp);
	
	\draw [edgestyle] (31.6*\asp,-47.6*\asp) arc (225:90:2*\asp) -- (43*\asp,-44.2*\asp) arc (90:-45:2*\asp);
	
	\shade [vertstyle] (23*\asp,-53*\asp) circle (2*\asp) node {$y_2$};
	\shade [vertstyle] (23*\asp,-59*\asp) circle (2*\asp) node {$y_3$};
	
	\shade [vertstyle] (33*\asp,-49*\asp) circle (2*\asp) node {$v$};
	\shade [vertstyle] (43*\asp,-49*\asp) circle (2*\asp) node {$u$};
	
	\shade [vertstyle] (53*\asp,-53*\asp) circle (2*\asp) node {$z_2$};
	\shade [vertstyle] (53*\asp,-59*\asp) circle (2*\asp) node {$z_3$};


	\draw (15*\asp, 8*\asp) node {\scalebox{1.5}{$\Lambda$}};
	
	\draw (-7*\asp, -65*\asp) node {\scalebox{1.5}{$\Lambda' = \Lambda$}};
	\draw (38*\asp, -65.3*\asp) node {\scalebox{1.5}{$\Lambda' \neq \Lambda$}};

\end{tikzpicture}
		\end{center}
		\caption{The RBS moves possible on a 2-loop. The left move preserves the 2-loop while the right move elimates it.
				}\label{FIG_RBS_2LOOP}
	\end{figure}

	\begin{rules}[Acceleration for 2-loops]\label{RULE3}
		Given a fixed choice of 2-loops in $\Lambda$ with coloring $\CCC$ satisfying
			(a) Rules List \ref{RULE1} and
			(b) all the fixed 2-loops have distinct nonzero colors,
			there exists $\Lambda'$ with coloring $\CCC'$ (also satisfying Rules List \ref{RULE1}) such that:
			\begin{enumerate}
				\item $\Lambda'$ is the result of a finite sequence of RBS moves acting on $\Lambda$.
				\item The fixed 2-loops remain in $\Lambda'$ from $\Lambda$, meaning no RBS moves other than type 1 above from $\Lambda$ to
						$\Lambda'$ have acted
						on these 2-loops.
						Furthermore, $\CCC'$ agrees with $\CCC$ on the fixed
					2-loops that are colored by $\CCC$.
				\item There exists at least one fixed 2-loop in $\Lambda'$ colored by $\nu\neq 0$ such that $\CCC'$ colors at least one edge
							leaving the 2-loop and at least one edge entering the loop by $\nu$.
					In other words, the color $\nu$ ``spreads'' outside the 2-loop in $\Lambda'$.
			\end{enumerate}
	\end{rules}
	
	\begin{defn}
			For a graph $\Lambda$ and a set $L$ of edges in $\Lambda$, $\Lambda\setminus L$ is the
					graph obtained from $\Lambda$ by removing the edges in $L$.
			$L$ \emph{disconnects} $\Lambda$ if $\Lambda\setminus L$ is weakly disconnected, meaning
					$\Lambda\setminus L$ contains more than one weakly connected component.
	\end{defn}
	
	In this work, $L$ will contain loops, and we want to consider $L$ to be a collection of loops rather than a collection of edges.
	
	\begin{defn}
			If $\PPP = \{L_1,\dots,L_\kappa\}$ is a partition of a set $L$ of edges of $\Lambda$ and $L$ disconnects $\Lambda$,
					then $\PPP$ is a \emph{minimal disconnecting partition} if
					for each $i \in \{1,\dots,\kappa\}$, $L\setminus L_i$ does not disconnect $\Lambda$.
	\end{defn}
	
	When $\PPP$ is understood, we will suppress its appearance in the notation.
	According to item (3) of Rules List \ref{RULE3}, it may be the case that the $\nu$-colored
		edge leaving the $2$-loop $L_0$ that ``spreads'' is also the $\nu$-colored edge entering the $2$-loop.
		This is possible if there are multiple edges from right special $v$ to left special $u$ that are visited by $L_0$.
		Such a case remains compatible with our arguments below as only one edge from $v$ to $u$ is
			included in $L_0$ while any other such edge is part of the graph $\Lambda' \setminus L_0$.
		In particular, $L_0$ would never be part of a minimal disconnecting partition (as in the proof of Lemma~\ref{LEM2LOOPS}) because there
			exists at least one edge from $v$ to $u$ in $\Lambda'\setminus L_0$.

	\begin{rema}
		If $L$ disconnects $\Lambda$ and $\PPP$ is a partition of $L$,
			then there exists $\PPP'\subseteq \PPP$ that is a minimal disconnecting partition
			with disconnecting set $L' = \cup_{L_i \in \PPP'} L_i$,
			although the choice of $\PPP'$ may not be unique.
	\end{rema}
	
	\begin{lemm}\label{LEM2LOOPS}
		Let $\Lambda$ and $\CCC$ satisfy Rules List \ref{RULE1} and suppose we may apply Rules List \ref{RULE3}
				to any choice of nonzero colored 2-loops. 
			If $L$ is a union of all edges from a collection of distinctly colored 2-loops in $\Lambda$, then $L$ cannot disconnect $\Lambda$.
	\end{lemm}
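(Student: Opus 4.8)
The plan is a proof by contradiction, implementing the separation argument announced before the statement. Assume $L$ disconnects $\Lambda$. Using the Remark preceding the statement, I would first replace $L$ by a minimal disconnecting partition $\PPP'=\{L_1,\dots,L_\kappa\}$ consisting of some of the given $2$-loops, with $L'=\bigcup_i L_i$ (the given loops are edge-disjoint, since a shared edge would carry two colors). The first step is purely structural: each $L_i$ consists of two edges joining only the pair $u_i$ (left special) and $v_i$ (right special), and minimality says that restoring the edges of $L_i$ to $\Lambda\setminus L'$ recovers weak connectivity. Since those edges touch only $u_i$ and $v_i$, this is possible only if $\Lambda\setminus L'$ has \emph{exactly two} weakly connected components $A,B$ and, for every $i$, the vertices $u_i$ and $v_i$ lie in different ones.

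Next I would feed the loops $L_1,\dots,L_\kappa$ (which have distinct nonzero colors) into Rules List \ref{RULE3}, obtaining a graph $\Lambda'$ with coloring $\CCC'$ -- both obeying Rules List \ref{RULE1} -- reached from $\Lambda$ by RBS moves, in which every $L_i$ survives with its color and in which some loop $L_{i_0}$, say of color $\nu$, ``spreads'': there is an edge $e$ of $\Lambda'$ not belonging to $L_{i_0}$ but ending at a vertex of $L_{i_0}$ with $\CCC'(e)=\nu$. The crucial bookkeeping point is that these RBS moves do not disturb the two-component picture of $\Lambda\setminus L'$: a type-$1$ move on an $L_i$ only rewires the two edges of $L_i$, both of which already lie in $L'$; while an RBS move on a bispecial edge $\hat e_0\notin L'$ has both of its special endpoints outside the vertex set of every $L_j$ -- each $u_j$'s unique outgoing edge and each $v_j$'s unique incoming edge lies in $L'$ -- so every edge it rewires avoids $L'$ and stays within a single component of $\Lambda\setminus L'$. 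Hence $\Lambda'\setminus L'$ again has exactly the two components $A,B$, and after relabelling $u_{i_0}\in A$ and $v_{i_0}\in B$; similarly, $e\notin L'$ and its tail lies in $A$, since the only edges of $L'$ ending at $u_{i_0}$ are those of $L_{i_0}$, and $v_{i_0}$ has no incoming edge outside $L_{i_0}$.

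The contradiction then comes from Rules List \ref{RULE1}(4). In $\Lambda'$ the left-special vertex $u_{i_0}$ has a single outgoing edge $g$, necessarily one of the two edges of the preserved loop $L_{i_0}$, so $g$ runs from $u_{i_0}\in A$ to $v_{i_0}\in B$, and $\CCC'(g)=\nu$ by Rules List \ref{RULE1}(3) applied at $u_{i_0}$. By Rules List \ref{RULE1}(4) there is a directed circuit $\gamma$ through $e$ all of whose edges are $\nu$-colored; beginning $\gamma$ at the tail of $e$, the edge after $e$ is forced to be $g$, so $\gamma$ crosses into $B$. But every edge of $\Lambda'$ from a vertex of $B$ to a vertex of $A$ lies in $L'$, and none lying in an $L_j$ with $j\neq i_0$ can be $\nu$-colored (Rules List \ref{RULE1}(8) would force a vertex of $L_j$ to have color $\nu$, contradicting its color $\nu_j\neq\nu$); the only remaining candidate is the second edge $h$ of $L_{i_0}$, from $v_{i_0}$ to $u_{i_0}$, and traversing $h$ would require returning to $v_{i_0}$ via its unique incoming edge $g$, which $\gamma$ has already used. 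Thus $\gamma$ can never return to $A$, yet it must close up at the tail of $e\in A$ -- a contradiction. Therefore $L$ cannot disconnect $\Lambda$.

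I expect the genuine obstacle to be the middle paragraph: verifying carefully that the RBS moves supplied by Rules List \ref{RULE3} leave the decomposition of $\Lambda\setminus L'$ into the two blocks $A,B$ intact (equivalently, that $L'$ remains a minimal disconnecting set in $\Lambda'$ with the same components). Everything after that point is a short forcing argument on the $\nu$-colored circuit, but that invariance needs a somewhat delicate case analysis of how an RBS move can interact with $L'$, relying on the left/right-special endpoints of any acted-upon bispecial edge not being loop vertices.
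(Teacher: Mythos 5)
Your proposal is correct and follows essentially the same separation argument as the paper: pass to a minimal disconnecting sub-collection, accelerate via Rules List \ref{RULE3}, observe that the intervening RBS moves preserve the components of $\Lambda\setminus L'$, and derive a contradiction from the $\nu$-colored circuit through the spreading edges. The only differences are cosmetic — the paper phrases the endgame as extracting a $\nu$-colored path from $v$ to $u$ avoiding $L'$ (placing them in one component) rather than showing the circuit cannot close, and your citation of item (8) should point to Notation \ref{NOTA1} rather than Rules List \ref{RULE1}.
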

	
	\begin{proof}
		We will use a separation argument.
		Assume for a contradiction that $L$ disconnects $\Lambda$.
		Let $\PPP$ be the partition of $L$ into the 2-loops,
			and choose a minimal disconnecting partition $\PPP' \subseteq \PPP$.
		Then $L' = \bigcup_{L_i\in \PPP'} L_i$ disconnects $\Lambda$.
		If $u$ and $v$ are the left special and right special vertices respectively
			of a 2-loop in $L_i\in \PPP'$,
			let $C_u$ and $C_v$ be the respective weakly connected components containing
			$u$ and $v$ in $\Lambda \setminus L'$.
		If $C_u = C_v$, then $L\setminus L_i$ is a disconnecting set of edges for $\Lambda$,
			contradicting that $\PPP'$ is a minimal partition.
		Therefore $C_u \neq C_v$.
			
		By Rules List \ref{RULE3}, accelerate to $\Lambda'$ with $\CCC'$
			using $\PPP'$ as our chosen fixed 2-loops and let
				 $\mathfrak{L}\in \PPP'$ be a
			2-loop whose color ``spreads'' in $\Lambda'$ with non-zero color $\nu$
			(from item (3) of Rules List~\ref{RULE3}).
		Because only moves of type 1 (those that preserve a 2-loop) occur from $\Lambda$ to $\Lambda'$,
			any other RBS moves must occur on edges whose vertices are not in the 2-loops from $\PPP'$ and these do not
			alter the connected components (viewed as vertex sets) from $\Lambda\setminus \PPP'$.
		Therefore $\Lambda'$ is disconnected by $\mathcal{P}'$.
		If $u$ (resp. $v$) is the left (resp. right) special vertex in the 2-loop $\mathfrak{L}$, then
			by Rules List \ref{RULE1} there must exist a directed path in $\Lambda'$ with color $\nu$
			that begins at $v$, ends at $u$ and does not contain the edge 
			from $v$ to $u$ in $\mathfrak{L}$.
		Furthermore, this path cannot intersect any edge in $L'$ as the other 2-loops
			have non-zero colors different than $\nu$.
		This implies that the weakly connected component containing $v$ in $\Lambda'\setminus \mathfrak{L}$
			is equal the weakly connected component containing $u$, a contradiction.
	\end{proof}
	
	\begin{coro}\label{COR2LOOPS}
		If $\Lambda$ and $\CCC$ as are in Lemma \ref{LEM2LOOPS}, then there may be at most $\frac{K+1}{2}$
			distinct colors on 2-loops.
	\end{coro}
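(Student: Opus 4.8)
The plan is to upgrade the purely qualitative statement of Lemma~\ref{LEM2LOOPS} into the quantitative bound by a short Euler-type edge count. First I would fix notation: suppose $\nu_1,\dots,\nu_m$ are distinct (nonzero) colors each realized by a $2$-loop, and for each $i$ choose such a $2$-loop, so that $\CCC^{-1}(\nu_i)=\{u_i,v_i,e_0^i,e_1^i\}$. Since the $\nu_i$ are pairwise distinct and $\CCC$ is a function, the color classes $\CCC^{-1}(\nu_i)$ are pairwise disjoint; in particular the $2m$ edges $e_0^1,e_1^1,\dots,e_0^m,e_1^m$ are genuinely distinct edges of $\Lambda$. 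Set $L=\bigcup_{i=1}^m\{e_0^i,e_1^i\}$, a set of exactly $2m$ edges.

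Next I would invoke Lemma~\ref{LEM2LOOPS}: since $L$ is the union of all edges of a collection of distinctly colored $2$-loops, $L$ cannot disconnect $\Lambda$, so $\Lambda\setminus L$ is weakly connected. This graph has the same vertex set as $\Lambda$, namely $K_\ell+K_r$ vertices, and by item~(4) of Notation~\ref{NOTA1} it has $(K+K_\ell+K_r)-2m$ edges. A weakly connected (multi)graph on $N$ vertices has at least $N-1$ edges, so
$$
(K+K_\ell+K_r)-2m \ \geq\ (K_\ell+K_r)-1,
$$
which rearranges to $2m\leq K+1$, that is, $m\leq\frac{K+1}{2}$, as claimed.

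The only point that needs care is the bookkeeping step that removing the $m$ distinctly colored $2$-loops lowers the edge count by exactly $2m$; this is immediate from the definition of a $2$-loop, whose color class is precisely its two vertices and its two edges, together with the distinctness of the colors. I do not expect a genuine obstacle here: the substantive work (the separation argument showing the loops cannot disconnect $\Lambda$) has already been carried out in Lemma~\ref{LEM2LOOPS}, and this corollary is just the counting step that converts ``cannot disconnect'' into the numerical bound $\frac{K+1}{2}$.
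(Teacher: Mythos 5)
Your proof is correct and follows essentially the same route as the paper's: remove the $2m$ edges of the distinctly colored $2$-loops, invoke Lemma~\ref{LEM2LOOPS} to conclude the remaining graph is weakly connected, and apply the bound that a weakly connected graph on $V$ vertices has at least $V-1$ edges to get $2m \leq K+1$. No issues.
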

	
	\begin{proof}
		Let $\PPP = \{L_1,\dots,L_{\kappa}\}$ be a set of distinctly colored 2-loops in $\Lambda$ such that
			every color $\nu$ on a 2-loop in $\Lambda$ is included and let $L = \bigcup_{i=1}^\kappa L_i$.
		Let $K_\mathfrak{s}$ be the number of $\mathfrak{s}$-special vertices in $\Lambda$, $\mathfrak{s}\in \{\ell,r\}$.
		Removing the $2\kappa$ edges of $L$ from $\Lambda$ results in graph $\Lambda \setminus L$ with
			$K_\ell + K_r$ vertices and $K + K_\ell + K_r - 2 \kappa$ edges.
		By Lemma \ref{LEM2LOOPS}, $L$ cannot disconnect $\Lambda$, and to be weakly connected,
			a graph with $V$ vertices must have at least $V-1$ edges.
		Therefore,
			$$ K + K_\ell + K_r - 2\kappa \geq K_\ell + K_r - 1 \Rightarrow \kappa \leq \frac{K+1}{2},$$
		as desired.
	\end{proof}
	
	\subsection{Larger Loops: Behavior}\label{SEC_NLOOPS_MOVES}
	
	While 2-loops are relatively simple, Rules List \ref{RULE1} does allow for $\nu\in \{1,\dots,E\}$
		to color more than two vertices.
	However, in such an event we may still associate to $\nu$ an \emph{$N$-loop}, $N\geq 2$,
		meaning a directed circuit of $N$ edges in $\Lambda$ colored by $\nu$.
	Note that $\nu$ may color more edges and we choose for simplicity to restrict our attention to just one $N$-loop
		per color.
	Having extra colored edges will not alter or otherwise interfere with our arguments.
	By item (4) of Rules List~\ref{RULE1}, for every $\nu \in \{1, \ldots, E\}$, there is an $N$-loop with color $\nu$.

	The allowable RBS moves on an $N$-loop, $N\geq 3$, will now be described.
	Two of the moves will be analogous to those for 2-loops, while a third move
		will only apply to larger loops. 
	As in the previous sections, let $e_0$ denote the bispecial edge from $u$ to $v$
		in our $N$-loop.
	Using prior notation for edges $e_1,\dots,e_L,\tilde{e}_1,\dots,\tilde{e}_R$
		and vertices $y_1,\dots,y_L,z_1,\dots,z_R$,
		assume $e_1$ is the edge ending at $u$ in the $N$-loop and $\tilde{e}_1$ is the edge leaving $v$
		in the $N$-loop.
		The following list describes all allowable types of RBS moves.

      \begin{figure}[t!]
	    \begin{center}
		\subfigure[A twist RBS move.]{\label{FIG_NLOOP_TWIST}
		      \begin{tikzpicture}[vertstyle/.style={draw=black, top color = black!5, bottom color = black!35},
			edgestyle/.style={->,thick,black}, dotedgestyle/.style={thick,dotted,black},
			gbox/.style={thick,black!20,fill=black!10}]
	\def\asp{0.13}

	\draw [gbox] (19*\asp, -39*\asp) rectangle +(42*\asp,-24*\asp);
	\draw [gbox] (-31*\asp, -39*\asp) rectangle +(42*\asp,-24*\asp);
		
	\draw [->,very thick, black!20] (11*\asp, -52*\asp) -- (19*\asp, -52*\asp);
	\draw (15*\asp,-50*\asp) node {RBS};
	

	\draw [dotedgestyle] (-25*\asp,-53*\asp) -- +(-5*\asp, 2*\asp);
	\draw [dotedgestyle] (-25*\asp,-53*\asp) -- +(-5*\asp, -2*\asp);
	\draw [dotedgestyle] (-25*\asp,-59*\asp) -- +(-5*\asp, 2*\asp);
	\draw [dotedgestyle] (-25*\asp,-59*\asp) -- +(-5*\asp, -2*\asp);

	\draw [edgestyle] (-23.3*\asp,-52*\asp) -- (-16.7*\asp,-50*\asp);
	\draw [edgestyle] (-23.6*\asp,-57.6*\asp) -- (-16*\asp,-50.7*\asp);

	\draw [edgestyle] (-13*\asp,-49*\asp) -- +(6*\asp,0);
	
	\draw [edgestyle] (-3.3*\asp,-50*\asp) -- (3.3*\asp,-52*\asp);
	\draw [edgestyle] (-4*\asp,-50.7*\asp) -- (3.6*\asp,-57.6*\asp);
	
	\draw [dotedgestyle] (5*\asp,-53*\asp) -- +(5*\asp, 2*\asp);
	\draw [dotedgestyle] (5*\asp,-53*\asp) -- +(5*\asp, -2*\asp);
	\draw [dotedgestyle] (5*\asp,-59*\asp) -- +(5*\asp, 2*\asp);
	\draw [dotedgestyle] (5*\asp,-59*\asp) -- +(5*\asp, -2*\asp);
	
	\draw [edgestyle] (-20*\asp, -47*\asp) arc (180:270:2*\asp) -- (-17*\asp, -49*\asp);
	\draw [edgestyle] (-3*\asp, -49*\asp) -- (-2*\asp, -49*\asp) arc (270:360:2*\asp);
	\draw [dotedgestyle,->]  (0*\asp, -43*\asp) arc (0:90:2*\asp) -- (-18*\asp, -41*\asp) arc (90:180:2*\asp);
	\draw (-10*\asp,-43*\asp) node {\scalebox{0.75}{$N$-loop}};
	
	\shade [vertstyle] (-25*\asp,-53*\asp) circle (2*\asp) node {$y_2$};
	\shade [vertstyle] (-25*\asp,-59*\asp) circle (2*\asp) node {$y_3$};

	\shade [vertstyle] (-15*\asp,-49*\asp) circle (2*\asp) node {$u$};
	\shade [vertstyle] (-5*\asp,-49*\asp) circle (2*\asp) node {$v$};
	
	\shade [vertstyle] (5*\asp,-53*\asp) circle (2*\asp) node {$z_2$};
	\shade [vertstyle] (5*\asp,-59*\asp) circle (2*\asp) node {$z_3$};
	
	\shade [vertstyle] (-20*\asp, -45*\asp) circle (2*\asp) node {$y_1$};
	\shade [vertstyle] (0*\asp, -45*\asp) circle (2*\asp) node {$z_1$};



	\draw [dotedgestyle] (25*\asp,-53*\asp) -- +(-5*\asp, 2*\asp);
	\draw [dotedgestyle] (25*\asp,-53*\asp) -- +(-5*\asp, -2*\asp);
	\draw [dotedgestyle] (25*\asp,-59*\asp) -- +(-5*\asp, 2*\asp);
	\draw [dotedgestyle] (25*\asp,-59*\asp) -- +(-5*\asp, -2*\asp);

	\draw [edgestyle] (27*\asp,-53*\asp) -- (43.3*\asp,-50*\asp);
	\draw [edgestyle] (27*\asp,-59*\asp) -- (44*\asp,-50.7*\asp);

	\draw [edgestyle] (37*\asp,-49*\asp) -- +(6*\asp,0);
	
	\draw [edgestyle] (36*\asp,-50.0*\asp) -- (36*\asp, -52*\asp) arc(180:270:1*\asp) -- (53*\asp,-53*\asp);
	\draw [edgestyle] (34*\asp,-50.7*\asp) -- (34*\asp, -57*\asp) arc(180:270:2*\asp) -- (53*\asp,-59*\asp);
	
	\draw [dotedgestyle] (55*\asp,-53*\asp) -- +(5*\asp, 2*\asp);
	\draw [dotedgestyle] (55*\asp,-53*\asp) -- +(5*\asp, -2*\asp);
	\draw [dotedgestyle] (55*\asp,-59*\asp) -- +(5*\asp, 2*\asp);
	\draw [dotedgestyle] (55*\asp,-59*\asp) -- +(5*\asp, -2*\asp);
	
	\draw [edgestyle] (30*\asp, -47*\asp) arc (180:270:2*\asp) -- (33*\asp, -49*\asp);
	\draw [edgestyle] (47*\asp, -49*\asp) -- (48*\asp, -49*\asp) arc (270:360:2*\asp);
	\draw [dotedgestyle,->]  (50*\asp, -43*\asp) arc (0:90:2*\asp) -- (32*\asp, -41*\asp) arc (90:180:2*\asp);
	\draw (40*\asp,-43*\asp) node {\scalebox{0.75}{$N$-loop}};
	
	\shade [vertstyle] (25*\asp,-53*\asp) circle (2*\asp) node {$y_2$};
	\shade [vertstyle] (25*\asp,-59*\asp) circle (2*\asp) node {$y_3$};

	\shade [vertstyle] (35*\asp,-49*\asp) circle (2*\asp) node {$v$};
	\shade [vertstyle] (45*\asp,-49*\asp) circle (2*\asp) node {$u$};
	
	\shade [vertstyle] (55*\asp,-53*\asp) circle (2*\asp) node {$z_2$};
	\shade [vertstyle] (55*\asp,-59*\asp) circle (2*\asp) node {$z_3$};

	\shade [vertstyle] (30*\asp, -45*\asp) circle (2*\asp) node {$y_1$};
	\shade [vertstyle] (50*\asp, -45*\asp) circle (2*\asp) node {$z_1$};

	\draw (-10*\asp, -65*\asp) node {\scalebox{1.5}{$\Lambda$}};
	\draw (40*\asp, -65*\asp) node {\scalebox{1.5}{$\Lambda'$}};

\end{tikzpicture}
		     	}
	    \end{center}
	    \begin{center}
		\subfigure[A shrink RBS move. In $\Lambda'$ a path from $y_3$ to $z_3$ exists that avoids the loop.]{\label{FIG_NLOOP_SHRINK}
		      \begin{tikzpicture}[vertstyle/.style={draw=black, top color = black!5, bottom color = black!35},
			edgestyle/.style={->,thick,black}, dotedgestyle/.style={thick,dotted,black},
			gbox/.style={thick,black!20,fill=black!10}]
	\def\asp{0.13}

	\draw [gbox] (19*\asp, -39*\asp) rectangle +(42*\asp,-24*\asp);
	\draw [gbox] (-31*\asp, -39*\asp) rectangle +(42*\asp,-24*\asp);
		
	\draw [->,very thick, black!20] (11*\asp, -52*\asp) -- (19*\asp, -52*\asp);
	\draw (15*\asp,-50*\asp) node {RBS};
	

	\draw [dotedgestyle] (-25*\asp,-53*\asp) -- +(-5*\asp, 2*\asp);
	\draw [dotedgestyle] (-25*\asp,-53*\asp) -- +(-5*\asp, -2*\asp);
	\draw [dotedgestyle] (-25*\asp,-59*\asp) -- +(-5*\asp, 2*\asp);
	\draw [dotedgestyle] (-25*\asp,-59*\asp) -- +(-5*\asp, -2*\asp);

	\draw [edgestyle] (-23.3*\asp,-52*\asp) -- (-16.7*\asp,-50*\asp);
	\draw [edgestyle] (-23.6*\asp,-57.6*\asp) -- (-16*\asp,-50.7*\asp);

	\draw [edgestyle] (-13*\asp,-49*\asp) -- +(6*\asp,0);
	
	\draw [edgestyle] (-3.3*\asp,-50*\asp) -- (3.3*\asp,-52*\asp);
	\draw [edgestyle] (-4*\asp,-50.7*\asp) -- (3.6*\asp,-57.6*\asp);
	
	\draw [dotedgestyle] (5*\asp,-53*\asp) -- +(5*\asp, 2*\asp);
	\draw [dotedgestyle] (5*\asp,-53*\asp) -- +(5*\asp, -2*\asp);
	\draw [dotedgestyle] (5*\asp,-59*\asp) -- +(5*\asp, 2*\asp);
	\draw [dotedgestyle] (5*\asp,-59*\asp) -- +(5*\asp, -2*\asp);
	
	\draw [edgestyle] (-20*\asp, -47*\asp) arc (180:270:2*\asp) -- (-17*\asp, -49*\asp);
	\draw [edgestyle] (-3*\asp, -49*\asp) -- (-2*\asp, -49*\asp) arc (270:360:2*\asp);
	\draw [dotedgestyle,->]  (0*\asp, -43*\asp) arc (0:90:2*\asp) -- (-18*\asp, -41*\asp) arc (90:180:2*\asp);
	\draw (-10*\asp,-43*\asp) node {\scalebox{0.75}{$N$-loop}};
	
	\shade [vertstyle] (-25*\asp,-53*\asp) circle (2*\asp) node {$y_2$};
	\shade [vertstyle] (-25*\asp,-59*\asp) circle (2*\asp) node {$y_3$};

	\shade [vertstyle] (-15*\asp,-49*\asp) circle (2*\asp) node {$u$};
	\shade [vertstyle] (-5*\asp,-49*\asp) circle (2*\asp) node {$v$};
	
	\shade [vertstyle] (5*\asp,-53*\asp) circle (2*\asp) node {$z_2$};
	\shade [vertstyle] (5*\asp,-59*\asp) circle (2*\asp) node {$z_3$};
	
	\shade [vertstyle] (-20*\asp, -45*\asp) circle (2*\asp) node {$y_1$};
	\shade [vertstyle] (0*\asp, -45*\asp) circle (2*\asp) node {$z_1$};



	\draw [dotedgestyle] (25*\asp,-53*\asp) -- +(-5*\asp, 2*\asp);
	\draw [dotedgestyle] (25*\asp,-53*\asp) -- +(-5*\asp, -2*\asp);
	\draw [dotedgestyle] (25*\asp,-59*\asp) -- +(-5*\asp, 2*\asp);
	\draw [dotedgestyle] (25*\asp,-59*\asp) -- +(-5*\asp, -2*\asp);

	\draw [edgestyle] (26.7*\asp,-52*\asp) -- (33.3*\asp,-50*\asp);
	\draw [edgestyle] (27*\asp,-59*\asp) -- (43*\asp,-59*\asp);

	\draw [edgestyle] (36.4*\asp,-50.4*\asp) -- (43.6*\asp, -57.6*\asp);
	
	\draw [edgestyle] (36.7*\asp,-50*\asp) -- (53.3*\asp,-52*\asp);
	\draw [edgestyle] (47*\asp,-59*\asp) -- (53*\asp,-59*\asp);
	
	\draw [dotedgestyle] (55*\asp,-53*\asp) -- +(5*\asp, 2*\asp);
	\draw [dotedgestyle] (55*\asp,-53*\asp) -- +(5*\asp, -2*\asp);
	\draw [dotedgestyle] (55*\asp,-59*\asp) -- +(5*\asp, 2*\asp);
	\draw [dotedgestyle] (55*\asp,-59*\asp) -- +(5*\asp, -2*\asp);
	
	\draw [edgestyle] (30*\asp, -47*\asp) arc (180:270:2*\asp) -- (33*\asp, -49*\asp);
	\draw [edgestyle] (37*\asp, -49*\asp) -- (48*\asp, -49*\asp) arc (270:360:2*\asp);
	\draw [dotedgestyle,->]  (50*\asp, -43*\asp) arc (0:90:2*\asp) -- (32*\asp, -41*\asp) arc (90:180:2*\asp);
	\draw (40*\asp,-43*\asp) node {\scalebox{0.75}{$(N-1)$-loop}};
	
	\shade [vertstyle] (25*\asp,-53*\asp) circle (2*\asp) node {$y_2$};
	\shade [vertstyle] (25*\asp,-59*\asp) circle (2*\asp) node {$y_3$};

	\shade [vertstyle] (35*\asp,-49*\asp) circle (2*\asp) node {$v$};
	\shade [vertstyle] (45*\asp,-59*\asp) circle (2*\asp) node {$u$};
	
	\shade [vertstyle] (55*\asp,-53*\asp) circle (2*\asp) node {$z_2$};
	\shade [vertstyle] (55*\asp,-59*\asp) circle (2*\asp) node {$z_3$};

	\shade [vertstyle] (30*\asp, -45*\asp) circle (2*\asp) node {$y_1$};
	\shade [vertstyle] (50*\asp, -45*\asp) circle (2*\asp) node {$z_1$};

	\draw (-10*\asp, -65*\asp) node {\scalebox{1.5}{$\Lambda$}};
	\draw (40*\asp, -65*\asp) node {\scalebox{1.5}{$\Lambda'$}};

\end{tikzpicture}
		     }
	    \end{center}
	    \begin{center}
		\subfigure[A collapse RBS move. Here $\Lambda'$ has been ``untangled.'']{\label{FIG_NLOOP_SPREAD}
		      \begin{tikzpicture}[vertstyle/.style={draw=black, top color = black!5, bottom color = black!35},
			edgestyle/.style={->,thick,black}, dotedgestyle/.style={thick,dotted,black},
			gbox/.style={thick,black!20,fill=black!10}]
	\def\asp{0.13}

	\draw [gbox] (19*\asp, -39*\asp) rectangle +(42*\asp,-24*\asp);
	\draw [gbox] (-31*\asp, -39*\asp) rectangle +(42*\asp,-24*\asp);
		
	\draw [->,very thick, black!20] (11*\asp, -52*\asp) -- (19*\asp, -52*\asp);
	\draw (15*\asp,-50*\asp) node {RBS};
	

	\draw [dotedgestyle] (-25*\asp,-53*\asp) -- +(-5*\asp, 2*\asp);
	\draw [dotedgestyle] (-25*\asp,-53*\asp) -- +(-5*\asp, -2*\asp);
	\draw [dotedgestyle] (-25*\asp,-59*\asp) -- +(-5*\asp, 2*\asp);
	\draw [dotedgestyle] (-25*\asp,-59*\asp) -- +(-5*\asp, -2*\asp);

	\draw [edgestyle] (-23.3*\asp,-52*\asp) -- (-16.7*\asp,-50*\asp);
	\draw [edgestyle] (-23.6*\asp,-57.6*\asp) -- (-16*\asp,-50.7*\asp);

	\draw [edgestyle] (-13*\asp,-49*\asp) -- +(6*\asp,0);
	
	\draw [edgestyle] (-3.3*\asp,-50*\asp) -- (3.3*\asp,-52*\asp);
	\draw [edgestyle] (-4*\asp,-50.7*\asp) -- (3.6*\asp,-57.6*\asp);
	
	\draw [dotedgestyle] (5*\asp,-53*\asp) -- +(5*\asp, 2*\asp);
	\draw [dotedgestyle] (5*\asp,-53*\asp) -- +(5*\asp, -2*\asp);
	\draw [dotedgestyle] (5*\asp,-59*\asp) -- +(5*\asp, 2*\asp);
	\draw [dotedgestyle] (5*\asp,-59*\asp) -- +(5*\asp, -2*\asp);
	
	\draw [edgestyle] (-20*\asp, -47*\asp) arc (180:270:2*\asp) -- (-17*\asp, -49*\asp);
	\draw [edgestyle] (-3*\asp, -49*\asp) -- (-2*\asp, -49*\asp) arc (270:360:2*\asp);
	\draw [dotedgestyle,->]  (0*\asp, -43*\asp) arc (0:90:2*\asp) -- (-18*\asp, -41*\asp) arc (90:180:2*\asp);
	\draw (-10*\asp,-43*\asp) node {\scalebox{0.75}{$N$-loop}};
	
	\shade [vertstyle] (-25*\asp,-53*\asp) circle (2*\asp) node {$y_2$};
	\shade [vertstyle] (-25*\asp,-59*\asp) circle (2*\asp) node {$y_3$};

	\shade [vertstyle] (-15*\asp,-49*\asp) circle (2*\asp) node {$u$};
	\shade [vertstyle] (-5*\asp,-49*\asp) circle (2*\asp) node {$v$};
	
	\shade [vertstyle] (5*\asp,-53*\asp) circle (2*\asp) node {$z_2$};
	\shade [vertstyle] (5*\asp,-59*\asp) circle (2*\asp) node {$z_3$};
	
	\shade [vertstyle] (-20*\asp, -45*\asp) circle (2*\asp) node {$y_1$};
	\shade [vertstyle] (0*\asp, -45*\asp) circle (2*\asp) node {$z_1$};



	\draw [dotedgestyle] (25*\asp,-53*\asp) -- +(-5*\asp, 2*\asp);
	\draw [dotedgestyle] (25*\asp,-53*\asp) -- +(-5*\asp, -2*\asp);
	\draw [dotedgestyle] (25*\asp,-59*\asp) -- +(-5*\asp, 2*\asp);
	\draw [dotedgestyle] (25*\asp,-59*\asp) -- +(-5*\asp, -2*\asp);

	\draw [edgestyle] (26.7*\asp,-52*\asp) -- (33.3*\asp,-50*\asp);
	\draw [edgestyle] (26.7*\asp,-58*\asp) -- (43.3*\asp,-50*\asp);

	\draw [edgestyle] (43*\asp,-49*\asp) -- +(-6*\asp,0);
	
	\draw [edgestyle] (46.7*\asp,-50*\asp) -- (53.3*\asp,-52*\asp);
	\draw [edgestyle] (36.7*\asp,-50*\asp) -- (53.3*\asp,-58*\asp);
	
	\draw [dotedgestyle] (55*\asp,-53*\asp) -- +(5*\asp, 2*\asp);
	\draw [dotedgestyle] (55*\asp,-53*\asp) -- +(5*\asp, -2*\asp);
	\draw [dotedgestyle] (55*\asp,-59*\asp) -- +(5*\asp, 2*\asp);
	\draw [dotedgestyle] (55*\asp,-59*\asp) -- +(5*\asp, -2*\asp);
	
	\draw [edgestyle] (30*\asp, -47*\asp) arc (180:270:2*\asp) -- (33*\asp, -49*\asp);
	\draw [edgestyle] (47*\asp, -49*\asp) -- (48*\asp, -49*\asp) arc (270:360:2*\asp);
	\draw [dotedgestyle,->]  (50*\asp, -43*\asp) arc (0:90:2*\asp) -- (32*\asp, -41*\asp) arc (90:180:2*\asp);
	\draw (40*\asp,-43*\asp) node {\scalebox{0.75}{not a loop}};
	
	\shade [vertstyle] (25*\asp,-53*\asp) circle (2*\asp) node {$y_2$};
	\shade [vertstyle] (25*\asp,-59*\asp) circle (2*\asp) node {$y_3$};

	\shade [vertstyle] (35*\asp,-49*\asp) circle (2*\asp) node {$u$};
	\shade [vertstyle] (45*\asp,-49*\asp) circle (2*\asp) node {$v$};
	
	\shade [vertstyle] (55*\asp,-53*\asp) circle (2*\asp) node {$z_2$};
	\shade [vertstyle] (55*\asp,-59*\asp) circle (2*\asp) node {$z_3$};

	\shade [vertstyle] (30*\asp, -45*\asp) circle (2*\asp) node {$y_1$};
	\shade [vertstyle] (50*\asp, -45*\asp) circle (2*\asp) node {$z_1$};

	\draw (-10*\asp, -65*\asp) node {\scalebox{1.5}{$\Lambda$}};
	\draw (40*\asp, -65*\asp) node {\scalebox{1.5}{$\Lambda'$}};

\end{tikzpicture}
		      }
	    \end{center}
	    \caption{The types of RBS moves on $u\to v$ in an $N$-loop.}\label{FIG_NLOOPS}
      \end{figure}
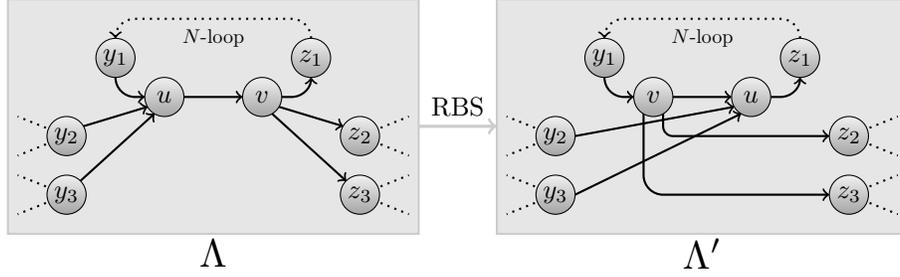
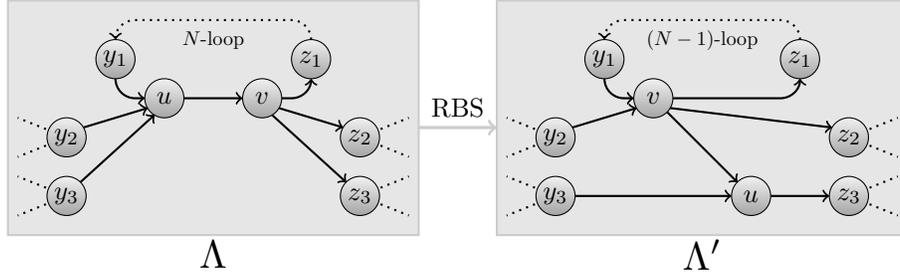
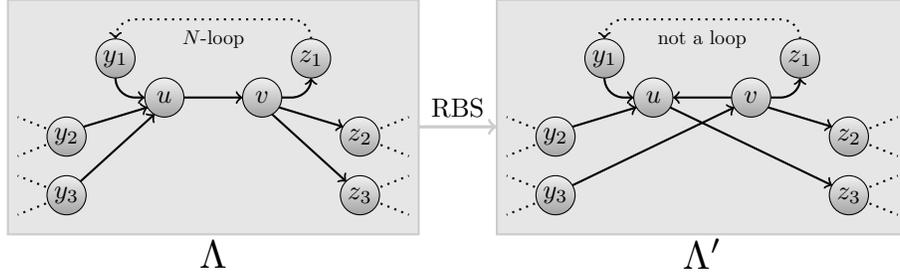
	
		\begin{enumerate}
			\item \emph{Twist move:} 
					In this case $i_0 = j_0 = 1$.
					After untangling, the order of $u$ and $v$ is reversed in the $N$-loop, while all other edges are preserved.
					In a 2-loop, this would simply preserve the loop.
					See Figure \ref{FIG_NLOOP_TWIST}.
					
			\item \emph{Shrink move(s):} 
					Either $i_0 =1$ and $j_0 \neq 1$ or $i_0 \neq 1$ and $j_0 = 1$.
					After the move, one of the vertices is removed from the $N$-loop, resulting in an $(N-1)$-loop.
					Figure \ref{FIG_NLOOP_SHRINK} shows a shrink move that removes $u$ from the $N$-loop
						and there is an analogous move that removes $v$ from the $N$-loop instead. 
						This move is only allowed if the removed $\mathfrak{s}$-special vertex is not the only
						$\mathfrak{s}$-special vertex in the $N$-loop initially.
						For example, in Figure \ref{FIG_NLOOP_SHRINK} either $y_1$ or $z_1$ must be left sepcial.
						This is never allowed in a $2$-loop.

			\item \emph{Collapse move:}
				In this last case 
					$i_0 \neq 1$ and $j_0 \neq 1$.
				After the move, the $N$-loop becomes collapsed
					into two paths given by edge $e_0$ from $v$ to $u$ and a new path
					from $v$ to $u$ along the remaining $N-1$ edges.
					This is shown in Figure \ref{FIG_NLOOP_SPREAD} and is the analogue of the second type of move for 2-loops described at the beginning of
						Section \ref{SEC_2LOOPS}. 
					By Rules Lists \ref{RULE1} and \ref{RULE2}, a non-zero color on the $N$-loop in $\Lambda$
						must agree with the coloring on  $e_{i_0}$ and $\tilde{e}_{j_0}$ (``outside'' edges) by both $\CCC$ and $\CCC'$.
						
		\end{enumerate}

		Note that a twist move only permutes the order of two vertices within the $N$-loop itself but does not otherwise alter the
			graph structure.
		For example, the outside edges that end at $u$ still do so after the move and likewise for
				the outside edges beginning at $v$.
		Thus each $N$-loop is
			preserved for a number of moves of the first type until a move of either the
			second or third type occurs.
		Similar to the 2-loop case, we want to focus on a collection of $N$-loops
			until either a shrink move occurs or an $N$-loop passes its color to outside edges.
		We again do not know (or care) what happens to the other colors and want to apply a
			separation argument.
			
		However, we are met with a complication due to shrink moves.
		The general strategy of the separation argument, as introduced in Section \ref{SEC_2LOOPS},
			is that we begin with a purported minimal disconnecting partition
			(of $N$-loops here)
			in $\Lambda$ and accelerate the graph to $\Lambda'$ when one of the $N$-loops spreads its color.
		If the given collection of $N$-loops is still a minimal disconnecting partition in $\Lambda'$,
			we get a contradiction, as one loop spreads color to another,
			but this assumption may no longer hold.
		Consider the move in Figure \ref{FIG_NLOOP_SHRINK} with $N$-loop colored by $\nu$. 
		Let us assume that every directed path from $y_3$ to $z_3$ in $\Lambda$ touches $u$,
			which is part of the pictured $N$-loop.
		Thus the $N$-loop disconnects $y_3$ from $z_3$,
			and so no color $\nu' \neq \nu$ can pass from $y_3$ to $z_3$.
		However, after the shrink move that transforms $\Lambda$ to $\Lambda'$,
			the resulting $(N-1)$-loop no longer disconnects $y_3$ from $z_3$.
		If $u$ is not colored by $\nu$ in $\Lambda'$,
			then the color $\nu'$ can freely pass from $y_3$ to $z_3$.
		This demonstrates the fact that a minimal disconnecting partition in $\Lambda$ need not be one in $\Lambda'$.
		We will address this issue in two steps.
		In Section \ref{SEC_NLOOPS_ITIN} we will form the $N$-loop analogue of Rules List \ref{RULE3}.
		In Section \ref{SEC_NLOOPS_NEWG} we will make an auxiliary graph on which we will form our
			minimal separating partition argument for our main proof.
			
	\subsection{Larger Loops: Itineraries}\label{SEC_NLOOPS_ITIN}
	
		We will use the previous section's notions of $N$-loops and outside edges,
			meaning edges that are not part of the $N$-loop but either begin or end at a vertex visited by the $N$-loop.
		The next definition is meant to articulate the following idea:
			we want to select a set of loops and ``follow them'' while they undergo RBS moves.
			Twist moves do not alter the graph outside of the loops, so we want to accelerate through RBS moves
				until at least one loop either shrinks or passes its color along at least
				one pair of outside edges. 
		In the case of 2-loops, this was given simply by Rules List \ref{RULE3}.

		\begin{defn}
			If $\Lambda$ has a colored $N$-loop $L$ (colored by the function $\mathcal{C}$)
				and $\Lambda'$ is the result of applying a finite ordered list of RBS moves to $\Lambda$ such that:
				\begin{enumerate}
					\item the only moves acting on vertices of $L$ are twist and shrink moves,
					\item the color on $L$ is preserved by $\CCC'$ for $\Lambda'$ and
					\item the color of $L$ passes along at least one outside edge entering and one
						outside edge leaving $L$ in $\Lambda'$ (this may be one edge that both leaves and enters the $N$-loop),
				\end{enumerate}
				then we say that $L$ experiences a \emph{spread event} from $\Lambda$ to $\Lambda'$.
		\end{defn}
		
		As we will discuss in Section \ref{SEC_ERG_TO_GRAPH}, in the graphs we construct from subshifts,
			every $N$-loop must eventually undergo a collapse move and must therefore experience
			a spread event.
			
		The next definition is used in our final Rules List.
		As with the other lists, this one will be justified in Section \ref{SEC_ERG_TO_GRAPH} for graphs $\Lambda$ constructed from subshifts.
	
		\begin{defn}\label{DEF_ITIN}
			Given $\Lambda$ with coloring $\CCC$ and fixed collection of $N$-loops $\PPP$ (each colored by a distinct nonzero color),
				an \emph{itinerary} is a sequence
					$$ \left(\Lambda^{(i)},\CCC^{(i)},\PPP^{(i)}\right)_{i=0}^{M}$$\\
				of graphs, colorings, and $N$-loop collections respectively, and a sequence $\mathsf{L} = (\mathsf{L}_i)_{i=0}^{M-1}$ of finite ordered lists of RBS moves so that $$\left(\Lambda^{(0)},\CCC^{(0)},\PPP^{(0)}\right)
					= (\Lambda,\CCC,\PPP)$$ and if $i\in \{0,1,\dots,M-1\}$:
					\begin{enumerate}
						\item $\Lambda^{(i+1)}$ is obtained from $\Lambda^{(i)}$ by using the list $\mathsf{L}_i$ 
								 of RBS moves.
						\item The only moves acting on $N$-loops from $\PPP^{(i)}$ from $\Lambda^{(i)}$ to $\Lambda^{(i+1)}$
								are the twist and shrink moves described in the previous section.
						\item 
							At least one of the following occurs from $\Lambda^{(i)}$ to $\Lambda^{(i+1)}$:
								\begin{enumerate}
								\item at least one loop in $\PPP^{(i)}$ is shrunk by at least one RBS move,
								\item at least one loop in $\PPP^{(i)}$ undergoes a spread event.
								\end{enumerate}
								At most one type of event (shrink or spread) may occur per
									$N$-loop in $\PPP^{(i)}$ from $\Lambda^{(i)}$ to $\Lambda^{(i+1)}$.
									Multiple shrink moves are permitted to simultaneously
										occur on a particular $N$-loop in $\PPP^{(i)}$
										(these moves will act on edges that pairwise do not share
											endpoints in the $N$-loop).

						\item If an $N$-loop in $\PPP^{(i)}$ undergoes shrink moves and twist moves from $\Lambda^{(i)}$ to $\Lambda^{(i+1)}$,
									then the list $\mathsf{L}_i$ contains all twist moves before any of the shrink moves.
						\item 	The loops from $\PPP^{(i+1)}$ are the loops in $\PPP^{(i)}$ that did not spread, minus
								the edges (and therefore vertices) jettisoned by shrink moves.
								
						\item $\CCC^{(i+1)}$ agrees with $\CCC^{(i)}$ on $\PPP^{(i+1)}$ and any $N$-loops in $\PPP^{(i)}$ that spread. 
							Any vertices lost by shrink moves will be uncolored by $\CCC^{(i+1)}$.
						\item $\PPP^{(i)} \neq \emptyset$ for $i<M$ and $\PPP^{(M)} = \emptyset$.
					\end{enumerate}
		\end{defn}
		
		We now are able to give our new rules list.
		
		\begin{rules}[Acceleration for $N$-loops]\label{RULE4}
			Given a fixed choice $\PPP$ of $N$-loops in $\Lambda$ with coloring $\CCC$ satisfying Rules List \ref{RULE1} 
			(and each colored by a distinct nonzero color),
				there exists an itinerary
					$$\left(\Lambda^{(i)},\CCC^{(i)},\PPP^{(i)}\right)_{i=0}^{M}.$$
			For any other choice $\PPP_*\subsetneq \PPP$ of $N$-loops,
				there exists an itinerary
					$$\left(\Lambda_*^{(j)},\CCC_*^{(j)},\PPP_*^{(j)}\right)_{j=0}^{M_*}$$
				such that for some $0= i_0 < \dots < i_{M_*} \leq M$ and all $0 \leq j \leq M_*$,
					\begin{enumerate}
						\item $\Lambda_*^{(j)} = \Lambda^{(i_j)}$,
						\item $\CCC_*^{(j)} = \CCC^{(i_j)}$ and
						\item $ \PPP_*^{(j)} = \PPP^{(i_j)} \wedge \PPP_*$, meaning $\PPP_*^{(j)}$ is the collection of all
								sets of the form $L \cap L'$ where $L \in \PPP^{(i_j)}$ and $L'\in \PPP_*$
								such that the intersection is non-empty.
					\end{enumerate}
		\end{rules}
		
		An itinerary notes the important sequential events that occur for each loop in $\PPP$
			until they are no longer of interest.
		We may then start over with a smaller set of loops $\PPP_*\subseteq \PPP$.
		For these loops, the relevant events we initially observed for $\PPP$ will remain and occur in the same order.

	\subsection{Larger Loops: Graph \texorpdfstring{$\Xi$}{Xi}}\label{SEC_NLOOPS_NEWG}
	
		In order to find a minimal separating subset for a collection of $N$-loops $\PPP$
			we need to take into account the effect of shrink moves because these moves can merge components together:
		as discussed previously, a shrink move may increase connectivity in $\Lambda^{(i+1)}\setminus \PPP^{(i+1)}$ as
			compared to that in $\Lambda^{(i)}\setminus \PPP^{(i)}$.
			
		For each $\nu\in \{1,\dots,E\}$, choose a colored $N$-loop and let $\PPP$ be the set of all such loops.
		By Rules List \ref{RULE4},
			there exists an itinerary for $\mathcal{P}$.
		From this itinerary,
			we may extract an ordered sequence of shrink RBS moves
			and spread events.
		If a spread event occurs from $\Lambda^{(i)}$ to
					$\Lambda^{(i+1)}$ for a loop $L \in \PPP^{(i)}$ we may mark or identify outside edges
					from which the color of $L$ spreads to $\Lambda^{(i+1)}\setminus L$.
		These edges are consistently identified in $\Lambda^{(j)}$ for all $i\leq j$
			by using the associations made when defining RBS moves in Section \ref{SSEC_RBS_DEF}.
				
		Let $m_1,\dots,m_T$ be the twist and shrink moves, in order, extracted from the list $\mathsf{L}$ of moves in our itinerary for $\PPP$.
		We will now construct a new undirected (multi)graph $\Xi$.
		Informally, we will act on $\Lambda$ by the twist and shrink moves $m_1,\dots,m_T$
			(ignoring all other moves)
			to arrive at new a graph $\Lambda^*$.
		As discussed in previous sections, we may without ambiguity refer to the same loops $L^*_1,\dots,L^*_E$
			in $\Lambda^*$.
		From $\Lambda^*$ we will then remove all edges from the loops in $L^*_1,\dots,L^*_E$
			to get a new undirected graph $\overline{\Lambda^*}$.
		Finally, we will create $\Xi$ by 
			identifying all $\mathfrak{s}$-special vertices in $L^*_\nu$ as one new vertex $\nu_{\mathfrak{s}}$
			for each $\nu\in \{1,\dots,E\}$ and $\mathfrak{s}\in \{\ell,r\}$.
			
		More specifically, act on $\Lambda$ by the moves $m_1,\dots,m_T$ on
			loops $L_1,\dots,L_E$ given by the itinerary to arrive at new graph $\Lambda^*$.
		Note that each loop $L_\nu$ must exist in $\Lambda^*$, as twist moves only permute vertices
			while shrink moves may only act on $L_\nu$
			if it has at least $3$ edges.
		Call the resulting loops $L^*_\nu$ in $\Lambda^*$ for each $\nu\in \{1,\dots,E\}$.
		Traversing each loop $L^*_\nu$ in $\Lambda^*$, 
			we visit the same vertices we would visit when traversing $L_\nu$ in $\Lambda$ except for
				those lost due to shrink moves, and the order of 
				visitation may change due to twist moves.
			
		We now create the undirected graph $\overline{\Lambda^*}$.
		The vertices of $\overline{\Lambda^*}$ are the same as those of $\Lambda$ (and $\Lambda^*$).
		For each edge $e$ in $\Lambda^*$ that does not belong to 
			a loop $L_\nu^*$ for some $\nu\in \{1,\dots,E\}$,
			we add an undirected edge  $e'$ that connects in $\overline{\Lambda^*}$
			the initial vertex of $e$ in $\Lambda^*$ to the terminal vertex of $e $ in $\Lambda^*$.
			
		Finally we create $\Xi$ from $\overline{\Lambda^*}$.
		Let $V^-$ be the collection of all vertices in $\Lambda^*$ that 
			are contained in
			a loop $L^*_\nu$ for some $\nu\in \{1,\dots,E\}$ and
			let $$V^+ = \{\nu_\mathfrak{s}:~\mathfrak{s}\in \{\ell,r\},~\nu\in \{1,\dots,E\}\}$$
			be a new set of $2E$ vertices that do not belong to $\Lambda^*$.
		The vertices of $\Xi$ are then all those that belong to $\overline{\Lambda^*}$ except those in $V^-$,
			along with the new vertices in $V^+$.
		Define a function $\psi$ on the vertices of $\overline{\Lambda^*}$ to be 
			$\psi(w) = \nu_\mathfrak{s}$ if $w \in V^-$ is a $\mathfrak{s}$-special
				vertex $L_\nu^*$
			and $\psi(w)=w$ if $w\not\in V^-$.
		For each edge $e$ in $\overline{\Lambda^*}$ (not $\Lambda^*$)
			we add an edge $e'$ to the edges of $\Xi$ where $e'$ is induced by the image of $\psi$.
		See Figure \ref{FIG_MAKING_XI} for an example of how to construct
			$\Xi$ about a $4$-loop.
		
		\begin{figure}[t]
			\begin{center}
				\scalebox{0.8}{\begin{tikzpicture}[vertstyle/.style={draw=black, top color = black!5, bottom color = black!35},
			edgestyle/.style={->,thick,black}, uedgestyle/.style={thick,black}, dotedgestyle/.style={thick,dotted,black},
			gbox/.style={thick,black!20,fill=black!10}]
	\def\asp{0.13}
	
	\def\xa{0}
	\def\ya{0}
	\def\xb{0}
	\def\yb{-26}
	\def\xc{0}
	\def\yc{-52}

	\draw [gbox] (\xa*\asp, \ya*\asp) rectangle +(50*\asp,-24*\asp);
	\draw [gbox] (\xb*\asp, \yb*\asp) rectangle +(50*\asp,-24*\asp);
	\draw [gbox] (\xc*\asp, \yc*\asp) rectangle +(50*\asp,-24*\asp);
		

	\draw [dotedgestyle] (7*\asp + \xa*\asp, -5*\asp + \ya*\asp) -- +(-5*\asp, 2*\asp);
	\draw [dotedgestyle] (7*\asp + \xa*\asp, -5*\asp + \ya*\asp) -- +(-5*\asp, -2*\asp);
	
	\draw [dotedgestyle] (7*\asp + \xa*\asp, -20*\asp + \ya*\asp) -- +(-5*\asp, 2*\asp);
	\draw [dotedgestyle] (7*\asp + \xa*\asp, -20*\asp + \ya*\asp) -- +(-5*\asp, -2*\asp);
	
	\draw [dotedgestyle] (43*\asp + \xa*\asp, -5*\asp + \ya*\asp) -- +(5*\asp, 2*\asp);
	\draw [dotedgestyle] (43*\asp + \xa*\asp, -5*\asp + \ya*\asp) -- +(5*\asp, -2*\asp);
		
	\draw [dotedgestyle] (43*\asp + \xa*\asp, -12.5*\asp + \ya*\asp) -- +(5*\asp, 2*\asp);
	\draw [dotedgestyle] (43*\asp + \xa*\asp, -12.5*\asp + \ya*\asp) -- +(5*\asp, -2*\asp);
		
	\draw [dotedgestyle] (43*\asp + \xa*\asp, -20*\asp + \ya*\asp) -- +(5*\asp, 2*\asp);
	\draw [dotedgestyle] (43*\asp + \xa*\asp, -20*\asp + \ya*\asp) -- +(5*\asp, -2*\asp);
		
	\draw [edgestyle] (9*\asp + \xa*\asp, -5*\asp + \ya*\asp) -- (18.3*\asp + \xa*\asp, -7.75*\asp + \ya*\asp);
	\draw [edgestyle] (18.3*\asp + \xa*\asp, -17.25*\asp + \ya*\asp) -- (9*\asp + \xa*\asp, -20*\asp + \ya*\asp);
	\draw [edgestyle] (41*\asp + \xa*\asp, -5*\asp + \ya*\asp) -- (31.7*\asp + \xa*\asp, -7.75*\asp + \ya*\asp);
	\draw [edgestyle] (41*\asp + \xa*\asp, -12.5*\asp + \ya*\asp) -- (32*\asp + \xa*\asp, -8.75*\asp + \ya*\asp);
	\draw [edgestyle] (31.7*\asp + \xa*\asp, -17.25*\asp + \ya*\asp) -- (41*\asp + \xa*\asp, -20*\asp + \ya*\asp);
	
	\draw [edgestyle] (18.6*\asp + \xa*\asp, -10.15*\asp + \ya*\asp) arc (135:225:3.303*\asp);
	\draw [edgestyle] (21.4*\asp + \xa*\asp, -17.65*\asp + \ya*\asp) arc (225:315:5.071*\asp);
	\draw [edgestyle] (31.4*\asp + \xa*\asp, -14.85*\asp + \ya*\asp) arc (-45:45:3.303*\asp);
	\draw [edgestyle] (28.6*\asp + \xa*\asp, -7.35*\asp + \ya*\asp) arc (45:135:5.071*\asp);
	
	\draw [black!75] (25*\asp + \xa*\asp, -12.5*\asp + \ya*\asp) node {\scalebox{1.5}{$L_\nu$}};

	\shade [vertstyle] (7*\asp + \xa*\asp, -5*\asp + \ya*\asp) circle (2*\asp) node {$x_1$};
	\shade [vertstyle] (7*\asp + \xa*\asp, -20*\asp + \ya*\asp) circle (2*\asp) node {$x_2$};
	
	\shade [vertstyle] (20*\asp + \xa*\asp, -8.75*\asp + \ya*\asp) circle (2*\asp) node {$u_1$};
	\shade [vertstyle] (20*\asp + \xa*\asp, -16.25*\asp + \ya*\asp) circle (2*\asp) node {$v_1$};

	\shade [vertstyle] (30*\asp + \xa*\asp, -8.75*\asp + \ya*\asp) circle (2*\asp) node {$u_2$};
	\shade [vertstyle] (30*\asp + \xa*\asp, -16.25*\asp + \ya*\asp) circle (2*\asp) node {$v_2$};

	\shade [vertstyle] (43*\asp + \xa*\asp, -5*\asp + \ya*\asp) circle (2*\asp) node {$x_5$};
	\shade [vertstyle] (43*\asp + \xa*\asp, -12.5*\asp + \ya*\asp) circle (2*\asp) node {$x_4$};
	\shade [vertstyle] (43*\asp + \xa*\asp, -20*\asp + \ya*\asp) circle (2*\asp) node {$x_3$};



	\draw [dotedgestyle] (7*\asp + \xb*\asp, -5*\asp + \yb*\asp) -- +(-5*\asp, 2*\asp);
	\draw [dotedgestyle] (7*\asp + \xb*\asp, -5*\asp + \yb*\asp) -- +(-5*\asp, -2*\asp);
	
	\draw [dotedgestyle] (7*\asp + \xb*\asp, -20*\asp + \yb*\asp) -- +(-5*\asp, 2*\asp);
	\draw [dotedgestyle] (7*\asp + \xb*\asp, -20*\asp + \yb*\asp) -- +(-5*\asp, -2*\asp);
	
	\draw [dotedgestyle] (43*\asp + \xb*\asp, -5*\asp + \yb*\asp) -- +(5*\asp, 2*\asp);
	\draw [dotedgestyle] (43*\asp + \xb*\asp, -5*\asp + \yb*\asp) -- +(5*\asp, -2*\asp);
		
	\draw [dotedgestyle] (43*\asp + \xb*\asp, -12.5*\asp + \yb*\asp) -- +(5*\asp, 2*\asp);
	\draw [dotedgestyle] (43*\asp + \xb*\asp, -12.5*\asp + \yb*\asp) -- +(5*\asp, -2*\asp);
		
	\draw [dotedgestyle] (43*\asp + \xb*\asp, -20*\asp + \yb*\asp) -- +(5*\asp, 2*\asp);
	\draw [dotedgestyle] (43*\asp + \xb*\asp, -20*\asp + \yb*\asp) -- +(5*\asp, -2*\asp);
		
	\draw [uedgestyle] (7*\asp + \xb*\asp, -5*\asp + \yb*\asp) -- (20*\asp + \xb*\asp, -8.75*\asp + \yb*\asp);
	\draw [uedgestyle] (20*\asp + \xb*\asp, -16.25*\asp + \yb*\asp) -- (7*\asp + \xb*\asp, -20*\asp + \yb*\asp);
	\draw [uedgestyle] (45*\asp + \xb*\asp, -5*\asp + \yb*\asp) -- (30*\asp + \xb*\asp, -8.75*\asp + \yb*\asp);
	\draw [uedgestyle] (45*\asp + \xb*\asp, -12.5*\asp + \yb*\asp) -- (30*\asp + \xb*\asp, -8.75*\asp + \yb*\asp);
	\draw [uedgestyle] (30*\asp + \xb*\asp, -16.25*\asp + \yb*\asp) -- (45*\asp + \xb*\asp, -20*\asp + \yb*\asp);
	
	
	\draw [black!40] (25*\asp + \xb*\asp, -12.5*\asp + \yb*\asp) node {\scalebox{1.5}{$L_\nu$}};	
	
	\shade [vertstyle] (7*\asp + \xb*\asp, -5*\asp + \yb*\asp) circle (2*\asp) node {$x_1$};
	\shade [vertstyle] (7*\asp + \xb*\asp, -20*\asp + \yb*\asp) circle (2*\asp) node {$x_2$};
	
	\shade [vertstyle] (20*\asp + \xb*\asp, -8.75*\asp + \yb*\asp) circle (2*\asp) node {$u_1$};
	\shade [vertstyle] (20*\asp + \xb*\asp, -16.25*\asp + \yb*\asp) circle (2*\asp) node {$v_1$};

	\shade [vertstyle] (30*\asp + \xb*\asp, -8.75*\asp + \yb*\asp) circle (2*\asp) node {$u_2$};
	\shade [vertstyle] (30*\asp + \xb*\asp, -16.25*\asp + \yb*\asp) circle (2*\asp) node {$v_2$};

	\shade [vertstyle] (43*\asp + \xb*\asp, -5*\asp + \yb*\asp) circle (2*\asp) node {$x_5$};
	\shade [vertstyle] (43*\asp + \xb*\asp, -12.5*\asp + \yb*\asp) circle (2*\asp) node {$x_4$};
	\shade [vertstyle] (43*\asp + \xb*\asp, -20*\asp + \yb*\asp) circle (2*\asp) node {$x_3$};



	\draw [dotedgestyle] (7*\asp + \xc*\asp, -5*\asp + \yc*\asp) -- +(-5*\asp, 2*\asp);
	\draw [dotedgestyle] (7*\asp + \xc*\asp, -5*\asp + \yc*\asp) -- +(-5*\asp, -2*\asp);
	
	\draw [dotedgestyle] (7*\asp + \xc*\asp, -20*\asp + \yc*\asp) -- +(-5*\asp, 2*\asp);
	\draw [dotedgestyle] (7*\asp + \xc*\asp, -20*\asp + \yc*\asp) -- +(-5*\asp, -2*\asp);
	
	\draw [dotedgestyle] (43*\asp + \xc*\asp, -5*\asp + \yc*\asp) -- +(5*\asp, 2*\asp);
	\draw [dotedgestyle] (43*\asp + \xc*\asp, -5*\asp + \yc*\asp) -- +(5*\asp, -2*\asp);
		
	\draw [dotedgestyle] (43*\asp + \xc*\asp, -12.5*\asp + \yc*\asp) -- +(5*\asp, 2*\asp);
	\draw [dotedgestyle] (43*\asp + \xc*\asp, -12.5*\asp + \yc*\asp) -- +(5*\asp, -2*\asp);
		
	\draw [dotedgestyle] (43*\asp + \xc*\asp, -20*\asp + \yc*\asp) -- +(5*\asp, 2*\asp);
	\draw [dotedgestyle] (43*\asp + \xc*\asp, -20*\asp + \yc*\asp) -- +(5*\asp, -2*\asp);
		
	\draw [uedgestyle] (7*\asp + \xc*\asp, -5*\asp + \yc*\asp) -- (25*\asp + \xc*\asp, -8.75*\asp + \yc*\asp);
	\draw [uedgestyle] (25*\asp + \xc*\asp, -16.25*\asp + \yc*\asp) -- (7*\asp + \xc*\asp, -20*\asp + \yc*\asp);
	\draw [uedgestyle] (45*\asp + \xc*\asp, -5*\asp + \yc*\asp) -- (25*\asp + \xc*\asp, -8.75*\asp + \yc*\asp);
	\draw [uedgestyle] (45*\asp + \xc*\asp, -12.5*\asp + \yc*\asp) -- (25*\asp + \xc*\asp, -8.75*\asp + \yc*\asp);
	\draw [uedgestyle] (25*\asp + \xc*\asp, -16.25*\asp + \yc*\asp) -- (45*\asp + \xc*\asp, -20*\asp + \yc*\asp);
	
	

	\shade [vertstyle] (7*\asp + \xc*\asp, -5*\asp + \yc*\asp) circle (2*\asp) node {$x_1$};
	\shade [vertstyle] (7*\asp + \xc*\asp, -20*\asp + \yc*\asp) circle (2*\asp) node {$x_2$};
	

	
	\shade [vertstyle] (25*\asp + \xc*\asp, -8.75*\asp + \yc*\asp) circle (2*\asp) node {$\nu_\ell$};
	\shade [vertstyle] (25*\asp + \xc*\asp, -16.25*\asp + \yc*\asp) circle (2*\asp) node {$\nu_r$};

	\shade [vertstyle] (43*\asp + \xc*\asp, -5*\asp + \yc*\asp) circle (2*\asp) node {$x_5$};
	\shade [vertstyle] (43*\asp + \xc*\asp, -12.5*\asp + \yc*\asp) circle (2*\asp) node {$x_4$};
	\shade [vertstyle] (43*\asp + \xc*\asp, -20*\asp + \yc*\asp) circle (2*\asp) node {$x_3$};


	\draw (-3*\asp+ \xa*\asp,-12 *\asp + \ya*\asp) node {\scalebox{1.5}{$\Lambda$}};
	\draw (-3*\asp+ \xb*\asp,-12 *\asp + \yb*\asp) node {\scalebox{1.5}{$\overline{\Lambda^*}$}};
	\draw (-3*\asp+ \xc*\asp,-12 *\asp + \yc*\asp) node {\scalebox{1.5}{$\Xi$}};
	
\end{tikzpicture}}
			\end{center}
		\caption{The local construction of $\overline{\Lambda^*}$ and $\Xi$ from $\Lambda$ around $4$-loop $L_\nu$.
			Here we assume the $x_i$'s do not belong to an $N$-loop.}\label{FIG_MAKING_XI}
		\end{figure}
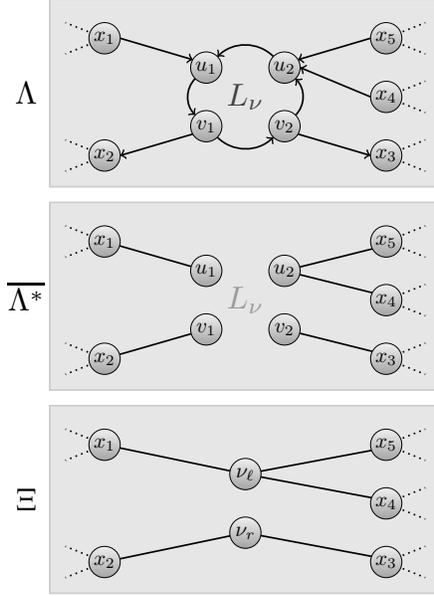
						
		Recall that $\Lambda$ has $K_\ell + K_r$ vertices and $K + K_\ell + K_r$ edges,
			where $K_\mathfrak{s}$ is the number of $\mathfrak{s}$-special vertices in $\Lambda$ for $\mathfrak{s}\in \{\ell,r\}$.
		For each $\nu\in \{1,\dots,E\}$ let $N^*_\nu$ be the number of vertices (equivalently edges) in
		 the loop $L^*_\nu$  in $\Lambda^*$ (after all shrink and twist moves have been applied)
		 	and let $\mathbf{N}^*$ be the sum of all the $N^*_\nu$'s.
			We note that: $\Lambda^*$ has the same number of vertices (resp. edges) as $\Lambda$,
				$\overline{\Lambda^*}$ and $\Xi$ have $\mathbf{N}^*$ fewer edges than $\Lambda$ and
					$\Xi$ had $\mathbf{N}^*$ vertices removed and $2E$ vertices added. Therefore
			\begin{equation}
				\begin{array}{l}
				\#_\mathrm{edge} (\Xi) = K + K_\ell + K_r - \mathbf{N}^*,\mbox{ and}\\
				\#_\mathrm{vert} (\Xi) = K_\ell + K_r + 2E - \mathbf{N}^*,
				\end{array}
			\end{equation}
		and so
			\begin{equation}\label{EQ_CONN_XI}
				\#_{\mathrm{edge}}(\Xi) - \#_{\mathrm{vert}}(\Xi) = K - 2E.
			\end{equation}
		
		In the next proposition, we will show that $\Xi$ must be 
			weakly connected.
		As in the proof of Corollary \ref{COR2LOOPS}, $\#_{\mathrm{edge}}(\Xi) - \#_{\mathrm{vert}}(\Xi) \geq -1$ or
			\begin{equation}
					K - 2E \geq -1 \iff E \leq \frac{K+1}{2}
			\end{equation}
		which is our main result.
		The proof of this proposition is given in the appendix.
		
		\begin{prop}\label{PROP_CONNECT}
			Let $\Lambda$ with coloring $\CCC$ satisfy Rules List \ref{RULE1} and let
				$\PPP$ be a collection of distinctly colored $N$-loops.
			If we may find itineraries for $\Lambda$, $\CCC$ and $\PPP$ according to Rules List \ref{RULE4}
				and construct $\Xi$ as in this section, then
				$\Xi$ must be connected.
		\end{prop}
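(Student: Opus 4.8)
The plan is to run a \emph{separation argument} in the spirit of the proof of Lemma~\ref{LEM2LOOPS}, but carried out directly on the auxiliary graph $\Xi$ rather than on $\Lambda$ --- which is exactly what the passage through $\Lambda^*$ and $\overline{\Lambda^*}$ is designed to permit, since once all the twist and shrink moves of the itinerary have been performed we no longer have to track how the connected components of $\Lambda^{(i)}\setminus\PPP^{(i)}$ evolve. Concretely, I will show that for every color $\nu\in\{1,\dots,E\}$ the vertices $\nu_\ell$ and $\nu_r$ lie in one connected component of $\Xi$. Once this is known, $\Xi$ has the same number of components as the further quotient $\Xi'$ obtained by additionally identifying $\nu_\ell$ with $\nu_r$ for each $\nu$ (identifying vertices already in the same component changes nothing), and $\Xi'$ is precisely the graph obtained from $\Lambda^*$ by contracting each of the pairwise vertex-disjoint connected subgraphs $L^*_\nu$ to a point. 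Contracting connected subgraphs of a connected graph yields a connected graph, and $\Lambda^*$, being the result of a sequence of RBS moves each of which preserves strong connectivity by definition, is strongly connected; hence $\Xi'$, and therefore $\Xi$, is connected. With \eqref{EQ_CONN_XI} this gives $K-2E\ge -1$, that is $E\le\frac{K+1}{2}$, as claimed in the Main Theorem.

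So the crux is the claim that $\nu_\ell$ and $\nu_r$ are joined in $\Xi$. First I would record the structural facts that go into it. Because the loops in $\PPP$ are colored by distinct nonzero colors and every vertex and edge of an $N$-loop carries that color (Notation~\ref{NOTA1}), the loops are pairwise vertex-disjoint; by Definition~\ref{DEF_ITIN} a loop is acted on only by twist and shrink moves until it spreads and is untouched thereafter, so each $L^*_\nu$ appears in $\Lambda^*$ as a $\nu$-colored circuit; and since $\PPP^{(M)}=\emptyset$ while (by clause (5) of Definition~\ref{DEF_ITIN}) a loop leaves the roster only by a spread event, every $L_\nu$ undergoes a spread event, which marks an outside edge $e^\nu_{\mathrm{out}}$ that is colored $\nu$ and leaves $L^*_\nu$ at a right-special vertex. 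Since $e^\nu_{\mathrm{out}}$ emanates from a right-special vertex it is never a bispecial edge, so by Rules List~\ref{RULE2}(2) its color is preserved by every subsequent RBS move; thus in $\Lambda^*$ (with the coloring $\CCC^*$ inherited from the itinerary, which still obeys Rules List~\ref{RULE1}) we have $\CCC^*(e^\nu_{\mathrm{out}})=\nu$.

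Next I would apply item (4) of Rules List~\ref{RULE1} to $e^\nu_{\mathrm{out}}$ in $\Lambda^*$: there is a directed circuit $\mathcal{O}_\nu$ through $e^\nu_{\mathrm{out}}$ whose every edge, and hence by Notation~\ref{NOTA1}(8) every vertex, is colored $\nu$. As the loops $L^*_\mu$ with $\mu\ne\nu$ carry colors different from $\nu$, they are disjoint from $\mathcal{O}_\nu$, so every edge of $\mathcal{O}_\nu$ that is not an edge of $L^*_\nu$ is a non-loop edge of $\Lambda^*$ and therefore survives into $\overline{\Lambda^*}$ and into $\Xi$. Because $e^\nu_{\mathrm{out}}$ is itself such an edge, $\mathcal{O}_\nu$ contains at least one maximal subpath off $L^*_\nu$; since at vertices of $L^*_\nu$ the only non-loop outgoing edges emanate from right-special vertices and the only non-loop incoming edges terminate at left-special vertices, any such subpath runs from a right-special vertex of $L^*_\nu$ to a left-special vertex of $L^*_\nu$, hence maps in $\Xi$ to a path from $\nu_r$ to $\nu_\ell$ (in the degenerate case where the subpath is the single edge $e^\nu_{\mathrm{out}}$ joining two vertices of $L^*_\nu$, the same holds directly). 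This proves the claim.

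The main obstacle is not the connectivity deduction above but the bookkeeping that supplies its hypotheses: one must check that the twist and shrink moves extracted from the itinerary, applied to $\Lambda$ with the collapse (and other) moves deleted, constitute a legitimate sequence of RBS moves --- in particular that no intermediate graph fails to be strongly connected --- and that the resulting $(\Lambda^*,\CCC^*)$ satisfies Rules List~\ref{RULE1} with each $L^*_\nu$ still $\nu$-colored and with $e^\nu_{\mathrm{out}}$ still incident to $L^*_\nu$. This is where the itinerary framework of Section~\ref{SEC_NLOOPS_ITIN}, the vertex-disjointness of the loops (so a move on one loop cannot disturb another), the fact that a shrink move's admissibility depends only on data intrinsic to its loop, and the compatibility clauses of Rules List~\ref{RULE4} all get used. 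Once these are established, the proof closes as above.
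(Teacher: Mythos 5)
Your overall strategy is genuinely different from the paper's: you try to prove directly that for \emph{every} $\nu$ the vertices $\nu_\ell$ and $\nu_r$ are joined in $\Xi$ by the image of a $\nu$-colored circuit, and then conclude by contracting the loops; the paper instead argues by contradiction, choosing a maximal set $T$ of pairs whose identification leaves $\Xi$ disconnected and then examining the \emph{first} loop of the complementary set $S$ to undergo a spread event. This difference is not cosmetic, and your route has a gap at its core. The circuit $\mathcal{O}_\nu$ supplied by Rules List \ref{RULE1}(4) lives in the itinerary graph $\Lambda^{(i_\nu)}$ where the spread event occurs (not in $\Lambda^*$, which is not an itinerary graph and carries no coloring certified to satisfy Rules List \ref{RULE1}), and it is only guaranteed to avoid the loops that \emph{still} carry their distinct colors at that moment --- by Definition \ref{DEF_ITIN}(6) the coloring is preserved on $\PPP^{(i)}$ and on loops at the instant they spread, but once a loop $L_\mu$ has spread and left the partition, nothing prevents later colorings from assigning $\nu$ to its vertices and edges. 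So for a loop $L_\nu$ that spreads late, $\mathcal{O}_\nu$ may enter $L^*_\mu$ at a left-special vertex, traverse edges of $L^*_\mu$, and exit at a right-special vertex; in $\Xi$ those edges are deleted and the entry and exit points are identified to $\mu_\ell$ and $\mu_r$ respectively, so the image of $\mathcal{O}_\nu$ breaks into pieces that reconnect only if $\mu_\ell\sim\mu_r$ in $\Xi$ --- which is exactly the statement you are trying to prove, for a different color. Your claim is circular precisely in the cases where the ordering of spread events matters.

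The paper's contradiction structure is designed to kill this circularity: by taking the first loop $L_I$ of the separating family $S$ to spread, every other $L_j$ with $j\in S$ still carries its own distinct nonzero color at that time, so the spreading path $\pi$ cannot enter it, while the loops of $T$ are neutralized by the added edges $e_i$ in $\hat\Xi$; the contradiction then comes from showing $\pi$ must nevertheless cross some $L_j$, $j\in S$. The second, nontrivial half of that argument --- relating separation in $\overline{\Lambda^{(i)}_0}$ to separation in $\Xi$ across interleaved twist, shrink, and type-C moves --- is the component-and-tag tracking of Proposition \ref{prop: connected_moves} and Corollary \ref{cor: connected_moves} together with the synchronization of the two move sequences. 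You flag this as ``bookkeeping,'' but it is the bulk of the proof and also underwrites the strong-connectivity of $\Lambda^*$ that your contraction step silently assumes (applying only the extracted twist and shrink moves to $\Lambda$, out of their original context, is not a priori a legal sequence of RBS moves). As written, the proposal does not constitute a proof.
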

		
		\begin{coro}\label{cor: main_corollary}
			Any coloring $\CCC$ on $\Lambda$ satisfying the conditions in the previous proposition
				must use at most $\frac{K+1}{2}$ colors.
		\end{coro}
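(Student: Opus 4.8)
The plan is to derive the corollary directly from Proposition~\ref{PROP_CONNECT} together with the identity~\eqref{EQ_CONN_XI} and the elementary fact that a connected (multi)graph on $V$ vertices has at least $V-1$ edges.

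First I would set up the reduction. Let $E$ be the largest value attained by $\CCC$, so that by Rules List~\ref{RULE1}(1)--(2) the set of nonzero values of $\CCC$ is exactly $\{1,\dots,E\}$; thus the number of colors used by $\CCC$ is precisely $E$, and it suffices to show $E \le \frac{K+1}{2}$. To apply Proposition~\ref{PROP_CONNECT}, I first produce a collection $\PPP$ of distinctly colored $N$-loops, one per color: fixing $\nu\in\{1,\dots,E\}$, Rules List~\ref{RULE1}(2) gives a right special vertex $v$ with $\CCC(v)=\nu$, item (3) gives an edge $e$ leaving $v$ with $\CCC(e)=\nu$, and item (4) places $e$ on a monochromatic directed circuit, which is an $N_\nu$-loop colored by $\nu$. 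Choosing one such loop for each $\nu$ yields $\PPP$, a collection of $E$ distinctly colored $N$-loops.

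Next, since $\Lambda$, $\CCC$, $\PPP$ satisfy the hypotheses of Proposition~\ref{PROP_CONNECT} (Rules List~\ref{RULE4} supplies the required itineraries), that proposition tells us the auxiliary graph $\Xi$ constructed in Section~\ref{SEC_NLOOPS_NEWG} is weakly connected. A connected multigraph satisfies $\#_{\mathrm{edge}}(\Xi)\ge \#_{\mathrm{vert}}(\Xi)-1$, so $\#_{\mathrm{edge}}(\Xi)-\#_{\mathrm{vert}}(\Xi)\ge -1$. Substituting the identity~\eqref{EQ_CONN_XI}, namely $\#_{\mathrm{edge}}(\Xi)-\#_{\mathrm{vert}}(\Xi)=K-2E$, gives $K-2E\ge -1$, hence $E\le \frac{K+1}{2}$, which is the claim.

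The real content is all upstream: Proposition~\ref{PROP_CONNECT} (the connectedness of $\Xi$, proved in the appendix) and the bookkeeping of Section~\ref{SEC_NLOOPS_NEWG} yielding~\eqref{EQ_CONN_XI}. The only step in the present argument that needs a line of care is verifying that the loops selected above are ``distinctly colored'' in the precise sense the Rules Lists demand and that restricting attention to a single loop per color is harmless — but this was already remarked in Section~\ref{SEC_NLOOPS_MOVES}. I therefore do not expect any genuine obstacle here; the corollary is a short counting consequence of the proposition.
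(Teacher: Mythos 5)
Your proof is correct and follows exactly the route the paper takes: select one $\nu$-colored $N$-loop per color via Rules List \ref{RULE1} (items (2)--(4)), invoke Proposition \ref{PROP_CONNECT} to get that $\Xi$ is weakly connected, and combine the bound $\#_{\mathrm{edge}}(\Xi)-\#_{\mathrm{vert}}(\Xi)\ge -1$ with the identity \eqref{EQ_CONN_XI} to conclude $K-2E\ge -1$. This is precisely the counting argument the paper sketches immediately before stating the proposition, so there is nothing to add.
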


\section{Ergodic theory on subshifts}\label{SEC_ERG}

We now build on the results from \cite{cDamFick2016} and Section \ref{SEC_COLOR_DEF} concerning the upper density
		function $\DDD(w,x)$, which is used to define the coloring function $\CCC$.
In Section \ref{SEC_ERG_TO_GRAPH}, these results along with those from Section \ref{SEC_COLOR_DEF} will be used to justify the Rules Lists from Section \ref{SEC_GRAPHS}
	for graphs $\Lambda$ constructed from subshifts.

Assume $X$ is a transitive subshift on $\AAA$ with language $\LLL$ satisfying RBC with growth constant $K$.
By Corollary \ref{COR_REC_IFF_UREC} and Lemma \ref{LEM_UREC_IFF_MIN} it must be that $X$ is actually minimal.
Recall that $r(w,x,j)$, $w\in \LLL$ and $x\in X$, is $1$ if $w$ occurs in $x$ beginning at a position
	within the $j^{th}$ block of length $(K+1)|w|$ in $x$
	and $0$ otherwise.
If we define
	\begin{equation}
		\SSS_{N}(w,x) = \sum_{j=1}^N r(w,x,j)
			\mbox{ and }
		\DDD_{N}(w,x) = \frac{1}{N} \SSS_{N}(w,x),
	\end{equation}
then $\DDD(w,x) = \limsup_{N\to \infty} \DDD_N(w,x)$.

The next lemma gives a lower bound for densities in the following situation.
If in any $x$, one of $z^{(1)}, \dots, z^{(p)}$ occurs between any two occurrences of $w$,
	then there is a $z^{(j)}$ whose density in $x$ can be bounded below in terms of the density of $w$ in $x$.

\begin{lemm}\label{LemLoopWords}
	Let $w\in \LLL_n$ and suppose
		$$ \BBB = \{z^{(1)},z^{(2)},\dots,z^{(p)}\}\subseteq \LLL_m$$
	satisfies the following:
	for each $y\in \LLL$, $|y|>\max\{n,m\}$, and $j,j'$ with $1\leq j<j'\leq |y|-\max\{m,n\}$ such that
		$$ y_{[j,j+n-1]} = y_{[j',j'+n-1]} = w,$$
	(that is, $w$ occurs at least twice in $y$), then there exist $i,k$ with $1\leq i \leq p$
		and $j\leq k < j'$ so that $y_{[k,k+m-1]} = z^{(i)}$ (that is, $z^{(i)}$ begins between the two occurrences of $w$).
		
	Then for $x\in X$ there exists $j$ with $1\leq j \leq p$ so that
		$$ \DDD(z^{(j)},x) \geq \frac{1}{p (1 + 3n/m)}\DDD(w,x).$$
\end{lemm}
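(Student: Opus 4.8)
The plan is to turn the statement into a counting argument about occurrences of $w$ and of the $z^{(i)}$'s in $x$, using the block structure built into the definition of $\DDD$ in an essential way. Fix $x\in X$ and put $\beta=\DDD(w,x)$; if $\beta=0$ the inequality is trivial, so assume $\beta>0$, and choose $N_k\to\infty$ with $\DDD_{N_k}(w,x)\to\beta$. For each $k$, the number $\SSS_{N_k}(w,x)=N_k\DDD_{N_k}(w,x)$ counts those blocks of length $(K+1)n$ among the first $N_k$ in which $w$ has a starting position; picking one such occurrence per block and listing the starting positions as $\kappa_1<\cdots<\kappa_{s_k}$ (so $s_k=N_k\DDD_{N_k}(w,x)$), the key point is that consecutive $\kappa_i$'s lie in \emph{distinct} blocks of length $(K+1)n$. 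This ``spread-out'' choice of representatives---rather than all occurrences of $w$, which may cluster---is exactly what the definition of $\DDD$ provides.

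Next I would invoke the hypothesis on $\BBB$: for each $r$, applying it to a sufficiently long subword of $x$ containing the occurrences of $w$ at positions $\kappa_r$ and $\kappa_{r+1}$ produces a position $\lambda_r$ with $\kappa_r\le\lambda_r<\kappa_{r+1}$ and $x_{[\lambda_r,\lambda_r+m-1]}\in\BBB$. Since $\lambda_r<\kappa_{r+1}\le\lambda_{r+1}$, the $\lambda_r$ are strictly increasing; there are $s_k-1$ of them, and they all lie in a window covered by $M_k$ consecutive blocks of length $(K+1)m$, where $M_k=(1+o(1))N_kn/m$. By the pigeonhole principle some $z^{(j_k)}\in\BBB$ occurs at $\lambda_r$ for at least $(s_k-1)/p$ values of $r$, and passing to a subsequence we may fix $j_k\equiv j$.

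The crux is the combinatorial estimate that every block of length $(K+1)m$ contains at most $\lceil m/n\rceil+2\le m/n+3$ of the $\lambda_r$'s. Indeed, if $\lambda_r,\dots,\lambda_{r+\ell}$ all lie in one such block $C$, then monotonicity together with $\kappa_{r+1}>\lambda_r$ and $\kappa_{r+i}\le\lambda_{r+i}\le\lambda_{r+\ell}$ forces $\kappa_{r+1},\dots,\kappa_{r+\ell}\in C$; these $\ell$ positions occupy distinct $(K+1)n$-blocks, and $C$ meets at most $\lceil m/n\rceil+1$ of those, so $\ell\le\lceil m/n\rceil+1$. Consequently the number of $(K+1)m$-blocks among the first $M_k$ containing a starting position of $z^{(j)}$ is at least $(s_k-1)/\big(p(\lceil m/n\rceil+2)\big)$, whence
\[
\DDD_{M_k}(z^{(j)},x)\ \ge\ \frac{s_k-1}{p(\lceil m/n\rceil+2)\,M_k}\ \xrightarrow[k\to\infty]{}\ \frac{\beta m}{p(\lceil m/n\rceil+2)\,n}\ \ge\ \frac{\beta}{p(1+3n/m)} .
\]
Taking $\limsup$ over $M$ on the left gives $\DDD(z^{(j)},x)\ge \frac{1}{p(1+3n/m)}\DDD(w,x)$, as required.

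The step I expect to be the main obstacle is this last combinatorial estimate, and in particular recognizing that it must be applied to the spread-out representatives $\kappa_i$; if one instead worked with all occurrences of $w$, clustering (which happens when $w$ is nearly periodic) would allow unboundedly many $\lambda_r$'s inside a single $(K+1)m$-block and the bound would fail. The remaining points---fixing the window lengths so that all relevant occurrences are captured, verifying that the hypothesis on $\BBB$ applies with an admissible word $y$, and handling the limits---are routine bookkeeping.
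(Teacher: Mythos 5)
Your proposal is correct and follows essentially the same route as the paper's proof: both arguments use the block structure in the definition of $\DDD$ to select occurrences of $w$ in distinct $(K+1)n$-blocks, apply the hypothesis on $\BBB$ between consecutive such occurrences, pigeonhole to isolate one $z^{(j)}$, and then bound the number of resulting starting positions per $(K+1)m$-block by roughly $m/n+3$ to convert the count into the density inequality. The constants and the limiting argument along a subsequence realizing $\DDD(w,x)$ match the paper's as well.
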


\begin{rema}
	If $m \geq n$ then the coefficient $1/4p$ may be used instead.
\end{rema}

\begin{proof}[Proof of Lemma \ref{LemLoopWords}]
	For any large $M\in \NN$,
		consider
		$ x_{[1,M(K+1)nm]}$
	which is both the first $Mn$ of $x$'s $(K+1)m$-blocks and the first $Mm$ of $x$'s $(K+1)n$-blocks.
	Between any two beginnings of $w$ in different $(K+1)n$-blocks,
		there exists at least one $z^{(i)}$ in between.
	So there must be $z^{(j_M)}$ that occurs at least $\frac{1}{p}(\SSS_{Mm}(w,x)-1)$ times.
	Of these occurrences, we claim that at most $3+\frac{m}{n}$ of them may begin in the same $(K+1)m$-block.

	Indeed, if $m\leq n$ then any $(K+1)m$-block may overlap with at most $2$ $(K+1)n$-blocks.
		If $m = kn$, then each $(K+1)m$-block overlaps exactly $k = \frac{m}{n}$ $Ln$-blocks.
		If $m = kn + k'$ for integer $0<k'<n$, then each $(K+1)m$-block may overlap at most $2 + \frac{m}{n}$ $(K+1)n$-blocks.
		In all cases, the number of beginnings of $z^{(j_M)}$ in an $(K+1)m$-block is at most $1$ plus the number of
			overlapped $(K+1)n$-blocks.
	
	It follows then that
		$$ \SSS_{Mn}(z^{(j_M)},x) \geq \frac{1}{p(3+m/n)}(\SSS_{Mm}(w,x)- 1)$$
	or
		$$
			\begin{array}{rcl}
			\DDD_{Mn}(z^{(j_M)},x) &\geq& \displaystyle \frac{1}{p(3+m/n)} \frac{m}{n}\DDD_{Mm}(w,x) - \frac{1}{Mp(3n+m)}\GAP\\
				& = & \displaystyle \frac{1}{p(1 + 3n/m)} \DDD_{Mm}(w,x) - \frac{1}{Mp(3n+m)}.
			\end{array}
		$$
	Fix an infinite subset $\MMM\subseteq \NN$ so that
		\begin{enumerate}
			\item $ \DDD(w,x) = \lim_{\MMM\ni M \to \infty} \DDD_{Mm}(w,x)$, and
			\item $ j = j_M$ for fixed $j$ and all $M\in \MMM$.
		\end{enumerate}
	By taking the limsup over $\MMM$ we have
		$$
			\begin{array}{rcl}
			\DDD(z^{(j)},x) & \geq & \displaystyle \limsup_{\MMM\ni M \to\infty} \DDD_{Mn}(z^{(j)},x)\GAP\\
				& \geq & \displaystyle \lim_{\MMM\ni M \to\infty} \frac{1}{p(1+ 3n/m)} \DDD_{Mm}(w,x) - \frac{1}{Mp(3n+m)}\GAP\\
				& = & \displaystyle \frac{1}{p(1 + 3n/m)} \DDD(w,x),
			\end{array}
		$$
	as desired.
\end{proof}

The next lemma appeared in \cite{cDamFick2016} and the proof is similar
		to the one for Lemma \ref{LemLoopWords} above.

\begin{lemm}[Lemma 3.7 in \cite{cDamFick2016}]\label{LemSubword}
	If $w'$ is a subword of $w\in \AAA^*$ and $x\in \AAA^\NN$, then
		$$ \DDD(w',x) \geq \frac{|w'|}{2|w|} \DDD(w,x).$$
\end{lemm}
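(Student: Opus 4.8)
The plan is to adapt the scheme from the proof of Lemma~\ref{LemLoopWords}, comparing the decompositions of $x$ into blocks of length $(K+1)|w|$ and into blocks of length $(K+1)|w'|$. Write $n=|w|$ and $m=|w'|$; since $w'$ is a subword of $w$ we have $m\le n$, and we may fix $s$ with $0\le s\le n-m$ so that $w'=w_{[s+1,s+m]}$. The key point is that every occurrence of $w$ in $x$ beginning at a position $k$ produces an occurrence of $w'$ beginning at $k+s$, with the \emph{same} shift $s$ for all occurrences.

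First I would fix a large $M\in\NN$ and consider $x_{[1,M(K+1)nm]}$, which is simultaneously the first $Mm$ blocks of length $(K+1)n$ and the first $Mn$ blocks of length $(K+1)m$. Among the first $Mm$ blocks of length $(K+1)n$, exactly $\SSS_{Mm}(w,x)$ contain a starting position of $w$; for each such block select one such position $k_i$. These $k_i$ lie in distinct blocks, hence are pairwise distinct, so the shifted positions $k_i+s$ are pairwise distinct starting positions of $w'$ in $x$. The key combinatorial step is that at most two of the $k_i+s$ can lie in any single block of length $(K+1)m$: if $k_{i_1}+s,\dots,k_{i_t}+s$ all lie in $\big((j'-1)(K+1)m,\,j'(K+1)m\big]$, then $k_{i_1},\dots,k_{i_t}$ all lie in the translate $\big((j'-1)(K+1)m-s,\,j'(K+1)m-s\big]$, an interval of length $(K+1)m\le (K+1)n$, which meets at most two consecutive blocks of length $(K+1)n$; since the $k_i$ were chosen one per such block, $t\le2$. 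Moreover, only a bounded number $c$ (independent of $M$) of the positions $k_i+s$ can fall beyond $M(K+1)nm$, since these force $k_i$ into the last one or two blocks of length $(K+1)n$. Hence
$$
\SSS_{Mn}(w',x)\ \ge\ \tfrac12\big(\SSS_{Mm}(w,x)-c\big).
$$

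Dividing by $Mn$ and using $\SSS_{Mm}(w,x)=Mm\,\DDD_{Mm}(w,x)$ gives
$$
\DDD_{Mn}(w',x)\ \ge\ \frac{m}{2n}\,\DDD_{Mm}(w,x)-\frac{c}{2Mn}.
$$
Choosing an infinite $\MMM\subseteq\NN$ along which $\DDD_{Mm}(w,x)\to\DDD(w,x)$ and taking $\limsup$ over $M\in\MMM$ (using $\DDD(w',x)\ge\limsup_{M}\DDD_{Mn}(w',x)$) yields $\DDD(w',x)\ge\frac{m}{2n}\DDD(w,x)=\frac{|w'|}{2|w|}\DDD(w,x)$, as required.

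The step needing the most care is the block-overlap bookkeeping: confirming that an interval of length $(K+1)m$ with $m\le n$ can meet at most two consecutive blocks of length $(K+1)n$, and that the number of ``escaping'' positions $k_i+s>M(K+1)nm$ is bounded by an absolute constant so that the error term vanishes in the limit. Everything else is a direct transcription of the argument for Lemma~\ref{LemLoopWords}.
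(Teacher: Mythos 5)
Your proof is correct, and it is exactly the block-counting adaptation of the argument for Lemma~\ref{LemLoopWords} that the paper itself indicates (the paper only cites \cite{cDamFick2016} and remarks that the proof is "similar" to that of Lemma~\ref{LemLoopWords}). The one place deserving care — that a translate of a $(K+1)|w'|$-block is an interval of at most $(K+1)|w|$ integers and hence meets at most two $(K+1)|w|$-blocks, so at most two of the selected (one-per-block) occurrences of $w$ can feed the same $(K+1)|w'|$-block — is handled correctly, and the boundary error term is indeed $O(1)$ in $M$, so the factor $\frac{|w'|}{2|w|}$ follows.
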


	In the arguments contained in Section \ref{SEC_ERG_TO_GRAPH},
	we will want to relate the density of $w$ to that of 
	one or more of its exit words $z$ of step $q$.
	In some cases, for a sequence $w^{(n)}$
		of words we use in those constructions, the
		related exit words $z^{(n)}$ will satisfy
		$
			|w^{(n)}|/|z^{(n)}| \to 0
		$
		as $n\to\infty$.
	Under this condition, the previous lemma would become trivial.
	The next lemma will be used instead, as it provides
		bounds relating these densities that do not depend on
		the lengths of the $w^{(n)}$'s or $z^{(n)}$'s.

\begin{lemm}\label{LemExitDens}
	For fixed $w\in \LLL_n$, with minimal valid step $q$ for $w$ in $\LLL$, let
		$\XXX=\XXX_q(w)$ denote the set of exit words of $w$ in $\LLL$ with step $q$.
	Then for any $x\in X$ and $z \in \XXX$ 
		\begin{equation}\label{EqPeriodSub}
			\DDD(w,x) \geq \frac{1}{3 K + 9} \DDD(z,x).
		\end{equation}
	Also, there exists $z'\in \XXX$ so that
		\begin{equation}\label{EqExitWordDens}
			\DDD(z',x) \geq \frac{1}{(2K+3)|\XXX|} \DDD(w,x).
		\end{equation}
\end{lemm}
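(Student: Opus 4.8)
The plan is to prove the two inequalities separately, each by a counting/combinatorial density argument of the same flavor as the proof of Lemma \ref{LemLoopWords}. Throughout I will use the block-counting functions $\SSS_N$ and $\DDD_N$, the structural facts about exit words from Section \ref{SEC_EXIT_WORDS} (especially Corollary \ref{CorExitOverlap}, Lemma \ref{LemMinRep}, and Remark \ref{REM_EXIT_WORDS_SPECIALS}), and the fact (Corollary \ref{COR_REC_IFF_UREC} plus Lemma \ref{LEM_UREC_IFF_MIN}) that $X$ is minimal, hence no $x\in X$ is eventually periodic, so that Lemma \ref{LemWordInExit} applies to every beginning position of $w$ in every $x\in X$.

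For \eqref{EqPeriodSub}, the idea is that every occurrence of an exit word $z = p\,w^{q\ast r}\,s$ in $x$ carries with it, by Lemma \ref{LemMinRep}, exactly $r\geq 1$ occurrences of $w$; so occurrences of $z$ are "charged" to occurrences of $w$. First I would fix large $M$ and look at the word $x_{[1,M(K+1)nm']}$ where $m' = |z|$, simultaneously the first $M(K+1)m'/$something blocks at two scales. Count beginnings of $z$ in $(K+1)m'$-blocks: if a $(K+1)m'$-block contains a beginning of $z$, then it contains at least $r\geq 1$ beginnings of $w$, and two distinct occurrences of $z$ that both begin in the same $(K+1)|w|$-scale window can overlap only in the controlled way of Corollary \ref{CorExitOverlap} ($i'\geq i+|z|-n$), which bounds how many $z$-beginnings can crowd into one block of $w$-scale. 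Converting the count of $z$-beginnings (in the appropriate blocks) into a lower bound on $\SSS(w,x)$ using the factor $r\geq 1$ and the overlap bound, then passing to a subsequence $\MMM$ on which $\DDD_{\cdot}(z,x)$ converges to $\DDD(z,x)$ and on which the identity of the crowding constant stabilizes, yields $\DDD(w,x)\geq \frac{1}{3K+9}\DDD(z,x)$. The constant $3K+9$ should emerge as a product of a small geometric overlap factor (the "$3 + m/n$"-type term, here bounded since $q\leq n/2$ makes $z$ not too long relative to $w$'s repeats — more precisely the relevant ratio is controlled) and the bound $|\XXX|$ is \emph{not} needed here because we only need \emph{some} $w$-charge, not to distribute among exit words.

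For \eqref{EqExitWordDens}, I would run the argument in the other direction: by Lemma \ref{LemWordInExit}, apart from an initial segment (case (1), a suffix of $w^{q\ast r}$, which occurs at most once near the start of $x$ and so contributes nothing to the density), \emph{every} occurrence of $w$ in $x$ lies inside a unique exit word $z$ of $w$ with step $q$. So the occurrences of $w$ are partitioned among the finitely many (by Lemma \ref{LemExitCount}, at most $2K^2$, but here we just call it $|\XXX|$) exit words; by pigeonhole some $z'\in\XXX$ must absorb at least a $\frac{1}{|\XXX|}$ fraction of the $w$-occurrences. Then I convert "many $w$-occurrences inside copies of $z'$" into "many beginnings of $z'$": since each copy of $z'=p\,w^{q\ast r'}\,s'$ contains exactly $r'$ copies of $w$ and distinct copies of $z'$ overlap by at most $n$ beyond $|z'|-n$ (Corollary \ref{CorExitOverlap} again), the number of $z'$-beginnings in the relevant range is at least a constant times $\SSS(w,x)/|\XXX|$, with the constant of order $1/(2K+3)$ coming from the block-rescaling between the $|w|$-scale and $|z'|$-scale and from the mild double-counting allowed by overlaps. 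Passing to a subsequence on which $\DDD_{\cdot}(w,x)\to\DDD(w,x)$ and on which the identity of $z'$ is fixed gives \eqref{EqExitWordDens}.

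The main obstacle I anticipate is the bookkeeping of the two block scales ($(K+1)|w|$ versus $(K+1)|z|$, where $|z|$ can be much larger than $|w|$) together with the overlap phenomenon described in the Remark after Corollary \ref{CorExitOverlap}: a copy of $w$ can straddle the junction between two overlapping exit words without lying inside either, so one must be careful that such "boundary" copies of $w$ are few (at most $2$ per pair of consecutive exit-word occurrences, as that Remark indicates) and do not spoil the pigeonhole or the conversion counts. Getting the explicit constants $3K+9$ and $(2K+3)|\XXX|$ exactly right is a matter of tracking: (i) the factor $r$ or $r'$ (always $\geq 1$, used as $\geq 1$ in one direction); (ii) the geometric overlap multiplicity, which is where the dependence on $q\leq n/2$ enters; and (iii) the ratio of block lengths, which contributes the linear-in-$K$ factor. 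I expect no genuine difficulty beyond careful accounting, since all the structural input (uniqueness of exit-word representation with a minimal step, the count $|\XXX|$, the overlap bound, and the dichotomy of Lemma \ref{LemWordInExit}) is already in hand.
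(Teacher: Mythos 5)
Your overall strategy is the paper's: two-scale block counting, Lemma \ref{LemWordInExit} plus pigeonhole over $\XXX$ for \eqref{EqExitWordDens}, and Corollary \ref{CorExitOverlap} to control overlaps. Your sketch of \eqref{EqExitWordDens} matches the paper's proof in all essentials (the paper additionally discards $O(1)$ boundary blocks at both ends of the window, not just the start, but that is bookkeeping).

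There is, however, a concrete gap in your sketch of \eqref{EqPeriodSub}. You propose to charge each occurrence of $z$ to ``at least $r\ge 1$'' occurrences of $w$ and assert that the relevant length ratio is controlled because $q\le n/2$. It is not: $|z| = |p| + n + (r-1)q + |s|$ and $r$ is unbounded, so $|z|/n$ can be arbitrarily large. If you use only $r\ge 1$, the two-scale conversion between $(K+1)|z|$-blocks and $(K+1)n$-blocks produces a factor of order $n/|z|$, and you get $\DDD(w,x)\gtrsim \frac{n}{|z|}\DDD(z,x)$, which is not a bound of the required form since the constant must be uniform over all exit words of all lengths. The paper's proof resolves this with a dichotomy at $\alpha=\frac{3K+9}{2}$: if $|z|\le\alpha n$ it applies Lemma \ref{LemSubword} directly, and if $|z|>\alpha n$ it uses the \emph{full} multiplicity of $w$ inside $z$ --- namely that $w$ begins every $q\le n/2$ letters throughout $w^{q\ast r}$, hence in at least $\lfloor (r-1)q/((K+1)n)\rfloor$ consecutive $(K+1)n$-blocks per occurrence of $z$, which is proportional to $|z|/((K+1)n)$ and exactly cancels the $n/|z|$ rescaling to leave a constant of order $1/(K+1)$. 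Your explicit remark that $r$ is ``used as $\ge 1$ in one direction'' is precisely the step that breaks: in the direction of \eqref{EqPeriodSub} you must use $r\sim |z|/q$, not $r\ge 1$. Without this (or the case split), the claimed uniform constant $\frac{1}{3K+9}$ is not reachable by your argument.
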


Note that \eqref{EqPeriodSub} is similar to \cite[Lemma 3.10]{cDamFick2016}.
Also, for the subshifts $X$ we are considering the set of exit words $\XXX$ for $w$ in $\LLL$ with minimal step $q$ must
	be finite.
We will assume $|\XXX|<\infty$ in the proof of \eqref{EqExitWordDens},
	as the last inequality is trivial when $\XXX$ is infinite.
The results \cite[Lemma 3.10 \& Corollary 3.11]{cDamFick2016} collectively provided a similar result that was used in that
	paper to treat $2$-loops.
However, RBC was not assumed in that paper, so a uniform (over all word lengths) bound on the number of exit words,
	as provided in Lemma \ref{LemExitCount}, associated to a $2$-loop
	could not be assumed.

\begin{proof}
	We first will show \eqref{EqPeriodSub}.
	Let $pw^{q \ast r}s$ be the representation of $z\in \XXX$ and
	 $m = |z|$,
	 noting that 
	 	$$
	 		m = |p| + |w^{q\ast r}| + |s| \leq n + (r+1)q.
	 	$$
	 	
	We fix a parameter $\alpha = \frac{3K+9}{2}$.
	If $m \leq \alpha n$, then we may apply Lemma \ref{LemSubword} to get
		$$ \DDD(w,x) \geq \frac{1}{2\alpha} \DDD(z,x).$$
	Now suppose $m > \alpha n$ and fix $M \in \NN$.
	If $z=pw^{q \ast r}s$ begins in an $(K+1)m$-block of $x_{[1,M(K+1)mn]}$
		except the last one, then $w$ must begin in at least $\left\lfloor \frac{(r-1)q}{(K+1)n} \right\rfloor + 1$ consecutive $(K+1)n$-blocks.
	Two occurrences of $z$ may overlap in $x$
		but this overlap will not reduce the number of occurrences of $w$ by Corollary \ref{CorExitOverlap}.
	However, it is possible that at most the first appearance of $w$ in $z$
		occurs in the same $(K+1)n$-block as the last appearance of $w$ in the previous $z$.
		
	We therefore should remove at most $1$ occurrence in an $(K+1)n$-block of $w$ from each $z$ in our count,
		as up to two per $z$ may be over-counted by a factor of two.
	Therefore,
		$$
			\SSS_{Mm}(w,x) \geq \left\lfloor \frac{(r-1)q}{(K+1)n}\right\rfloor\left(\SSS_{Mn}(z,x) - 1\right)
		$$
	and so
		$$
			\DDD(w,x) \geq \frac{n}{m}\left\lfloor \frac{(r-1)q}{(K+1)n}\right\rfloor \DDD(z,x).
		$$
	Because $ m < (r-1)q + 3n$ and $m > \alpha n$,
		$$
			\begin{array}{rcl}
				\frac{n}{m}\left\lfloor \frac{(r-1)q}{(K+1)n}\right\rfloor &
					> & \frac{n}{m}\frac{(r-1)q - (K+1)n}{(K+1)n}\\
					\\
					& > & \frac{m - (K+4)n}{Lm} \\
					\\
					& > & \frac{1}{K+1} - \frac{K+4}{(K+1)\alpha},
			\end{array}
		$$
	or
		$$
			\DDD(w,x) \geq \left(\frac{1}{K+1} - \frac{K+4}{(K+1)\alpha}\right) \DDD(z,x).
		$$
	It follows that the global lower bound is given by
		$$
			\DDD(w,x) \geq \min\left\{\frac{1}{2\alpha},\frac{1}{K+1} - \frac{K+4}{(K+1)\alpha}\right\} \DDD(z,x),
		$$
	which, with our value of $\alpha$, gives \eqref{EqPeriodSub}.
	
	We will now prove \eqref{EqExitWordDens}.
	Consider the $i^{th}$ $Ln$-block in $x_{[1,M(K+1)mn]}$ where
		$m$ is the least common multiple of the lengths $|z|$ for each $z\in \XXX$.
	Let $\III=\III_M$ be the integers $i\in[1,Mm]$ such that $w$ begins in the $i^{th}$ $(K+1)n$-block
		of $x_{[1,M(K+1)mn]}$, i.e. $w$ begins in $x$ at a position in  $[(K+1)n(i-1)+1,(K+1)ni]$.
	For each $i\in \III$,
		let $j_i$ be the minimum beginning position of $w$ in $[(K+1)n(i-1)+1,(K+1)ni]$,
			in other words the minimum $j_i$ in this interval so that
				$$ x_{[j_i,j_i+n-1]} = w.$$
	Let $\beta = \left\lceil \frac{m}{(K+1)n} \right\rceil$ and $\III' = \III \cap [\beta, Mm - \beta]$.
	If $i\in \III'$, there exists a
		unique position $k_i$ and unique $z_i\in \XXX$ such that
			$$
				z_i = x_{[k_i,k_i+|z_i|-1]}
				\mbox{ and }
				k_i \leq j_i < k_i + |z_i| - n,
			$$
	by Lemma \ref{LemWordInExit}.
	Then there exists $z'\in \XXX$ such that
		$$
			\frac{|\{i\in \III': z_i = z'\}|}{|\III'|}
				\geq
			\frac{1}{|\XXX|},
		$$
	and let $\KKK = \{k_i: i\in \III', z_i = z'\}$
		be the positions of $z'$ in our full block of $x$.
	For each $k\in \KKK$ there may be at most $\gamma$ elements $i\in \III'$ such that
		$k_i= k$, where
		$$
			\gamma = \left\lfloor \frac{(r-1)q -1}{(K+1)n}\right\rfloor + 2 \leq \frac{|z'|}{(K+1)n} + 2
		$$
	given decomposition $z' = p w^{q\ast r} s$.
	To justify this, note that Corollary \ref{CorExitOverlap} ensures that
		each beginning of $w$ can be a subword of at most one occurrence of $z'$ and must occur within $w^{q\ast r}$
		in the decomposition of $z'$.
		At worst: $z'$ may be positioned so that the first $w$ in $w^{q\ast r}$ begins at the end of a $(K+1)n$-block
			(requiring an overlap of only the beginning position in this block) and
			the last $w$ in $w^{q\ast r}$ begins at the beginning of an $(K+1)n$-block (requiring the entire word $w$ to
				overlap in this block).
		The remaining portion of $w^{q\ast r}$ that fully covers $(K+1)n$-blocks is length at most
			$$
				|w^{q\ast r}| - n -1 =  n + (r-1)q - n -1 = (r-1) q - 1,
			$$
		and may therefore only contribute beginnings in at most $\frac{(r-1)q - 1}{(K+1)n}$ consecutive $(K+1)n$-blocks.
	
	We claim that at most $K+1$ occurrences of $z'$ may begin within one $(K+1)|z'|$-block.
		Suppose there are $B$ beginning positions of $z'$ in a $(K+1)|z'|$-block and $i_1$ is the first beginning
				position and $i_B$ is the last.
		Because $i_B$ and $i_1$ occur in the same $(K+1)|z'|$-block and by Corollary \ref{CorExitOverlap},
				$$
					(K+1)|z'| - 1 \geq i_B - i_1 \geq (B-1)(|z'| + n),
				$$				
		which implies that $B < K+1$.

	If $m' = \frac{m}{|z'|}$, then we conclude that
		$$
			\SSS_{Mm'n}(z',x)
				\geq
				\frac{1}{(K+1) \gamma | \XXX|}
			(\SSS_{Mm}(w,x) - 2\beta).
		$$
	If we divide each side by $Mm'n$ to get $\DDD(z',x)$ in the $\limsup$ on the left hand side,
		the right hand side has leading coefficient
		$$
			\frac{1}{Mm'n}\cdot \frac{1}{(K+1) \gamma |\XXX|} = \frac{|z'|}{(K+1)\gamma|\XXX|n}\frac{1}{Mm}
		$$
	and so
		$$
			\DDD(z',x) \geq \frac{|z'|}{(K+1)\gamma|\XXX|n} \DDD(w,x).
		$$
	We observe that
		$$
			\begin{array}{rcl}
				\frac{|z'|}{(K+1)\gamma|\XXX|n} & \geq & \frac{|z'|(K+1)}{(K+1)|\XXX|(|z'| + 2(K+1)n)}\\
					\\
					 & \geq & \frac{|z'|}{|\XXX|\cdot |z'|(2(K+1) + 1)}\\
					 \\
					 & = & \frac{1}{(2K+3)|\XXX|}
			\end{array}
		$$
	concluding the proof.
\end{proof}

\section{From ergodic theory to graphs}\label{SEC_ERG_TO_GRAPH}

	Fix a transitive subshift $X$ whose aperiodic language $\LLL$ satisfies RBC.
	By Lemma \ref{LEM_RBC_then_ECG}, $\LLL$ has eventually constant growth as well.
	Let $K$ be the growth constant and $N_0$ be such that
		$p(n+1) - p(n) = K$ for all $n\geq N_0$
		and each bispecial $w\in \LLL_n$ is regular for all $n\geq N_0$.

	Recall that for $\nu\in \EEE(X)$ we fix a generic $x^{(\nu)} \in X$ for $\nu$,
		meaning \eqref{EQ_ERGODIC_THM} holds for all $w\in \AAA^*$.
	By Corollary \ref{COR_REC_IFF_UREC}, $X$ is in fact minimal and so each fixed $x^{(\nu)}$
		is not eventually periodic (as defined before Lemma \ref{LemWordInExit}).
	Following the discussion in Section \ref{SSEC_INTRO_SUBSHIFTS},
		we may find an infinite $\WWW \subseteq \NN$ and base graph
			$\Lambda$ so that, up to fixing a vertex naming,
	$\Lambda = \Gamma_n^{\mathrm{sp}}$ for each $n\in \WWW$.
	For $w\in \Lambda$, let $w_n$ denote the corresponding vertex in $\Gamma_n^{\mathrm{sp}}$, $n\in \WWW$.
	We may refine $\WWW$ further so that:
		\begin{enumerate}
			\item For each $w\in \Lambda$ there exists $\mu_w \in \MMM(X)$ such that
				 $\{ w_n\}_{n\in \WWW} \to \mu_w$ as described before \eqref{EQ_WORDS_TO_MEASURE}.
			\item For each $w\in \Lambda$,
				$$
					\DDD(\mu_w, \nu) = \lim_{\WWW \ni n \to \infty} \DDD(w^{(n)}, x^{(\nu)}),
				$$
				which differs from \eqref{EQ_D(nu)_DEF} as we have $\lim$ rather than $\limsup$ on the
					right-hand side.
		\end{enumerate}
		
	In the following subsections, we will verify the rules lists from Section~\ref{SEC_GRAPHS} for our graph $\Lambda$ defined above.
	
\subsection{Building \texorpdfstring{$\Lambda$}{Lambda} and \texorpdfstring{$\CCC$}{C}: Notation \ref{NOTA1}}

	Items (1) -- (5) in Notation \ref{NOTA1}
		follow naturally from the counting arguments presented in Section \ref{SEC_DEF}.
	Recall the coloring function $\CCC$ from Definition \ref{DEF_COLORING_1}
		which assigns to each $w\in \Lambda$ a value in $\EEE(X) \cup \{\mathbf{0}\}$
		as follows: $\CCC(w) = \nu \in \EEE(X)$ if and only if $\DDD(\mu_w,\nu) >0$
			and $\CCC(w) = \mathbf{0}$ if and only if $\DDD(\mu_w,\nu) =0$ for all $\nu$.
	Item (6) is satisfied, as $\mathcal{C}$ is well-defined once we identify $\EEE(X)$
		with $\{1,\dots,|\EEE(X)|\}$ and $\mathbf{0}$ with the number $0$.
	We want to then show that items (7) and (8) are satisfied.
	To do so, we must extend our coloring rule $\CCC$ to edges and show
		consistency between the coloring of an edge and its endpoint vertices.
	
	First, for each edge $e$ in $\Lambda$ and $n\in \WWW$, there exists
		a branchless path $P$ in $\Gamma_n$, which we consider to be a word $P\in \LLL$ of
		some length $N \geq n$.
	For each $1 \leq j \leq N+1-n$, let $w_j = P_{[j,j+n-1]}$
		be the $j^{th}$ vertex in $\Gamma_n$ visited by $P$.
	Let $u$ be the starting vertex and $v$ be the terminal vertex of $e$ in $\Lambda$
		with $u^{(n)}$ and $v^{(n)}$ the corresponding vertices in $\Gamma_n$ for this fixed $n$.
	It follows that $w_1 = u^{(n)}$, $w_{N-n+1} = v^{(n)}$,
	 	$w_j$ is neither left nor right special if $1< j < N-n+1$
	 	and $w_j \neq w_{j'}$ for any $j\neq j'$.
	We have that $N = n$ if and only if $u^{(n)} = v^{(n)}$, and in such cases $e$ is a bispecial
		edge in $\Lambda$.
		
	We say that $w_j$ is \emph{representative} of $e$ in $\Gamma_n$ as long as the following
		two conditions are satisfied:
		\begin{itemize}
			\item if $u$ is right special, then $j >1$ and
			\item if $v$ is left special, then $j< N-n+1$.
		\end{itemize}
	In other words, $w_j$ is representative of $e$ in $\Gamma_n$ if
		any path visiting $w_j$ must agree with the path $P$ until first visiting $v^{(n)}$ in the forward direction and
			until first reaching $u^{(n)}$ in the reverse.
	These two restrictions in the definition are necessary as there are multiple branchless paths that start 
		from (resp. end at) a right (resp. left) special vertex in $\Gamma_n$.

	For each $n\in \WWW$ fix $w^{(n)} = w_j$ that is representative of $e$ and
		for $\nu\in \EEE(X)$ we assign $\CCC(e) = \nu$ if and only if
		\begin{equation}\label{EQ_EDGE_DEFPOS}
			\limsup_{\WWW\ni n \to \infty} \DDD(w^{(n)},x^{(\nu)}) > 0,
		\end{equation}
	and $\CCC(e) = \mathbf{0}$ if and only if this limit is $0$ for all $\nu$.
	
	As we will now show, condition \eqref{EQ_EDGE_DEFPOS} will imply that $\CCC(u) = \CCC(v) = \nu$
		as well, satisfying item (8).
	This implies that we may assign at most one color to an edge.
	The final statement of the following lemma shows that our definition of $\CCC(e)$ is 
		independent of the choice of representative $w^{(n)}$'s.
	This will allow for more flexible arguments in the remainder of Section \ref{SEC_ERG_TO_GRAPH}
		as we may freely choose convenient representatives as desired.
	\begin{lemm}
		Let $e$ from $u$ to $v$ be as above with a sequence $w^{(n)}$ of representative words.
			\begin{enumerate}
				\item For a fixed $\nu \in \EEE(X)$, let
					$\alpha = \DDD(u,\nu)$ and $\beta = \DDD(v,\nu)$.
					Then $$\min\{\alpha,\beta\} \geq \frac{1}{4} \limsup_{\WWW\ni n \to \infty} \DDD(w^{(n)},x^{(\nu)}).$$
				\item If $\tilde{w}^{(n)}$ is another sequence of representative words for $e$, let
					$$\gamma = \limsup_{\WWW\ni n \to \infty} \DDD(w^{(n)},x^{(\nu)})
						\mbox{ and }
						\tilde{\gamma} = \limsup_{\WWW\ni n \to \infty} \DDD(\tilde{w}^{(n)},x^{(\nu)}).$$
					Then
					$
						\gamma \geq \frac{1}{4} \tilde{\gamma}
						$ and $
						\tilde{\gamma} \geq \frac{1}{4} \gamma.
					$
			\end{enumerate}
	\end{lemm}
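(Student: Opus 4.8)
\emph{Strategy.} The plan is to exploit the single combinatorial object underlying all three words: the simple branchless path $P = P^{(n)}$ in $\Gamma_n$ representing the edge $e$ (furnished by Definition~\ref{DEF_SPEC_RG}), along which $u^{(n)} = P_{[1,n]}$, $v^{(n)} = P_{[N-n+1,N]}$, and $w^{(n)} = P_{[j,j+n-1]}$ for some $1 \le j \le N-n+1$, with $j > 1$ when $u$ is right special and $j < N-n+1$ when $v$ is left special. (If $N = n$ the edge is bispecial, $u^{(n)} = v^{(n)} = w^{(n)}$, and the lemma is trivial; so assume $N > n$.) First I would establish the structural fact that, since the interior vertices $P_{[i,i+n-1]}$ for $2 \le i \le N-n$ are neither left nor right special, every occurrence $x^{(\nu)}_{[k,k+n-1]} = w^{(n)}$ with $k > N$ can be traced backward along forced, non-branching steps to an occurrence of $u^{(n)}$ and forward along forced steps to an occurrence of $v^{(n)}$ --- representativity being exactly the hypothesis guaranteeing this tracing meets no branch point before reaching $u^{(n)}$, resp.\ $v^{(n)}$. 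Consequently such an occurrence forces $x^{(\nu)}_{[k-d_1,\,k-d_1+n-1]} = u^{(n)}$ and $x^{(\nu)}_{[k+d_2,\,k+d_2+n-1]} = v^{(n)}$, with $d_1 = j-1$ and $d_2 = N-n+1-j$ fixed nonnegative integers. Finally, since $P$ is simple its $N-n+1$ vertices are distinct elements of $\LLL_n$, so $N-n+1 \le p(n) = Kn+C$, giving $d_1, d_2 \le N-n < (K+1)n$ for all large $n$.

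\emph{Density comparison.} Because $|u^{(n)}| = |v^{(n)}| = |w^{(n)}| = n$, the length-$(K+1)n$ blocks used in \eqref{EQ_r_DEFINING_DDD} to define $\DDD(\cdot,x^{(\nu)})$ coincide for the three words. A routine block-count then gives the comparison: if $w^{(n)}$ occurs in the $J$-th block, its associated occurrence of $u^{(n)}$ is shifted left by the fixed amount $d_1 < (K+1)n$, hence lies in the $(J-1)$-th or $J$-th block; each block index is charged this way by at most two values of $J$, while the $w^{(n)}$-occurrences in the first $N < (K+1)n$ symbols of $x^{(\nu)}$ (where backward tracing may fail) occupy at most two blocks. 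Therefore $\SSS_N(w^{(n)},x^{(\nu)}) \le 2\,\SSS_N(u^{(n)},x^{(\nu)}) + O(1)$ for all $N$; dividing by $N$ and letting $N \to \infty$ yields $\DDD(u^{(n)},x^{(\nu)}) \ge \tfrac12\,\DDD(w^{(n)},x^{(\nu)})$, and the same argument with the shift $+d_2$ gives $\DDD(v^{(n)},x^{(\nu)}) \ge \tfrac12\,\DDD(w^{(n)},x^{(\nu)})$.

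\emph{Conclusions.} For part (1), take $\limsup_{\WWW \ni n \to \infty}$ in these inequalities. By the refinement of $\WWW$ fixed at the start of Section~\ref{SEC_ERG_TO_GRAPH}, $\alpha = \DDD(\mu_u,\nu) = \lim_{\WWW\ni n}\DDD(u^{(n)},x^{(\nu)})$ and $\beta = \DDD(\mu_v,\nu) = \lim_{\WWW\ni n}\DDD(v^{(n)},x^{(\nu)})$, so monotonicity of $\limsup$ gives $\alpha, \beta \ge \tfrac12 \limsup_{\WWW\ni n}\DDD(w^{(n)},x^{(\nu)})$ and hence $\min\{\alpha,\beta\} \ge \tfrac14 \limsup_{\WWW\ni n}\DDD(w^{(n)},x^{(\nu)})$. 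For part (2), $w^{(n)}$ and $\tilde w^{(n)}$ are length-$n$ subwords of the \emph{same} path $P^{(n)}$ (which is determined by $e$ and $n$), at positions $j$ and $\tilde j$ say; the tracing argument now links occurrences of one bijectively, up to an $O(1)$ edge correction, to occurrences of the other at the fixed offset $\tilde j - j$, again of absolute value $< (K+1)n$, so the same block-count yields $\DDD(\tilde w^{(n)},x^{(\nu)}) \ge \tfrac12\DDD(w^{(n)},x^{(\nu)})$ and, symmetrically, the reverse inequality; passing to $\limsup_{\WWW\ni n}$ gives $\gamma \ge \tfrac14\tilde\gamma$ and $\tilde\gamma \ge \tfrac14\gamma$.

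\emph{Main obstacle.} The one genuinely delicate step is the structural fact in the first paragraph: checking that representativity is precisely the condition under which the forced forward and backward traversals from an occurrence of $w^{(n)}$ reach $u^{(n)}$ and $v^{(n)}$ (and, in part (2), $\tilde w^{(n)}$) before hitting any left- or right-special vertex, and that simplicity of $P$ keeps all offsets within a single $(K+1)n$-block. Everything after that is block-counting bookkeeping, and the stated constant $\tfrac14$ is not sharp --- the argument actually delivers $\tfrac12$.
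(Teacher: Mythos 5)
Your proposal is correct and follows essentially the same route as the paper: the paper simply applies Lemma~\ref{LemLoopWords} with $\BBB=\{u^{(n)}\}$ (and its analogues for $v^{(n)}$ and $\tilde w^{(n)}$), where the hypothesis of that lemma is supplied by exactly the representativity/forced-tracing observation you make, and the constant $\tfrac14$ is the $p=1$, $m=n$ case of that lemma's bound. Your inline block count re-derives this special case (with the marginally better constant $\tfrac12$), so the only difference is that you unpack the lemma rather than cite it.
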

	
	\begin{proof}
		We prove the first item for $\alpha$ and then discuss the remaining parts.
		By the definition of a representative word,
			for each $n\in \WWW$,
			we may apply Lemma \ref{LemLoopWords} to $w^{(n)}$ with $\BBB = \{u^{(n)}\}$
			on the sequence $x^{(\nu)}$
			to get
				$$
					\DDD(u^{(n)},x^{(\nu)}) \geq \frac{1}{4} \DDD(w^{(n)},x^{(\nu)}).
				$$
		The claim then holds by taking the $\limsup$ and recalling that $\WWW$ has been refined so
				that $\alpha$ is obtained by a limit (the inequality for $\beta$ is similar).
		The remaining results all follow from similar applications of Lemma \ref{LemLoopWords}.
	\end{proof}
	
	Similar to our treatment of vertices, we finish by refining $\WWW$ so that
		\begin{equation}\label{EQ_EDGE_MOSTDEFPOS}
			\liminf_{\WWW\ni n \to \infty} \DDD(w^{(n)},x^{(\nu)})>0
			\mbox{ if \eqref{EQ_EDGE_DEFPOS} holds.}
		\end{equation}
	Once we have fixed this refinement, \eqref{EQ_EDGE_MOSTDEFPOS} will
		also hold for any choice of representative sequence $w^{(n)}$.
	We are then able to refine further as required in the next sections
		and do so using any representative sequence without altering the assignment of $\CCC$.
		
\subsection{More on \texorpdfstring{$\CCC$}{C}: Rules List \ref{RULE1}}\label{SSEC_RULES1_PROOF}

	As already discussed, we may rename $\EEE(X)$ so that $\EEE(X)$ is
		identified with $\{1,\dots,E\}$ and $\textbf{0}$ is identified with the number $0$,
	satisfying item (1).
	By Corollary \ref{COR_THE_SPECIAL}, for each $\nu\in \EEE(X)$ and $n\in \WWW$, there exist
		left special $u_n \in \Gamma_n^{\mathrm{sp}}$ and
		right special $v_n \in \Gamma_n^{\mathrm{sp}}$ such that
		$$\DDD(u_n,x^{(\nu)})\geq\frac{1}{K}
		\mbox{ and }
	\DDD(v_n,x^{(\nu)}) \geq \frac{1}{K}.$$
	There exist $u,v\in \Lambda$ so that $u$ corresponds to $u_n$ for infinitely many $n\in \WWW$
		and likewise for $v$ and the $v_n$'s.
	This implies that $\min\{\DDD(\mu_u,\nu), \DDD(\mu_v,\nu)\}\geq \frac{1}{K}$, or $\CCC(u) = \CCC(v) = \nu$,
		so item (2) holds.
		
	We now want to show that item (3) holds.
	As discussed in that item, we are left showing
		that for a vertex $v$ such that $\CCC(v) = \nu \neq \mathbf{0}$
		there exists an edge $e$ ending at $v$ and and edge $e'$ beginning at $v$
		such that $\CCC(e) = \CCC(e') = \nu$.
		
	Let $e_1,\dots,e_L$ be the edges that end at $v$, where $1 \leq L \leq K$ is the in-degree of $v$.
	For each $n\in \WWW$ and $1\leq k \leq L$, fix $w_k^{(n)}$ representative of $e_k$ in $\Gamma_n$.
	If $v^{(n)}$ corresponds to $v$ in $\Gamma_n^{\mathrm{sp}}$, then
			by applying Lemma \ref{LemLoopWords} to $v^{(n)}$
				with $\BBB = \{w^{(n)}_{1},\dots, w^{(n)}_{L}\}$
			there exists $k_n$ so that 
			$$
				\DDD(w^{(n)}_{k_n},x^{(\nu)}) \geq \frac{1}{4K} \DDD(v^{(n)},x^{(\nu)})
			$$	
	for each $n\in \WWW$.
	There exists at least one $k$ with $1\leq k \leq L$ so that $k = k_n$ for infinitely many $n$ in $\WWW$.
	For any such $k$, it must be that $\CCC(e_k) = \nu$ as \eqref{EQ_EDGE_DEFPOS} must hold
		for that edge.
	Similarly, we may find at least one $e'$ that begins at $v$ such that $\CCC(e') = \nu$.
	
	It remains to show that item (4) holds.
	Suppose for a contradiction that for an edge $e$ such that $\CCC(e) = \nu \neq 0$
		and for every circuit containing $e$ there exists an edge $\tilde{e}$ in that circuit
			such that $\CCC(\tilde{e}) \neq \nu$.
	Let $w^{(n)}$ be a representative sequence for $e$.
	It must be that \eqref{EQ_EDGE_DEFPOS} fails for any representative
		sequence $\tilde{w}^{(n)}$ for $\tilde{e}$ as above.
	Let $\mathfrak{E}$ be the collection of all such $\tilde{e}$'s.
	For fixed $n\in \WWW$ let $\BBB^{(n)} = \{\tilde{w}_1^{(n)},\dots, \tilde{w}_p^{(n)}\}$
		be the set of all chosen representative words for the edges $\tilde{e}$ in $\mathfrak{E}$.
	Because there are at most $3K$ edges in $\Lambda$, it must be that $p \leq 3K$.
	By applying Lemma \ref{LemLoopWords} to $w^{(n)}$ and this set $\BBB^{(n)}$ we have that
		$$
			\DDD(\tilde{w}^{(n)}_{k_n},x^{(\nu)}) \geq \frac{1}{12K} \DDD(w^{(n)},x^{(\nu)})
		$$
		for some $k_n$ with $1\leq k_n\leq p$.
	As in our argument for item (3) above, this implies that for at least one
		edge $\tilde{e}$ represented by the words in the $\BBB^{(n)}$ sets
		we have $\CCC(\tilde{e}) = \nu$. This is a contradiction.

\subsection{RBS Moves and Rules List \ref{RULE2}}\label{SSEC_RBS_EXPLAINED}
We first justify the definition of regular bispecial moves as described
	in Section \ref{SSEC_RBS_DEF}.
To begin, we will examine how for $n\in \WWW$ the graphs $\Gamma_n^{\mathrm{sp}}$
	and $\Gamma_{n+1}^{\mathrm{sp}}$ are related by natural identifications.
	
Assume that $n$ is large enough so that the conclusions of Lemma \ref{LEM_RBC_UNIQUENESS} hold.
In particular, if $w\in \LLL_n$ is $\mathfrak{s}$-special, its unique $\mathfrak{s}'$-extension $w'\in \LLL_{n+1}$,
	$\mathfrak{s}'\neq \mathfrak{s}$, that is $\mathfrak{s}$-special satisfies
	$Ex^\mathfrak{s}(w) = Ex^\mathfrak{s}(w')$,
	or in other words $w$ and $w'$ may each be $\mathfrak{s}$-extended by the same letters.
Because our graphs $\Lambda$ are created using an infinite sequence $n\in \WWW$
	this assumption is applicable.
	
If $v^{(n)}$ is a vertex in $\Gamma_n^{\mathrm{sp}}$ associated with the word $w\in \LLL_n$,
	identify $w$ with $w'\in \LLL_{n+1}$
	as defined above.
We then name $v^{(n+1)}$ as the vertex in $\Gamma_{n+1}^{\mathrm{sp}}$ associated with $w'$.
This naming is consistent even when $w$ is bispecial as there will be one right special word $w'\in \LLL_{n+1}$
	and one left special word $\tilde{w}'\in \LLL_{n+1}$ defined this way from $w$.
Therefore, for fixed vertex $v\in \Lambda$ and corresponding $v^{(n)}$ in $\Gamma_{n}^{\mathrm{sp}}$
	we have a natural associated vertex $v^{(n +1)}$ in $\Gamma_{n+1}^{\mathrm{sp}}$.
	
Now consider an edge $e$ from $u$ to $v$ in $\Lambda$ as represented in $\Gamma_n$ by a path word $P$.
We want to associate to $e$ an appropriate path word $P'$ in $\Gamma_{n+1}$ consistent with
	the vertex transformations $u^{(n)}$ to $u^{(n+1)}$ and $v^{(n)}$ to $v^{(n+1)}$.
In all cases, $P'$ will contain $P$ as a subword and will possibly be extended on each side.

We define $P'$ by cases on the special words $w$ and $z$
	of length $n$ that respectively begin and end $P$.
If neither $w$ nor $z$ is bispecial then we may define $P'$
	to be the minimal word that:
	\begin{enumerate}
	\item[(a)] contains $P$ as a subword,
	\item[(b)] begins with the length $n+1$ extension $w'$ associated to $w$ as described above, and
	\item[(c)] ends with the length $n+1$ extension $z'$ associated to $z$.
	\end{enumerate}
In these cases $P'$ is either equal to $P$ or is obtained via extending $P$
	by at most one letter on each side.
	
If $w\neq z$ (so $|P| >n$) but either $w$ or $z$ is bispecial (or both) we change our definition $P'$ as follows:
	\begin{itemize}
		\item[(b$'$)] If $w$ is bispecial we consider the length $n+1$ prefix $wc$ of $P$.
			If $wc = w'$ is left-special (this is then the unique such right-extension of $w$)
				then we begin $P'$ with $P$.
				
			If $wc$ is not left-special then $wc$ ends a unique branchless path $Q$ in $\Gamma_{n+1}$
				that begins at a special vertex.
			In this case we extend $P'$ to start with $Q$ before $P$.
			
			If $w$ is not bispecial we use (b) above to decide how to extend $P$ on the left.
			
		\item[(c$'$)] Similarly, if $z$ is bispecial we let $dz$ be the length $n+1$ suffix of $P$.
			If $dz$ is right-special we end $P'$ with $P$.
			
			If $dz$ is not right-special we let $Q'$ be the unique branchless path from $dz$ to
				a special vertex in $\Gamma_{n+1}$.
			$P'$ is then extended after $P$ to include $Q'$.
			
			If $z$ is not bispecial we use (c) above to decide how to extend $P$ on the right.
	\end{itemize}
	
Finally, if $w = z$ then $w=z$ is bispecial and we have the unique right-special extension $aw$ and
	unique left special extension $wb$.
In this case $P' = awb$.

In all cases, we then name as $e$ the edge in $\Gamma_{n+1}^{\mathrm{sp}}$ associated this path $P'$.

Using these identifications, we first note that the only changes between $\Gamma_n^{\mathrm{sp}}$
	and $\Gamma_{n+1}^{\mathrm{sp}}$ occur when an edge $e_0$ is bispecial
		and its path word $P$ in $\Gamma_n$ is a bispecial word of length $n$.
It is important to note that a bispecial edge must exist in $\Gamma_n^{\mathrm{sp}}$ (the graph is strongly connected)
	and
	cannot share an endpoint with another bispecial edge in $\Gamma_n^{\mathrm{sp}}$.
Therefore each change
	from $\Gamma_n^{\mathrm{sp}}$ to $\Gamma_{n+1}^{\mathrm{sp}}$
	is ``local,'' meaning it only depends on the edges adjacent to the bispecial edge.
Note the paths for each bispecial edge decrease in length from $\Gamma_n$ to 
	$\Gamma_{n+1}$ as its new path word $P'$ is actually equal to $P$.
We may then choose $\tilde{n}$ to be the minimal integer of size at least $n$
	so that a bispecial word exists in $\LLL_{\tilde{n}}$.
Then let $n' = \tilde{n}+1$ and, because $|P| \leq (K+1)n + C$, we have
	\begin{equation}\label{EQ_BOUND_ON_Nprime}
		n' \leq Kn + C
	\end{equation}
	where $K$
	and $C$ are from the defintion of ECG.
The graph $\Gamma_{n'}^{\mathrm{sp}}$ will undergo at least one \emph{Regular Bispecial Move}
	from $\Gamma_{\tilde{n}}^{\mathrm{sp}}$ (which by induction must be equal to $\Gamma_n^{\mathrm{sp}}$),
	meaning at least one bispecial edge  will be replaced with an edge that starts at
	a right special vertex and ends at a left special vertex as in the last case of the definitions above.

Before continuing, we refine $\WWW$ so that all $n$ are large enough to satisfy the assumptions above (the conclusions of Lemma~\ref{LEM_RBC_UNIQUENESS} hold).
Furthermore, we assume that the map $n \mapsto n'$ is a bijection by removing all but one $n$ value from $\WWW$ that maps to $n'$
	(for each such $n'$).
Because of how we have created $\Lambda$ and $\CCC$, these definitions will not change under these or any further
	refinements to the infinite set $\WWW$.

Returning to the graph $\Lambda$ with coloring $\CCC$ defined using $\WWW$,
	we now address how to construct $\Lambda'$ and $\CCC'$.
We let $\widetilde{\WWW}'$ be the collection of all $n'$ values derived from all $n$ values in $\WWW$.
We restrict $\widetilde{\WWW}'$ (and $\WWW$  by the natural bijection) so that there is one graph $\Lambda'$
	such that $\Gamma_{n'}^{\mathrm{sp}} = \Lambda'$ for all $\widetilde{\WWW}'$.
Finally,
	refine to $\WWW'$ to get coloring $\CCC'$ on $\Lambda'$ satisfying Notation \ref{NOTA1} and Rules List \ref{RULE1}.

We now direct our attention to Rules List \ref{RULE2}.
If we show that item (2) holds, then item (1) follows from Rules List \ref{RULE1} as it applies to $\CCC$ and $\CCC'$.
For fixed $n\in \WWW$, let $e$ from $y$ to $z$ be an edge in $\Lambda$ that does not undergo an RBS move from $\Lambda$ to $\Lambda'$.
Let $P$ from $y^{(n)}$ to $z^{(n)}$ be the path word identified with $e$ in $\Gamma_n$
	and let $P'$ from $y^{(n')}$ to $z^{(n')}$ be the path word for $e$ in $\Gamma_{n'}$.
	
We will argue in full detail assuming that $y$ and $z$ are both left  special and leave
	the other cases to the reader.
Here $|P'|> |P|$.
Fix any length $n$ subword $w \neq z^{(n)}$ of $P$ and any length $n'$ subword of $w' \neq z^{(n')}$ of $P'$.
Then $w$ is representative of $e$ (from $\Lambda$) in $\Gamma_n$ and
	$w'$ is representative of $e$ (from $\Lambda'$) in $\Gamma_{n'}$.
If $i$ is the position of $w$ in $P$ and $i'$ is the position of $w'$ in $P'$,
	then for any (long) word $W\in \LLL$,
		any $k$ such that $1\leq k \leq |W| - n + 1$ and $1 \leq k - i + i' \leq |W| - n'$
		we have that $w$ occurs at position $k$ iff $w'$ occurs at position $k + i' - i$.
In particular we may apply Lemma \ref{LemLoopWords} twice to conclude that
	$$
		\DDD(w',x^{(\nu)}) \geq \frac{1}{4} \DDD(w,x^{(\nu)})
		\mbox{ and }
		\DDD(w,x^{(\nu)}) \geq \frac{1}{3 K + 4} \DDD(w',x^{(\nu)})
	$$
as $n+1 \leq n' \leq (K+1)n$ by \eqref{EQ_BOUND_ON_Nprime} if we assume $n \geq C$.
Because both ratios are positive and independent of $n$,
	we conclude that the definitions of $\CCC(e)$ and $\CCC'(e)$ must agree.
	
It may be possible that more than one RBS move occurs at the same time.
This would be the case if there are at least two distinct edges in $\Lambda$ that
	each are identified with a (bispecial) path word of length $n'-1$ in $\Gamma_{n'-1}$
	for infinitely many $n'\in \WWW'$.
Because the transformations described above are all local, we
	may instead view these moves as occurring one at a time
	and the resulting graph does not depend on the order of occurrence.
(See Lemma 5.2 from \cite{cDamFick2016} for a more technical treatment of this issue.)

\subsection{Building itineraries: Rules Lists \ref{RULE3} and \ref{RULE4}}\label{SSEC_ITIN_PROOF}
	We now show how to construct an itinerary as introduced in Definition \ref{DEF_ITIN}.

For subshift $X$, fix a generic $x^{(\nu)}\in X$, $\nu\in \EEE(X)$, and constructed $\WWW$, $\Lambda$ and $\CCC$
	satisfying Notation \ref{NOTA1} and Rules List \ref{RULE1}.
Let $\PPP$ be a selected set of $N$-loops in $\Lambda$ such that each $L\in \PPP$ is colored by an element of $\EEE(X)$
	and no distinct $L,\tilde{L}$ in $\PPP$ have the same color.
	
We describe in detail how to create an itinerary for the case $\PPP = \{L\}$, $L$ colored by $\nu$, and then briefly explain how to extend to larger lists.
Suppose first that another $N$-loop $\tilde{L}$ is colored by $\nu$ and contains at least one of the vertices in $L$,
	so in particular a $\nu$-colored path exists in $\Lambda$ that enters and leaves $L$.
In this case we set $\Lambda^{(0)} = \Lambda^{(1)} = \Lambda$, $\CCC^{(0)} = \CCC^{(1)} = \CCC$
	and $\PPP^{(1)} = \emptyset$ as there is already a ``spread event'' as discussed in Section \ref{SEC_NLOOPS_ITIN}.

So for the remainder we assume that no such $\tilde{L}$ exists.
Fix a vertex $w\in \Lambda$ in $L$ with corresponding $w^{(n)}\in \Gamma^{\mathrm{sp}}_n$, $n\in \WWW$,
	and associated path words $L^{(n)}$ representing $L$.
In other words, $L^{(n)}$ is a path in $\Gamma_n$ that starts at $w^{(n)}$,
	follows the edges defined by $L$ in order and ends at $w^{(n)}$.
So $w^{(n)}$ is visited exactly twice by $L^{(n)}$ and each other vertex in $\Gamma_n$
	is visited at most once.
	
Before continuing, we make the following claim.
This will allow us to exclude some problematic cases before
	addressing the general case.
\begin{lemm}\label{LEM_EDGE_PATH_EXTENSION}
For the fixed choices in this section and any $d\geq 0$,
		there exists a path word $P\in \LLL$ that begins with $w^{(n)}$,
		traverses at least $d$ edges in $\Gamma_n^{\mathrm{sp}}$ (possibly with repetition)
		and
			$$
				\DDD(P, x^{(\nu)}) \geq \left(\frac{1}{16 (K+1)}\right)^{d} \DDD(w^{(n)},x^{(\nu)}).
			$$
\end{lemm}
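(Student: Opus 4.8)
The plan is to prove this by induction on $d$. The base case $d=0$ is immediate: take $P=w^{(n)}$ itself, which traverses no edges; and if $\DDD(w^{(n)},x^{(\nu)})=0$ there is nothing to prove, so one may assume it is positive. For the inductive step it suffices to prove the following one-step claim: if $P\in\LLL$ is a path word in $\Gamma_n$ beginning with $w^{(n)}$ and ending at a special vertex $v^{(n)}$, then there is a path word $P'$ beginning with $w^{(n)}$ that traverses exactly one more $\Gamma_n^{\mathrm{sp}}$-edge than $P$ and has $\DDD(P',x^{(\nu)})\ge\frac{1}{16(K+1)}\DDD(P,x^{(\nu)})$ (if at some stage the current path already traverses more edges than required, one simply keeps it, using $\frac{1}{16(K+1)}\le1$). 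Iterating the claim $d$ times starting from $w^{(n)}$ then proves the lemma.

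I would carry out the one-step extension in two applications of Lemma~\ref{LemLoopWords}. First, set $\BBB=\{Pc:Pc\in\LLL\}$: between two occurrences of $P$ in a word of $\LLL$, with the room to the right that the hypothesis of Lemma~\ref{LemLoopWords} guarantees, the first occurrence is followed by some letter $c$ with $Pc\in\LLL$, so $\BBB$ is an admissible family for $P$; moreover each word of $\BBB$ is longer than $P$, and $|\BBB|\le|Ex^r(v^{(n)})|\le K+1$ by \eqref{EQ_GROWTH_AND_EXTENSIONS} together with ECG. Hence Lemma~\ref{LemLoopWords}, via its remark (applicable since the words of $\BBB$ are longer than $P$), produces a letter $c_0$ with $\DDD(Pc_0,x^{(\nu)})\ge\frac{1}{4(K+1)}\DDD(P,x^{(\nu)})$. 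Second, the length-$(n+1)$ suffix $v^{(n)}c_0$ of $Pc_0$ is the first $\Gamma_n$-edge of a unique branchless path from $v^{(n)}$ to the next special vertex $z^{(n)}$, and since every interior vertex of that path has out-degree one, the path — hence the word $P'$ obtained by appending its remaining letters to $P$ — is completely forced, so any occurrence of $Pc_0$ with room to the right extends to an occurrence of $P'$. Applying Lemma~\ref{LemLoopWords} to $Pc_0$ with the singleton family $\{P'\}$ then gives $\DDD(P',x^{(\nu)})\ge\frac{1}{4}\DDD(Pc_0,x^{(\nu)})$; multiplying the two bounds yields $\DDD(P',x^{(\nu)})\ge\frac{1}{16(K+1)}\DDD(P,x^{(\nu)})$, and $P'$ traverses exactly one extra $\Gamma_n^{\mathrm{sp}}$-edge.

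The bulk of the write-up will be routine: the index bookkeeping confirming that the hypothesis of Lemma~\ref{LemLoopWords} is met in each stage (namely, that at the left endpoint of a pair of occurrences there is always room for some $Pc$, respectively for $P'$, to begin there), together with the inductive letter-by-letter reconstruction of $P'$ from $Pc_0$ along the out-degree-one interior vertices of the branchless path. The one point that shapes the argument — and explains why the constant is $\frac{1}{16(K+1)}$ rather than $\frac{1}{4(K+1)}$ — is that the one-step extension really must be carried out in two stages: the full one-edge extensions of $P$ along the various edges leaving $v^{(n)}$ in general have different lengths and cannot be padded to a common length inside $\LLL$, so Lemma~\ref{LemLoopWords} cannot be applied to that family at once. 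Extending first by a single letter (to pin down the edge) and then along the forced branchless path (to complete it) is exactly what produces the two separate factors of $\frac{1}{4}$.
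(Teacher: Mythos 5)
Your proof is correct and follows essentially the same route as the paper's: induct on $d$, extend by one letter using Lemma~\ref{LemLoopWords} on the at most $K+1$ single-letter right extensions (factor $\tfrac{1}{4(K+1)}$), then extend along the forced branchless continuation to the next special vertex via a second, singleton application of Lemma~\ref{LemLoopWords} (factor $\tfrac14$). Your observation about why the family of full one-edge extensions cannot be handled in a single application (differing lengths) correctly explains the two-stage structure, and your handling of the case where an extension advances more than one $\Gamma_n^{\mathrm{sp}}$-edge matches the paper's remark that the suffix may be bispecial.
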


\begin{proof}
If $d =0$, then the proof is concluded using $P = w^{(n)}$.
	Assume we have constructed such a path $P'$ that satisfies the claim for a given $d\geq 0$;
		we will produce a path $P$ for $d+1$.
		
	Let
		$
			\BBB = \{P'a \in \LLL:~a\in \AAA\}
		$
	be the set of right extensions of $P'$ in the language.
	By \eqref{EQ_GROWTH_AND_EXTENSIONS}, $\BBB$ must contain at most $K+1$ elements.
	By Lemma \ref{LemLoopWords}, there exists $a$ such that
		$$
			\DDD(P'a,x^{(\nu)}) \geq \frac{1}{4 (K+1)} \DDD(P',x^{(\nu)}).
		$$
	We then extend $P'a$ further: there exists a unique right extension of $P'a$ of minimal length, which we call $P$,
		that has an $\mathfrak{s}$-special word as its suffix $s$
		of length $n$ for some $\mathfrak{s}\in \{\ell,r\}$.
	Because this extension is unique, we may again apply Lemma \ref{LemLoopWords} to get
		$$
			\DDD(P, x^{(\nu)}) \geq \frac{1}{4} \DDD(P'a,x^{(\nu)}).
		$$
	Furthermore, $P$ is the extension of path $P'$ in $\Gamma_n^{\mathrm{sp}}$
		either by two edges (if $s$ is bispecial) or by one edge (otherwise).
	Therefore the claim holds for $P$ as desired.
\end{proof}

First, we argue that for infinitely many $n\in \WWW$ the loop word $L^{(n)}$ both belongs to the language and satisfies
	$$
	 \liminf_{\WWW\ni n \to \infty} \DDD(L^{(n)}, x^{(\nu)}) >0.
	$$
If either of these conditions fail, then by Lemmas \ref{LemSubword} and \ref{LEM_EDGE_PATH_EXTENSION}
	there exists a path word $P$ in $\Lambda$ of length at least $d$ (the number of edges in $L$) that contains $w^{(n)}$
	with each edge colored by $\nu$.
It follows that there is at least one edge that starts at a vertex visited by $L$ but is not itself
	in $L$ (meaning an ``outside edge'' as discussed previously)
	that is colored by $\nu$.
This means that a spread event has occurred already in $\Lambda$ which has previously been addressed (above Lemma~\ref{LEM_EDGE_PATH_EXTENSION}).

Let $N = |L^{(n)}|$ and $q = N - n$.
Then for each $r\geq 1$, $[L^{(n)}]^{q\ast r}$ is a path in $\Gamma_n$ that
	travels around loop $L$ exactly $r+1$ times.
We would like to use our arguments now for exit words for $L^{(n)}$ with step $q$
	as discussed in Section \ref{SEC_EXIT_WORDS}.
However, to do so we would need $q$ to satisfy $N \geq 2 q$ or equivalently that $N \leq 2n$.
To avoid this issue, we will use the word $P^{(n)} = [L^{(n)}]^{q\ast 2}$
	instead.
The length $q$ will remain a valid minimal step for $P^{(n)}$ and $|P^{(n)}| = N+q \geq 2q$.
Before continuing with the general construction,
	we note that by a similar argument to the one in the previous paragraph we may assume that
	(after ignoring finitely many $n\in \WWW$)
	$P^{(n)} = [L^{(n)}]^{q\ast 2}$ is in the language and the $\liminf$ of the
	densities of these words in $x^{(\nu)}$ are positive.
Otherwise, a spread event will occur at $\Lambda$.

If we consider the vertex $L^{(n)}$ in $\Gamma_N$, then $[L^{(n)}]^{q\ast r}$
	now travels around $r$ times on the loop closely related to $L^{(n)}$ from $\Gamma_n$.
Because $N \leq (K+2)n$ for large enough $n\in \WWW$
	we may consider the densities of the vertices and edges in $\Gamma_N^{\mathrm{sp}}$
	as they will force a lower bound on densities for corresponding edges and vertices in $\Gamma_n^{\mathrm{sp}}$
	by Lemma \ref{LemSubword}.

For the word $P^{(n)}$ we may apply Lemmas \ref{LemExitDens} and \ref{LemExitCount}
	to find exit word $Z = p [P^{(n)}]^{q\ast r_0} s$
	such that
		\begin{equation}\label{EQ_APPLICATION_OF_EXIT_LEMMA}
			\DDD(Z, x^{(\nu)}) \geq \frac{\DDD(P^{(n)},x^{(\nu)})}{2 K^2 (2K+3)} .
		\end{equation}
Because $Z$ is also an exit word for $[P^{(n)}]^{q\ast r}$, $r\leq r_0$, with minimal step $q$, we may apply the first part of
	Lemma \ref{LemExitDens} to get
	\begin{equation}\label{EQ_MANY_LOOPS}
		\DDD([P^{(n)}]^{q\ast r}, x^{(\nu)}) \geq \frac{1}{3K+9} \DDD(Z,x^{(\nu)}),\mbox{ for all } 1\leq r \leq r_0.
	\end{equation}
Let $N' = N + (r_0-1) q$ be the length of $[P^{(n)}]^{q\ast (r_0-1)}$.
For each $n \leq m \leq N'$ we claim the following:
	\begin{itemize}
		\item $\Gamma_m$ has a path word $L^{(m)}$ corresponding to the loop $L$.
			$L^{(m)}$ traverses the closed circuit that $[P^{(n)}]^{q\ast r_0}$ does in $\Gamma_m$,
		\item For each vertex $v^{(m)}$ in $\Gamma_m$ visited by $L^{(m)}$:
			\begin{itemize}
				\item If $m \leq N$, then $v^{(m)}$ is a subword of $P^{(n)}$ and so
					$$
						\DDD(v^{(m)}, x^{(\nu)}) \geq \frac{1}{2(K+1)} \DDD(\PPP^{(n)}, x^{(\nu)})
					$$
				by Lemma \ref{LemSubword}, using that $N \leq (K + 2) n$ as previously discussed.
				
				\item If $N < m \leq N'$, let $r'$ be the unique integer satisfying
							$$N + (r'-1)q < m \leq N + r' q.$$
					In this case, $v^{(m)}$ is a subword of $P^{q \ast r'}$ and so
						$$
						\DDD(v^{(m)}, x^{(\nu)}) \geq \frac{1}{24K^2(K+3)(2K+3)} \DDD(\PPP^{(n)}, x^{(\nu)})
						$$
					by Lemma \ref{LemSubword}, the bounding inequalities \eqref{EQ_APPLICATION_OF_EXIT_LEMMA} and \eqref{EQ_MANY_LOOPS} and 
						using that $N + rq' \leq 2 m$.
			\end{itemize}
	In either case, $v^{(m)}$ is bounded from below by a constant (depending only on $K$) times $\DDD(P^{(n)},x^{(\nu)})$.
		\item If we consider the special graphs $\Gamma^{\mathrm{sp}}_n$, $\Gamma^{\mathrm{sp}}_{n+1}$,\ldots, $\Gamma^{\mathrm{sp}}_{m}$,
			the edges that correspond to subpaths of $L$ in $\Lambda$ can only be altered by RBS moves that either exchange vertices (twist moves)
				or eject vertices (shrink moves).
			Recall that in the previous section we justified that only RBS moves may occur
					from one graph to the next and the discussion of possible RBS moves on loops corresponding
						to $L$ were covered in Sections \ref{SEC_2LOOPS} and \ref{SEC_NLOOPS_MOVES}.
	\end{itemize}
	By the bounding arguments above, we witness a spread event at or before $N'$.
	Indeed, the loop corresponding to $L$
		in $\Gamma^{\mathrm{sp}}_{m}$, $n \leq m \leq N'$, has a density with positive lower bound that only depends on $K$ and the density of
		$P^{(n)}$.
	Furthermore, the edges in $\Gamma^{sp}_{N'}$ that correspond to the $N'$ length prefix and suffix of exit word $Z$ respectively
		also have density with positive lower bound and these edges must be outside edges, meaning edges that begin or end at 
			a vertex in the loop but are not themselves in the loop.
	
	Let $n = m^{(n)}_0 < m^{(n)}_1 < \dots m^{(n)}_{M^{(n)}} = N'$ be the times such that at least one shrink RBS move on $L^{(m^{(n)}_{i})}$
		occurs
		 from $\Gamma^{\mathrm{sp}}_{m^{(n)}_i}$ to $\Gamma^{\mathrm{sp}}_{m^{(n)}_i+1}$ for $1\leq i \leq M^{(n)}-1$.
	We note that $M^{(n)}$ cannot be more than the number of vertices in $L$ minus $2$ and it is possible that $M^{(n)} = 1$, meaning no shrink
			moves occur from $n$ to $N'$.
	
	For each $n\in \WWW$ we obtain the values $m_0^{(n)},\dots,m_{M^{(n)}}^{(n)}$ and may refine $\WWW$ so that:
		\begin{itemize}
		\item $M^{(n)}$ is a constant value $M$, independent of $n$.
		\item there exist $\Lambda^{(1)},\Lambda^{(2)},\dots, \Lambda^{(M)}$ so that for each $i\in \{1,\dots, M\}$ and $n\in \WWW$,
				we have $\Gamma^{\mathrm{sp}}_{m^{(n)}_i} = \Lambda^{(i)}$.
		\item there are colorings $\CCC^{(1)},\dots,\CCC^{(M)}$ so that each pair $\Lambda^{(i)},\CCC^{(i)}$ satisfies Notation
			\ref{NOTA1} and Rules List \ref{RULE1}.
		\end{itemize}
	We let $\Lambda^{(0)} = \Lambda$ and $\CCC^{(0)} = \CCC$.
	By our construction above, the loop $L$ persists, meaning a corresponding version of the loop exists by the $L^{(m^{(n)}_i)}$
		path words described above,
		and is colored by $\nu$ on each of these graphs.
	We also know that a spread event must occur for $L$ by $\Lambda^{(M)}$
		and we may (by removing graphs $\Lambda^{(i)}$ at the end) assume that the spread event occurs from $\Lambda^{(M-1)}$ to $\Lambda^{(M)}$.
	Combinatorially, the transition from $\Lambda^{(i)}$ to $\Lambda^{(i+1)}$ may be defined by choosing any
		$n\in \WWW$ and using the finite list of RBS moves from $\Gamma^{\mathrm{sp}}_{m^{(n)}_i}$ to $\Gamma^{\mathrm{sp}}_{m^{(n)}_{i+1}}$.

	To finish building this itinerary, we let $\PPP^{(0)}$ be our original partition $\PPP = \{L\}$
		and for $i < M$ we let $\PPP^{(i)}$ be the partition containing what remains in $L$ after ejections via shrink moves, meaning the loop corresponding to the $L^{(m_i^{(n)})}$ paths as discussed above.
	Because a spread event occurs on $L$ from $\Lambda^{(M-1)}$ to $\Lambda^{(M)}$, we let $\PPP = \emptyset$.
	
	If we instead assume an initial partition with multiple loops, we may proceed as we did in the simpler case
		with some modifications.
	For each $n\in \WWW$ and loop $L_\nu\in \PPP$ where $\nu\in \EEE(X)$ denotes the color of the loop, we may find the values
			$$n = m_0^{(n,\nu)} < \dots < m_{M^{(n,\nu)}}^{(n,\nu)}$$
	as described above for $L_\nu$.
	We then interleave all values over all loops to get $n = m_0^{(n)}< \dots < m_{M^{(n)}}^{(n)}$.
	
	We then again refine our set $\WWW$ and create the graphs $\Lambda^{(0)},\dots,\Lambda^{(M)}$
		and respective colorings $\CCC^{(0)},\dots,\CCC^{(M)}$ as before.
	When creating our partitions $\PPP^{(0)},\dots, \PPP^{(M)}$ we keep each loop from $\PPP$ (minus ejections due to
		shrink moves)
		until that loop's spread event.
	Once a spread event occurs for a loop, it is removed from the $\PPP^{(i)}$'s from that point on.
	
	The construction presented above builds an itinerary from Definition \ref{DEF_ITIN}.
	To realize Rules List \ref{RULE4},
		we note that we may restrict from a partition $\PPP$ to any subpartition $\PPP'$
		and build from our initial itinerary an itinerary for $\PPP'$ by ignoring intermediary graphs
		that only realize shrink or spread events for loops in $\PPP\setminus \PPP'$.
	Recalling that shrink moves are not possible for $2$-loops and a twist move actually fixes a $2$-loop,
		we see that Rules List \ref{RULE3} is a simplification of Rules List \ref{RULE4}
		when we restrict to the case of only $2$-loops.

\section{Future Work}\label{SEC_FUTURE_WORK}

		The main result in this paper provides a new combinatorial proof of
			the bound $d/2$ for ergodic measures for minimal $d$-IETs.
		However, if $\pi$ is the permutation on $d$ symbols used to define a minimal IET,
			the known upper bound is $g(\pi)$ as shown independently by \cite{cKatokMeasures} and \cite{cVeechMeas},
			where $g(\pi)$ is the genus of the translation surfaces naturally associated to the IET via
			the zippered rectangle construction as defined in \cite{cVeechZip}.
		This upper bound is sharp as shown in \cite{cKeane} for $d=4$, in \cite{cYoccoz} when $\pi$ is the order reversing permutation
			and in \cite{cFickIET} for the remaining cases.
		While $g(\pi) = \lfloor d/2\rfloor$ when $\pi$ belongs to the same Rauzy class as the order-reversing permutation,
			the relationship $g(\pi) < \lfloor d/2\rfloor$ does hold in many classes.
		If the remaining conditions from \cite{cRBCforIET} are applied, and so $X$ is indeed
			the encoding of a minimal IET, can the correct bound of $g(\pi)$ be
			shown using these combinatorial methods?

			In \cite{cDamFick2016}, the bound $K-2$ was shown under the eventually constant growth condition with constant $K\geq 4$.
			Is the RBC assumption necessary to improve the bound to $\frac{K+1}{2}$?
			In this work, RBC was necessary to bound the number of exit words in Lemma \ref{LemExitCount}.

			Generic measures are those such that \eqref{EQ_ERGODIC_THM} holds for some $x\in X$.
			By the pointwise ergodic theorem, every ergodic measure is
				generic.
			However not every generic measure is ergodic.
			In \cite{cCyrKra}, new methods were used to strengthen and extend the results from \cite{cBosh}
				to bound generic measures.
			Our current coloring definition no longer applies to generic measures
				as they are not extremal.
			However, does there exist an alternate coloring compatible
				with generic measures that may be used to get the $\frac{K+1}{2}$
				bound presented here?
			As of the authorship of this work, the correct bound for generic measures
				for a minimal $d$-IET is not known (by any method).

\appendix

		\section{Proof of Proposition \ref{PROP_CONNECT}}
		Here we prove Proposition~\ref{PROP_CONNECT}. We begin with our (limiting) special Rauzy graph $\Lambda$, which is a directed graph satisfying the following conditions (recall Notation~\ref{NOTA1} and the choice of loops from Section~\ref{SEC_NLOOPS_NEWG}):

		\medskip
		\noindent
		{\bf Conditions A.}
		\begin{enumerate}
		\item $\Lambda$ has $K_\ell+K_r$ vertices, $K_r$ of in-degree 1 and out-degree $\geq 2$, and $K_\ell$ with in-degree $\geq 2$ and out-degree 1.
		\item $\Lambda$ is strongly connected; that is, between any two vertices of $\Lambda$, there is a directed path.
		\item $\Lambda$ contains directed circuits $L_1, \ldots, L_E$ which are vertex-disjoint and vertex self-avoiding, each with at least two vertices.
		\end{enumerate}

		Given a graph $\Lambda$ and circuits $L_1, \ldots, L_E$ as specified in Conditions A, define $\overline{\Lambda}$ as the graph obtained from $\Lambda$ by removing all edges in the union of the $L_i$'s. We will use the terms $L_i$ for the same corresponding in $\overline{\Lambda}$, which are actually no longer circuits, but only collections of vertices.
		
		We first explain what happens to the (weakly connected) components of $\overline{\Lambda}$ after applying an RBS move to both $\Lambda$ and $\overline{\Lambda}$. Letting $C_1, \ldots, C_p$ be the components of $\overline{\Lambda}$ (as unordered vertex sets), we note that since $\Lambda$ was connected, each $C_i$ must contain at least one vertex from the union of all the circuits $L_j$ for $j=1, \ldots, E$. Let $X_i$ be the $E$-tuple $(S_1^i, \ldots, S_E^i)$ consisting of entries $S_j^i$, the set of vertices of $L_j$ that are in $C_i$. We will track both the components $C_i$ and the ``tags'' $X_i$ after applying an RBS move. The move will be one of the following types (recall these terms from Section~\ref{SEC_NLOOPS_MOVES}):
		\begin{enumerate}
		\item[A.] a ``twist'' move operating on two vertices of the same circuit $L_j$,
		\item[B.] a ``shrink'' move operating on two vertices of the same circuit $L_j$ containing at least 3 vertices, or
		\item[C.] some other RBS move operating on two vertices in the complement of $L_1 \cup \cdots \cup L_E$.
		\end{enumerate}
		These are the only three possible RBS moves that operate on $\Lambda$ because there are no bispecial edges (edges leaving an out-degree 1 vertex and entering an in-degree 1 vertex) entering or exiting the $L_i$'s. Note that after applying one of these moves to a graph $\Lambda$ satisfying Conditions A to result in a graph $\zeta$, we can identify the circuits $L_i$ in $\Lambda$ to circuits in $\zeta$ in the natural way. Move A will permute the order of two vertices on a circuit, and move B will remove one vertex from a circuit. We also call these associated circuits in $\zeta$ by the names $L_i$.
		
		The next proposition explains what happens to the $C_i$'s and $X_i$'s after an RBS move of the three types above. To describe the result, we use the following notation. If $X_{i_1}$ and $X_{i_2}$ are tags, then we write $X_{i_1} \cup X_{i_2}$ for the $E$-tuple $(S_1^{i_1} \cup S_1^{i_2}, \dots, S_E^{i_1}\cup S_E^{i_2})$. Furthermore, if $x$ is a vertex of a circuit $L_{j_0}$, then $(X_{i_1} \cup X_{i_2}) \setminus \{x\}$ refers to the $E$-tuple whose $j_0$-th entry is $(S_{j_0}^{i_1}\cup S_{j_0}^{i_2}) \setminus \{x\}$.

		\begin{prop}\label{prop: connected_moves}
		Let $\Lambda$ be a graph satisfying Conditions A and define $\overline{\Lambda}$ as above. Define $\zeta$ as the graph obtained from $\Lambda$ by applying an RBS move of type A, B, or C, and let $\overline{\zeta}$ be the graph obtained from $\zeta$ by removing the edges in the union of the $L_i$'s. If $C_1, \ldots, C_p$ and $X_1, \ldots, X_p$ are the components and tags for $\overline{\Lambda}$, write $D_1, \ldots, D_q$ and $Y_1, \ldots, Y_q$ for the components and tags for $\overline{\zeta}$. The RBS moves A, B, or C have the following effects on the $C_i$'s, $X_i$'s, $D_i$'s, and $Y_i$'s.
		\begin{enumerate}
		\item Moves A and C leave components and tags unchanged. That is, $q=p$, and for each $i$ there is a unique $j$ such that  $(C_i,X_i)=(D_j,Y_j)$.
		\item Move B can only merge components. That is, $q \in \{p-1,p\}$, and there are $I_1,I_2,J$ (where $I_1$ can equal $I_2$, and this occurs if and only if $q=p$) with the following properties. For all $i \neq I_1,I_2$, there is a unique $j \neq J$ such that $(C_i,X_i)=(D_j,Y_j)$. Furthermore, $D_J = C_{I_1}\cup C_{I_2}$, and $Y_J = (X_{I_1} \cup X_{I_2}) \setminus \{x\}$, where $x$ is the vertex contained in the circuit $L_{j_0}$ in $\Lambda$ which is removed from $L_{j_0}$ in $\zeta$ by the move.
		\end{enumerate}
		\end{prop}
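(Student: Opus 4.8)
The plan is to analyze each of the three move types A, B, C separately, treating them as purely local modifications to the graph and tracking how the removal of the circuit edges interacts with the reattachment of the non-circuit edges. Throughout, the key observation is that $\overline{\Lambda}$ and $\overline{\zeta}$ are obtained by deleting exactly the circuit edges, and by Conditions A none of these circuit edges are bispecial (they neither leave an out-degree-$1$ vertex nor enter an in-degree-$1$ vertex, since circuit vertices on $L_j$ have in- and out-degree consistent with the $N$-loop structure). Hence an RBS move on a bispecial edge $e_0$ from $u$ to $v$ never touches a circuit edge in its ``replacement'' role; it only reattaches the edges $e_1,\dots,e_L$ ending at $u$ and $\tilde e_1,\dots,\tilde e_R$ beginning at $v$, and for moves of type A or B the vertices $u,v$ both lie on a common circuit $L_{j_0}$, while for type C they lie in the complement.

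First I would dispose of move C. Here $u,v\notin L_1\cup\cdots\cup L_E$, and all the reattached edges $e_i,\tilde e_j$ are non-circuit edges, hence present in both $\overline{\Lambda}$ and $\overline{\zeta}$. The RBS move on $\Lambda$ and the corresponding move on $\overline{\Lambda}$ modify the same set of non-circuit edges in the same way, so $\overline{\zeta}$ is literally the result of applying ``the same move, with $e_0$ deleted'' to $\overline{\Lambda}$. Since $u,v$ are distinct vertices of $\overline{\Lambda}$ joined by $e_0$'s endpoints, one checks that rearranging incident edges among $u$ and $v$ (which stay in the same component as each other because $e_0$ is reinstated as an edge from $v$ to $u$ in $\zeta$, hence an undirected edge in $\overline{\zeta}$ --- wait, $e_0$ is a circuit edge? no, $e_0$ is bispecial so not on any $L_j$, so $e_0$ persists in $\overline{\zeta}$) preserves the partition into weakly connected components as vertex sets, and leaves each $S_j^i$ unchanged since no circuit vertex moves between components. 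For move A (twist) the argument is essentially the same: a twist permutes the order of two vertices $u,v$ on $L_{j_0}$ but, as emphasized in Section~\ref{SEC_NLOOPS_MOVES}, the outside edges ending at $u$ still end at $u$ and those beginning at $v$ still begin at $v$ after the move, so the incidence structure of $\overline{\Lambda}$ is unchanged up to relabeling and components and tags are identical.

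The substantive case is move B, the shrink move. Say the move ejects the vertex $x$ from the circuit $L_{j_0}$ (so $x$ is one of $u,v$, say $x=u$ is the ejected left-special vertex; the case $x=v$ is symmetric). In $\zeta$, $x$ is no longer on $L_{j_0}$, so in passing from $\overline{\Lambda}$ to $\overline{\zeta}$ we delete one fewer circuit edge at $x$: concretely, the two edges of $L_{j_0}$ incident to $x$ are present in $\Lambda$ but one of them has become a non-circuit edge in $\zeta$ and so survives into $\overline{\zeta}$. The plan is to show this surviving edge is precisely an edge joining (a vertex of) the component $C_{I_1}$ that contained $x$'s circuit-predecessor-side to the component $C_{I_2}$ that contained $x$'s circuit-successor-side; if these were the same component already then $q=p$ and the tag merely loses $x$, otherwise the two components merge and $q=p-1$. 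The verification amounts to: (i) identifying which edge survives (this is read off directly from the RBS move definition with $i_0=1,j_0\neq 1$ or $i_0\neq1,j_0=1$ in Section~\ref{SEC_NLOOPS_MOVES}, and Figure~\ref{FIG_NLOOP_SHRINK}); (ii) checking no other edge incidences change in $\overline{\zeta}$ relative to $\overline{\Lambda}$ --- this follows because all other reattached edges $e_i,\tilde e_j$ that were non-circuit remain non-circuit and the bispecial edge $e_0$ reappears as an edge from $v$ to $u$; and (iii) bookkeeping the tags: the only circuit vertex whose component membership is affected is $x$ itself, which is simply removed from whichever $S_{j_0}$ it was in (and since $x$ is no longer on $L_{j_0}$ in $\zeta$, it contributes to no tag), giving $Y_J = (X_{I_1}\cup X_{I_2})\setminus\{x\}$.

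The main obstacle I anticipate is part (ii) for move B: one must be careful that no spurious extra merge occurs, i.e.\ that exactly one new connection is created and it connects exactly the intended pair of (possibly equal) components. This requires pinning down, from the explicit combinatorics of the RBS move, which of the edges $e_1,\dots,e_L,\tilde e_1,\dots,\tilde e_R$ become incident to which of $u,v$, and observing that among these the only one whose component-connectivity role changes is the old circuit edge at $x$ now freed; all the genuinely ``outside'' edges were already non-circuit and their endpoints in $\overline{\Lambda}$ are unchanged (only their attachment among $\{u,v\}$ changes, and $u,v$ are joined in $\overline{\zeta}$ by the reinstated $e_0$, so no net change to which components they connect). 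Once this local picture is nailed down the counting $q\in\{p-1,p\}$ and the description of $D_J,Y_J$ follow immediately, and moves A and C are routine.
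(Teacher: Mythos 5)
Your proposal follows essentially the same route as the paper's proof: a case-by-case local analysis of how the three move types reattach edges, with the key points that moves A and C create no new connections between components, while move B frees exactly one former circuit edge (the bispecial edge $e_0$, reinstated as an edge from $v$ to $u$), which merges the components of $u$ and $v$ and removes the ejected vertex from the tags. One correction to your setup: the opening claim that ``none of these circuit edges are bispecial'' is backwards. A left special vertex on $L_j$ has out-degree $1$ and a right special vertex on $L_j$ has in-degree $1$, so any bispecial edge with an endpoint on a circuit must itself be an edge of that circuit; the correct dichotomy (the one the paper uses) is that every bispecial edge is either an edge of some $L_i$ --- and moves A and B act on exactly these --- or has both endpoints outside all the $L_i$'s (move C). Your subsequent analysis implicitly adopts the correct picture, since in the shrink case you treat $e_0$ as the circuit edge at $x$ that becomes a surviving outside edge, so the error does not propagate; but the inference ``an RBS move never touches a circuit edge in its replacement role'' should be deleted, and the surviving edge in move B should be described as joining $u$ to $v$ (the component of the ejected vertex to the component of the other endpoint of $e_0$), not the predecessor side to the successor side.
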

		\begin{proof}
		A move of type A only permutes two vertices on a circuit in $\Lambda$, so their corresponding components in $\overline{\Lambda}$ also permute. Thus none of the components or tags change from $\overline{\Lambda}$ to $\overline{\zeta}$. Suppose we perform a move of type C in $\Lambda$ on a bispecial edge $e$, which is not in any of the circuits $L_1, \ldots, L_E$, to obtain $\zeta$. Since both endpoints of $e$ are in the same component we can verify that no components or tags change: let $u,v$ be the endpoints of $e$ and, following earlier notation of the paper, let $y_1, \dots, y_L$ (respectively $z_1, \dots, z_R$) be the beginning (respectively ending) vertices of edges that end (respectively begin) at $u$ (respectively $v$). Then in $\overline{\Lambda}$, all the vertices $u,v,y_i,z_i$ are in the same component. Because none of the edges between these vertices are in any of the circuits $L_i$, after performing the RBS move (see Figure~\ref{FIG_RBS}) these vertices are still all in the same component of $\overline{\zeta}$, and the tags for this component do not change.
		The last case is a move of type B. In this case, write $e$ (with endpoints $u$ and $v$) again for the edge on which the move occurs, so that $e$ is on a loop $L_{j_0}$. In $\overline{\Lambda}$, $u$ and $v$ are in tags for components $C_{I_1}$ and $C_{I_2}$ respectively (and these components could be equal). 
		There are two possible shrink moves that operate on $e$. In one, the edge $e$ is replaced by an edge from $v$ to $u$ and $u$ is ejected from $L_{j_0}$ (this move is shown in Figure~\ref{FIG_NLOOPS}(b)). The other is similar, but $v$ is ejected from the circuit. 

		Note that in both cases, one edge and one vertex are removed from $L_{j_0}$. Since this circuit had at least 3 vertices (and 3 edges) in $\Lambda$, in $\zeta$ it is still a circuit. In the first case, $C_{I_1}$ is the component of $u$ in $\overline{\Lambda}$, all vertices from $C_{I_1}$ are placed into $C_{I_2}$, and since $u$ is removed from $L_{j_0}$ and not placed in another circuit, it is removed from the set of tags. Therefore there is a component $Y_J$ of $\overline{\zeta}$ equal to $C_{I_1} \cup C_{I_2}$, and one has $Y_J = (X_{I_1} \cup X_{I_2}) \setminus \{u\}$. In the second case, all vertices of $C_{I_2}$ are placed into $C_{I_1}$, and $v$ is removed from the set of tags. Therefore there is a component $Y_J$ of $\zeta'$ equal to $C_{I_1} \cup C_{I_2}$, and one has $Y_J = (X_{I_1} \cup X_{I_2}) \setminus \{v\}$.
		\end{proof}
		
		As a direct corollary of the previous proposition, the components and tags of any two graphs on the same vertex set with the same directed circuits $L_i$ and the same components and tags behave the same under moves A-C.
		\begin{coro}\label{cor: connected_moves}
		Let $\Lambda, \Gamma$ be graphs satisfying Conditions A on the same vertex set, both containing the (vertex disjoint, vertex self-avoiding) directed circuits $L_1, \ldots, L_E$. Define $\overline{\Lambda}$ and $\overline{\Gamma}$ as above, by removing the edges in the circuits $L_i$ in both $\Lambda$ and $\Gamma$. Suppose that the components and tags for $\overline{\Lambda}$ and $\overline{\Gamma}$ are equal. 
		\begin{enumerate}
		\item Let $\zeta$ be the graph obtained by applying an RBS move of type C to $\Lambda$, with $\overline{\zeta}$ obtained from $\zeta$ by removing the edges of circuits $L_1, \ldots, L_E$. Then the tags and components of $\overline{\Lambda}$ and $\overline{\zeta}$ are the same.
		\item Similarly let $\chi$ and $\xi$ be the graphs obtained from $\Lambda$ and $\Gamma$ by applying the same move of type A or B to both (to the same vertices in the circuit), with $\overline{\chi}$ and $\overline{\xi}$ the corresponding graphs with edges of $L_1, \ldots, L_E$ removed. Then the tags and components of $\overline{\chi}$ and $\overline{\xi}$ are the same.
		\end{enumerate}
		\end{coro}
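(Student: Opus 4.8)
The plan is to read everything off Proposition~\ref{prop: connected_moves}, whose content is precisely that the components and tags of $\overline{\zeta}$ are determined by the components, the tags, and the (combinatorial) data of the RBS move applied to $\Lambda$, and by nothing finer. Part (1) is then immediate: a move of type C leaves components and tags unchanged by Proposition~\ref{prop: connected_moves}(1), so $\overline{\zeta}$ and $\overline{\Lambda}$ have identical components and tags, which is the assertion. For part (2) with a move of type A (twist), the same proposition gives that $\overline{\chi}$ has the same components and tags as $\overline{\Lambda}$ and $\overline{\xi}$ has the same components and tags as $\overline{\Gamma}$; since those two agree by hypothesis, so do the former, and we are done.

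The only case that needs a word is a type B (shrink) move. Here I would first note that ``the same move to the same vertices'' is unambiguous: a shrink move acts on the bispecial edge $e$ from a left-special vertex $u$ to a right-special vertex $v$ lying on one of the circuits $L_{j_0}$ (this $e$ is the unique edge from $u$ to $v$, since $u$ has out-degree $1$), and it ejects one of $u,v$ — call it $x$ — from $L_{j_0}$; specifying the same vertices $u,v$ and the same ejected vertex $x$ in $\Gamma$ makes sense because $\Gamma$ carries the same vertex set and the same circuits $L_1,\dots,L_E$. Then I would invoke Proposition~\ref{prop: connected_moves}(2): the components and tags of $\overline{\chi}$ are obtained from those of $\overline{\Lambda}$ by locating the components $C_{I_1},C_{I_2}$ that contain $u$ and $v$, replacing them by the single component $C_{I_1}\cup C_{I_2}$ with tag $(X_{I_1}\cup X_{I_2})\setminus\{x\}$ (so that $q=p$ exactly when $I_1=I_2$), and leaving every other component and tag alone. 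This is a transformation whose only inputs are the unordered family of component--tag pairs of $\overline{\Lambda}$ together with the three vertices $u,v,x$. Applying the identical transformation to $\overline{\Gamma}$ produces the components and tags of $\overline{\xi}$; since $\overline{\Lambda}$ and $\overline{\Gamma}$ have the same components and tags — in particular the components of $u$ and of $v$ coincide — and $u,v,x$ are shared by the two moves, the two outputs coincide, giving that $\overline{\chi}$ and $\overline{\xi}$ have the same components and tags.

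The main (minor) obstacle is the bookkeeping in that last step: one must check that Proposition~\ref{prop: connected_moves}(2) really does present the new components and tags as a function solely of the old component--tag data and of $u,v,x$ — in particular that the identity of the merged block, and the degenerate case $I_1=I_2$, are also governed by that data — so that equal inputs are forced to give equal outputs. Given the explicit statement of Proposition~\ref{prop: connected_moves}, I expect this to be entirely routine; it is essentially the observation, made just before the corollary, that the effect of an RBS move on components and tags is ``local.'' I would not re-prove that ``the same move'' is a legal RBS move (i.e.\ that the resulting graphs are strongly connected) in both $\Lambda$ and $\Gamma$, since the corollary hypothesizes that $\chi$ and $\xi$ are in fact obtained by applying that move.
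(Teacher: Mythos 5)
Your proposal is correct and follows the paper's intended route exactly: the paper offers no separate argument for this corollary beyond declaring it a direct consequence of Proposition~\ref{prop: connected_moves}, and your reading — that the proposition exhibits the new component--tag data as a function only of the old component--tag data together with the vertices $u,v,x$ of the move — is precisely the observation that makes it "direct." Your extra bookkeeping for the type B case is a faithful filling-in of what the paper leaves implicit, not a different approach.
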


		Given these preliminaries, we now continue with the original connectedness claim for the graph $\Xi$ (the proof of Proposition~\ref{PROP_CONNECT}). First we recall that the graph $\Xi$ from Section~\ref{SEC_NLOOPS_NEWG} was constructed as follows. We first extract all the shrink and twist moves from the list $\mathsf{L}$, taken from the itinerary, as $m_1, \ldots, m_T$. These moves act on the circuits $L_1, \dots, L_E$. We first apply all these moves to $\Lambda$ to get a new graph $\Lambda^*$ with corresponding circuits $L_1^*, \dots, L_E^*$. We then remove all edges in the union of these circuits (and make all other edges undirected) to obtain a graph $\overline{\Lambda^*}$. 
		Then we identify all right special vertices in a loop $L_i^*$ in $\overline{\Lambda^*}$ to one vertex $\nu_{r}^i$ and all left special vertices in $L_i^*$ to one vertex $\nu_\ell^i$. The resulting graph is called $\Xi$.
		We will show here that $\Xi$ is weakly connected. As stated in Corollary~\ref{cor: main_corollary}, this implies that $E \leq (K+1)/2$.
		
		So suppose that $\Xi$ is not weakly connected. We choose a maximal set of pairs $P_i = (\nu_\ell^i, \nu_r^i)$ as follows. Note that if, for all $i$, we add an edge $e_i$ between $\nu_\ell^i$ and $\nu_r^i$, then $\Xi$ becomes weakly connected. So choose a maximal set $T$ (possibly empty) of pairs $P_i$ such that if we add all the associated $e_i$'s into $\Xi$, then $\Xi$ is still not connected. Now let $S$ be the set of pairs not in $T$, and note that $S$ is nonempty. For simplicity, we will write
		\[
		S = \{P_1, \ldots, P_q\} \text{ and } T = \{P_{r+1}, \ldots, P_E\} \text{ for } q \in [1,E].
		\]
		Furthermore, adding the edges $e_i$ into $\Xi$ for $i=q+1, \ldots, E$ produces a graph $\hat \Xi$ which has exactly two components, and
		\begin{equation}\label{eq: xi_hat_disconnect}
		\text{for }i=1, \ldots, q,~\nu_\ell^i \text{ and } \nu_r^i \text{ are in different components of }\hat \Xi.
		\end{equation}
		Now let $I$ be the value of $i$ such that the circuit $L_i$ is the first of $L_1, \ldots, L_q$ (corresponding to pairs $P_1, \ldots, P_q$) to undergo a spread event in the itinerary. (Such $i$ may not be unique.) We then let $\hat \Lambda$ be the first graph from the itinerary in which $L_i$ undergoes a spread event, According to the definition of a spread event and Rules List~\ref{RULE1}(4), in this graph $\hat \Lambda$, there must be a directed path $\pi$ from a right special vertex $x_I$ on $L_I$ to a left special vertex $y_I$ on $L_I$ such that this path lies entirely outside $L_I$, and each vertex of $\pi$ has the same color as that of the circuit $L_I$ in $\hat \Lambda$. We claim that 
		\begin{equation}\label{eq: to_show}
		\pi \text{ intersects } L_j \text{ for some }j \neq I \text{ and }j \in [1,q].
		\end{equation}
		As in the proof of Lemma~\ref{LEM2LOOPS}, this will give a contradiction to the coloring rules, since in $\hat \Lambda$, $L_j$ has a different color than does $L_I$.
		
		To show \eqref{eq: to_show}, we need some more constructions. In what follows, we will use the following notation: if $\chi$ is a graph with the loops $L_1, \dots, L_E$, then $\overline{\chi}$ is the graph obtained by removing the edges in these loops, and $\overline{\chi_0}$ is the graph obtained by removing the edges in $L_1, \dots, L_q$.
		In the graph $\Xi$, $\nu_\ell^I$ and $\nu_r^I$ are in different components, and we have seen that when we add the edges $e_i$ for $i = r+1, \ldots, E$ into $\Xi$ to produce $\hat \Xi$, then $\nu_\ell^I$ and $\nu_r^I$ are still in separate components (they are in the two different components of $\hat \Xi$). Now we produce a second graph $\overline{\Lambda_0^*}$ by expanding the pairs $P_i$ for $i = q+1, \ldots, E$ into circuits $L_{q+1}, \ldots, L_E$ containing all their edges, and re-expanding the pairs $P_i$ for $i=1, \ldots, q$ into circuits $L_1, \ldots, L_q$ with their edges removed. Equivalently, $\overline{\Lambda_0^*}$ is the graph $\Lambda^*$ with the edges in $L_1, \ldots, L_q$ removed. Then directly from \eqref{eq: xi_hat_disconnect}, we obtain
		\begin{equation}\label{eq: fact_1}
		\begin{array}{c}
		\text{ in }\overline{\Lambda^*_0}, \text{ for each } i = 1, \ldots, q,\text{ each right special vertex of }L_i \\
		\text{ is in a different component than each left special vertex of }L_i.
		\end{array}
		\end{equation}	 		 
		
		Given \eqref{eq: fact_1}, the next step in proving \eqref{eq: to_show} is to define two sequences of graphs. First list
		\[
		\Lambda = \zeta^{(0)} , \ldots, \zeta^{(T)} = \Lambda^*
		\]
		as the sequence of graphs obtained by applying the shrink and twist moves $m_1, \ldots, m_T$ in order to $\Lambda$. Write
		\[
		\overline{\Lambda_0} = \overline{\zeta_0^{(0)}} , \ldots, \overline{\zeta_0^{(T)}} = \overline{\Lambda_0^*}
		\]
		as the sequence of graphs obtained from $\zeta^{(0)}, \ldots, \zeta^{(T)}$ by removing the edges in the circuits $L_1, \ldots, L_q$. We next claim that
		\begin{equation}\label{eq: fact_2}
		\text{for }1\leq i \leq q, \text{ and }  0 \leq t \leq T, ~x_i \text{ and }y_i \text{ are in separate components in }\overline{\zeta_0^{(t)}}.
		\end{equation}
		Here, $x_i$ and $y_i$ are defined similarly to $x_I$ and $y_I$:
			they are left and right special vertices in $L_i$ through which $L_i$ spreads color during its first spread event.
		To show \eqref{eq: fact_2}, we argue inductively,
			backward on $t$.
		Suppose that for some $t \in [1, p]$,
			one has $x_i$ and $y_i$ in separate components for all $i = 1, \ldots, q$,
			in $\overline{\zeta_0^{(t)}}$ (it holds by \eqref{eq: fact_1} for $t=T$).
		Now apply Proposition~\ref{prop: connected_moves} with $\Lambda = \zeta_{t-1}$ and $\zeta = \zeta_t$ (removing loops $L_1, \dots, L_q$).
		If the move $m_{t-1}$ is of type A then the components and tags of
			$\overline{\zeta_0^{(t-1)}}$ and $\overline{\zeta_0^{(t)}}$ are the same.
		In particular, for any $i=1, \ldots, q$, the vertices $x_i$ and $y_i$ are in separate components in $\overline{\zeta_0^{(t-1)}}$.
		If the move is of type B (which is the only other possibility),
			then two components from $\overline{\zeta_0^{(t-1)}}$ may have merged to form a single component in $\overline{\zeta_0^{(t)}}$.
		But since for each $i=1, \ldots, q$, $x_i$ and $y_i$ were in separate components in $\overline{\zeta_0^{(t)}}$, they must also then be in $\overline{\zeta_0^{(t-1)}}$. This shows \eqref{eq: fact_2}.

		Next we list all shrink and twist moves $m_1, \ldots, m_\kappa$
			from the full list $m_1,\ldots,m_T$ that was taken from $\mathsf{L}$ in the itinerary
			from the original graph $\Lambda$ until we reach $\hat \Lambda$ (the graph obtained from $\Lambda$ by applying all moves --- not just shrink and twist moves --- from the itinerary, until the loop $L_I$ spreads its color).
		Now we must relate $\overline{\zeta_0^{(\kappa)}}$ to $\hat \Lambda$. To do so, we need two more sequences of graphs. List all moves $M_1, \ldots, M_s$  performed from $\Lambda$ to $\hat \Lambda$ from the full list $\mathsf{L}$, and apply these to the graph $\Lambda$. In other words, we define a sequence of graphs
		\[
		\Lambda = \Delta^{(0)} , \ldots, \Delta^{(s)} = \hat \Lambda
		\]
		obtained from $\Lambda$ so that $\Delta^{(j)}$ is the result of applying moves $M_1, \ldots, M_j$ to $\Lambda$. As before, we also set
		\[
		\overline{\Lambda_0} = \overline{\Delta_0^{(0)}}, \ldots, \overline{\Delta_0^{(s)}} = \overline{\hat \Lambda_0}
		\]
		as the sequence obtained from $\Delta^{(0)}, \ldots, \Delta^{(s)}$ by removing the edges from the circuits $L_1, \ldots, L_q$. To synchronize these graphs to the sequences $(\zeta^{(i)})$ and $(\overline{\zeta_0^{(i)}})$, we note that the shrink and twist moves $m_1, \ldots, m_T$ are in the list $\mathsf{L}$, whose moves are $M_1, \ldots, M_s$, so we write $k_0=0$, and set $k_j$ so that
		\[
		m_j = M_{k_j} \text{ for } j = 1, \ldots, \kappa,
		\]
		where $k_\kappa \leq s$.
		
		Given these definitions, we finally claim that 
		\begin{equation}\label{eq: last_claim}
		\text{for } 1 \leq i \leq q, x_i \text{ and } y_i \text{ are in separate components in } \overline{\Delta_0^{(s)}} = \overline{\hat \Lambda_0}.
		\end{equation}
		Applying this claim to $i=I$ shows that in particular $x_I$ and $y_I$ are in separate components in $\overline{\Delta_0^{(s)}} = \overline{\hat \Lambda_0}$, and so the path $\pi$ from \eqref{eq: to_show} must intersect a circuit $L_j$ for $1 \leq j \leq q$ and $j \neq I$, showing \eqref{eq: to_show}.
		
		To show \eqref{eq: last_claim}, we first prove that 
		\begin{equation}\label{eq: component_tag}
		 \text{for }j=0, \ldots \kappa \text{, the components and tags of } \overline{\Delta_0^{(k_j)}} \text{ are the same as those of }\overline{\zeta_0^{(j)}}.
		 \end{equation} 
		 This is true for $j=0$, since $\overline{\Delta_0^{(k_0)}} = \overline{\Lambda_0} = \overline{\zeta_0^{(0)}}$.
		 Assuming this holds for some $j=0, \ldots, \kappa-1$,
		 	we use Corollary~\ref{cor: connected_moves} (item 1) with $\Lambda = \Delta^{(k_j)}$,
		 	$\Gamma = \zeta^{(j)}$, and removing circuits $L_1, \dots, L_q$.
		 To obtain $\Delta_0^{(k_j+1)}$ from $\Delta^{(k_j)}$,
		 	we apply a move of type C, so we set $\zeta = \Delta^{(k_j+1)}$.
		 We deduce then that components and tags of $\overline{\Delta_0^{(k_j+1)}}$
		 	are the same as those of $\overline{\zeta_0^{(j)}}$.
		 We now repeat this argument, at the $a$-th time, applying the corollary with $\Lambda = \Delta^{(k_j+a-1)}$, $\Gamma = \zeta^{(j)}$,
		 	and $\zeta = \Delta^{(k_j+a)}$,
		 	and so on,
		 	to eventually find that the tags and components of $\overline{\Delta_0^{(k_{j+1}-1)}}$
		 	are the same as those of $\overline{\zeta_0^{(j)}}$.
		 To move from $\Delta^{(k_{j+1}-1)}$ to $\Delta^{(k_{j+1})}$
		 	(and $\zeta^{(j)}$ to $\zeta^{(j+1)}$),
		 		we must use the move $m_{j+1}$, which is of type A or B.
		 We therefore apply Corollary~\ref{cor: connected_moves} (item 2)
		 	with $\Lambda = \Delta^{(k_{j+1}-1)}$, $\Gamma = \zeta^{(j)}$, $\chi = \Delta^{(k_{j+1})}$, and $\xi = \zeta^{(j+1)}$.
		 We then deduce that the tags and components of $\overline{\Delta_0^{(k_{j+1})}}$ are the same as those
		 	in $\overline{\zeta_0^{(j+1)}}$.
		 By induction, this verifies \eqref{eq: component_tag}.
		 
		Since the components and tags of $\overline{\Delta_0^{(k_\kappa)}}$
			are the same as those of $\overline{\zeta_0^{(\kappa)}}$,
			we apply the previous paragraph's argument a few more times to compare the components and tags for the graphs
		\[
		\overline{\Delta_0^{(k_\kappa)}}, \overline{\Delta_0^{(k_\kappa+1)}}, \ldots, \overline{\Delta_0^{(s)}}
		\]
		to those of $\overline{\zeta_0^{(\kappa)}}$.
		Since the only moves involved in transitioning between these graphs are of type C,
			we can again use Corollary~\ref{cor: connected_moves} to say that all of these graphs have the same components and tags.
		In particular, $\overline{\Delta_0^{(s)}}$ and $\overline{\zeta_0^{(\kappa)}}$ have the same components and tags.
		Using \eqref{eq: fact_2} with $t=\kappa$,
			we find that for $1 \leq i \leq q$,
			$x_i$ and $y_i$ are in separate components in $\overline{\Lambda_0^{(s)}}$,
			which is \eqref{eq: last_claim}, and this completes the proof.
		
\bibliographystyle{alpha}
\bibliography{BIBTEX}

\end{document}